\documentclass{amsart}
\usepackage{amssymb,stmaryrd}
\usepackage{mathrsfs}
\usepackage[colorlinks,citecolor=blue,urlcolor=black,linkcolor=black]{hyperref}

\newtheorem{thm}{Theorem}[section]
\newtheorem{cor}[thm]{Corollary}
\newtheorem{prop}[thm]{Proposition}
\newtheorem{fact}[thm]{Fact}
\newtheorem{lemma}[thm]{Lemma}
\newtheorem{claim}{Claim}[thm]
\newtheorem{mainthm}{Theorem}

\theoremstyle{definition}
\newtheorem{definition}[thm]{Definition}
\newtheorem{q}[thm]{Question}

\theoremstyle{remark}
\newtheorem{remark}[thm]{Remark}

\newcommand*\axiomfont[1]{\textsf{\textup{#1}}}
\newcommand\ch{\axiomfont{CH}}

\newcommand\zfc{\axiomfont{ZFC}}
\newcommand\nsp{\axiomfont{NSP}}
\newcommand\TP{\axiomfont{TP}}
\newcommand\sq{\sqsubseteq}
\newcommand\s{\subseteq}
\newcommand\br{\blacktriangleright}
\renewcommand{\restriction}{\mathbin\upharpoonright}
\renewcommand\mid{\mathrel{|}\allowbreak}

\newcommand{\ssr}{\axiomfont{SSR}}
\newcommand{\mm}{\axiomfont{MM}}
\newcommand{\pfa}{\axiomfont{PFA}}
\newcommand{\mrp}{\axiomfont{MRP}}
\newcommand{\pid}{\axiomfont{PID}}
\newcommand{\ma}{\axiomfont{MA}}
\newcommand{\sch}{\axiomfont{SCH}}
\newcommand{\isp}{\axiomfont{ISP}}
\newcommand{\Add}{\mathrm{Add}}
\newcommand{\bb}{\mathbb}

\DeclareMathOperator{\add}{Add}
\DeclareMathOperator{\h}{ht}
\DeclareMathOperator{\pr}{Pr}

\DeclareMathOperator{\id}{id}
\DeclareMathOperator{\reg}{Reg}
\DeclareMathOperator{\cl}{cl}
\DeclareMathOperator{\cf}{cf}
\DeclareMathOperator{\im}{Im}
\DeclareMathOperator{\otp}{otp}
\DeclareMathOperator{\dom}{dom}
\DeclareMathOperator{\acc}{acc}
\DeclareMathOperator{\nacc}{nacc}
\DeclareMathOperator{\U}{U}
\DeclareMathOperator{\ssup}{ssup}
\DeclareMathOperator{\ind}{ind}
\newcommand\inds{\boxminus^{\ind}}
\newcommand\inde{\square_-^{\ind}}

\author{Chris Lambie-Hanson}
\address{Institute of Mathematics,
Czech Academy of Sciences,
{\v Z}itn{\'a} 25, Prague 1,
115 67, Czech Republic}
\urladdr{https://math.cas.cz/lambiehanson}

\author{Assaf Rinot}
\address{Department of Mathematics, Bar-Ilan University, Ramat-Gan 5290002, Israel.}
\urladdr{http://www.assafrinot.com}

\author{Jing Zhang}
\address{Department of Mathematics, University of Toronto
Bahen Centre, Room 6290
40 St. George St., Toronto, ON, M5S 2E4}
\urladdr{https://jingjzzhang.github.io/}

\title{Squares, ultrafilters and forcing axioms}

\begin{document}
\date{\today}
\begin{abstract}
We study relationships between various set theoretic compactness principles, focusing on
the interplay between the three families of combinatorial objects or principles mentioned in the title. Specifically, we show the following.
\begin{enumerate}
\item Strong forcing axioms, in general incompatible with the existence of indexed squares, can be made compatible with weaker versions of indexed squares.
\item Indexed squares and indecomposable ultrafilters with suitable parameters can coexist. As a consequence, the amount of stationary reflection known to be implied by the existence of a uniform indecomposable ultrafilter is optimal.
\item The Proper Forcing Axiom implies that any cardinal carrying a uniform indecomposable ultrafilter is either measurable or a supremum of countably many measurable cardinals. Leveraging insights from the preceding sections, we demonstrate that the conclusion cannot be improved.
\end{enumerate}
\end{abstract}
\maketitle
\section{Introduction}

The study of compactness and incompactness phenomena in combinatorial set theory has a long history.
On the incompactness side, the square principles ($\square$), discovered by Jensen \cite{jensen} in  his fine structural analysis of the constructible universe, have been used to settle many independent questions. Such principles make it possible to generalize techniques and proofs available at the level of the first uncountable cardinal to higher cardinals. For example, with square principles, the ``walks on ordinals" techniques discovered by Todorcevic \cite{todorcevicwalks} are available at higher cardinals, giving rise to many applications inside and outside of set theory \cite{todorcevicwalksbook}. On the compactness side, large cardinal axioms play an essential role in settling independent questions, usually in an opposite way from how square principles decide them. They are also known to directly imply statements about objects relatively low in the cumulative hierarchy; for example, \emph{Projective Determinacy} \cite{PD}. One particularly
important class of strong compactness principles, whose consistency can usually be established by performing
iterated forcing over models of large cardinals, is the class of \emph{forcing axioms}. These can be thought of as generalizations of the Baire Category Theorem in the following two aspects: they are 1) applied to more general topological spaces and 2) designed to meet more requirements/dense sets.
Two notable forcing axioms, the \emph{Proper Forcing Axiom} ($\pfa$), introduced by Baumgartner \cite{baumgartnerpfa}, and \emph{Martin's Maximum} ($\mm$), introduced by Foreman, Magidor and Shelah \cite{MM}, have found wide ranging applications both inside and outside of set theory.

In this paper, we study certain combinatorics of ultrafilters under strong forcing axioms and use a weaker version of indexed squares to demonstrate the optimality of the theorem. We need a few more definitions in order to state the main results.

\begin{definition}
An ultrafilter $U$ over an infinite cardinal $\theta$ is said to be
\begin{enumerate}
\item \emph{uniform} if $|X|=|\theta|$ for every $X\in U$;
\item \emph{weakly normal} if for any regressive $f: \theta\to \theta$, there exists $\tau<\theta$ such that $f^{-1}[\tau]\in U$.
\end{enumerate}
\end{definition}

\begin{definition}[Keisler, Prikry \cite{prikrythesis}]
Let $U$ be an ultrafilter over a set $I$, and let $\mu$ be a infinite cardinal. $U$ is
said to be  \emph{$\mu$-decomposable} if there exists a function $f : I \rightarrow \mu$ such that
$f^{-1}[H] \not\in U$ for every $H \in [\mu]^{<\mu}$.
Otherwise, it is said to be \emph{$\mu$-indecomposable}.
\end{definition}
A ultrafilter on $\kappa$ is \emph{indecomposable} if it is $\nu$-indecomposable for every $\nu\in [\aleph_1, \kappa)$. Hence, if we compare the definition with the ultrafilter given by a measurable cardinal, it is weaker in that it is possibly not countably complete. This makes it possible for non large cardinals to carry such ultrafilters.

Silver \cite{silver} asked whether a strongly inaccessible $\kappa$ carrying a uniform indecomposable ultrafilter is necessarily measurable. Sheard \cite{sheard} answered the question negatively. We give another proof of this result (see Theorem \ref{ind_sq_thm}).
However, such independent configurations cannot occur when certain structural constraints are imposed on the ground model. For example, Donder, Jensen and Koppelberg \cite{DonderJensenKoppelberg} showed that if an inaccessible $\kappa$ carries a $\mu$-indecomposable ultrafilter for some $\mu<\kappa$, then there exists an inner model of a measurable cardinal. Hence, in $L$, Silver's question has a trivial positive answer. One can show, using Kunen's analysis \cite{iteratedUltra}, in $L[\mu]$, the canonical inner model for one measurable cardinal, Silver's question also has a positive answer. It is likely such analysis generalizes to other canonical inner models.

What is more surprising is that strong large cardinals give rise to the positive answer of Silver's question as well. More recently, Goldberg \cite{goldberg} showed that any cardinal $\kappa$ carrying a uniform indecomposable ultrafilter must either be measurable or a supremum of countably many measurable cardinals provided $\kappa$ is above a strongly compact cardinal. Our first main result shows that the same conclusion follows from strong forcing axioms.

\begin{mainthm}\label{theorem: main1}
$\pfa$ implies that any cardinal carrying a uniform indecomposable ultrafilter must be either measurable or a supremum of countably many measurable cardinals.
\end{mainthm}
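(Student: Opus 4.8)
The plan is to split into cases according to $\lambda := \cf(\kappa)$, and the first observation, which needs no forcing axiom, is a restriction on this cofinality. Suppose $U$ is a uniform indecomposable ultrafilter on $\kappa$ and that $\aleph_1 \le \lambda < \kappa$. Fix an increasing cofinal sequence $\langle \kappa_i : i < \lambda \rangle$ in $\kappa$ and let $f : \kappa \to \lambda$ send $\alpha$ to the least $i$ with $\alpha < \kappa_i$. Since $\lambda \in [\aleph_1,\kappa)$, applying $\lambda$-indecomposability to $f$ produces $H \in [\lambda]^{<\lambda}$ with $f^{-1}[H] \in U$; but $f^{-1}[H] \subseteq \sup_{i \in H}\kappa_i$, a bounded, hence size-${<}\kappa$, subset of $\kappa$, contradicting uniformity. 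Thus either $\kappa$ is regular or $\cf(\kappa) = \omega$, and I would treat these two cases separately.

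For the regular case I would reduce everything to countable completeness. The point is that \emph{a uniform, indecomposable, countably complete ultrafilter on a regular cardinal $\kappa$ is automatically witness to the measurability of $\kappa$}: if $W$ is such an ultrafilter and $j : V \to M = \ult(V,W)$ is its (now well-founded) ultrapower with $\crit(j) = \mu$, then $\mu < \kappa$ is impossible. Indeed, picking $f : \kappa \to \mu$ representing $\mu$, one has for every $H \in [\mu]^{<\mu}$ that $f^{-1}[H] \in W$ iff $\mu \in j(H)$; since $j \restriction \mu = \id$ forces $j(H) = H$ and $\mu \notin H$, we get $f^{-1}[H] \notin W$ for all such $H$, i.e. $W$ is $\mu$-decomposable with $\mu \in [\aleph_1,\kappa)$, contradicting indecomposability. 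Hence $\crit(j) = \kappa$ and $\kappa$ is measurable. So, after the standard normalization of projecting $U$ along a minimal unbounded function to a weakly normal (still uniform and, since pushforwards preserve indecomposability, still indecomposable) ultrafilter $W$, the entire regular case comes down to showing that $W$ is countably complete, equivalently that $\ult(V,W)$ is well-founded.

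This last step is exactly where $\pfa$ must enter, and I expect it to be the main obstacle. The mechanism I would use is that $\pfa$ forbids the coherent combinatorial objects that necessarily accompany an ill-founded such ultrapower on a regular, non-measurable $\kappa$: $\pfa$ yields strong stationary reflection and, as emphasized in this paper, the failure of indexed square principles (the converse direction, that indexed squares \emph{produce} uniform indecomposable ultrafilters on non-measurable cardinals, being the content of Theorem~\ref{ind_sq_thm}). I would therefore argue contrapositively, showing that if $W$ were uniform, indecomposable, and \emph{not} countably complete on a regular non-measurable $\kappa$, then the threading/scale data obstructing well-foundedness of $\ult(V,W)$ could be organized—using the reflection that $W$ itself supplies—into an indexed-square-like sequence on $\kappa$, contradicting $\pfa$. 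Making this extraction precise, so that the quantitative interplay between the degrees of indecomposability and the coherence of the square is correctly calibrated, is the delicate heart of the argument and is where the technology of the earlier sections is genuinely needed.

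Finally, for $\cf(\kappa) = \omega$ I would fix an increasing cofinal sequence $\langle \kappa_n : n < \omega \rangle$ of regular cardinals and, in the style of Prikry, use $\kappa_n$-indecomposability to decompose $U$ into a coherent system of ultrafilters concentrated on cofinally many of the $\kappa_n$, arranging that uniformity and indecomposability descend to the projections (here the cofinality computation of the first paragraph is what guarantees the projections remain uniform). Applying the regular-case analysis to each projection then forces cofinally many of the $\kappa_n$ to be measurable, so that $\kappa$ is the supremum of countably many measurable cardinals; combined with the regular case, $\kappa$ is therefore either measurable or such a countable supremum, as required.
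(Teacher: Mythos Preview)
Your cofinality dichotomy and the reduction in the regular case (a uniform indecomposable \emph{countably complete} ultrafilter on a regular $\kappa$ forces $\crit(j)=\kappa$) are correct and standard. The problem is that the step you flag as ``the delicate heart of the argument'' is not actually carried out, and the mechanism you propose for it is not the right one. You suggest that from a uniform, indecomposable, \emph{non}-countably-complete $W$ on a regular non-measurable $\kappa$ one should extract an indexed-square-like object contradicting $\pfa$. But the paper's Theorem~\ref{theorem: main3} shows precisely that indexed squares and such ultrafilters can \emph{coexist}; there is no reason to expect a square sequence to be recoverable from $W$, and the earlier sections provide no such extraction machinery. So this line, as stated, does not go through.

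The paper's actual argument is quite different and does not pass through square principles at all. It isolates from $\pfa$ the consequence $\isp(\omega_2)$ (existence of stationarily many $\aleph_1$-guessing models) together with $2^\omega=\omega_2$, and then proves Theorem~\ref{theorem: ISPMeasurable} directly. The engine is Silver's factorization (Fact~\ref{finest} and Theorem~\ref{theorem: silver}): take a finest partition $\varphi:\kappa\to\omega$, set $D=\varphi^*(U)$, factor $j_U=k\circ j_D$, and let $W$ be the $M_D$-ultrafilter on $j_D(\kappa)$ derived from $k$. The key technical point is Lemma~\ref{lemma: approximation}, which says $W\cap j_D(x)\in M_D$ for small $x$. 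One then shows that for any $\aleph_1$-guessing model $M$, the traces $f_{M\cap H(\theta)}(i)$ are $M$-guessed for $D$-many $i$, producing genuine $\sigma$-complete ultrafilters $V_i^N\in N$ on $\kappa$. A pressing-down argument on the completeness degrees of these $V_i^N$, together with Lemma~\ref{lemma: nonstronglimit} (itself a nontrivial application of guessing models ruling out the non-strong-limit case) and the $\sch$ consequence of $\isp(\omega_2)$, yields the contradiction. This handles the regular and countable-cofinality cases \emph{simultaneously}; there is no separate Prikry-style decomposition of $U$ along a cofinal $\omega$-sequence as you outline, and indeed such a decomposition into uniform indecomposable ultrafilters on the $\kappa_n$ is not available in general.
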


Goldberg's theorem and Theorem~\ref{theorem: main1} add to the long list of
combinatorial statements that were first shown to hold above a strongly compact or supercompact
cardinal and later shown to also follow from strong forcing axioms.
A popular heuristic explaining this phenomenon is that strong forcing axioms assert that $\omega_2$ behaves in many ways like a strongly compact or supercompact cardinal.
For example, Solovay showed \cite{solovay}  that, if $\kappa$ is a strongly compact cardinal, then
$\square(\lambda)$ fails for every regular cardinal $\lambda \geq \kappa $ and the Singular Cardinals Hypothesis holds above $\kappa$. Later, Todorcevic \cite{todorcevicPFA} and Viale \cite{viale}, respectively, showed the same conclusions hold with $\kappa = \omega_2$ under $\pfa$.

Next, in order to demonstrate that the conclusion we get in Theorem~\ref{theorem: main1} is optimal, we study the relationship between forcing axioms and certain indexed square principles. In what follows,
$\inds(\kappa, \theta)$ and $\inde(\kappa, \theta)$ are two natural weakenings of the indexed
square principles $\square^{\mathrm{ind}}(\kappa, \theta)$ (see Definitions \ref{inds} and
\ref{def: weakening}).

\begin{mainthm}\label{theorem: main2}
\begin{enumerate}
\item $\mm$ implies that $\inds(\kappa, \omega_1)$ fails for all regular $\kappa > \omega_1$.
\item For every pair $\theta < \kappa$ of infinite regular cardinals, there exists a $\theta^+$-directed
closed, $({<}\kappa)$-distributive forcing that adds a $\inde(\kappa, \theta)$-sequence. In
particular, $\mm$ is compatible with $\inde(\kappa, \omega_1)$ holding for all regular
$\kappa > \omega_1$.
\end{enumerate}
\end{mainthm}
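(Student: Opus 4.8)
For part (1) the natural plan is to show that a $\inds(\kappa,\omega_1)$-sequence manufactures an incompact object of a kind that $\mm$ forbids. Concretely, from a putative $\inds(\kappa,\omega_1)$-sequence $\vec C$ I would extract either (a) a narrow system of height $\kappa$ and width $\omega_1$ with no cofinal branch, or (b) a witness to the failure of stationary reflection at $\kappa$: the coherence of the matrix should let one read off a stationary set (or a family of $\omega_1$-many stationary sets) that reflects nowhere, while the non-triviality clause of $\inds$ guarantees the absence of a thread or cofinal branch. Since $\mm$ delivers the relevant compactness — the narrow system property ($\nsp$) at $\kappa$, together with strong stationary reflection such as $\ssr$ or the appropriate instance of $\wrp$ — this would be contradictory. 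The delicate point is to match the exact reflection principle (or narrow-system instance) that $\inds$ refutes with a genuine consequence of $\mm$; because the strictly weaker $\inde$ survives $\mm$ by part (2), the argument must exploit the full stationary-preserving strength of $\mm$ and cannot go through $\pfa$ alone. I therefore expect the identification of the precise $\mm$-consequence, and the verification that the width-$\omega_1$ object is ``narrow enough'' to be captured by it, to be the main obstacle in (1).

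For part (2) I would force with the poset $\mathbb P$ of bounded approximations to a $\inde(\kappa,\theta)$-sequence: a condition is a matrix $\langle C_{\alpha,i}:i<\theta,\ \alpha\le\gamma\rangle$ for some $\gamma<\kappa$ satisfying all the coherence and non-triviality requirements of $\inde$ below $\gamma$, ordered by end-extension. The $\theta^+$-directed closure is the routine direction: given a directed family of at most $\theta$ conditions, their union is an approximation up to the supremum $\delta$ of their heights, and since $\cf(\delta)\le\theta$ one can cap the construction by defining the $\delta$-th row $\langle C_{\delta,i}:i<\theta\rangle$ so as to cohere — the weakening built into $\inde$ should be precisely what makes this capping always possible. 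The substantive work is $(<\kappa)$-distributivity, since $\theta^+$-directed closure only handles descending sequences of length $\le\theta$; I would prove $(<\kappa)$-strategic closure by a fusion argument in which, at a limit stage of cofinality $\mu\in(\theta,\kappa)$, the coherence of the columns pins down a canonical lower bound (the union determines the matrix up to the relevant height, and coherence forces the cap). Non-triviality of the generic sequence — that it admits no thread and is thus a genuine $\inde(\kappa,\theta)$-sequence — I would obtain by a density argument: given a name for a candidate thread, below any condition one finds an extension whose top row diverges from it, exploiting the freedom in choosing the columns; distributivity is then used to preserve cardinals and cofinalities up to $\kappa$, so that the generic matrix has the intended length and width.

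Finally, for the ``in particular'' I would start in a model of $\mm$ and force with $\mathbb P$ for $\theta=\omega_1$, so that $\mathbb P$ is $\aleph_2$-directed closed and $(<\kappa)$-distributive. The compatibility then reduces to a preservation statement of K\"{o}nig--Yoshinobu type, to the effect that sufficiently closed (here, $\aleph_2$-directed-closed) forcing preserves $\mm$: the directed closure adds no new $\omega_1$-sequences, so stationary subsets of $[\lambda]^{\omega}$ and the semiproperness of posets are controlled, and one reflects an arbitrary family of $\aleph_1$ dense sets in $V[G]$ back to an application of $\mm$ in $V$. I expect this preservation step to be the crux of (2): $\mm$ is not preserved by arbitrary $\sigma$-closed forcing, so one must genuinely use the full $\aleph_2$-directed closure (matching $\theta^+=\omega_2$) and verify carefully that the relevant reflection goes through.
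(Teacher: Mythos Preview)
Your plan for part~(2) is essentially what the paper does: the forcing $\mathbb P^-(\kappa,\theta)$ of bounded approximations is shown to be $\theta^+$-closed (the poset is tree-like, so directed closure reduces to ordinary closure, and the ``cap'' at a limit of cofinality $\le\theta$ is exactly where the weakened coherence of $\inde$ is used), then $\kappa$-strategically closed, and non-triviality is a density argument. One correction: the preservation of $\mm$ under $\omega_2$-directed closed forcing is a known theorem of Larson, cited as a black box, so it is not the crux you anticipate.

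For part~(1) there is a genuine gap. You correctly sense that the argument must go beyond $\pfa$ and you mention $\ssr$, but neither of your concrete candidates works. A narrow system of width $\omega_1$ will not do: the narrow system property available from forcing axioms is $\nsp(\omega_1,\kappa)$ (width ${<}\,\omega_1$), which already follows from $\isp(\omega_2)$ and hence from $\pfa$; if a width-$\omega_1$ system sufficed you would contradict your own observation that $\pfa$ alone cannot work. And ordinary one-cardinal non-reflection in $\kappa$ is too coarse to separate $\inds$ from $\inde$: the paper itself notes that $\mathbb P^-(\kappa,\omega_1)$ introduces a non-reflecting stationary subset of $E^\kappa_{\omega_1}$, yet $\mm$ survives in that extension, so $\mm$ is compatible with such non-reflection at $\kappa>\omega_2$.

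The missing idea is a genuinely two-cardinal construction. From a $\inds(\kappa,\omega_1)$-sequence the paper defines a specific stationary $S\subseteq[\kappa]^{\aleph_0}$, where membership of $x$ in $S$ is phrased in terms of the column $C_{\sup(x),\,x\cap\omega_1}$ (so $x\cap\omega_1$ serves as the column index), and verifies that $S$ is upward closed under the Sakai--Veli\v{c}kovi\'c relation $\sqsubseteq^*$. It is precisely the \emph{full} coherence of $\inds$ at accumulation points (not merely eventual coherence as in $\inde$) that makes this $\sqsubseteq^*$-closure go through. Stationarity of $S$ is then established by a game argument adapted from Sakai--Veli\v{c}kovi\'c, and one checks directly that $S$ cannot reflect to any $W\in[\kappa]^{\aleph_1}$, contradicting $\ssr$. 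This construction is the heart of the proof and is not visible in your outline.
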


Our third main result concerns the co-existence of indexed square principles and indecomposable ultrafilters. As a consequence, we show that the amount of stationary reflection implied by the existence of a uniform indecomposable ultrafilter is optimal.

\begin{mainthm}\label{theorem: main3}
Relative to the existence of a measurable cardinal, it is consistent that $\square^{\mathrm{ind}}(\kappa,\theta)$ holds and $\kappa$ carries a uniform ultrafilter
that is $\mu$-indecomposable  for every cardinal $\mu\in [\theta^+, \kappa)$.
\end{mainthm}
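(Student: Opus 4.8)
The plan is to start from a measurable cardinal $\kappa$ carrying a fixed normal ultrafilter $U_0$, to let $j\colon V\to M=\ult(V,U_0)$ be the induced ultrapower embedding, and, after a preliminary forcing, to assume \gch. Fixing a regular $\theta<\kappa$, I would then force with the natural poset $\mathbb{S}$ adding a $\square^{\mathrm{ind}}(\kappa,\theta)$-sequence by initial approximations, exactly as in the forcing underlying Theorem~\ref{theorem: main2}(2) and Theorem~\ref{ind_sq_thm}: under \gch this $\mathbb{S}$ is $({<}\kappa)$-directed closed and of size $\kappa$, hence $\kappa^+$-cc, so it adds no bounded subsets of $\kappa$ and preserves cardinals. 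Let $G$ be generic and work in $V[G]$, where $\square^{\mathrm{ind}}(\kappa,\theta)$ holds. Since an indexed square entails $\square(\kappa)$, the cardinal $\kappa$ is no longer weakly compact; in particular, the ultrafilter we seek must be \emph{genuinely} $\theta$-decomposable and far from $\kappa$-complete. This already shows it cannot be produced by naively extending $U_0$: any lift of $j$ with critical point $\kappa$ that is recoverable inside $V[G]$ would make $\kappa$ measurable and would thread the generic square, contradicting $\square^{\mathrm{ind}}(\kappa,\theta)$.

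The ultrafilter is therefore built from the finer interaction between $U_0$ and the generic square. Let $e\colon\kappa\to\theta$ be the layer function read off the $\square^{\mathrm{ind}}(\kappa,\theta)$-sequence, where $e(\beta)$ records the least index $i<\theta$ at which $\beta$ enters the $i$-th layer; this $e$ is the designated witness to $\theta$-decomposability. I would realize the target $U$ as a $W$-sum $\int_{\theta}U_i\,dW$, where $W=e_{*}U$ is arranged to be a uniform ultrafilter on $\theta$ and the fiber ultrafilters $U_i$ live on the fibers $e^{-1}\{i\}$ (cofinally many of which have size $\kappa$) and are extracted from $U_0$ using the coherence of the square. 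With this presentation, uniformity of $U$ is immediate, and $\theta$-decomposability is built in: since every member of $W$ has size $\theta$, we have $e^{-1}[H]\notin U$ for every $H\in[\theta]^{<\theta}$.

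The crux is \emph{indecomposability above} $\theta$, i.e.\ that for every regular $\mu\in[\theta^+,\kappa)$ and every $f\colon\kappa\to\mu$ in $V[G]$ there is $H\in[\mu]^{<\mu}$ with $f^{-1}[H]\in U$. Here the two ingredients must cooperate precisely. For $W$-almost every $i$, the $\kappa$-completeness of $U_0$ (transmitted to $U_i$ through the extraction) yields a fiberwise witness $H_i\in[\mu]^{<\mu}$ with $f^{-1}[H_i]\cap e^{-1}\{i\}\in U_i$; the coherence of the \emph{indexed} square is what lets these fiberwise witnesses be chosen synchronously, so that the single set $H=\bigcup_i H_i$ has size at most $\theta\cdot\sup_i|H_i|$, which stays below $\mu$ because $\mu>\theta$ is regular and the completeness of $U_0$ caps the $|H_i|$ uniformly below $\mu$. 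This is exactly the step where an \emph{arbitrary} $\theta$-decomposition would fail and the coherence is essential. Finally, $\square^{\mathrm{ind}}(\kappa,\theta)$ holds in $V[G]$ by genericity of $G$, completing the configuration.

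The main obstacle, as the third paragraph indicates, is pinning the decomposition spectrum of $U$ to be exactly $\{\mu:\mu\leq\theta\}$: one must transfer the ground-model measure's $\kappa$-completeness into \emph{uniform} continuity of the derived embedding at every regular $\mu\in(\theta,\kappa)$ while the $\theta$-layering deliberately breaks continuity at $\theta$. The delicacy is that this synchronization of the fiberwise witnesses flirts with producing a coherent system of threads through all $\theta$ layers at once—that is, a thread through the indexed square—which is precisely forbidden by $\square^{\mathrm{ind}}(\kappa,\theta)$ and whose existence would resurrect the measurability of $\kappa$. Verifying that the construction extracts enough coherence to bound the witnesses below $\mu$, yet not so much as to assemble a full thread, is where the real work lies.
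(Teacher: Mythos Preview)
Your outline has the right skeleton---force to add a $\square^{\ind}(\kappa,\theta)$-sequence over a measurable $\kappa$, then realize the desired ultrafilter as a $W$-integral $\int_{i<\theta} U_i\, dW$ of ``fiber'' ultrafilters indexed by the layers---and this is exactly the shape of the paper's argument. But the proposal leaves a genuine gap at the decisive point: you never say how the fiber ultrafilters $U_i$ are produced. In $V[G]$ the cardinal $\kappa$ is not weakly compact, so there is no $\kappa$-complete ultrafilter on $\kappa$ there at all; the ground-model measure $U_0$ is no longer an ultrafilter (new subsets of $\kappa$ were added), and the fibers $e^{-1}\{i\}$ are themselves new sets, so ``extracting $U_i$ from $U_0$ using the coherence of the square'' has no content as stated. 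Your indecomposability paragraph assumes that ``the $\kappa$-completeness of $U_0$ is transmitted to $U_i$,'' but without a mechanism for defining $U_i$ this step is empty. (There is also a small circularity: you write $W = e_* U$ while simultaneously defining $U$ as an integral over $W$.)

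The paper's missing idea is precisely the one you are groping for in your final paragraph, and it resolves your worry about ``flirting with a thread'' by \emph{embracing} threads rather than avoiding them. One first arranges that the measurability of $\kappa$ is indestructible under $\Add(\kappa,1)$. For each $i<\theta$ there is a threading forcing $\bb{T}_i$ for the $i$-th column of the generic square, and the two-step iteration $\bb{P}\ast\dot{\bb{T}}_i$ has a dense $\kappa$-directed closed subset, hence is equivalent to $\Add(\kappa,1)$. Thus $\kappa$ is measurable again in $V[G\ast H_i]$, and one takes $U_i$ to be a normal measure \emph{there}. A single $\bb{T}_0$-generic $H_0$ induces, via a commuting system of projections $\pi_{0i}:\bb{T}_0\to\bb{T}_i$, generics $H_i$ for every $i$, so all the $U_i$ are visible in $V[G\ast H_0]$ and one can form the $W$-integral $U$ there. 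The crucial verification---replacing your vague ``synchronization''---is that for any $X\subseteq\kappa$ in $V[G]$, any two conditions in $\bb{T}_0$ deciding ``$X\in\dot U$'' oppositely would project to incompatible decisions in some $\bb{T}_j$, contradicting that their projections become compatible for large enough $j$. This shows $U\in V[G]$. Indecomposability for $\mu\in[\theta^+,\kappa)$ then follows cleanly: each $U_i$ is $\kappa$-complete in its own extension, so a given $f:\kappa\to\mu$ is constant mod $U_i$ with value $g(i)$; since $W$ is $\mu$-indecomposable (trivially, as $\mu>\theta$), $g^{-1}[H]\in W$ for some $H\in[\mu]^{<\theta}$, whence $f^{-1}[H]\in U$.
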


As a corollary to Theorem \ref{theorem: main2} and the proof of Theorem \ref{theorem: main3}, we will show
in Theorem \ref{theorem: optimal} that $\mm$ (and hence $\pfa$) is
compatible with the existence of a strongly inaccessible cardinal that is not weakly compact but
carries a uniform ultrafilter that is $\mu$-indecomposable for every cardinal $\mu \in [\aleph_2,
\kappa)$, thus demonstrating the optimality of Theorem \ref{theorem: main1}.

\subsection{Organization of this paper}
In Section~\ref{section: trees}, we give a brief overview of an important technique of Kunen \cite{Kunen}
and then use variations of this technique to answer several questions in the literature regarding trees.

In Section~\ref{section: forcingaxiomssquare}, we introduce various indexed square
principles and prove Theorem \ref{theorem: main2}. We also answer a question from \cite{hlh}
by showing that $\square(\kappa, \theta)$ does not in general imply the existence of a
\emph{full} $\square(\kappa, \theta)$-sequence.

In Section~\ref{section: ultrafiltersquare}, we investigate the effect of indecomposable ultrafilters
on a variety of combinatorial principles, including the C-sequence number, trees with ascent paths,
strong colorings, and square principles. We prove Theorem \ref{theorem: main3} and apply similar techniques to
reproduce consistency results concerning partially strongly compact cardinals.

In Section~\ref{section: FAandUltra}, we prove Theorem \ref{theorem: main1} and then use results from
Sections \ref{section: forcingaxiomssquare} and \ref{section: ultrafiltersquare} to establish its
optimality.

Finally in Section~\ref{Section: questions}, we conclude with some open questions.

\subsection{Notation and conventions}\label{conventions}
$\reg(\kappa)$ stands for set of all infinite regular cardinals below $\kappa$.
For a set $X$, we write $[X]^{\kappa}$ for the collection of all subsets of $X$ of size $\kappa$. The collections $[X]^{\le\kappa}$ and $[X]^{<\kappa}$ are defined similarly.
For a set of ordinals $A$, we write $\ssup(A) := \sup\{\alpha + 1 \mid
\alpha \in A\}$, $\acc(A) := \{\alpha\in A \mid \sup(A \cap \alpha) = \alpha > 0\}$,
$\nacc(A) := A \setminus \acc(A)$, and $\acc^+(A) := \{\alpha < \ssup(A) \mid \sup(A \cap \alpha) =
\alpha > 0\}$.

If $a$ and $b$ are sets of ordinals, then $a < b$ is the assertion that $\alpha < \beta$ for
all $\alpha \in a$ and $\beta \in b$.
If $A$ is a set of ordinals, then we write $(\alpha,\beta) \in [A]^2$ to assert that
$\alpha,\beta \in A$ and $\alpha < \beta$. If $\mathcal{A}$ is a collection of sets of ordinals,
then we write $(a,b) \in [\mathcal{A}]^2$ to assert that $a,b \in \mathcal{A}$ and $a < b$.
If $a$ and $b$ are sets of ordinals, then we write $a \sqsubseteq b$ to denote
the assertion that $b$ is an end-extension of $a$. If $\delta$ is an ordinal and $\theta$ is an infinite
cardinal, then $E^\delta_\theta := \{\alpha < \delta \mid \cf(\alpha) = \theta\}$. Variations such
as $E^\delta_{\neq \theta}$, $E^\delta_{> \theta}$, etc.~are defined in the obvious way.

For a tree $(T,<_T)$ and an ordinal $\alpha$, we denote by $T_\alpha$ the $\alpha^{\text{th}}$-level of the tree,
and we write $T\restriction\beta$ for $\bigcup_{\alpha<\beta}T_\alpha$.
Also, for a pair of ordinals $\alpha<\beta$ and a node $t\in T_\alpha$, we write $t\restriction\beta$ for the unique $s<_Tt$ belonging to $T_\beta$. Given $s \in T$, we let $s^\uparrow$ denote the cone of
$T$ above $s$, i.e., the tree with underlying set $\{t \in T \mid s \leq_T t\}$, ordered by the
restriction of $<_T$.

\section{Trees at strongly inaccessible cardinals}\label{section: trees}

\subsection{A brief survey of Kunen's method}
A central concern of this paper, and of the study of combinatorial set theory more broadly, is
the determination of any causal implications that may exist among various compactness
principles. One half of this endeavor involves the task of \emph{separating} certain compactness
principles, i.e., proving that one does not imply another. In \cite{Kunen}, Kunen introduced
a useful technique for achieving such results that has been further deployed and refined
by a number of researchers in the intervening years. Since many of our results in this paper both
are directly motivated by this prior work and rely themselves on variations of Kunen's technique,
we thought it appropriate to begin this paper with a brief overview of technique and some of its
relevant applications over the last almost half century.

We will typically be interested in compactness principles that can hold at some given
cardinal $\kappa$. In light of this, we will often, e.g., let $\Phi$ denote the general
formulation of a compactness principle and let $\Phi(\kappa)$ denote an instance of $\Phi$
at a particular cardinal $\kappa$. For example, $\Phi$ could be ``the tree property", in
which case $\Phi(\kappa)$ would be ``the tree property at $\kappa$".
In broad strokes, Kunen's technique can now be summarized as follows.
Suppose that $\Phi$ and $\Psi$ are two compactness principles, and one wants to prove
that $\Phi(\kappa)$ does not imply $\Psi(\kappa)$. In a typical application,
one begins in a model $V$ of $\zfc$ with
a cardinal $\kappa$ such that $\Phi(\kappa)$ holds and is indestructible under forcing
with $\Add(\kappa, 1)$, the forcing to add a Cohen subset to $\kappa$.
One then designs a two-step forcing iteration $\bb{P} \ast \dot{\bb{Q}}$ such that
\begin{enumerate}
\item forcing with $\bb{P}$ introduces a counterexample to $\Psi(\kappa)$;
\item $\bb{P} \ast \dot{\bb{Q}}$ is forcing equivalent to $\Add(\kappa, 1)$;
\item in $V^{\bb{P}}$, forcing with $\bb{Q}$ provably preserves counterexamples to $\Phi(\kappa)$,
i.e., if $\Phi(\kappa)$ fails in $V^{\bb{P}}$, then it continues to fail in $V^{\bb{P} \ast \dot{\bb{Q}}}$.
\end{enumerate}
Clause (1) implies that $\Psi(\kappa)$ fails in $V^{\bb{P}}$, clause (2) and our initial assumption
about $\kappa$ implies that $\Phi(\kappa)$ holds in $V^{\bb{P} \ast \dot{\bb{Q}}}$, and then clause
(3) implies that $\Phi(\kappa)$ holds in $V^{\bb{P}}$. In particular, we have proven that
$\Phi(\kappa)$ does not imply $\Psi(\kappa)$, modulo the consistency of our original assumptions.

Kunen originally developed this technique in \cite[\S 3]{Kunen} to prove that an inaccessible cardinal
$\kappa$ carrying a nontrivial, $\kappa$-complete, $\kappa$-saturated ideal need not be measurable.
To give a sketch of his proof, we need to recall the following definitions, which will
continue to be relevant throughout this section.

\begin{definition}
Let $\alpha$ be an ordinal. We say that a tree $T\s {}^{<\alpha}2$ where the tree order is the natural end-extension is
\begin{itemize}
\item \emph{normal} if every for all $\gamma<\beta<\alpha$, for every node $t\in T_\gamma$, there exists a node $s\in T_\beta$ extending $t$;
\item \emph{splitting} if every node $t\in T$ admits two immediate extensions in $T$;
\item \emph{homogeneous} if for every $s\in T$, $T_s:=\{s'\mid s{}^\smallfrown s' \in T\}$ is equal to $T$.
\end{itemize}
\end{definition}

Note that, if $T \s {}^{<\alpha}2$ is a homogeneous tree, then $\alpha$ is necessary an
additively indecomposable ordinal. We will sometimes need the following slight abuse of terminology.

\begin{definition}
Suppose that $\alpha$ is indecomposable and $T \s {}^{<\alpha + 1}2$ is a normal tree. We
say that $T$ is homogeneous if, for every $s \in T_{<\alpha}$, $T_s = T$.
\end{definition}

We can now sketch a proof of Kunen's result as follows. Begin in a model $V$ of $\zfc$
in which $\kappa$ is a measurable cardinal that is
indestructible under forcing with $\Add(\kappa, 1)$. Then let $\bb{P}$ be the forcing
consisting of all normal, splitting, homogeneous strees of height $\alpha + 1$ for some
indecomposable $\alpha < \kappa$. $\bb{P}$ is ordered by end-extension, i.e., if
$p,q \in \bb{P}$, then $q \leq_{\bb{P}} p$ iff $q \restriction \mathrm{ht}(p) = p$.
One can then argue that $\bb{P}$ is $({<}\kappa)$-distributive and, in $V^{\bb{P}}$, the
union of the $\bb{P}$-generic filter is a homogeneous $\kappa$-Souslin tree $T$.
Thus, in $V^{\bb{P}}$, $\kappa$ is an inacessible cardinal that is not weakly compact,
let alone measurable. In $V$, let $\dot{T}$ be the canonical $\bb{P}$-name for this
generic $\kappa$-Souslin tree, considered as a forcing notion (the forcing order is the
reverse of the tree order). One then proves that the two-step iteration
$\bb{P} \ast \dot{T}$ has a dense $\kappa$-directed closed subset of cardinality $\kappa$
and is therefore forcing equivalent to $\mathrm{Add}(\kappa, 1)$.
By assumption, $\kappa$ is measurable in $V^{\bb{P} \ast \dot{T}}$ and hence
carries a nontrivial, $\kappa$-complete, $\kappa$-saturated ideal in that model.
Since $\dot{T}$ is forced to have the $\kappa$-cc in $V^{\bb{P}}$, the following fact,
whose proof we leave to the reader, will complete the proof.

\begin{fact}
Suppose that $\kappa$ is a regular uncountable cardinal, $\bb{Q}$ is a $\kappa$-cc
forcing notion, and $\dot{I}$ is a $\bb{Q}$-name for a nontrivial, $\kappa$-complete,
$\kappa$-saturated ideal over $\kappa$. Then
\[
J := \{X \s \kappa \mid \Vdash_{\bb{Q}} \check{X} \in \dot{I}\}
\]
is a nontrivial, $\kappa$-complete, $\kappa$-saturated ideal in $V$.
\end{fact}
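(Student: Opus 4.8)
The plan is to verify the four properties of $J$ separately — that it is an ideal, that it is nontrivial, that it is $\kappa$-complete, and that it is $\kappa$-saturated — transferring each from $\dot{I}$ across the forcing, with the $\kappa$-cc of $\bb{Q}$ entering only in the last clause. Throughout I would lean on two elementary observations: first, that for $X \s \kappa$ lying in $V$ one has $X \notin J$ iff some condition forces $\check{X}$ to be $\dot{I}$-positive; and second, the absoluteness of Boolean operations on check-names, i.e.\ $\Vdash \check{(X\cap Y)} = \check{X}\cap\check{Y}$ and $\Vdash \check{(\bigcup_{i<\delta} X_i)} = \bigcup_{i<\delta}\check{X_i}$ for sequences in $V$.

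The ideal axioms, nontriviality, and completeness are direct pushforwards. If $X \in J$ and $Y \s X$ with $Y \in V$, then $\Vdash \check{Y}\s\check{X}\in\dot{I}$, so $Y \in J$; likewise $X,Y \in J$ forces $\check{(X\cup Y)}\in\dot{I}$, so $X\cup Y \in J$. Since $\dot{I}$ is forced nontrivial we have $\Vdash \check{\kappa}\notin\dot{I}$ and $\Vdash\{\check\alpha\}\in\dot{I}$ for each $\alpha<\kappa$, whence $\kappa\notin J$ and every singleton lies in $J$. For $\kappa$-completeness, given $\langle X_i : i<\delta\rangle \in V$ with $\delta<\kappa$ and each $X_i \in J$, I would pass to an arbitrary generic $G$: in $V[G]$ every $X_i$ lies in the $\kappa$-complete ideal $\dot{I}^G$, hence so does their union (computed identically in $V$ and in $V[G]$); as $G$ is arbitrary, $\bigcup_{i<\delta}X_i \in J$.

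The real content is $\kappa$-saturation, and this is where the $\kappa$-cc is indispensable. Suppose toward a contradiction that $\langle X_\alpha : \alpha<\kappa\rangle$ is a family of $J$-positive sets that are pairwise disjoint modulo $J$, so $X_\alpha\cap X_\beta \in J$ for $\alpha\ne\beta$. Let $\dot{S}$ be the $\bb{Q}$-name for $\{\alpha<\kappa : X_\alpha\notin\dot{I}\}$. First I would show $\Vdash \dot{S}$ is bounded in $\check{\kappa}$: in any generic extension $V[G]$, the sets $\{X_\alpha : \alpha\in\dot{S}^G\}$ are $\dot{I}^G$-positive, and since each $X_\alpha\cap X_\beta\in J$ forces $X_\alpha\cap X_\beta\in\dot{I}$, they are pairwise disjoint modulo $\dot{I}^G$; thus they form an antichain in $P(\kappa)/\dot{I}^G$, so $|\dot{S}^G|<\kappa$ by $\kappa$-saturation of $\dot{I}^G$, and $\dot{S}^G$ is bounded by the regularity of $\kappa$.

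Finally I would promote this pointwise boundedness to a uniform one. Since $\Vdash \sup\dot{S} < \check\kappa$ and $\bb{Q}$ is $\kappa$-cc, a maximal antichain deciding the ordinal $\sup\dot{S}$ has size $<\kappa$, so the set of values it decides is bounded by some $\rho<\kappa$, giving $\Vdash \dot{S}\s\check\rho$. But then $\Vdash \check{X_\alpha}\in\dot{I}$ for every $\alpha\ge\rho$, that is $X_\alpha\in J$, contradicting $J$-positivity of $X_\alpha$. This rules out antichains of size $\kappa$ in $P(\kappa)/J$ and finishes the argument. The hard part will be exactly this last extraction of a single $\rho$ from the fact that $\dot{S}$ is forced bounded — the use of $\kappa$-cc to tame the name $\sup\dot{S}$ — whereas every other clause is a routine transfer of the corresponding property of $\dot{I}$.
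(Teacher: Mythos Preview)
Your argument is correct. The paper itself does not supply a proof of this fact, explicitly leaving it to the reader, so there is no approach to compare against; your verification of the four properties is exactly the routine transfer one would expect, and your use of the $\kappa$-cc to extract a uniform bound $\rho$ on $\sup\dot{S}$ is the standard way to handle saturation.
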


A few years later, a variation on Kunen's method was employed by Sheard \cite{sheard} to prove
that an inaccessible cardinal carrying a uniform indecomposable ultrafilter
need not be measurable, answering a question of Silver. Sheard forces with a slight variation
on Kunen's forcing over the canonical inner model $L[\mu]$, where $\mu$ is a measure over
$\kappa$, to add a homogeneous $\kappa$-Souslin tree $T$. The desired model is then
$L[T, \mathscr{U}]$ where $\mathscr{U}$ is a filter over $\kappa$ in a certain further
forcing extension that becomes the desired indecomposable ultrafilter in $L[T,\mathscr{U}]$.

Because of its relevance to the results of this paper, we end this subsection by recalling
one more recent application of Kunen's method. In \cite{hlh}, building on work of Cummings,
Foreman, and Magidor \cite{cfm}, Hayut and Lambie-Hanson investigated the interplay between
$\square(\kappa, \theta)$-sequences and stationary reflection principles. For instance,
they showed that, if one starts wtih regular cardinals $\theta < \kappa$ such that $\kappa$
is weakly compact and indestructible under forcing with $\Add(\kappa, 1)$, then one can force with
a poset $\bb{P}$ to add a $\square^{\mathrm{ind}}(\kappa, \theta)$-sequence\footnote{See
Section~\ref{section: forcingaxiomssquare} for the definition of $\square^{\mathrm{ind}}(\kappa, \theta)$.}
in such a way that any of the forcings to add a thread through the generic
$\square^{\mathrm{ind}}(\kappa, \theta)$-sequence would resurrect the weak compactness of $\kappa$.
They then leveraged this fact to show that, in $V^{\bb{P}}$, every collection of fewer than
$\theta$-many stationary subsets of $\kappa$ reflects simultaneously. This is sharp, since
$\square^{\mathrm{ind}}(\kappa, \theta)$ implies the existence of a collection of
$\theta$-many stationary subsets of $\kappa$ that does not reflect simultaneously.

We shall see in Subsection \ref{subsection: ind_square_ind_uf} that forcing to add a
$\square^{\mathrm{ind}}(\kappa, \omega)$-sequence over an indestructibly measurable cardinal
yields a model in which $\kappa$ carries a uniform indecomposable ultrafilter, thus providing an
alternate proof of Sheard's result mentioned above.

\subsection{Souslin tree and diamond at an inaccessible cardinal}
Kunen proved that if $\diamondsuit(S)$ fails in $V$, where $S$ is a stationary subset of a successor cardinal $\kappa$, then it continues to fail in any further $\kappa$-cc forcing extension.
The next result shows that this is not true for $\kappa$ inaccessible.

Note that the techniques in \cite{ShelahNotCollapsing} can be used to build a model where $\kappa$ is an inaccessible cardinal, $\diamondsuit(S)$ fails for some stationary $S\subset \kappa$, and there exists a $\kappa$-Souslin tree. To see this, we can start with $L$ being the ground model with an inaccessible non-weakly compact cardinal $\kappa$. Pick some non-reflecting stationary $S\subset \kappa$ such that whose complement $S^c$ is fat. Let $E\subset S^c$ be a stationary such that $\diamondsuit(E)$ and $\square(E)$ (in the sense of \cite[Theorem 6.1]{jensen}) both hold. By \cite{jensen}, $\diamondsuit(E)$ and $\square(E)$ implies the existence of a $\kappa$-Souslin tree.
Then the forcing in \cite{ShelahNotCollapsing} giving rise to $\neg \diamondsuit(S)$ is $S^c$-closed. In particular, it preserves the stationarity of $E$, $\diamondsuit(E)$ and $\square(E)$. So there exists a $\kappa$-Souslin tree in the forcing extension.

\begin{prop}\label{lemma: Souslin} Suppose that $\kappa$ is a strongly inaccessible cardinal and there exists a $\kappa$-Souslin tree. Then in some $\kappa$-cc forcing extension, $\diamondsuit(S)$ holds for all stationary $S\s\kappa$.
\end{prop}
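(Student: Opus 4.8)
The plan is to exhibit a single $\kappa$-cc poset $\mathbb{P}$ whose generic object is a $\diamondsuit$-sequence that guesses correctly on every stationary $S \subseteq \kappa$. Since $\kappa$ is inaccessible we have $2^{<\kappa}=\kappa$, and a $\kappa$-cc forcing preserves stationary subsets of $\kappa$ and keeps $\kappa$ regular, so the real content is to arrange the guessing while maintaining the chain condition. This is precisely the configuration that Kunen's theorem (quoted just above) forbids at successor cardinals, so the inaccessibility of $\kappa$ — and, concretely, the given $\kappa$-Souslin tree $T$ — must enter the construction in an essential way, its sole purpose being to keep $\mathbb{P}$ $\kappa$-cc.

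The conditions of $\mathbb{P}$ will be approximations $\langle A_\alpha : \alpha \le \gamma \rangle$, with $A_\alpha \subseteq \alpha$, to a $\diamondsuit$-sequence, ordered by end-extension, but each carrying a side condition tying it to a node of $T$ of the same height. The naive poset of all such approximations is merely $<\kappa$-closed and carries antichains of size $\kappa$ — this is exactly why the standard forcing to add $\diamondsuit$ fails to be $\kappa$-cc — and the function of $T$ is to seal these antichains. Concretely, I would define $\mathbb{P}$ by a recursion along the levels of $T$ so that the branching of $\mathbb{P}$ is subordinated to that of $T$, arranging that two conditions are incompatible only when their attached $T$-nodes are $<_T$-incomparable. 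As $T$ has no antichain of size $\kappa$, any antichain of $\mathbb{P}$ then injects into an antichain of $T$ and so has size $<\kappa$, yielding the $\kappa$-cc. I would simultaneously want $\mathbb{P}$ to be $<\kappa$-distributive, so that it adds no new bounded subsets of $\kappa$ and preserves the inaccessibility of $\kappa$; this should follow from the end-extension ordering together with closure along the branches of $T$.

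It then remains to check that the generic sequence $\langle A_\alpha : \alpha < \kappa \rangle$ guesses every (possibly new) subset of $\kappa$ on every stationary set: given names $\dot X$ and $\dot S$ for a subset and a stationary set and a name $\dot C$ for a club, one first uses the $\kappa$-cc to find a ground-model club $C$ below $\dot C$, and then runs a density argument to locate, below any condition, an extension and an ordinal $\alpha \in \dot S \cap C$ forcing $A_\alpha = \dot X \cap \alpha$. The hard part — the place where the poset must be engineered with care — is the tension between the two demands just described: for the chain condition one wants the choice of the next value $A_\alpha$ to be constrained by the comparatively thin tree $T$, whereas for the guessing one needs, above every node and at stationarily many levels, enough freedom in that choice to realize an arbitrary target $\dot X \cap \alpha$. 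Reconciling these amounts to building into $\mathbb{P}$, at densely many nodes and cofinally often, precisely the splitting required to capture the possible values of a name while never manufacturing an antichain not already traced by $T$; and the most delicate point will be handling names for genuinely new subsets of $\kappa$, which one addresses by reflecting $\dot X$, via the $\kappa$-cc, to antichains of size $<\kappa$ and then matching $A_\alpha$ against the finitely-many-per-node possible values along the tree.
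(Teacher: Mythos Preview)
Your plan has a genuine gap, and it lies exactly at the tension you yourself flag at the end without resolving. You are aiming for a \emph{single} generic sequence $\langle A_\alpha : \alpha<\kappa\rangle$ that works as a $\diamondsuit(S)$-sequence for every stationary $S$; unwinding, this means that for every $X\subseteq\kappa$ the set $\{\alpha : A_\alpha = X\cap\alpha\}$ must contain a club, i.e.\ you are trying to force $\diamondsuit^*(\kappa)$ with a $\kappa$-cc poset. In your density argument you must, given $p$, $\dot X$, and a name $\dot S$ for a stationary set, find $q\le p$ and $\alpha$ with $q\Vdash\alpha\in\dot S$ and $q$ already committing to $A_\alpha=\dot X\cap\alpha$. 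But to keep the chain condition you have subordinated the value $A_\alpha$ to the tree node at (or near) level $\alpha$, whereas forcing the \emph{generic} statement ``$\alpha\in\dot S$'' may require climbing $T$ well past level $\alpha$---by which time $A_\alpha$ is already frozen and need not match $\dot X\cap\alpha$. Nothing in your sketch explains how to arrange both simultaneously.

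The paper sidesteps this by \emph{not} producing a single sequence. The forcing is simply the Souslin tree itself, normalized so that every node at level $\delta$ has $2^\delta$ immediate successors; the generic object is just a branch $g$. Then, for each stationary $S$ in the extension, a \emph{separate} $\diamondsuit(S)$-sequence is read off $g$: one sets $A_\delta := x^\delta_{g(\epsilon)}$ where $\epsilon\ge\delta$ is the least height at which $g\restriction\epsilon$ already forces $\delta\in\dot S$. This decouples the two demands completely: in the density argument one first climbs the tree to some $t^*$ forcing $\delta\in\dot S$, and only \emph{then} uses the next step $t^*{}^\frown\langle i\rangle$ (which is still free) to encode $\dot X\cap\delta$. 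Allowing $A_\delta$ to depend on $\dot S$, and to be determined by a coordinate of $g$ above $\delta$ rather than at $\delta$, is precisely the missing idea.
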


\begin{proof} By a standard fact (see \cite[Lemma~2.4]{paper20}), we may fix a normal $\kappa$-Souslin tree $T\s{}^{<\kappa}\kappa$ such that, for every $\delta<\kappa$ and $t\in T_\delta$, $\{ t{}^\smallfrown\langle i\rangle\mid i<2^\delta\}\s T$.
Clearly, $\mathbb P:=(T,{\supseteq})$ is a $\kappa$-cc notion of forcing. Let $G$ be $\mathbb P$-generic over $V$,
so that $g:=\bigcup G$ is a branch through $(T,{\s})$.

In $V$, for each $\delta<\kappa$, let $\langle x^\delta_i\mid i<2^\delta\rangle$ enumerate $\mathcal P(\delta)$.
In $V[G]$, let $S\s\kappa$ be a stationary set,
and we shall define a $\diamondsuit(S)$-sequence $\langle A_\delta\mid\delta\in S\rangle$, as follows.
Given $\delta\in S$, let $\epsilon\in [\delta,\kappa)$ be the least such that $g\restriction \epsilon\Vdash \delta\in \dot{S}$, and then define
$$A_\delta := \begin{cases}x^\delta_{g(\epsilon)},&\text{if }g(\epsilon)<2^\delta;\\
\emptyset,&\text{otherwise}.\end{cases}$$

We verify that this works by running a standard density argument back in $V$.
Given $t\in T$, a $\mathbb P$-name $\dot{X}$  for a subset of $\kappa$ and a club $C\s\kappa$ (in $V$), we need to find an extension $t'$ of $t$ and some $\delta\in C$ such that $t'\Vdash \delta\in \dot{S}$ and $t'\Vdash \dot{X}\cap \delta = \dot A_\delta$.

Let $\langle M_\delta\mid \delta<\kappa\rangle$ be an $\in$-increasing continuous sequence of elementary submodels
of $H(\kappa^+)$ containing $\{\mathbb P,\dot S,\dot X\}$.
Consider the club $D:=\{\delta\in C\mid M_\delta\cap\kappa= \delta\}$.
Notice that an immediate consequence of the $\kappa$-cc-ness of $\mathbb P$ gives that for every $\delta\in D$, any node $s\in T_{\delta}$ is $\mathbb P$-generic over $M_\delta$.
In addition, $\mathbb P$ is ${<}\kappa$-distributive, thus, for every $\delta\in D$, any node $s\in T_{\delta}$ decides $\dot X$ up to $\delta$.

Now, since $\dot S$ is a $\mathbb P$-name for a stationary subset of $\kappa$, we may pick some $\delta\in D$ and an extension $t^*$ of $t$ such that $\dom(t^*) \geq \delta$ and $t^*\Vdash \delta\in \dot{S}$.
Set $\epsilon:=\dom(t^*)$. By possibly going to an initial segment of $t^*$, we may assume that
$\epsilon$ is the least ordinal $\varepsilon \geq \delta$ such that $t^* \restriction \varepsilon
\Vdash \delta \in \dot{S}$.

Now pick $i < 2^\delta$ such that $t^* \restriction \delta \Vdash \dot{X} \cap \delta = x^\delta_i$.
Then $t':= t^*{}^\smallfrown \langle i\rangle$ is an extension of $t$ in $T$ such that $\epsilon$ is the least element of $[\delta,\kappa)$ to satisfy $t'\restriction\epsilon\Vdash \delta\in \dot{S}$,
and it is the case that $t'\Vdash \dot{X}\cap \delta = x^\delta_{g(\epsilon)}=\dot A_\delta$.
\end{proof}

\subsection{A non-coherent variation}
Recall that, for a regular uncountable cardinal $\kappa$, a \emph{$C$-sequence over $\kappa$} is
a sequence $\langle C_\beta \mid \beta < \kappa \rangle$ such that, for all $\beta < \kappa$,
$C_\beta$ is a closed subset of $\beta$ with $\sup(C_\beta) = \sup(\beta)$.

In \cite{knaster_ii}, a measure $\chi(\kappa)$ for a cardinal $\kappa$ was introduced to describe how far it is from being weakly compact.
If $\kappa$ is weakly compact, then $\chi(\kappa):= 0$. Otherwise, $\chi(\kappa)$ denotes the least cardinal $\chi\leq \kappa$ such that for every $C$-sequence $\langle C_\beta\mid \beta<\kappa\rangle$, there exist $\Delta\in [\kappa]^\kappa$ and $b: \kappa\to [\kappa]^{\chi}$ such that $\Delta\cap \alpha \s \bigcup_{\beta\in b(\alpha)} C_{\beta}$ for every $\alpha<\kappa$. The cardinal $\chi(\kappa)$ is
referred to as the \emph{$C$-sequence number of $\kappa$}.
Question~6.4 of the same paper asks whether a strongly inaccessible cardinal $\kappa$ satisfying $\chi(\kappa) = 1$
must admit a coherent $\kappa$-Aronszajn tree.
As a coherent $\kappa$-Aronszajn tree cannot contain a copy of the tree ${^{\leq \omega}}2$,
the following theorem answers the above question in the negative. We first recall the important notion
of \emph{strategic closure}.

\begin{definition}
Let $\bb{P}$ be a partial order (with maximum element $1_{\bb{P}}$) and let $\beta$ be
an ordinal.
\begin{enumerate}
\item $\Game_\beta(\bb{P})$ is the two-player game in which Players I and II alternate
playing conditions from $\bb{P}$ to attempt to construct a $\leq_{\bb{P}}$-decreasing
sequence $\langle p_\alpha \mid \alpha < \beta \rangle$. Player I plays at odd stages, and
Player II plays at even stages (including limit stages). Player II is required to play
$p_0 = 1_{\bb{P}}$. If, during the course of play, a limit ordinal $\alpha < \beta$ is
reached such that $\langle p_\eta \mid \eta < \alpha \rangle$ has no lower bound in
$\bb{P}$, then Player I wins. Otherwise, Player II wins.
\item $\bb{P}$ is said to be \emph{$\beta$-strategically closed} if Player II has a winning
strategy in $\Game_\beta(\bb{P})$.
\end{enumerate}
\end{definition}

We will often speak about strategic closure of a poset $\bb{P}$ in which we have not explicitly
added a maximum element $1_{\bb{P}}$. In this case, we implicitly add $\emptyset$ as a maximum
condition to $\bb{P}$.
Note that, if $\kappa$ is a regular cardinal and $\bb{P}$ is $\kappa$-strategically closed, then
$\bb{P}$ is $({<}\kappa)$-distributive.

\begin{thm}\label{theorem: CantorSubtree} Suppose that $\kappa$ is weakly compact. Then there is a ${<}\kappa$-distributive forcing extension in which
$\chi(\kappa)=1$
and every $\kappa$-Aronszajn tree contains a copy of ${}^{\leq\theta}2$ for every $\theta<\kappa$.
\end{thm}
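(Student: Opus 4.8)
The plan is to design a forcing iteration in the spirit of Kunen's method as described above, starting from a weakly compact $\kappa$ (indestructibly so under $\Add(\kappa,1)$, which can be arranged by a standard preparatory forcing). The goal is a $({<}\kappa)$-distributive extension witnessing two things simultaneously: that $\chi(\kappa)=1$, and that every $\kappa$-Aronszajn tree contains a copy of ${}^{\leq\theta}2$ for each $\theta<\kappa$. The value $\chi(\kappa)=1$ is the minimum possible for a non-weakly-compact $\kappa$, so the first task is to destroy weak compactness while keeping the $C$-sequence number as small as possible; one natural way to force $\chi(\kappa)=1$ is to add, via a $({<}\kappa)$-distributive poset, a $C$-sequence witnessing the defining property with $\chi=1$, equivalently a sequence exhibiting a weak coherence/non-reflection pattern whose $C$-sequence number computation collapses to $1$. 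A cleaner route is to force a $\square(\kappa,1)$-type object (a single non-reflecting ladder pattern) that yields $\chi(\kappa)=1$ while leaving enough compactness in reserve.

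First I would set up the iteration $\bb{P}\ast\dot{\bb{Q}}$ so that $\bb{P}$ introduces the object forcing $\chi(\kappa)=1$ and killing weak compactness, while $\bb{P}\ast\dot{\bb{Q}}$ is forcing-equivalent to $\Add(\kappa,1)$, so that weak compactness is resurrected in the full extension. The crucial third clause of Kunen's scheme here is that $\bb{Q}$ (forced over $V^{\bb{P}}$) provably preserves counterexamples to weak compactness, i.e. preserves the non-weakly-compact configuration witnessing $\chi(\kappa)=1$; this is what lets us conclude $\chi(\kappa)=1$ holds already in $V^{\bb{P}}$. The subtree-embedding conclusion is the genuinely new ingredient: I would arrange that the tail forcing $\dot{\bb{Q}}$ is itself a tree-like, homogeneous, highly splitting poset (of height $\kappa$, with each node splitting into $2^{|\cdot|}$-many immediate successors, as in Proposition~\ref{lemma: Souslin}) so that generically every node of the generic tree sits below a full copy of ${}^{\leq\theta}2$ for every $\theta<\kappa$. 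The resurrected weak compactness in $V^{\bb{P}\ast\dot{\bb{Q}}}$ then forces, by the tree property and a reflection argument, that every $\kappa$-Aronszajn tree $U$ in $V^{\bb{P}}$ must already contain the required Cantor subtrees: one uses that the generic branch/embedding data, pulled back through the forcing equivalence with $\Add(\kappa,1)$, must be captured inside $U$.

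The main technical steps, in order, are: (i) verify $({<}\kappa)$-distributivity (equivalently $\kappa$-strategic closure) of $\bb{P}$, so no bounded subsets of $\kappa$ are added and cardinals are preserved; (ii) compute $\chi(\kappa)=1$ in $V^{\bb{P}}$ by exhibiting the witnessing $C$-sequence together with the function $b\colon\kappa\to[\kappa]^{1}$, using the preservation clause to rule out $\chi(\kappa)=0$, i.e. to confirm $\kappa$ is genuinely not weakly compact in $V^{\bb{P}}$; and (iii) establish the subtree property by a reflection argument. The hardest part will be step (iii): showing that in $V^{\bb{P}}$ an \emph{arbitrary} $\kappa$-Aronszajn tree $U$ (not just the generic one) must contain a copy of ${}^{\leq\theta}2$. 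The obstacle is that $U$ need not be generic, so I cannot read the embedding directly off $G$; instead I expect to argue by contradiction. Suppose some $\kappa$-Aronszajn $U$ in $V^{\bb{P}}$ omits a copy of ${}^{\leq\theta}2$ for some $\theta$. Since $\bb{Q}$ preserves counterexamples to weak compactness and $\bb{P}\ast\dot{\bb{Q}}$ resurrects it, I would reflect the situation: in $V^{\bb{P}\ast\dot{\bb{Q}}}$, weak compactness of $\kappa$ gives the tree property and strong reflection, forcing the embedding ${}^{\leq\theta}2\hookrightarrow U$ to exist there; then I must descend this embedding back to $V^{\bb{P}}$. Because $\dot{\bb{Q}}$ is $({<}\kappa)$-closed (or strategically closed), the bounded approximations to the embedding are already in $V^{\bb{P}}$, and a distributivity/genericity argument should let me assemble the full embedding inside $V^{\bb{P}}$ — the delicate point being to show the assembled map lands in $U$ rather than merely in the generic tree. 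Controlling this descent, and ensuring the splitting in $\dot{\bb{Q}}$ is rich enough that the reflected copy is honestly ${}^{\leq\theta}2$ and not a thinner tree, is where the real work lies.
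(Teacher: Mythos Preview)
Your proposal has the right Kunen-style scaffolding but misses the key mechanism on both fronts, and the argument for the subtree property does not work as written. Weak compactness in $V^{\bb{P}\ast\dot{\bb{Q}}}$ gives an arbitrary $\kappa$-Aronszajn $U$ a cofinal \emph{branch}, not an embedding of ${}^{\leq\theta}2$; there is no embedding to ``descend.'' Moreover, you cannot simultaneously arrange $\dot{\bb{Q}}$ to be a $\kappa$-Souslin tree and $({<}\kappa)$-closed: a Souslin tree is $\kappa$-cc, and its global closure is trivial. The paper's idea is quite different. One takes $\bb{P}$ to be Kunen's forcing to add a homogeneous $\kappa$-Souslin tree $T$, but with the extra requirement that conditions are \emph{closed at singular levels} (every cofinal branch through a singular level extends to a node). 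Then $\dot{\bb{Q}} = \dot{T}$ itself. The closure condition ensures that in $V^{\bb{P}}$, for every $\theta<\kappa$ and every node $x\in T$ with $\h(x)>\theta$, the cone $x^\uparrow$ is $\theta^+$-closed as a forcing. Now the argument for an arbitrary $\kappa$-Aronszajn $U$ in $V^{\bb{P}}$ is: forcing with $x^\uparrow$ resurrects weak compactness, so $U$ acquires a new cofinal branch after a $\theta^+$-closed forcing; and it is a classical fact (build a binary tree of conditions splitting the name for the branch) that if a $\theta^+$-closed forcing adds a new branch to a tree, that tree already contains a copy of ${}^{\leq\theta}2$. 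The closure you need is of \emph{high cones in the generic Souslin tree}, not of $\bb{Q}$ as a whole, and this is exactly what the ``closed at singular levels'' clause buys.

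Second, adding a $\square(\kappa)$-type object for $\chi(\kappa)=1$ is the wrong direction: a $\square(\kappa)$-sequence forces $\chi(\kappa)=\sup(\reg(\kappa))$, not $1$. In the paper's model, $\chi(\kappa)=1$ comes for free from the setup: since $T$ is $\kappa$-cc and $({<}\kappa)$-distributive, and forcing with $T$ makes $\kappa$ weakly compact, any ground-model $C$-sequence $\vec C$ admits (in the extension) a club $\Delta$ with $\Delta\cap\alpha\subseteq C_{b(\alpha)}$ for a single $b(\alpha)$; by $\kappa$-cc one may shrink $\Delta$ to a ground-model club, and the covering statement $\exists\beta\,[\Delta\cap\alpha\subseteq C_\beta]$ is then absolute, giving $\chi(\vec C)\le 1$ in $V^{\bb{P}}$. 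No separate square-forcing step is needed or wanted.
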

\begin{proof}
We will construct a model with a Souslin tree $T$ such that
\begin{itemize}
\item $\Vdash_T \kappa$ is weakly compact,
\item for every $\theta<\kappa$ and every $x\in$ with $\h(x) > \theta$, $x^\uparrow$ is $\theta^+$-closed.
\end{itemize}

Consider the forcing $\mathbb P_\kappa$ consisting of all conditions $t$ such that:
\begin{itemize}
\item $t$ is a normal, splitting, homogeneous tree of height $\alpha+1$ for some $\alpha<\kappa$,
\item $t$ is closed at singular levels: for every singular cardinal $\gamma\le\alpha$, and every $<_t$-increasing sequence $\langle s_i\mid i<\gamma\rangle$ of nodes below level $\gamma$,
there is a node $s$ in $t$ such that $s_i<_t s$ for every $i<\gamma$.
\end{itemize}

The order is end-extension.

\begin{claim}
$\mathbb P_\kappa$ is $\kappa$-strategically closed.
\end{claim}
\begin{proof}
The strategy for Player~II is simply to continue all cofinal branches.
\end{proof}

\begin{claim}
$\mathbb P_\kappa$ adds a $\kappa$-Souslin tree.
\end{claim}
\begin{proof}
Let $\dot{T}_\kappa$ be the canonical name for the union of the $\bb{P}_\kappa$-generic filter,
let $p \in \bb{P}$, and let $\dot{X}$ be a $\bb{P}$-name such that
$p\Vdash ``\dot{X}$ is a maximal antichain in $\dot{T}_\kappa"$. Let $\chi$ be a sufficiently large regular cardinal, and find an elementary submodel $M\prec H(\chi)$ containing all relevant objects such that $M\cap \kappa=\lambda\in\reg(\kappa)$ and ${}^{<\lambda}M\subset M$. Such $\lambda$ exists since $\kappa$ is Mahlo. Using the strategy for Player~II, we can define a decreasing sequence $\langle t_i\mid i<\lambda\rangle$
from $M \cap \bb{P}_\kappa$ that is $(M, \bb{P}_\kappa)$-generic, i.e., it meets every dense subset $D \subseteq \bb{P}_\kappa$ such that $D \in M$. Let $t'=\bigcup_{i<\lambda} t_i$. By the genericity, there exists a maximal antichain $X'\subset t'$ such that any condition extending $t'$ forces that $\dot{X}\cap \lambda =X'$. Then we just continue certain cofinal branches through $t'$ at level $\lambda$ to seal $X'$. Namely, each branch has to pass through a node in $X'$. We need to maintain the normality as well as the homogeneity of the tree also but this is easy since $X'$ is a maximal antichain of $t'$. The reader is referred to \cite[Pages 70--71]{Kunen} for more details. The key point here is that since $\lambda$ is a regular cardinal, we are not obliged to complete all branches. As a result, the ``antichain sealing" is possible.
\end{proof}

Let $T_\kappa$ be the $\kappa$-Souslin tree added by $\mathbb P_\kappa$. Then, in $V$, $\mathbb P_\kappa*T_\kappa$ has the following dense subset: $\{(t, b)\in \bb{P}_\kappa \times {^{<\kappa}}2 \mid  \h(t)=\dom(b)+1 \text{ and } t\Vdash b\in \dot{T}_\kappa\}$. This dense set is ${<}\kappa$-closed of size $\kappa$. Hence it is forcing equivalent to $\add(\kappa,1)$.

The final model is obtained by performing an Easton-support iteration with $\mathbb P=\langle \mathbb R_\alpha\mid \alpha<\kappa\rangle$ followed by $\mathbb P_\kappa$, where $\mathbb R_\alpha:=\add(\alpha,1)$ for $\alpha$ Mahlo and trivial otherwise. In the final model, we have a $\kappa$-Souslin tree such that forcing with it restores the weak compactness of $\kappa$. Furthermore, this Souslin tree is closed at singular levels. In particular, given $\theta<\kappa$, for every $x\in T$ with $\h(x) > \theta$, $x^\uparrow$ is $\theta^+$-closed. Since every $\kappa$-Aronszajn tree in $V[\mathbb P*\mathbb P_\kappa]$ obtains a cofinal branch in some $\theta^+$-closed forcing extension,
it follows that it must contain a copy of ${}^{\leq\theta}2$.
\end{proof}

\subsection{A star variation}
For a $C$-sequence $\langle C_\beta\mid \beta<\kappa\rangle$,
$\chi(\vec C)$ stands for least cardinal $\chi\leq \kappa$ such that
there exist $\Delta\in [\kappa]^\kappa$ and $b: \kappa\to [\kappa]^{\chi}$ such that $\Delta\cap \alpha \subset \bigcup_{\beta\in b(\alpha)} C_{\beta}$ for every $\alpha<\kappa$.
By \cite[Lemma~4.12]{knaster_ii}, if $\vec C$ witnesses $\square(\kappa)$, then $\chi(\vec C)=\sup(\reg(\kappa))$.
Here, we point out that this cannot be weakened to the following principle $\square(\kappa,{\sq^*})$.
\begin{definition}\label{def27}
Suppose that $\kappa$ is a regular uncounctable cardinal.
\begin{enumerate}
\item For $C,D \in \mathcal{P}(\kappa)$, we say that $C \sqsubseteq^* D$ if there is
$\gamma < \sup(C)$ such that $D \setminus \gamma$ end-extends $C \setminus \gamma$.
\item $\square(\kappa, {\sqsubseteq^*})$ is the assertion that there is a sequence
$\vec{C} = \langle C_\alpha \mid \alpha \in \acc(\kappa)\rangle$ such that
\begin{enumerate}
\item for all $\alpha \in \acc(\kappa)$, $C_\alpha$ is a club in $\alpha$;
\item $\vec{C}$ is $\sqsubseteq^*$-coherent, i.e., for all $\beta \in \acc(\kappa)$ and all
$\alpha \in \acc(C_\beta)$, we have
$C_\alpha \sqsubseteq^* C_\beta$;
\item there is no club $D$ in $\kappa$ such that, for all $\alpha \in \acc(D)$, we have
$C_\alpha\sq^* D$.
\end{enumerate}
\end{enumerate}
\end{definition}

\begin{prop} Suppose that $\kappa$ is a regular uncountable cardinal and $\square(\kappa)$ holds.
Then there is a $\square(\kappa,\sq^*)$-sequence $\vec C$ with $\chi(\vec C)=1$.
\end{prop}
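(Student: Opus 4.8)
The plan is to start from a $\square(\kappa)$-sequence $\vec{D} = \langle D_\alpha \mid \alpha \in \acc(\kappa)\rangle$ and manufacture from it a $\sqsubseteq^*$-coherent sequence $\vec{C}$ whose $C$-sequence number is $1$. The key tension is between two goals: $\sqsubseteq^*$-coherence is a \emph{weaker} requirement than the genuine end-extension coherence witnessed by $\vec{D}$, so any $\square(\kappa)$-sequence is automatically $\sqsubseteq^*$-coherent and nonthreadable in the stronger sense. But we also need $\chi(\vec{C}) = 1$, which asserts that there is a single $\Delta \in [\kappa]^\kappa$ and a function $b : \kappa \to [\kappa]^1$ (i.e.\ $b$ picks out a \emph{single} index) such that $\Delta \cap \alpha \subseteq C_{b(\alpha)}$ for every $\alpha$. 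This means each proper initial segment $\Delta \cap \alpha$ must be swallowed by one club from the sequence; the clubs in a bare $\square(\kappa)$-sequence need not do this, so the real work is to \emph{thicken} the clubs so that a suitably chosen $\Delta$ threads through them one-at-a-time.

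First I would fix $\vec{D}$ witnessing $\square(\kappa)$ and build a continuous increasing sequence of ordinals $\langle \delta_i \mid i < \kappa \rangle$ that will serve as the ``master'' club, setting $\Delta := \{\delta_i \mid i < \kappa\}$. The idea is to define $C_\alpha$ so that, for each $\alpha$, it contains a tail of $\Delta \cap \alpha$ while remaining close enough to $D_\alpha$ to preserve coherence modulo the $\sqsubseteq^*$ relation. Concretely I would set $C_\alpha := D_\alpha \cup (\Delta \cap \alpha)$ (or the closure thereof), or more carefully $C_\alpha := \cl(D_\alpha \cup (\Delta \cap [\gamma_\alpha, \alpha)))$ for an appropriately chosen threshold $\gamma_\alpha$. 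Since adding a set $\Delta \cap \alpha$ that is itself $\sqsubseteq^*$-coherent (an initial-segment-coherent family) to each $D_\alpha$ perturbs each club only below $\sup$, and $\sqsubseteq^*$ forgives any bounded modification, the resulting sequence should remain $\sqsubseteq^*$-coherent: for $\alpha \in \acc(C_\beta)$ we would verify $C_\alpha \sqsubseteq^* C_\beta$ by checking that past some $\gamma < \alpha$ the two agree, using that $D_\alpha \sqsubseteq D_\beta$ on a tail and that $\Delta \cap \alpha$ is literally an initial segment of $\Delta \cap \beta$.

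The main obstacle, and the step demanding the most care, is the nonthreadability clause (c): I must ensure that the thickening does not accidentally create a club $E$ with $C_\alpha \sqsubseteq^* E$ for all $\alpha \in \acc(E)$. Here I would leverage the genuine nonthreadability of $\vec{D}$: because $\sqsubseteq^*$ only forgives \emph{bounded} modifications, any putative $\sqsubseteq^*$-thread $E$ through $\vec{C}$ would, after intersecting with the accumulation points of $\vec{D}$ and passing to tails, induce an honest $\sqsubseteq$-thread through $\vec{D}$ on a club, contradicting $\square(\kappa)$. I would formalize this by arguing that $E \cap D_\alpha$ is eventually an end-extension relation, so $E$ would thread $\vec{D}$ itself. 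The verification of $\chi(\vec{C}) = 1$ is then the payoff of the construction: for each $\alpha$, take $b(\alpha)$ to be $\alpha$ itself (or the least $\beta \ge \alpha$ with $\alpha \in \acc(C_\beta)$), so that $\Delta \cap \alpha \subseteq C_{b(\alpha)}$ holds by design, while $\Delta \in [\kappa]^\kappa$ guarantees $\chi(\vec{C}) \le 1$; and $\chi(\vec{C}) \ge 1$ is automatic since $\kappa$ is not weakly compact when $\square(\kappa)$ holds. I expect the delicate bookkeeping of the thresholds $\gamma_\alpha$ to reconcile coherence with the single-index swallowing to be where the proof's weight lies.
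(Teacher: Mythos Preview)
Your approach has a genuine gap in the coherence verification. You write that for $\alpha \in \acc(C_\beta)$ you will use ``$D_\alpha \sqsubseteq D_\beta$ on a tail''. But this is only available when $\alpha \in \acc(D_\beta)$, and once you enlarge $D_\beta$ to $C_\beta = D_\beta \cup (\Delta \cap \beta)$ you have created \emph{new} accumulation points, namely every $\alpha \in \acc(\Delta) \setminus \acc(D_\beta)$. For such $\alpha$, $D_\beta \cap \alpha$ is bounded below $\alpha$, so on a tail $C_\beta \cap [\gamma,\alpha) = \Delta \cap [\gamma,\alpha)$; meanwhile $C_\alpha \cap [\gamma,\alpha) = (D_\alpha \cup \Delta) \cap [\gamma,\alpha)$, and $D_\alpha$ is a club in $\alpha$ that has no reason to be eventually contained in $\Delta$. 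Hence $C_\alpha \sqsubseteq^* C_\beta$ fails in general. Introducing thresholds $\gamma_\alpha$ does not help: you still add an unbounded piece of $\Delta$ to $C_\beta$, so you still acquire accumulation points of $\Delta$ at which $D_\alpha$ and $D_\beta$ are unrelated.

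The paper avoids this obstruction by not working with the $\square(\kappa)$-sequence directly. Instead it invokes a theorem of K\"onig: $\square(\kappa)$ yields a sequence $\langle f_\beta : \beta \to 2 \mid \beta < \kappa\rangle$ that is coherent modulo finite and admits no global $=^*$-limit. One then sets $C_\beta := \acc(\beta) \cup \{\xi+1 \mid f_\beta(\xi)=1\}$. The point is that \emph{all} limit ordinals are placed in every $C_\beta$, so $\Delta = \acc(\kappa)$ witnesses $\chi(\vec C)=1$ trivially, while the nontrivial, non-threadable content of the sequence lives entirely on the successor ordinals, where $=^*$-coherence of the $f_\beta$'s translates directly into $\sqsubseteq^*$-coherence of the $C_\beta$'s. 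Nonthreadability is then recovered by reading off a putative global $f$ from any $\sqsubseteq^*$-thread. The separation of ``limit part'' (uniform, gives $\chi=1$) from ``successor part'' (carries the incompactness) is precisely what your union $D_\alpha \cup \Delta$ fails to achieve.
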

\begin{proof} Given two functions $f$ and $g$, let $f =^* g$ denote the assertion that
the set $\{a \in \dom(f) \cap \dom(g) \mid f(a) \neq g(a)\}$ is finite. By \cite[Theorem~3.9]{MR2013395}, $\square(\kappa)$ yields a sequence $\langle f_\beta:\beta\rightarrow2\mid \beta<\kappa\rangle$ such that:
\begin{itemize}
\item  for all $\alpha<\beta<\kappa$, $f_\alpha =^* f_\beta$;
\item there exists no $f:\kappa\rightarrow 2$ such that, for all $\alpha < \kappa$, $f_\alpha
=^* f$.
\end{itemize}
Now, for every $\beta\in\acc(\kappa)$, let $C_\beta:=\acc(\beta)\cup\{\xi+1\mid f_\beta(\xi)=1\}$.
Then $\vec C:=\langle C_\beta\mid\beta<\kappa\rangle$ is a $\sq^*$-coherent $C$-sequence. Moreover,
$\chi(\vec C)=1$, as witnessed by $\Delta = \acc(\kappa)$. However, there is no club $D \subseteq
\kappa$ such that $C_\beta \sqsubseteq^* D$ for all $\beta \in \acc(D)$. To see this, suppose
for the sake of contradiction that $D$ is such a club.

Using the pressing-down lemma, fix a stationary set $S \subseteq \acc(D)$ and an ordinal
$\gamma < \kappa$ such that, for all $\alpha \in S$, we have $C_\alpha \setminus \gamma =
D \cap [\gamma, \alpha)$. Now define a function $f:\kappa \rightarrow 2$ by letting
$f \restriction \gamma = f_\gamma$ and, for all $\xi \in [\gamma,\kappa)$, setting
$f(\xi) = 1$ if and only if $\xi + 1 \in D$.

We will reach a contradiction by showing that $f_\alpha =^* f$ for all $\alpha < \kappa$.
To this end, fix an $\alpha < \kappa$. Note first that $f_\alpha \restriction \gamma =^* f_\gamma =
f \restriction \gamma$. Next, fix $\beta \in S \setminus \alpha$, and note that
$f_\alpha \restriction [\gamma, \alpha) =^* f_\beta \restriction [\gamma, \alpha)$.
Moreover, by the choice of $S$, we have $C_\beta \cap [\gamma, \alpha) = D \cap [\gamma, \alpha)$,
so, by the definition of $C_\beta$ and of $f$, we have $f_\beta \restriction [\gamma, \alpha) =
f \restriction [\gamma, \alpha)$. Altogether, this yields $f_\alpha =^* f$ and the desired
contradiction.
\end{proof}

\section{Forcing axioms and indexed squares}\label{section: forcingaxiomssquare}

The principle $\square^{\ind}(\kappa, \theta)$ was introduced in \cite{narrow_systems},
and it is the strengthening of the following principle obtained by requiring that $\Gamma$ be
the whole of $\acc(\kappa)$.

\begin{definition}[{\cite[\S4]{paper36}}]\label{inds}
$\inds(\kappa, \theta)$ asserts the existence of a matrix
\[
\vec{C} = \langle C_{\alpha,i}\mid \alpha\in\Gamma, ~ i(\alpha) \le i<\theta\rangle
\]
satisfying the following requirements:
\begin{enumerate}
\item $(E^\kappa_{\neq \theta} \cap \acc(\kappa)) \s \Gamma \s \acc(\kappa)$;
\item for all $\alpha\in\Gamma$, we have $i(\alpha) < \theta$, and $\langle C_{\alpha,i}\mid
i(\alpha) \le i<\theta\rangle$ is a $\s$-increasing sequence of clubs in $\alpha$, with
$\Gamma \cap \alpha = \bigcup_{i(\alpha)\le i<\theta}\acc(C_{\alpha,i})$;
\item $\vec{C}$ is \emph{coherent}, i.e., for all
$\alpha\in\Gamma$, $i(\alpha) \le i<\theta$, and $\bar\alpha \in
\acc(C_{\alpha,i})$, we have $i(\bar\alpha) \le i$ and $C_{\bar\alpha,i}=C_{\alpha,i} \cap\bar\alpha$;
\item \label{clause4} $\vec{C}$ is \emph{nontrivial}, i.e., for every club
$D$ in $\kappa$, there exists $\alpha\in\acc(D) \cap \Gamma$ such that,
for all $i<\theta$, $D\cap\alpha\neq C_{\alpha,i}$.
\end{enumerate}
\end{definition}
\begin{remark} $\square^{\ind}(\kappa, \theta)\implies\inds(\kappa, \theta)$.
The two coincide whenever $\theta=\omega$ or assuming that $\theta\in\reg(\kappa)$ and every stationary subset of $E^\kappa_\theta$ reflects (see \cite[Theorems~4.6 and Corollary~4.7]{paper36})
\end{remark}

It is well known that Martin's Maximum (\mm) is compatible with $\square^{\ind}(\kappa,\omega_2)$ holding for all regular $\kappa\geq \omega_2$.
Related to this is a result of L\"{u}cke \cite[Theorem 5.8]{lucke} implying that $\mm$
is compatible with the existence of an $\aleph_3$-Souslin tree admitting an $\aleph_1$-ascent path.

It is known that certain fragments of $\pfa$ or $\mm$ imply $\neg \square(\kappa,\omega_1)$ for a regular $\kappa>\omega_2$ (see for example  \cite{Strullu} and \cite{TPWu}).
However, these fragments are not compatible with $\ch$. For example, \cite{Strullu} uses $\mrp + \ma$ and \cite{TPWu} uses $\ssr + \neg \ch$.

Here, we shall show that $\inds(\kappa,\omega_1)$ is ruled out by the Semi-Stationary Reflection Principle ($\ssr$), a 2-cardinal stationary reflection principle \cite[Chapter XIII, 1.7]{properShelah} that follows from $\mm$ \cite{MM} and \emph{Rado's Conjecture} \cite{Doebler} (see also \cite[Theorem 5.2]{RCzhang}),
but is also compatible with $\ch$. This corrects a claim made in \cite[Remark~4.12]{paper36} that $\mm$ is compatible with $\inds(\kappa,\omega_1)$.
Whether $\ssr$ is compatible with $\square(\kappa, \omega_1)$ for $\kappa>\omega_2$ remains an open question (see Section \ref{Section: questions}).

\begin{definition}[\cite{SakaiVelickovic}]
For countable subsets $x, y$ of a regular cardinal $\lambda\geq \omega_2$,\footnote{The definition here is unrelated to that of Definition~\ref{def27}.}
we say $x \sqsubseteq^* y$ iff
\begin{enumerate}
\item $x\cap \omega_1 = y \cap \omega_1$,
\item $\ssup(x) = \ssup(y)$,
\item $\ssup(x\cap \gamma) = \ssup(y\cap \gamma)$ for any $\gamma\in x\cap E^\lambda_{\omega_1}$.
\end{enumerate}
\end{definition}

We will use the following equivalent formulation as the definition of $\ssr$ as proved in \cite[Lemma 2.2]{SakaiVelickovic}.

\begin{definition}
$\ssr$ asserts that for any $\lambda\geq \omega_2$ and any stationary $S\s[X]^{\aleph_0}$ that is closed upwards under $\sqsubseteq^*$, there exists $W\in[\lambda]^{\aleph_1}$ with $W\supseteq \omega_1$ such that $S\cap[W]^{\aleph_0}$ is stationary in $[W]^{\aleph_0}$.
\end{definition}

\begin{thm}\label{theoerm: SSR}
$\ssr$ implies that $\inds(\kappa,\omega_1)$ fails for every regular $\kappa\geq \omega_2$.
\end{thm}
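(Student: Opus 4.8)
The plan is to assume toward a contradiction that a matrix $\vec C=\langle C_{\alpha,i}\mid \alpha\in\Gamma,\ i(\alpha)\le i<\omega_1\rangle$ witnesses $\inds(\kappa,\omega_1)$, and to use $\ssr$ to manufacture a \emph{single-level} thread, which contradicts the nontriviality clause~(4). The first step is a reduction lemma isolating what is really needed: if there are $i^*<\omega_1$ and a club $E\s\kappa$ such that $\{\alpha\in\acc(E)\mid i(\alpha)\le i^*\}$ is cofinal in $\kappa$ and $E\cap\alpha=C_{\alpha,i^*}$ for every such $\alpha$, then clause~(4) fails. Indeed, given any $\alpha\in\acc(E)\cap\Gamma$, pick $\beta\in\acc(E)$ with $\beta>\alpha$ and $i(\beta)\le i^*$; then $\alpha\in\acc(E\cap\beta)=\acc(C_{\beta,i^*})$, so coherence yields $i(\alpha)\le i^*$ and $C_{\alpha,i^*}=C_{\beta,i^*}\cap\alpha=E\cap\alpha$. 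Thus $E$ itself refutes clause~(4), and it suffices to produce such a thread.

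To locate a level, for each countable $M\prec(H(\kappa^+),\in,\vec C)$ (expanded by a well-ordering) set $\delta_M:=\sup(M\cap\kappa)$. Since $E^\kappa_\omega\s\Gamma$ is cofinal in $\kappa$, elementarity gives $\delta_M\in E^\kappa_\omega\cap\acc(\kappa)\s\Gamma$ and that $M\cap\Gamma$ is cofinal in $\delta_M$; using clause~(2), which writes $\Gamma\cap\delta_M=\bigcup_i\acc(C_{\delta_M,i})$, and countability of $M$, there is a least $\ell(M)<\omega_1$ with $M\cap\Gamma\s\acc(C_{\delta_M,\ell(M)})$. The index-monotonicity built into coherence makes $\ell$ nondecreasing along $\in$-chains: if $M\prec N$ and $\delta_M\in N$, then $\delta_M\in N\cap\Gamma\s\acc(C_{\delta_N,\ell(N)})$, whence $C_{\delta_M,\ell(N)}=C_{\delta_N,\ell(N)}\cap\delta_M$, and a short computation gives $\ell(M)\le\ell(N)$. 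The decisive observation is that if $\ell$ is \emph{bounded} along a continuous $\in$-chain $\langle M_\xi\mid\xi<\omega_1\rangle$ with $\delta_\xi:=\delta_{M_\xi}$ increasing to some $\delta$, say $\ell(M_\xi)\le i^*$, then the clubs $C_{\delta_\xi,i^*}$ cohere \emph{exactly}, so their union is a genuine level-$i^*$ thread up to $\delta$ with cofinal index-$\le i^*$ domain. A single such chain reaches only some $\delta<\kappa$; but exact coherence means that level-$i^*$ threads existing up to \emph{stationarily many} $\delta\in E^\kappa_{\omega_1}$ with a common $i^*$ (extracted by pressing the level map into $\omega_1<\kappa$) glue to a global thread of the kind demanded by the reduction. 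This is why mere stationary reflection is not enough and the semistationary structure must be invoked.

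With this in hand I would define a set $S\s[\kappa]^{\aleph_0}$ of ``non-stabilizing'' countable approximations and verify that it is stationary and, crucially, \emph{upward closed under $\sq^*$}. Stationarity is where nontriviality enters: were $S$ to miss a club, one could run the chain construction above against that club to bound $\ell$, assemble the resulting local threads at a common level, and glue them to a global thread, refuting clause~(4). The delicate constraint is the $\sq^*$-closure: since $\sq^*$ fixes only $x\cap\omega_1$, $\ssup(x)$, and the traces $\ssup(x\cap\gamma)$ at $\gamma\in x\cap E^\kappa_{\omega_1}$, the membership condition defining $S$ must be phrased entirely in terms of this invariant data. In particular it cannot refer directly to $\ell(x)$, which is not $\sq^*$-monotone; instead one records, at the $E^\kappa_{\omega_1}$-points of $x$, the least level at which $C_{\gamma,\cdot}$ catches up to the preserved trace $\ssup(x\cap\gamma)$, and declares $x$ non-stabilizing when these levels fail to be absorbed into $\ell(x)$.

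Finally I would apply $\ssr$ to this $S$, obtaining $W\in[\kappa]^{\aleph_1}$ with $\omega_1\s W$ and $S\cap[W]^{\aleph_0}$ stationary in $[W]^{\aleph_0}$, and then contradict this by showing that $S$ is in fact \emph{non}-stationary in $[W]^{\aleph_0}$. The point is that $W$ supports a level-$i^*$ thread $E^*\s W$ up to $\sup(W)$: granting this, the countable $x\s W$ with $\ssup(x\cap E^*)=\ssup(x)$ form a club in $[W]^{\aleph_0}$, and for each such $x$ the set $E^*\cap\delta_x$ exhibits stabilization, placing club-many $x$ outside $S$. Producing $E^*$ is the heart of the matter, and the step I expect to be the main obstacle: here one uses $\omega_1\s W$ together with the $\sq^*$-upward closure of $S$ to run the chain $\langle M_\xi\rangle$ inside $W$ so that the trace data recorded at the $E^\kappa_{\omega_1}$-points \emph{forces} $\ell(M_\xi)$ to be bounded -- converting semistationary reflection of the non-stabilizing set into an actual bound on the levels. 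It is precisely this bounding, which exploits the coherent indexing unavailable for the plain $\square(\kappa,\omega_1)$, that separates $\inds(\kappa,\omega_1)$ (refuted here) from $\square(\kappa,\omega_1)$, whose compatibility with $\ssr$ remains open.
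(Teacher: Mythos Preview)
Your proposal correctly identifies the overall architecture --- define a stationary, $\sqsubseteq^*$-upward-closed $S\subseteq[\kappa]^{\aleph_0}$, apply $\ssr$, and derive a contradiction --- and this matches the paper's approach, which adapts the argument of Sakai--Veli\v{c}kovi\'{c}. However, the proposal leaves the two decisive steps unexecuted, and your sketches of them do not work as stated.

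First, you never actually define $S$. The paper's definition is the crux: $x\in S$ iff $\sup(x)\notin x$, $x\cap\omega_1\in\omega_1$, $i(\sup(x))\le x\cap\omega_1$, and there is $\xi<\sup(x)$ with $x\cap C_{\sup(x),\,x\cap\omega_1}\subseteq\xi$ such that every $\beta\in C_{\sup(x),\,x\cap\omega_1}\setminus\xi$ satisfies $\cf(\min(x\setminus\beta))=\omega_1$. This last clause is exactly what makes $\sqsubseteq^*$-upward closure go through, since it is phrased via $\min(x\setminus\beta)$ at points of cofinality $\omega_1$; your abstract description (``record the least level at which $C_{\gamma,\cdot}$ catches up to the preserved trace'') does not obviously yield a $\sqsubseteq^*$-invariant predicate. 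Second, your stationarity argument is a genuine gap: you claim that if $S$ missed a club one could ``run the chain construction'' to bound $\ell$ and glue local threads into a global one, but a club in $[\kappa]^{\aleph_0}$ gives no such chain. The paper instead proves stationarity directly via a game argument: for a given $f:[\kappa]^{<\omega}\to\kappa$ one shows that Player~I wins an auxiliary game $G_{f,j}$ for club-many $j<\omega_1$, then picks $j$ so that stationarily many $\beta\in E^\kappa_\omega$ have $i(\beta)\le j$, and uses $C_{\beta,j}$ to steer the construction of an $x\in S$ closed under $f$. Finally, your endgame --- produce a thread $E^*\subseteq W$ --- is not what the paper does and is not clearly possible, since $W$ is an arbitrary $\aleph_1$-sized set. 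The paper instead does a short case analysis on $\cf(\sup W)$: if it is $\omega$, a countable cofinal subset of $C_{\sup W,\,i(\sup W)}$ generates a club in $[W]^{\aleph_0}$ disjoint from $S$; if it is $\omega_1$, one diagonalizes over a club $\langle\beta_i\mid i<\omega_1\rangle$ in $\sup W$ to find, for club-many $j$, that $\beta_j\in\acc(C_{\delta,j})$ for a fixed $\delta\ge\sup W$, and this again yields a club disjoint from $S$.
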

\begin{proof}
The proof is similar to that of \cite[Theorem~2.1]{SakaiVelickovic}.
Let $\kappa\geq \omega_2$ be regular.
Suppose for the sake of contradiction that $\vec{C}=\langle C_{\alpha, i}\mid \alpha \in \Gamma, \ i(\alpha)\le i<\omega_1\rangle$
is an $\inds(\kappa,\omega_1)$-sequence.

We shall soon show that the following set is stationary in $[\kappa]^{\aleph_0}$. Define $S\subset [\kappa]^{\aleph_0}$ containing elements $x$ such that
\begin{enumerate}
\item $\sup (x)\not\in x$,
\item $x\cap \omega_1\in\omega_1$,
\item $i(\sup(x))\leq  x\cap \omega_1$,
\item $\exists\xi<\sup(x)$, $x \cap C_{\sup(x), x \cap \omega_1} \subset \xi \  \& \ \forall \beta\in C_{\sup(x), x\cap \omega_1}\backslash \xi, \,\cf(\min (x\backslash\beta))=\omega_1$.
\end{enumerate}

Let us note that by design, $S$ is closed upwards under $\sqsubseteq^*$. To see this, suppose that $x\in S$ and $y$ is such that $x\sqsubseteq^* y$. The only nontrivial point to check is (5). Let $D=C_{\sup (x), x\cap \omega_1}$.
First we check that $\sup (y\cap D) = \sup (x\cap D)$. If not, then we can take some $\gamma\in y\cap D\setminus(\sup (x\cap D)+1)$. By the fact that $x\in S$, we know that $cf(\gamma^*)=\omega_1$ where $\gamma^*=\min (x\backslash \gamma)$. As $\gamma\not\in x$, $\gamma^*>\gamma$. However, $\ssup(x\cap \gamma^*)\leq \gamma$ but $\ssup(y\cap \gamma^*)\geq \gamma+1$, contradicting the fact that $x\sqsubseteq^* y$. Let $\xi<\sup(x)$ bound $x\cap D$ and $y\cap D$. For any $\beta\in D\backslash (\xi+1)$, $\min y \backslash \beta =_{def} \beta_y \leq \beta_x = _{def}\min x \backslash \beta$. To see that $\beta_y = \beta_x$, suppose for the sake of contradiction that $\beta_y < \beta_x$. Then $\ssup(x\cap \beta_x)\leq \beta\leq \beta_y$ and $\ssup(y\cap \beta_x)\geq \beta_y+1$, contradicting the fact that $x\sqsubseteq^* y$.

Thus, if $\ssr$ were to hold, we could pick $W\in [\kappa]^{\aleph_1}$ with $W\supseteq \omega_1$ such that $S\cap [W]^{\aleph_0}$ is stationary in $[W]^{\aleph_0}$. Let $\gamma:=\sup(W)$.
There are two options here, each leading to a contradiction:
\begin{itemize}
\item[$\br$] If $\cf(\gamma)=\omega$, then $\gamma \in \Gamma$. Let $j := i(\gamma)$, and let $y$ be a countable cofinal subset of $C_{\gamma,j}$,
and note that $\{x\in [W]^{\aleph_0}\mid j\cup y\s x\}$ is a club in $[W]^{\aleph_0}$ disjoint from $S$, contradicting the fact that $S\cap [W]^{\aleph_0}$ is stationary.
\item[$\br$]If $\cf(\gamma) = \omega_1$, then let $\langle \beta_i \mid i < \omega_1 \rangle$
be an increasing enumeration of a club in $\gamma$. We can assume that $\cf(\beta_i) = \omega$ for
all $i < \omega_1$. Fix $\delta \in \Gamma \setminus \gamma$,
and define a function $g:\omega_1 \rightarrow \omega_1$ by letting, for all $i < \omega_1$,
$g(i)$ be the least $j < \omega_1$ such that $i(\delta) \leq j$ and $\beta_i \in \acc(C_{\delta,j})$.
Let $D := \{j \in \acc(\omega_1) \mid g[j] \subseteq j\}$, so $D$ is a club in $\omega_1$. Moreover,
for all $j \in D$, we have $\beta_i \in \acc(C_{\delta,j})$ for all $i < j$. It follows that
$\beta_j \in \acc(C_{\delta,j})$, so, by coherence, we have $i(\beta_j) \leq j$ and
$\beta_i \in \acc(C_{\beta_j,j})$ for all $i < j$. Let $E$ be the set of $x \in [W]^{\aleph_0}$
such that
\begin{itemize}
\item $x \cap \omega_1 \in D$;
\item $\sup(x) = \beta_{x \cap \omega_1}$; and
\item $x \cap C_{\sup(x), x \cap \omega_1}$ is unbounded in $\sup(x)$.
\end{itemize}
Then $E$ is disjoint from $S$, and, by the choice of $D$, $E$ is a club in $[W]^{\aleph_0}$,
contradicting the fact that $S \cap [W]^{\aleph_0}$ is stationary.
\end{itemize}

We now turn to showing that $S$ is indeed a stationary subset of $[\kappa]^{\aleph_0}$. To this end, let $f: [\kappa]^{<\omega}\to \kappa$ be given. Our goal is to find $x\in S$ closed under $f$. The proof is essentially the same as that in \cite{SakaiVelickovic}, so we just include a brief outline.

For each $j<\omega_1$, consider the following game $G_{f,j}:$ Players~I and II alternate choosing ordinals $<\lambda$, with Player~I starting the game. A run of a game takes the following form: at stage $n$, Player~I chooses $\alpha_n$, then Player~II chooses $\beta_n$, then Player I chooses $\gamma_n>\beta_n, \alpha_n$ of cofinality $\omega_1$. Player~I wins iff, letting $x:=\cl_f(\{\gamma_{n}\mid n<\omega\}\cup j)$, we have $x \cap \bigcup_{m\in \omega} [\alpha_m, \gamma_m) =\emptyset$ and $x\cap \omega_1=j$. Since this is an open game for Player~II, it is determined. An argument as in \cite[Lemma 2.3]{SakaiVelickovic} shows that for club many $j<\omega_1$, Player~I has a winning strategy $\sigma_{j}$ in the game $G_{f,j}$.
Fix a large enough $j<\omega_1$ such that
Player~I has a winning strategy in the game $G_{f,j}$
and such that, for stationarily many $\beta\in E^\kappa_\omega$, $i(\beta)\leq j$. Let $C\subset \kappa$ be a club subset that is closed under $f$ and the winning strategy of Player I $\sigma_j$.
Find some $M\prec H(\kappa^+)$ containing all relevant objects with $\beta :=\sup(M\cap\kappa)$ in $E^\kappa_\omega$ and $i(\beta)\leq j$. The rest of the proof is the same as \cite[Theorem 2.1, Claim 1]{SakaiVelickovic}, with $C_{\beta, j}$ playing the role of the ``$C_\delta$" in that proof.
\end{proof}

\begin{proof}[Proof of Theorem \ref{theorem: main2}(1)]
By Theorem~\ref{theoerm: SSR} and the fact that $\mm$ implies $\ssr$.
\end{proof}

\subsection{Another weakening}
We now show that $\mm$ is compatible with a different weakening of
$\square^{\ind}(\kappa, \omega_1)$, which we denote $\inde(\kappa, \theta)$. 
This will later be used to provide a sense in which
Theorem \ref{theorem: main1} is sharp. We begin with the definition of
this weakening.

\begin{definition} \label{def: weakening}
Let $\theta < \kappa$ be a pair of infinite regular cardinals.
The principle $\inde(\kappa, \theta)$
asserts the existence of a matrix
\[
\vec{C} = \langle C_{\alpha,i} \mid \alpha \in \acc(\kappa), ~
i(\alpha) \leq i < \theta \rangle
\]
satisfying the following requirements:
\begin{enumerate}
\item for all $\alpha \in \acc(\kappa)$, we have $i(\alpha) < \theta$, and
$\langle C_{\alpha,i} \mid i(\alpha) \leq i < \theta \rangle$ is a
$\s$-increasing sequence of clubs in $\alpha$, with $\acc(\alpha) =
\bigcup_{i(\alpha) \leq i < \theta} \acc(C_{\alpha,i})$;
\item for all $\alpha \in \acc(\kappa)$, $i(\alpha) \leq i < \theta$, and
$\bar{\alpha} \in \acc(C_{\alpha,i}) \cap E^\kappa_{\geq \theta}$, we have
$i(\bar{\alpha}) \leq i$ and $C_{\bar{\alpha},i} = C_{\alpha,i} \cap
\bar{\alpha}$;
\item for all $(\bar{\alpha}, \alpha) \in [\acc(\kappa)]^2$ and all
sufficiently large $i < \theta$, we have $C_{\bar{\alpha},i} =
C_{\alpha,i} \cap \bar{\alpha}$;
\item for every club $D$ in $\kappa$, there exists $\alpha \in \acc(D)
\cap E^\kappa_{\geq \theta}$ such that, for all $i < \theta$,
$D \cap \alpha \neq C_{\alpha,i}$.
\end{enumerate}
\end{definition}

Loosely speaking, the difference between $\inds(\kappa, \theta)$ and
$\inde(\kappa, \theta)$ is that, in a matrix $\vec{C}$ witnessing the latter,
if $(\bar{\alpha}, \alpha) \in [\acc(\kappa)]^2$ with $\cf(\bar{\alpha}) < \theta$,
then we do not require coherence of $C_{\bar{\alpha},i}$ and $C_{\alpha,i}$ for
\emph{all} $i < \theta$ such that $\bar{\alpha} \in \acc(C_{\alpha,i})$, but
only for all sufficiently large $i < \theta$. Note that $\inde(\kappa, \omega)$
is equivalent to $\square^{\mathrm{ind}}(\kappa, \omega)$. Hence, for notational
convenience, we will focus in this section on the case in which
$\theta > \omega$.

As should be expected of a square principle, $\inde(\kappa, \theta)$ is incompatible with the
weak compactness of $\kappa$.

\begin{definition}[\cite{paper34}]
A coloring $c: [\kappa]^2\to \theta$ witnesses $\U(\kappa,2,\theta,2)$ if for any $H\in [\kappa]^\kappa$, $c``[H]^2$ is cofinal in $\theta$.
\end{definition}

\begin{prop} \label{prop: inde_wc}
Suppose that $\theta < \kappa$ is a pair of infinite regular cardinals and $\inde(\kappa, \theta)$
holds.
Then there exists an $E^\kappa_{\ge\theta}$-closed subadditive witness to $\U(\kappa,2,\theta,2)$.

In particular, $\kappa$ is not weakly compact.
\end{prop}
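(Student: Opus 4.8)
The plan is to extract from the matrix $\vec C$ a single subadditive coloring and to read off the three required features from the three coherence/nontriviality clauses of Definition~\ref{def: weakening}. The coloring I would use is the \emph{coherence threshold}: for $\bar\alpha<\alpha$ in $\acc(\kappa)$, set
\[
e(\bar\alpha,\alpha):=\min\{\,i<\theta\mid \max(i(\bar\alpha),i(\alpha))\le i\text{ and }C_{\bar\alpha,j}=C_{\alpha,j}\cap\bar\alpha\text{ for all }j\in[i,\theta)\,\}.
\]
By clause~(3), the displayed set is a nonempty final segment of $\theta$, so $e$ is well-defined with values below $\theta$; moreover, by construction $e(\bar\alpha,\alpha)\le i$ is \emph{equivalent} to $C_{\bar\alpha,j}=C_{\alpha,j}\cap\bar\alpha$ holding for all $j\ge i$, which supplies the monotonicity in $i$ that the raw coherence relation lacks. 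Since $\acc(\kappa)$ is a club with countable gaps, I would then transfer $e$ to a coloring $c$ on all of $[\kappa]^2$ by composing with the retraction $\pi(\xi):=\min\{\eta\ge\xi\mid\eta\text{ limit}\}$, setting $c(\xi,\eta):=e(\pi(\xi),\pi(\eta))$ when $\pi(\xi)<\pi(\eta)$ and $c(\xi,\eta):=0$ otherwise. As every fibre of $\pi$ is countable and $\kappa$ is regular uncountable, $\pi$ carries each $H\in[\kappa]^\kappa$ onto a size-$\kappa$ subset of $\acc(\kappa)$, and a routine case analysis (on which of $\pi(\xi),\pi(\eta),\pi(\zeta)$ coincide) shows this pullback preserves subadditivity, closedness, and the $\U$-property. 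I would therefore verify everything for $e$ and only remark on the transfer.

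Subadditivity is where clause~(3) does the real work. Given $\bar\alpha<\alpha<\beta$ in $\acc(\kappa)$ and $m:=\max(e(\bar\alpha,\alpha),e(\alpha,\beta))$, for every $j\ge m$ we have both $C_{\bar\alpha,j}=C_{\alpha,j}\cap\bar\alpha$ and $C_{\alpha,j}=C_{\beta,j}\cap\alpha$, whence $C_{\bar\alpha,j}=C_{\beta,j}\cap\bar\alpha$; this yields the first inequality $e(\bar\alpha,\beta)\le\max(e(\bar\alpha,\alpha),e(\alpha,\beta))$. The second inequality $e(\bar\alpha,\alpha)\le\max(e(\bar\alpha,\beta),e(\alpha,\beta))$ is symmetric: for $j$ above the right-hand side, $C_{\bar\alpha,j}=C_{\beta,j}\cap\bar\alpha=(C_{\beta,j}\cap\alpha)\cap\bar\alpha=C_{\alpha,j}\cap\bar\alpha$. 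The point worth stressing is that the weakening in $\inde(\kappa,\theta)$---that coherence of $C_{\bar\alpha,\,\cdot}$ and $C_{\alpha,\,\cdot}$ is demanded only for large $i$ when $\cf(\bar\alpha)<\theta$---costs nothing, because subadditivity of $e$ refers only to the eventual behaviour of the coherence relation. This is the conceptual heart of the proposition.

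For closedness I would show that $e$ is closed at every point of $E^\kappa_{\ge\theta}$. Fix $\alpha\in\acc(\kappa)$, $i<\theta$, and a point $\bar\alpha\in E^\kappa_{\ge\theta}\cap\alpha$ that is the supremum of $A:=\{\gamma\in\acc(\kappa)\cap\bar\alpha\mid e(\gamma,\alpha)\le i\}$. For each $j\ge i$ and $\gamma\in A$ we have $C_{\alpha,j}\cap\gamma=C_{\gamma,j}$, a club in $\gamma$, so $\sup(C_{\alpha,j}\cap\bar\alpha)=\bar\alpha$, and since $C_{\alpha,j}$ is closed this gives $\bar\alpha\in\acc(C_{\alpha,j})$. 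Now clause~(2) applies---this is exactly where $\cf(\bar\alpha)\ge\theta$ is needed---and yields $C_{\bar\alpha,j}=C_{\alpha,j}\cap\bar\alpha$ for all $j\ge i$, i.e.\ $e(\bar\alpha,\alpha)\le i$. This is the content of being ``$E^\kappa_{\ge\theta}$-closed''.

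It remains to see that $c$ witnesses $\U(\kappa,2,\theta,2)$, and here nontriviality enters. Suppose toward a contradiction that $c``[H]^2$ is bounded by some $i^*<\theta$ for an $H\in[\kappa]^\kappa$; then, with $H'=\pi``H\in[\acc(\kappa)]^\kappa$, monotonicity of $\pi$ gives $e(\bar\alpha,\alpha)\le i^*$ for all $(\bar\alpha,\alpha)\in[H']^2$, hence $C_{\bar\alpha,i^*}=C_{\alpha,i^*}\cap\bar\alpha$ throughout $[H']^2$. Thus $\langle C_{\alpha,i^*}\mid\alpha\in H'\rangle$ coheres and $D:=\bigcup_{\alpha\in H'}C_{\alpha,i^*}$ is a club in $\kappa$ with $D\cap\alpha=C_{\alpha,i^*}$ for every $\alpha\in H'$. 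I would then promote this to $D\cap\alpha=C_{\alpha,i^*}$ for every $\alpha\in\acc(D)\cap E^\kappa_{\ge\theta}$: such an $\alpha$ lies in $D$ with $\sup(D\cap\alpha)=\alpha$, so choosing any $\alpha'\in H'$ above $\alpha$ gives $\alpha\in\acc(C_{\alpha',i^*})$, and clause~(2) (again using $\cf(\alpha)\ge\theta$) yields $C_{\alpha,i^*}=C_{\alpha',i^*}\cap\alpha=D\cap\alpha$. This contradicts nontriviality, clause~(4). Hence $c``[H]^2$ is cofinal in $\theta$ for every $H\in[\kappa]^\kappa$. Finally, the ``in particular'' clause is immediate: were $\kappa$ weakly compact, $\kappa\to(\kappa)^2_\theta$ would produce an $H\in[\kappa]^\kappa$ on which $c$ is constant, contradicting cofinality of $c``[H]^2$ in the infinite cardinal $\theta$. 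The main obstacle, as flagged above, is choosing the coloring so that subadditivity survives the weakened coherence; taking the coherence threshold rather than the naive ``least level at which $\bar\alpha$ accumulates in $C_{\alpha,i}$'' is decisive, since the latter fails subadditivity precisely at triples whose middle term has cofinality below $\theta$.
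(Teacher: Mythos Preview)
Your proof is correct and follows essentially the same approach as the paper: define the coloring as the coherence threshold on $\acc(\kappa)$, derive subadditivity from clause~(3), $E^\kappa_{\ge\theta}$-closedness from clause~(2), and $\U(\kappa,2,\theta,2)$ from clause~(4). The only cosmetic difference is the transfer map between $\kappa$ and $\acc(\kappa)$: the paper uses the bijection $\alpha\mapsto\omega\cdot\alpha$, which sidesteps the case analysis on coinciding $\pi$-images that your retraction $\pi$ necessitates, but both work equally well.
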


\begin{proof}
Suppose $\vec{C} = \langle C_{\alpha,i} \mid \alpha \in \acc(\kappa), \ i(\alpha)
\leq i < \theta \rangle$ is a witness to $\inde(\kappa, \theta)$.
Using Clause~(3) of Definition~\ref{def: weakening}, we define a coloring
$c:[\kappa]^2 \rightarrow \theta$ via
$$c(\alpha,\beta):=\min\{j< \theta\mid j\ge\max\{i(\omega\cdot\alpha),i(\omega\cdot\beta)\}\ \&\ \forall i\in[j,\theta)\,[C_{\omega\cdot\alpha,i} = C_{\omega\cdot\beta,i} \cap \omega\cdot\alpha]\}.$$

\begin{claim} $c$ witnesses $\U(\kappa,2,\theta,2)$.
\end{claim}
\begin{proof} We need to show that for every $H\in[\kappa]^\kappa$, $c``[H]^2$ is cofinal in $\theta$.
Towards a contradiction, suppose $H\in[\kappa]^\kappa$ and $j<\theta$ are such that $c``[H]^2\s j$.
Then $D := \bigcup_{\alpha \in H} C_{\omega\cdot\alpha,j}$ is a club in $\kappa$.
Using Clause~(4) of Definition~\ref{def: weakening}, fix $\alpha\in\acc(D)\cap E^\kappa_{\ge\theta}$ such that, for all $i < \theta$,
$D \cap \alpha \neq C_{\alpha,i}$.
Set $\beta:=\min(H\setminus(\alpha+1))$.
Then $D\cap\alpha=C_{\omega\cdot\beta,j}\cap\alpha$. So $\alpha\in\acc(C_{\omega\cdot\beta,j})\cap E^\kappa_{\ge\theta}$,
and then Clause~(2) implies that $C_{\alpha,j}=C_{\omega\cdot\beta,j}\cap\alpha=D\cap\alpha$.
This is a contradiction.
\end{proof}

It thus immediately follows that $\kappa$ is not weakly compact.

\begin{claim} $c$ is $E^\kappa_{\ge\theta}$-closed.
\end{claim}
\begin{proof} Suppose that $\alpha<\beta<\kappa$ and $j<\theta$,
are such that $\sup\{\varepsilon<\alpha\mid c(\varepsilon,\beta)\le j\}=\alpha$;
we need to show that if $\alpha\in E^\kappa_{\ge\theta}$, then $c(\alpha,\beta)\le j$.

By our assumption, $\omega\cdot\varepsilon\in\acc(C_{\omega\cdot\beta,j})$ for cofinally many $\varepsilon<\alpha$,
and hence $\omega\cdot\alpha\in\acc(C_{\omega\cdot\beta,j})$. Thus, if $\alpha\in E^\kappa_{\ge\theta}$,
then $\omega\cdot\alpha\in\acc(C_{\omega\cdot\beta,j})\cap E^\kappa_{\ge\theta}$,
and then Clause~(2) of Definition~\ref{def: weakening} implies that $c(\alpha,\beta)\le j$.
\end{proof}

\begin{claim} $c$ is subadditive.
\end{claim}
\begin{proof} Let $\alpha<\beta<\gamma<\kappa$; we need to show that $c(\alpha,\gamma)\le\max\{c(\alpha,\beta),c(\beta,\gamma)\}$ and $c(\alpha,\beta)\le\max\{c(\alpha,\gamma),c(\beta,\gamma)\}$.

$\br$ Set $j:=\max\{c(\alpha,\beta),c(\beta,\gamma)\}$.
Then for every $i\in[j,\theta)$, $C_{\omega\cdot\alpha,i}=C_{\omega\cdot\beta,i}\cap\omega\cdot\alpha$ and $C_{\omega\cdot\beta,i}=C_{\omega\cdot\gamma,i}\cap\omega\cdot\beta$,
so that $C_{\omega\cdot\alpha,i}=C_{\omega\cdot\gamma,i}\cap\omega\cdot\alpha$. Consequently, $c(\alpha,\gamma)\le j$.

$\br$ Set $j:=\max\{c(\alpha,\gamma),c(\beta,\gamma)\}$.
Then for every $i\in[j,\theta)$, $C_{\alpha,i}=C_{\omega\cdot\gamma,i}\cap\omega\cdot\alpha$ and $C_{\beta,i}=C_{\omega\cdot\gamma,i}\cap\omega\cdot\beta$,
so that $C_{\omega\cdot\alpha,i}=C_{\omega\cdot\beta,i}\cap\omega\cdot\alpha$. Consequently, $c(\alpha,\beta)\le j$.
\end{proof}

This completes the proof.
\end{proof}

We now turn to proving that $\mm$ is compatible with $\inde(\kappa, \omega_1)$.
Hereafter, we roughly follow Section~7 of \cite{narrow_systems}. Fix
for now a pair of uncountable regular cardinals $\theta < \kappa$. We first
introduce a forcing to add a witness to $\inde(\kappa, \theta)$.

\begin{definition}\label{definition: ForcingIndexedMinus}
Define $\mathbb P^-(\kappa, \theta)$ to be the forcing poset consisting
of all conditions $p = \langle C^p_{\alpha, i} \mid \alpha\in\acc(\gamma^p + 1),
~ i(\alpha)^p \le i < \theta \rangle$ satisfying the following four requirements:
\begin{enumerate}
\item $\gamma^p \in \acc(\kappa)$;
\item for all $\alpha\in \acc(\gamma^p+1)$, we have $i(\alpha)^p < \theta$,
and $\langle C^p_{\alpha,i}\mid i(\alpha)^p \le i<\theta\rangle$ is a
$\s$-increasing sequence of clubs in $\alpha$, with
$\acc(\alpha) = \bigcup_{i(\alpha)\le i<\theta}\acc(C_{\alpha,i}^p)$;
\item for all $\alpha\in\Gamma^p$, $i(\alpha)^p \le i<\theta$, and
$\bar\alpha \in \acc(C^p_{\alpha,i}) \cap E^\kappa_{\geq \theta}$, we have
$i(\bar\alpha)^p \le i$ and $C_{\bar\alpha,i}^p=C_{\alpha,i}^p\cap\bar\alpha$;
\item for all $(\bar{\alpha}, \alpha) \in [\acc(\gamma^p+1)]^2$ and all
sufficiently large $i < \theta$, we have $C^p_{\bar{\alpha},i} =
C^p_{\alpha,i} \cap \bar{\alpha}$.
\end{enumerate}
$\bb{P}^-(\kappa, \theta)$ is ordered by end-extension.
\end{definition}

\begin{lemma}\label{lemma: directedclosed}
$\mathbb P^-(\kappa, \theta)$ is $\theta^+$-directed closed.
\end{lemma}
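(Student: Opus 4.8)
The plan is to show that $\mathbb{P}^-(\kappa,\theta)$ is $\theta^+$-directed closed by taking a directed system of at most $\theta$ conditions, forming the pointwise union of the relevant matrices, and verifying that this union is again a condition that end-extends each member of the system. So suppose $\mathcal{D}=\{p_\xi\mid \xi<\mu\}$ is a directed subset of $\mathbb{P}^-(\kappa,\theta)$ with $\mu\le\theta$. Directedness under end-extension means the heights $\gamma^{p_\xi}$ are linearly ordered by the fact that any two conditions have a common lower bound, so I would set $\gamma:=\sup_{\xi<\mu}\gamma^{p_\xi}$ and define the candidate lower bound by gluing: for each $\alpha\in\acc(\gamma+1)$, pick any $\xi$ with $\alpha\le\gamma^{p_\xi}$ and copy the relevant sequence $\langle C^{p_\xi}_{\alpha,i}\mid i(\alpha)^{p_\xi}\le i<\theta\rangle$. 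Directedness guarantees this is well-defined, since any two conditions defined at $\alpha$ must agree there (both end-extend a common condition, and end-extension forces equality of the matrix below the shared height). The one genuinely new point is the topmost level $\alpha=\gamma$ when $\gamma\notin\{\gamma^{p_\xi}\mid\xi<\mu\}$, i.e.\ when $\gamma$ is a strict supremum of the heights.

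The main work, then, is defining and verifying the condition at the new top level $\gamma$ (when it is not attained). Here $\cf(\gamma)\le\mu\le\theta$. The natural construction is to let $H:=\{\gamma^{p_\xi}\mid\xi<\mu\}$, a cofinal subset of $\gamma$ consisting of accumulation points, and to build clubs $C_{\gamma,i}$ in $\gamma$ for $i<\theta$ by threading together the top-level clubs of the $p_\xi$'s. Concretely, for each $\delta\in\acc(H)$ we already have the sequences $\langle C_{\delta,i}\mid i(\delta)\le i<\theta\rangle$ from the union below, and by requirement~(4) of Definition~\ref{definition: ForcingIndexedMinus} any two of these cohere for all sufficiently large $i$; I would use this to glue, for each fixed $i$, the pieces $C_{\delta,i}\cap[\text{previous }\delta,\delta)$ into a single increasing union and take $C_{\gamma,i}$ to be its closure in $\gamma$, throwing in a suitable tail of $H$ to guarantee cofinality in $\gamma$. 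The choice must be arranged so that $i(\gamma)$, the least index at which $C_{\gamma,i}$ becomes defined, is below $\theta$; since $\mu\le\theta$ and the relevant ``sufficiently large $i$'' thresholds for coherence among the $C_{\delta,i}$ form a set of size at most $\theta$ whose supremum is still below the regular cardinal $\theta^+$—and, crucially, stays below $\theta$ because we are only taking $\mu\le\theta$ many conditions and $\theta$ is regular—this supremum is an ordinal $<\theta$, which is exactly where $\theta^+$-directed closure (rather than merely $\theta$-directed) is used.

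After producing the candidate $p$ with domain $\acc(\gamma+1)$, I would check the four clauses of Definition~\ref{definition: ForcingIndexedMinus} for $p$. Clauses (1)--(3) at ordinals $\alpha<\gamma$ are inherited directly from the $p_\xi$'s, since below $\gamma$ everything is literally copied from an individual condition. Clause~(2) at $\alpha=\gamma$ amounts to checking that each $C_{\gamma,i}$ is a club in $\gamma$ with the $\s$-increasing and $\acc(\gamma)=\bigcup_i\acc(C_{\gamma,i})$ properties, which follows from the gluing construction and the corresponding properties at the lower levels. Clause~(3) at $\alpha=\gamma$ requires that for $\bar\alpha\in\acc(C_{\gamma,i})\cap E^\kappa_{\ge\theta}$ we have $C_{\bar\alpha,i}=C_{\gamma,i}\cap\bar\alpha$; here I would use that such $\bar\alpha$ lies below some $\gamma^{p_\xi}$ (as $\bar\alpha<\gamma$), so the identity reduces to the already-verified coherence inside $p_\xi$ together with how $C_{\gamma,i}$ was threaded. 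Clause~(4), the ``sufficiently large $i$'' coherence for pairs $(\bar\alpha,\gamma)$, is precisely what the gluing was designed to secure.

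The hard part will be the verification of Clause~(3) at the new top level together with controlling the index $i(\gamma)$: one must simultaneously ensure that $C_{\gamma,i}$ coheres \emph{exactly} (not just eventually) with $C_{\bar\alpha,i}$ for every $\bar\alpha\in\acc(C_{\gamma,i})\cap E^\kappa_{\ge\theta}$, while only having \emph{eventual} coherence available among the building blocks $C_{\delta,i}$. The resolution is that the only $\bar\alpha$ for which exact coherence is demanded have $\cf(\bar\alpha)\ge\theta$, and for such $\bar\alpha$ one shows $i(\bar\alpha)$ is already below the thread's starting index, so the pieces agree on the nose from $i(\gamma)$ onward; the ordinals of cofinality $<\theta$ inside $C_{\gamma,i}$ are exempted from Clause~(3) and only subject to the weaker Clause~(4), which is where the design of $\inde$ (as opposed to $\inds$) buys us the room to perform the gluing at all. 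I expect that once the top-level clubs are written down carefully, all four clauses fall out, so the bulk of the proof is the bookkeeping of the threading construction and the index bound $i(\gamma)<\theta$.
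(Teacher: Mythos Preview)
Your reduction to a chain and the gluing below $\gamma$ are fine, and the paper proceeds the same way (noting that $\mathbb P^-(\kappa,\theta)$ is tree-like, so directed closure reduces to closure under decreasing sequences). The gap is at the new top level when $\cf(\gamma)=\theta$. Your sentence ``stays below $\theta$ because we are only taking $\mu\le\theta$ many conditions and $\theta$ is regular'' is false: regularity of $\theta$ bounds suprema of \emph{fewer} than $\theta$ ordinals, not of $\theta$-many. When $\mu=\theta$ you genuinely can have $\theta$ many pairwise coherence thresholds whose supremum is $\theta$, so there is no single $i(\gamma)<\theta$ above which all the pieces agree, and your threading construction produces nothing.

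The paper therefore splits into two cases. If $\nu:=\cf(\gamma)<\theta$, your single-threshold idea works and the paper does exactly that. If $\nu=\theta$, a different construction is required: one fixes a club $D=\{\beta_\eta\mid\eta<\theta\}$ in $\gamma$ consisting of points of cofinality $<\theta$, and an increasing continuous sequence $\langle i_\eta\mid\eta<\theta\rangle$ recording how far out one must go in the index for the first $\eta$ many $\beta_\xi$'s to cohere. One then sets $i(\gamma)^q:=0$ and, for $i$ in the interval $[i_\eta,i_{\eta+1})$, defines $C^q_{\gamma,i}:=D\cup\bigcup_{\xi<\eta}C^q_{\beta_\xi,i}$. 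The inclusion of $D$ is what makes every $C^q_{\gamma,i}$ cofinal in $\gamma$ even for small $i$; restricting the union to $\xi<\eta$ is what buys exact coherence at accumulation points; and choosing the $\beta_\eta$ to have cofinality $<\theta$ is what exempts the points of $D$ themselves from Clause~(3). This is the missing idea, and without it the $\cf(\gamma)=\theta$ case does not go through.
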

\begin{proof}
As $\mathbb P^-(\kappa,\theta)$ is tree-like, it suffices to verify that it is $\theta^+$-closed.
Suppose that we are given a strictly decreasing sequence $\vec p  = \langle p_\sigma \mid \sigma < \tau \rangle$ of conditions in $\mathbb P^-(\kappa,\theta)$,
with $\tau\in\acc(\theta^+)$.

Set $\gamma := \sup\{\gamma^{p_\sigma} \mid \sigma < \tau\}$.
We will define a lower bound $q$ for $\vec{p}$ with $\gamma^q = \gamma$.
For all $\alpha \in \acc(\gamma)$, let $\sigma < \tau$ be least
such that $\alpha \in \acc(\gamma^{p_\sigma})$, and set $i(\alpha)^q =
i(\alpha)^{p_\sigma}$ and, for all $i(\alpha)^q \leq i < \theta$, set
$C^q_{\alpha,i} = C^{p_\sigma}_{\alpha,i}$. To complete the definition of $q$,
it suffices to specify $i(\gamma)^q$ and $\langle C^q_{\gamma,i}
\mid i(\gamma)^q \leq i < \theta \rangle$.

Let $\nu := \cf(\tau) = \cf(\gamma)$, so $\nu \leq \theta$, and let
$\langle \beta_\eta \mid \eta < \nu \rangle$ be an increasing enumeration of a
club $D$ in $\gamma$ such that $\beta_\eta \in \acc(\gamma) \cap E^\tau_{<\nu}$
for all $\eta < \nu$ (such a club exists because $\cf(\gamma) \leq \theta$
and $\gamma$ is a limit of limit ordinals).
Suppose first that $\nu < \theta$. In this case, we can find a sufficiently large
$i^* < \theta$ such that, for all $(\eta,\xi) \in [\nu]^2$ and all
$i^* \leq i < \theta$, we have $C^q_{\beta_\eta, i} = C^q_{\beta_\xi,i} \cap
\beta_\xi$. Then set $i(\gamma)^q := i^*$ and, for all $i^* \leq i < \theta$,
set $C^q_{\gamma,i} := \bigcup_{\eta < \nu} C^q_{\beta_\eta,i}$. It is routine
to verify that $q$ thus defined is as desired.

Suppose now that $\nu = \theta$, and let $\langle i_\eta \mid
\eta < \theta \rangle$ be a continuous, strictly increasing sequence of ordinals
below $\theta$ such that $i_0 = 0$ and, for all $\xi < \xi' < \eta < \theta$ and all
$i_\eta \leq i < \theta$, we have $i(\beta_\xi)^q, i(\beta_{\xi'})^q<i_\eta$ and $C^q_{\beta_\xi, i} = C^1_{\beta_{\xi'},i}
\cap \beta_\xi$. Set $i(\gamma)^q := 0$. For all $i < \theta$, let $\eta <
\theta$ be such that $i_\eta \leq i < i_{\eta + 1}$, and set
$C^q_{\gamma,i} := D \cup \bigcup_{\xi < \eta} C^q_{\beta_\xi,i}$. Notice that
our choice of $i_\eta$ ensures that, for all $\xi < \eta$, we have
$C^q_{\beta_\xi,i} = C^q_{\gamma,i} \cap \beta_\xi$. It is again readily verified
that $q$ is as desired.
\end{proof}

\begin{lemma} \label{game}
$\mathbb P^-(\kappa, \theta)$ is $\kappa$-strategically closed.
\end{lemma}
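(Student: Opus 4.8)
The plan is to exhibit a winning strategy for Player~II in $\Game_\kappa(\mathbb P^-(\kappa,\theta))$. The guiding idea is that, whereas the proof of $\theta^+$-directed closure (Lemma~\ref{lemma: directedclosed}) must accommodate an arbitrary descending sequence and therefore has to diagonalize across the $\theta$ indices at limits of cofinality $\theta$, here Player~II controls every even move and can pre-arrange enough coherence that \emph{all} limit stages, of whatever cofinality $<\kappa$, are handled by one uniform union. Concretely, writing $\gamma_\beta := \gamma^{p_\beta}$ for Player~II's even-stage tops, she will maintain a \emph{fully coherent thread}: every successor-stage top $\gamma_\beta$ has cofinality $\theta$, $i(\gamma_\beta)^{p_\beta}=0$, and for all even $\beta<\beta'$ and \emph{all} $i<\theta$ one has $\gamma_\beta\in C^{p_{\beta'}}_{\gamma_{\beta'},i}$ and $C^{p_{\beta'}}_{\gamma_{\beta'},i}\cap\gamma_\beta = C^{p_\beta}_{\gamma_\beta,i}$. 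By transitivity of this agreement, each new top then coheres fully with all earlier ones as well.

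Granting the invariant, the limit stages are immediate. At a limit $\sigma<\kappa$, Player~II's even-stage conditions are cofinal in the run, so $\bigcup_{\beta<\sigma}p_\beta$ is the natural matrix on $\acc(\gamma)$, where $\gamma:=\sup_{\beta<\sigma}\gamma_\beta$; this is a lower bound for every played condition since the $p_\beta$ end-extend all earlier moves. She sets $i(\gamma):=0$ and $C_{\gamma,i}:=\bigcup_{\beta<\sigma}C^{p_\beta}_{\gamma_\beta,i}$ for each $i<\theta$. Full coherence makes each $C_{\gamma,i}$ a continuous $\s$-increasing chain of clubs whose successive terms are initial segments of one another, hence a club in $\gamma$; these are $\s$-increasing in $i$ with $\bigcup_i\acc(C_{\gamma,i})=\acc(\gamma)$, and Clauses~(3) and~(4) of Definition~\ref{definition: ForcingIndexedMinus} relating the top to each $\bar\alpha<\gamma$ are inherited levelwise (pick $\beta$ with $\bar\alpha<\gamma_\beta$ and use that $p_\beta$ is a condition). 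Thus the resulting $p_\sigma$ is a lower bound, and Player~II survives the limit.

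The real work, and the main obstacle, is the successor step. Given Player~I's move $p$ with top $\gamma^p\ge\gamma_\beta$, Player~II must extend $p$ to a condition $q$ with a new top $\gamma_{\beta'}$ of cofinality $\theta$, $i(\gamma_{\beta'})^q:=0$, achieving $C^q_{\gamma_{\beta'},i}\cap\gamma_\beta = C^{p_\beta}_{\gamma_\beta,i}$ for \emph{all} $i<\theta$. The difficulty is that the matrix of $p$ on $\acc(\gamma^p+1)$ is frozen while Clause~(3) forces strong coherence at every accumulation point of cofinality $\ge\theta$ that her new top clubs traverse in $(\gamma_\beta,\gamma^p]$; since Player~I may defer the appearance of $\gamma_\beta$ in his top clubs to arbitrarily large indices, naively continuing $p$'s top would secure thread coherence only above a threshold that could creep up to $\theta$ along the run, breaking the limit union. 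I would resolve this by taking $i_0<\theta$ least such that, for all $i\ge i_0$, $\gamma_\beta\in\acc(C^p_{\gamma^p,i})$ and $C^p_{\gamma^p,i}\cap\gamma_\beta=C^{p_\beta}_{\gamma_\beta,i}$ (such $i_0$ exists by Clauses~(2) and~(4) for $p$, and immediately from Clause~(3) once $\cf(\gamma_\beta)\ge\theta$), and defining $C^q_{\gamma_{\beta'},i}$ in two regimes. For $i\ge i_0$ I route through $p$'s top, setting $C^q_{\gamma_{\beta'},i}:=C^p_{\gamma^p,i}\cup\{\gamma^p\}\cup T_i$; here the thread coherence at $\gamma_\beta$ and the coherence with every frozen high-cofinality point of $C^p_{\gamma^p,i}$ hold automatically by Clause~(3) for $p$. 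For $i<i_0$ I instead \emph{skip the frozen interval}, setting $C^q_{\gamma_{\beta'},i}:=C^{p_\beta}_{\gamma_\beta,i}\cup\{\gamma_\beta\}\cup T_i$ with no points in $(\gamma_\beta,\gamma^p]$, where $T_i\s(\gamma^p,\gamma_{\beta'})$ is a closed cofinal tail in Player~II's freshly defined region above $\gamma^p$. Choosing the tails $\langle T_i\mid i<\theta\rangle$ to be $\s$-increasing of order type $\theta$ forces all accumulation points they contribute to have cofinality $<\theta$, so Clause~(3) imposes nothing there beyond the coherence Player~II installs by fiat on her own region $(\gamma^p,\gamma_{\beta'}]$; moreover the regime change at $i_0$ only \emph{adds} the frozen middle, keeping the clubs $\s$-increasing in $i$. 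It then remains to check the routine matters—that $q$ end-extends $p$, that Clauses~(2)--(4) hold for $q$ (covering of $\acc(\gamma_{\beta'})$, strong coherence at $\gamma^p$ when $\cf(\gamma^p)\ge\theta$, and eventual coherence for pairs $(\bar\alpha,\gamma_{\beta'})$ via the $i\ge i_0$ regime), and that the invariant is preserved—all of which I would verify levelwise as above.
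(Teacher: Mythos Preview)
Your proposal is correct and follows essentially the same strategy as the paper: maintain at even stages a fully coherent thread (\,$i(\gamma_\beta)=0$ and $C_{\gamma_{\beta'},i}\cap\gamma_\beta=C_{\gamma_\beta,i}$ for \emph{all} $i<\theta$\,), handle the successor step by splitting into two regimes at a threshold $i_0$ (skip the interval $(\gamma_\beta,\gamma^p]$ below $i_0$, route through Player~I's top at or above $i_0$), and at limit stages simply take unions along the thread.

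The only notable divergence is your choice to make each successor-stage top have cofinality $\theta$ with $\subseteq$-increasing tails $T_i$ of order type $\theta$, whereas the paper takes the new top to be $\gamma^{p_{\eta+1}}+\omega$ and uses the single tail $\{\gamma^{p_{\eta+1}}+n\mid n<\omega\}$ for every $i$. The paper's choice is simpler: the fresh region is trivial (no interior limit points, so nothing further to define there), the tail is the same at every index (so $\subseteq$-monotonicity is automatic), and nothing is gained by forcing $\cf(\gamma_\beta)=\theta$ since the thread coherence is secured by Player~II's construction rather than by Clause~(3). Your version works, but it obliges you to populate the entire interval $(\gamma^p,\gamma_{\beta'}]$ with a coherent matrix and to check covering of $\acc(\gamma_{\beta'})$ via the tails---extra bookkeeping that the paper's $+\omega$ move sidesteps entirely.
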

\begin{proof}
We describe a winning strategy for Player II in $\Game_\kappa(\mathbb
P^-(\kappa, \theta))$.
Suppose $0 < \xi < \kappa$ is an even ordinal and $\langle p_\eta \mid \eta < \xi \rangle$ is a partial play of $\Game_\kappa(\mathbb P^-(\kappa, \theta))$.
Assume we have arranged inductively that, for all even nonzero ordinals $\eta < \xi$,
we have $\gamma^{p_\eta} < \gamma^{p_\xi}$, $i(\gamma^{p_\eta})^{p_\eta} = i(\gamma^{p_\xi}) = 0$, and, for all
$i < \theta$, $C^{p_\eta}_{\gamma^{p_\eta},i} = C^{p_\xi}_{\gamma^{p_\xi},i}
\cap \gamma^{p_\eta}$.

Suppose first that $\xi = \eta + 2$ for some even $\eta < \kappa$.
We shall define a condition  $p_\xi = \langle C^{p_\xi}_{\alpha, i}
\mid \alpha\in\acc(\gamma^{p_\xi} + 1), ~ i(\alpha)^{p_\xi} \le i < \theta
\rangle$ extending $p_{\eta+1}$.
First, set $\gamma^{p_\xi}:=\gamma^{p_{\eta+1}}+\omega$ and $i(\gamma^{p_\xi})^{p_\xi}
= 0$. To complete the definition of $p_\xi$, we only need to define
$$\langle C^{p_\xi}_{\gamma^{p_\xi}, i} \mid i < \theta \rangle.$$
First, fix $i^* < \theta$ such that, for all $i^* \leq i < \theta$, we have
either $\gamma^{p_\eta} = \gamma^{p_{\eta+1}}$ or $C^{p_\eta}_{\gamma^{p_\eta},i}
= C^{p_{\eta+1}}_{\gamma^{p_{\eta+1}},i} \cap \gamma^{p_\eta}$. Now, for
all $i < \theta$, define $C^{p_\xi}_{\gamma^{p_\xi},i}$ as follows.

$\br$ If $i < i^*$, then let
\[
C^{p_\xi}_{\gamma^{p_\xi},i} := C^{p_\eta}_{\gamma^{p_\eta},i} \cup
\{\gamma^{p_\eta}\} \cup \{\gamma^{p_{\eta + 1}} + n \mid n < \omega\}.
\]

$\br$ If $i \geq i^*$, then let
\[
C^{p_\xi}_{\gamma^{p_\xi},i} := C^{p_{\eta+1}}_{\gamma^{p_{\eta+1}},i}
\cup \{\gamma^{p_{\eta + 1}} + n \mid n < \omega\}.
\]

It is easily verified that $p_\xi$ forms a legitimate condition extending $p_{\eta + 1}$
satisfying the inductive hypothesis.

Next, suppose that $\xi$ is a limit ordinal. Let $\gamma^{p_\xi} := \sup_{\eta<\xi}\gamma^{p_\eta}$ and $i(\gamma^{p_\xi})^{p_\xi} = 0$. To complete the definition
of $p_\xi$, it remains to specify
$$\langle C^{p_\xi}_{\gamma^{p_\xi}, i} \mid i < \theta \rangle.$$
By our inductive hypothesis, we know that $\langle \gamma^{p_\eta} \mid \eta < \xi,
\eta \text{ is even}\rangle$ enumerates a club in $\gamma^{p_\xi}$ and, for all
$i < \theta$ and all even $\eta < \eta' < \xi$, we have $C^{p_\eta}_{\gamma^{p_\eta},i}
= C^{p_{\eta'}}_{\gamma^{p_{\eta'}},i} \cap \gamma^{p_\eta}$. Therefore, for each
$i < \theta$, we can set
\[
C^{p_\xi}_{\gamma^{p_\xi},i} := \bigcup \{C^{p_\eta}_{\gamma^{p_\eta},i} \mid
\eta < \xi, \eta \text{ is even}\}.
\]
It easy to see that $p_\xi$ is a lower bound for $\langle p_\eta \mid \eta < \xi \rangle$ and maintains the inductive hypothesis. This completes the description
of the winning strategy for Player II.
\end{proof}

So, forcing with $\mathbb P^-(\kappa, \theta)$ preserves all cardinalities and cofinalities $\le \kappa$. If, in addition, $\kappa^{< \kappa} = \kappa$, then $|\mathbb P^-(\kappa, \theta)| = \kappa$ and hence preserves all cardinalities and cofinalities.
The proof of Lemma \ref{game} makes it clear that, for every $\alpha < \kappa$, the
set $D_\alpha := \{p \in \mathbb P^-(\kappa, \theta) \mid \gamma^p\ge\alpha \}$ is dense in $\mathbb P^-(\kappa, \theta)$

\begin{lemma}
Let $G$ be $\mathbb P^-(\kappa, \theta)$-generic over $V$.
Set $\vec{{C}} := \bigcup G = \langle C_{\alpha, i} \mid \alpha \in \acc(\kappa),~ i(\alpha) \le i < \theta \rangle$.
Then $\vec{{C}}$ is an $\inde(\kappa,\theta)$-sequence.
\end{lemma}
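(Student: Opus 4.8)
The plan is to verify that the generic object $\vec{C} = \bigcup G$ satisfies all four clauses of Definition~\ref{def: weakening}. The genericity and the density of the sets $D_\alpha$ established after Lemma~\ref{game} will immediately guarantee that $\vec{C}$ is a total matrix indexed by all of $\acc(\kappa)$: since $D_\alpha$ is dense for every $\alpha < \kappa$, the generic filter contains conditions with arbitrarily large $\gamma^p$, so every $\alpha \in \acc(\kappa)$ appears in the domain of some condition. Because the forcing is ordered by end-extension and the coherence requirements (2)--(4) of Definition~\ref{definition: ForcingIndexedMinus} match the corresponding clauses of Definition~\ref{def: weakening} verbatim, clauses (1)--(3) of the target principle transfer directly from the pointwise requirements on individual conditions to the union. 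In more detail, for clause~(1) I would note that if $\alpha \in \acc(\kappa)$ and $p \in G$ has $\alpha \in \acc(\gamma^p + 1)$, then $i(\alpha) = i(\alpha)^p$ and $C_{\alpha,i} = C^p_{\alpha,i}$ are well-defined and independent of the choice of $p$ (by end-extension), and the properties of $\langle C^p_{\alpha,i} \mid i(\alpha)^p \le i < \theta \rangle$ being a $\s$-increasing sequence of clubs in $\alpha$ with $\acc(\alpha) = \bigcup_{i(\alpha) \le i < \theta} \acc(C_{\alpha,i})$ are inherited. Clauses~(2) and~(3) follow in exactly the same fashion from requirements~(3) and~(4) of Definition~\ref{definition: ForcingIndexedMinus}, using that any two relevant ordinals $\bar\alpha < \alpha$ lie in the domain of a common condition in $G$.

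The only clause requiring genuine work is \textbf{nontriviality}, clause~(4): for every club $D$ in $\kappa$ (in the extension), there must exist $\alpha \in \acc(D) \cap E^\kappa_{\ge \theta}$ such that $D \cap \alpha \neq C_{\alpha,i}$ for all $i < \theta$. This is where I expect the main obstacle to lie, and the argument must be a density/genericity argument carried out in $V$. The plan is to suppose toward a contradiction that some condition $p \in G$ forces that a $\bb{P}^-(\kappa,\theta)$-name $\dot{D}$ is a club in $\kappa$ threading the generic sequence, i.e., forcing that for every $\alpha \in \acc(\dot D) \cap E^\kappa_{\ge\theta}$ there is some $i < \theta$ with $\dot D \cap \alpha = C_{\alpha,i}$. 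I would then build, using the strategic closure from Lemma~\ref{game} together with the $\kappa^{<\kappa} = \kappa$ (or an elementary-submodel) apparatus, a decreasing sequence of conditions that decides longer and longer initial segments of $\dot D$, arriving at a condition $q$ with top point $\gamma^q = \alpha \in E^\kappa_{\ge\theta}$ such that $q$ has already committed $\dot D \cap \alpha$ to be a club $d$ in $\alpha$ coherent with the threading requirement. The key is that at the top level $\alpha$ I retain the freedom to define $C^q_{\alpha,i}$ for all $i < \theta$ to differ from this predicted $d$ (for instance by making each $C^q_{\alpha,i}$ disagree with $d$ on a nonaccumulation point), precisely because requirement~(2) of Definition~\ref{definition: ForcingIndexedMinus} only constrains $C^q_{\alpha,i}$ through its accumulation points and the coherence below $\alpha$, leaving the choice at the very top unconstrained. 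This yields a condition forcing $\dot D \cap \alpha \neq C_{\alpha,i}$ for all $i$, contradicting the assumption on $p$.

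The delicate point in executing the nontriviality argument is ensuring that the diagonal construction lands on an $\alpha$ of cofinality $\ge \theta$ (so that clause~(4) demands membership in $E^\kappa_{\ge\theta}$), while simultaneously guaranteeing that the reflection point is in $\acc(\dot D)$ and that the committed initial data $d = \dot D \cap \alpha$ genuinely differs from every $C_{\alpha,i}$. I would handle the cofinality requirement by reflecting through an elementary submodel $M \prec H(\kappa^+)$ with $M \cap \kappa = \alpha \in E^\kappa_{\ge\theta}$, which exists since the strategic closure and the stationarity of $E^\kappa_{\ge\theta}$ can be arranged; running the winning strategy for Player~II inside $M$ produces an $(M, \bb{P}^-(\kappa,\theta))$-generic descending sequence whose union commits $\dot D \cap \alpha$ to a club $d$ in $\alpha = \sup(M \cap \kappa)$. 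Because there are $\theta$-many sequences $C_{\alpha,i}$ but I control the single extra level freely, a counting or direct diagonal choice of $C^q_{\alpha,i}$ avoiding $d$ for each $i < \theta$ completes the contradiction. I would close by noting that this also shows $\vec C$ is genuinely nontrivial in the extension, so that, combined with clauses~(1)--(3), $\vec C$ is a bona fide $\inde(\kappa,\theta)$-sequence.
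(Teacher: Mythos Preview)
Your handling of clauses (1)--(3) is correct and matches the paper exactly: these transfer from conditions to the union by end-extension, and the paper dismisses them in one sentence.

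For clause (4), however, your direct approach has a real gap. You claim that at the top ordinal $\alpha$ you ``retain the freedom to define $C^q_{\alpha,i}$ for all $i < \theta$ to differ from [the committed] $d$'', for instance by tweaking a nonaccumulation point. But this freedom is far more constrained than you suggest. Requirement~(3) of Definition~\ref{definition: ForcingIndexedMinus} forces $C^q_{\alpha,i}\cap\bar\alpha = C^q_{\bar\alpha,i}$ for every $\bar\alpha\in\acc(C^q_{\alpha,i})\cap E^\kappa_{\geq\theta}$, and requirement~(4) forces $C^q_{\alpha,i}\cap\bar\alpha = C^q_{\bar\alpha,i}$ for \emph{every} $\bar\alpha<\alpha$ once $i$ is large enough. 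Together with $\subseteq$-monotonicity in $i$, these pin down $C^q_{\alpha,i}$ on arbitrarily long initial segments; if the data below $\alpha$ happens to make the natural top row agree with $d$, it is not at all clear where you can insert or delete a successor ordinal without breaking one of these constraints for some $i$. Your ``counting or direct diagonal choice'' does not address this, and no amount of elementary-submodel bookkeeping resolves it, since $d$ is only determined \emph{after} the descending sequence is built.

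The paper avoids this difficulty entirely by exploiting a different degree of freedom: not the clubs $C_{\alpha,i}$, but the index $i(\alpha)$. It first proves an intermediate reduction (Claim~\ref{claim_3101}): if for every $i^*<\theta$ the set $\{\alpha\in E^\kappa_{\geq\theta}\mid i(\alpha)>i^*\}$ is stationary, then nontriviality follows by a Fodor argument. Then it verifies this hypothesis by a simple density computation: given $p$, $\dot D$, and $i^*$, build a decreasing $\theta$-sequence below $p$ interleaving elements forced into $\dot D$, take its sup $\gamma\in E^\kappa_\theta$, form the lower bound $q_0$ from Lemma~\ref{lemma: directedclosed}, and simply reset $i(\gamma)^q:=i^*+1$ while keeping $C^q_{\gamma,i}=C^{q_0}_{\gamma,i}$ for $i>i^*$. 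Raising $i(\gamma)$ is \emph{genuinely} unconstrained, which is exactly the flexibility your argument lacks.
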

\begin{proof} The only nontrivial thing to verify is that
$\vec{{C}}$ satisfies Clause~(4) of Definition~\ref{def: weakening}.
\begin{claim} \label{claim_3101}
Suppose that for every $i^*<\theta$, the set $\{\alpha\in E^\kappa_{\geq \theta} \mid i(\alpha)>i^*\}$ is stationary.
Then for every club $D$ in $\kappa$, there exists $\alpha\in\acc(D) \cap E^\kappa_{\geq \theta}$ such that, for all $i<\theta$, $D\cap\alpha\neq C_{\alpha,i}$.
\end{claim}
\begin{proof} Suppose that there is a club $D \s\kappa$ such that, for all $\alpha \in \acc(D)\cap E^\kappa_{\geq \theta}$,
there exists $i_\alpha < \theta$ for which $D \cap \alpha = C_{\alpha, i_\alpha}$.
Find a stationary set $S\s\acc(D)\cap E^\kappa_{\geq \theta}$ and some $i^*<\theta$ such that $i_\alpha=i^*$ for all $\alpha\in S$.
Then it easily follows that for every $\alpha \in \acc(D)\cap E^\kappa_{\geq \theta}$,
we have $D \cap \alpha = C_{\alpha, i^*}$.
\end{proof}

It thus suffices to verify that the hypothesis of Claim \ref{claim_3101} holds.
Work back in $V$.
Fix $p \in G$, a $\mathbb P^-(\kappa, \theta)$-name $\dot{D}$ such that $p \Vdash ``\dot{D}\text{ is club in }\kappa"$, and an ordinal $i^*<\theta$.
Build a strictly decreasing sequence $\vec{p} = \langle p_\eta \mid \eta < \theta \rangle$
of conditions in $\bb{P}^-(\kappa, \theta)$ below $p$ together with an increasing
sequence of ordinals $\langle \delta_\eta \mid \eta < \theta \rangle$ such that,
for all $\eta < \theta$, we have
\begin{itemize}
\item $\gamma^{p_\eta} < \delta_\eta < \gamma^{p_{\eta+1}}$; and
\item $p_{\eta+1} \Vdash \check{\delta}_\eta \in \dot{D}$.
\end{itemize}
Let $\gamma := \sup\{\delta_\eta \mid \eta < \theta\} = \sup\{\gamma^{p_\eta}
\mid \eta < \theta\}$. By Lemma \ref{lemma: directedclosed}, we can find a
lower bound $q_0$ for $\vec{p}$ such that $\gamma^{q_0} = \gamma$. The condition
constructed in the proof of that lemma satisfies $i(\gamma)^{q_0} = 0$. However,
if we alter $q_0$ to a condition $q$ simply by setting $i(\gamma)^q = i^* + 1$
and leaving $C^q_{\gamma,i} = C^{q_0}_{\gamma,i}$ for all $i^* \leq i < \theta$,
then $q$ is still a lower bound for $\vec{p}$. Moreover,
\[
q \Vdash ``\check{\gamma} \in \acc(D) \cap E^\kappa_{\geq \theta}
\text{ and } \dot{i(\alpha)} > i^*".
\]
By genericity, the conclusion follows.
\end{proof}
\begin{remark} The above forcing introduces a non-reflecting stationary subset of $E^\kappa_\theta$, e.g., the set of all $\alpha\in E^\kappa_\theta$ such that $i(\alpha)=0$ and $\otp(C_{\alpha,0})=\theta$.
\end{remark}

We now arrive at the proof of Theorem~\ref{theorem: main2}(2):

\begin{cor}\label{mmiscompatible} If $\mm$ holds, then for every regular uncountable cardinal $\kappa=\kappa^{<\kappa}$, in some cofinality-preserving forcing extension,
$\mm$ and $\inde(\kappa, \omega_1)$ both hold.
\end{cor}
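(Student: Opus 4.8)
The plan is to combine the forcing $\mathbb{P}^-(\kappa,\omega_1)$ of Definition~\ref{definition: ForcingIndexedMinus} with the preservation of $\mm$, exploiting the strong closure properties of the forcing established in Lemmas~\ref{lemma: directedclosed} and~\ref{game}. Fix a regular uncountable $\kappa = \kappa^{<\kappa}$ and work in a model $V$ of $\mm$. Set $\theta = \omega_1$ and force with $\mathbb{P} := \mathbb{P}^-(\kappa, \omega_1)$. The preceding lemmas already hand us the two crucial facts: by Lemma~\ref{game} (together with the remark that $|\mathbb{P}| = \kappa$ under $\kappa^{<\kappa}=\kappa$) the forcing is $\kappa$-strategically closed and of size $\kappa$, hence preserves all cardinals and cofinalities; and by the final lemma of the excerpt, $\bigcup G$ is an $\inde(\kappa, \omega_1)$-sequence in the extension. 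So $\inde(\kappa, \omega_1)$ holds in $V^{\mathbb{P}}$, and it only remains to show that $\mm$ is preserved.

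The main obstacle is exactly the preservation of $\mm$, since $\mm$ is a statement about all stationary-set-preserving forcings and is notoriously fragile under forcing. The standard route I would take is to invoke the theorem of Larson that $\mm$ (indeed $\mm^{++}$) is preserved by any $\omega_2$-directed closed forcing. Since $\theta = \omega_1$, Lemma~\ref{lemma: directedclosed} tells us $\mathbb{P}^-(\kappa,\omega_1)$ is $\omega_2$-directed closed, which is precisely the hypothesis Larson's preservation theorem requires. Thus $\mm$ continues to hold in $V^{\mathbb{P}}$. I would cite Larson's result here rather than reprove it; the whole point of having arranged that $\mathbb{P}^-(\kappa,\theta)$ is $\theta^+$-directed closed (rather than merely $\kappa$-strategically closed) is to make this preservation theorem applicable at $\theta = \omega_1$.

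Assembling the pieces: in $V \models \mm$ with $\kappa = \kappa^{<\kappa}$ regular uncountable, let $G$ be $\mathbb{P}^-(\kappa, \omega_1)$-generic. The forcing is $\omega_2$-directed closed (Lemma~\ref{lemma: directedclosed}) and of cardinality $\kappa$ preserving all cardinals and cofinalities (Lemma~\ref{game} plus $\kappa^{<\kappa}=\kappa$), so by Larson's preservation theorem $\mm$ holds in $V[G]$. By the final lemma, $\bigcup G$ witnesses $\inde(\kappa, \omega_1)$ in $V[G]$. Hence $V[G]$ is a cofinality-preserving extension in which both $\mm$ and $\inde(\kappa, \omega_1)$ hold, as desired. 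The one detail worth double-checking is that the closure hypothesis in the cited preservation theorem matches what Lemma~\ref{lemma: directedclosed} delivers at $\theta = \omega_1$, namely $\omega_2$-directed closure; everything else is bookkeeping of cardinal preservation already contained in the lemmas above.
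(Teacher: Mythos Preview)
Your proposal is correct and follows essentially the same approach as the paper: force with $\mathbb{P}^-(\kappa,\omega_1)$, use Lemma~\ref{lemma: directedclosed} to get $\omega_2$-directed closure, and invoke Larson's theorem (\cite[Theorem~4.3]{MR1782117}) that $\mm$ is preserved by $\omega_2$-directed closed forcing. The paper's proof is just this one-line citation; your added remarks about cofinality preservation are correct and simply make explicit what the paper leaves to the surrounding lemmas.
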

\begin{proof} By \cite[Theorem~4.3]{MR1782117}, $\mm$ is preserved by any $\omega_2$-directed closed forcing.
\end{proof}

\begin{remark} Comparing Corollary~\ref{mmiscompatible} and Proposition~\ref{prop: inde_wc}
with Theorem~\ref{theoerm: SSR} and \cite[Theorem~4.4]{paper36},
we see that the pump-up feature of \cite[Corollary~5.20]{knaster_ii} is not available for the class of subadditive colorings.
\end{remark}

\subsection{Another interpolant} In \cite{hlh},
Hayut and Lambie-Hanson introduced the following definition as part of their investigation of
$\square(\kappa,\theta)$-sequences and stationary reflection principles.
\begin{definition}[{\cite[Definition~2.17]{hlh}}]\label{fulls} A $\square(\kappa,\theta)$-sequence $\langle\mathcal C_\alpha\mid\alpha<\kappa\rangle$
is said to be \emph{full} if the following set is cofinal in $\kappa$:
$$\Gamma:=\{\gamma<\kappa\mid \{\alpha<\kappa\mid \gamma\notin\bigcup\nolimits_{C\in\mathcal C_\alpha}\acc(C)\}\text{ is nonstationary in }\kappa\}.$$
\end{definition}
\begin{remark} $\square^{\ind}(\kappa, \theta)\implies\exists\text{ full }\square(\kappa,\theta)\text{-sequence}\implies\square(\kappa,\theta)$.
\end{remark}

Question~3 of \cite{hlh} asks whether $\square(\kappa,\theta)$ may always be witnessed by a full $\square(\kappa,\theta)$-sequence.
A negative answer follows from a result of Susice \cite{susice2019special} together with the following observation.
\begin{prop}\label{prop313} Suppose that there exists a full $\square(\kappa,\theta)$-sequence.
Then there exists a $\kappa$-Aronszajn tree $T$ with a $\theta$-ascending path.\footnote{See Definition~\ref{ascdef}.}
\end{prop}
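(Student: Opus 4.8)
The plan is to build the tree directly from the traces of the clubs appearing in the full $\square(\kappa,\theta)$-sequence, and to read the ascending path off the $\theta$-many clubs attached to each level. Fix a full $\square(\kappa,\theta)$-sequence $\langle\mathcal C_\alpha\mid\alpha<\kappa\rangle$ and, for each $\alpha\in\acc(\kappa)$, an enumeration $\mathcal C_\alpha=\langle C_{\alpha,i}\mid i<\theta\rangle$ (repeating entries if $|\mathcal C_\alpha|<\theta$). For a club $C$ and $\gamma\le\sup(C)$, write $\chi_C\restriction\gamma$ for the characteristic function of $C\cap\gamma$ viewed as a subset of $\gamma$. The underlying object is the tree of traces $T:=\{\chi_{C_{\alpha,i}}\restriction\gamma\mid \alpha\in\acc(\kappa),\ i<\theta,\ \gamma\le\alpha\}$, ordered by end-extension, so that $T_\gamma$ consists of the nodes of domain $\gamma$; the candidate $\theta$-ascending path is $\langle b_\alpha\mid\alpha\in\acc(\kappa)\rangle$ with $b_\alpha(i):=\chi_{C_{\alpha,i}}\restriction\alpha\in T_\alpha$. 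Three things must be checked: that $T$ is a genuine $\kappa$-tree with levels of size $<\kappa$, that $\langle b_\alpha\rangle$ is an ascending path through it in the sense of Definition~\ref{ascdef}, and that $T$ has no cofinal branch.

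The heart of the argument, and the place where \emph{fullness} is indispensable, is the ascent property. The traces are controlled by coherence: if $\bar\alpha\in\acc(C_{\alpha,i})$ then $C_{\alpha,i}\cap\bar\alpha\in\mathcal C_{\bar\alpha}$, say $C_{\alpha,i}\cap\bar\alpha=C_{\bar\alpha,j}$, whence $b_{\bar\alpha}(j)=b_\alpha(i)\restriction\bar\alpha$ lies $<_T b_\alpha(i)$. Thus $b_{\bar\alpha}$ sits below $b_\alpha$ precisely when $\bar\alpha$ is \emph{seen} by some club of $\mathcal C_\alpha$, i.e. $\bar\alpha\in\bigcup_{C\in\mathcal C_\alpha}\acc(C)$. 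Plain $\square(\kappa,\theta)$ gives no control over which $\bar\alpha$ are seen, but fullness does: for each $\gamma$ in the cofinal set $\Gamma$ of Definition~\ref{fulls}, the set of $\alpha$ failing to see $\gamma$ is nonstationary, so one may fix a club $E_\gamma$ of $\alpha$ that see $\gamma$. Taking the diagonal intersection $E^*:=\{\alpha<\kappa\mid\forall\gamma\in\Gamma\cap\alpha,\ \alpha\in E_\gamma\}$ produces a club such that every $\alpha\in E^*$ sees every $\gamma\in\Gamma\cap\alpha$. I would therefore probe the clubs only along the cofinal ``good'' set of levels $\Lambda:=\{\gamma\in\Gamma\cap E^*\mid\sup(\Gamma\cap E^*\cap\gamma)=\gamma\}$, replacing each $\chi_{C_{\alpha,i}}\restriction\gamma$ by its restriction to coordinates in $\Lambda\cap\gamma$ and reindexing levels by $\otp$. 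Then for every pair $\bar\alpha<\alpha$ of levels in $\Lambda$ there are $i,j<\theta$ with $b_{\bar\alpha}(j)<_T b_\alpha(i)$, which is the comparability required by Definition~\ref{ascdef}.

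For the tree conditions, each level has size $<\kappa$ by coherence: a trace $C_{\alpha,i}\cap\gamma$ is determined by its largest accumulation cut $\bar\gamma\le\gamma$ together with a member of $\mathcal C_{\bar\gamma}$ (of which there are $\le\theta$) and a tail of successor points, so at most $\max(|\gamma|,\theta)<\kappa$ distinct traces occur at level $\gamma$; restricting coordinates to $\Lambda$ only shrinks this. Since the levels are nonempty cofinally, $T$ has height $\kappa$. Finally, a cofinal branch would be the characteristic function of a set $D\s\kappa$ all of whose initial traces along $\Lambda$ agree with some $C_{\alpha,i}\cap\gamma$; a standard closing-off argument then shows that the closure of $D$ threads $\vec C$, i.e.\ $\overline D\cap\bar\alpha\in\mathcal C_{\bar\alpha}$ for club-many $\bar\alpha$, contradicting the nontriviality clause of $\square(\kappa,\theta)$. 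Hence $T$ is $\kappa$-Aronszajn and carries the $\theta$-ascending path, as required.

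I expect the ascent step to be the main obstacle. Because a full $\square(\kappa,\theta)$-sequence is only index-shifting coherent, one cannot hope for $b_{\bar\alpha}(i)<_T b_\alpha(i)$ at a fixed index and must settle for the existential form of comparability; more seriously, securing this for \emph{every} pair of levels rather than merely club-many is exactly what forces the passage to the diagonalized club $E^*$ and the thinned level set $\Lambda$. Reconciling this thinning with a genuine ordinal level structure whose levels have size $<\kappa$ is the delicate bookkeeping, but it is precisely what fullness supplies, explaining why the weaker $\square(\kappa,\theta)$ does not suffice.
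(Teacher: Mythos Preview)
Your approach---building the tree from characteristic functions of the clubs and reading the ascending path off the $\theta$-indexed enumeration---is natural but differs from the paper's, which instead takes $T=\bigcup_{i<\theta}\mathcal T(\rho_0(\vec{C^i}))$, the union of the $\rho_0$-trees attached to the transversals $\vec{C^i}=\langle C^i_\alpha\mid\alpha<\kappa\rangle$, and appeals to standard walks-on-ordinals facts for the $\kappa$-Aronszajn property.

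There is, however, a genuine gap in your construction of $\Lambda$. Fullness (Definition~\ref{fulls}) only guarantees that $\Gamma$ is \emph{cofinal} in $\kappa$, not stationary. Your diagonal intersection $E^*$ is indeed a club, but the intersection of a club with a merely cofinal set can be bounded: nothing prevents $\Gamma$ from lying entirely inside the gaps of $E^*$. Hence $\Gamma\cap E^*$, and with it your set $\Lambda$, need not be cofinal in $\kappa$, so the reindexed tree need not have height $\kappa$. The obvious recursive alternative---choosing $\lambda_\xi\in\Gamma\cap\bigcap_{\eta<\xi}E_{\lambda_\eta}$ above the earlier stages---faces exactly the same obstruction at each step, since $\Gamma$ intersected with a club may again be bounded.

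There is a secondary difficulty. Once you restrict coordinates to $\Lambda$, the no-branch argument you sketch no longer applies. A cofinal branch through the restricted tree determines only a subset $D\subseteq\Lambda$ satisfying $D\cap\gamma=C_{\alpha,i}\cap\Lambda\cap\gamma$ at each level; this does not recover $C_{\alpha,i}\cap\gamma$ itself, and the projection $C\mapsto C\cap\Lambda$ may identify distinct clubs, so nodes that were incomparable in the unrestricted tree can become comparable after restriction and generate new branches. Your ``standard closing-off'' would yield a thread only from a branch through the \emph{unrestricted} tree of full characteristic functions.
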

\begin{proof}
Suppose $\vec{\mathcal C}=\langle\mathcal C_\alpha\mid\alpha<\kappa\rangle$ is a full $\square(\kappa,\theta)$-sequence.
For each $\alpha<\kappa$, let $\langle C_\alpha^i\mid i<\theta\rangle$ be some enumeration of $\mathcal C_\alpha$, with repetitions if necessary.
For each $i<\theta$, let $T^i$ be the tree $\mathcal T(\rho_0(\vec{C^i}))$ for the $C$-sequence $\vec{C^i}:=\langle C_\alpha^i\mid \alpha<\kappa\rangle$ (see \cite[\S 6.1]{todorcevicwalksbook}
for the definition of $\mathcal T(\rho_0(\vec{C^i}))$).
As each $\vec{C^i}$ is in particular a transversal for a $\square(\kappa,{<}\kappa)$-sequence,
$T^i$ is a $\kappa$-Aronszajn tree.\footnote{This is a standard argument. The proof that it is a $\kappa$-tree is similar to that of \cite[Claim~4.11.3]{paper34}.
The proof that it has no $\kappa$-branch is as that of the forward implication of \cite[Theorem~6.3.5]{todorcevicwalksbook}.}
Consequently, $T:=\bigcup_{i<\theta}T^i$ is a $\kappa$-Aronszajn tree.
A moment's reflection makes it clear that if $\vec{\mathcal C}$ is full, then $T$ admits a $\theta$-ascending path.
\end{proof}

In \cite{susice2019special}, Susice proved that $\square_{\omega_1,2}$ is consistent with the assertion that
all $\aleph_2$-Aronszajn trees are special. As $\square_{\omega_1,2}$ implies $\square(\omega_2,\omega)$,
it suffices to prove that if all $\aleph_2$-Aronszajn trees are special, then there are no full $\square(\omega_2,\omega)$-sequences.
But this follows from Laver's theorem that an $\omega_2$-Aronszajn tree with an $\omega$-ascending path is nonspecial (see \cite[Corollary~1.7]{lucke}).

It is worth noting that $\diamondsuit(\omega_1)$ holds in Susice's model, assuming it held in the ground model. The reason is that the forcing he used is countably closed and countably closed forcings are known to preserve $\diamondsuit(\omega_1)$.
It was proved in \cite{paper31} that $\diamondsuit(\omega_1)+\square_{\omega_1}$ gives an $\aleph_2$-Souslin tree,
and this model shows that $\square_{\omega_1}$ cannot be relaxed to $\square_{\omega_1,2}$.

\section{The impact of indecomposable ultrafilters}\label{section: ultrafiltersquare}

For the convenience of stating results, let us  define the following.

\begin{definition}\label{interval-indecomposable}
For $\theta < \kappa$, an ultrafilter $U$ is said to be \emph{$[\theta, \kappa)$-indecomposable} if it is $\mu$-indecomposable for all $\theta\in[\mu,\kappa)$.
\end{definition}

Note that this is equivalent to the assertion that, for every $\mu
< \kappa$ and every function $f:\kappa \rightarrow \mu$, there is $H \in [\mu]^{<\theta}$
such that $f^{-1}[H] \in U$.
Recall that an ultrafilter $U$ over a cardinal $\kappa > \aleph_1$ is
\emph{indecomposable} if it is uniform and $[\aleph_1,\kappa)$-indecomposable.

Note that an ultrafilter $U$ is $\aleph_0$-indecomposable if and only if it is $\aleph_1$-complete. Also,
if $U$ is a nonprincipal ultrafilter containing a set of cardinality $\mu$, then $U$ is
$\mu$-decomposable. We remark that, by a result of Kunen and Prikry \cite{kunen_prikry},
if $\mu$ is a regular
cardinal and $U$ is $\mu$-indecomposable, then it is also $\mu^+$-indecomposable. As a
result, if $\kappa$ carries a uniform indecomposable ultrafilter, then $\kappa$ cannot be the
successor of a regular cardinal.

\begin{fact}[Silver, {\cite[Lemma~2]{silver}}]\label{finest}
Suppose that $\theta$ is regular and $U$ is a uniform $[\theta,\kappa)$-indecomposable ultrafilter over a cardinal $\lambda$ with $\lambda\geq \kappa>2^{\theta}$
that is not $\theta$-complete.
Then there exist a $\mu<\theta$ and a map $\varphi: \lambda\rightarrow \mu$ that is a \emph{finest partition} associated to $U$. That is:
\begin{itemize}
\item for all $n<\mu$, $\varphi^{-1}[n]\notin U$;
\item for any $f: \lambda\to \gamma$ with $\gamma<\kappa$, there exists a function $g: \mu\to \gamma$ such that $f = g\circ \varphi \pmod U$.
\end{itemize}
\end{fact}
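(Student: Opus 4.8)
The plan is to reduce the statement to a purely combinatorial problem about the refinement order on small‑range functions and then to settle that problem using the hypothesis $\kappa > 2^{\theta}$ together with indecomposability. Throughout, for $f,g \colon \lambda \to \mathrm{Ord}$ write $f \le_U g$ ($g$ \emph{refines} $f$) to mean $f = h \circ g \pmod U$ for some $h$, and write $f \equiv_U g$ when $f \le_U g \le_U f$.

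First I would record two reductions. By the reformulation of $[\theta,\kappa)$‑indecomposability noted after Definition~\ref{interval-indecomposable}, for every $\gamma < \kappa$ and every $f \colon \lambda \to \gamma$ there is $H \in [\gamma]^{<\theta}$ with $f^{-1}[H] \in U$; redefining $f$ to be constant off $f^{-1}[H]$ shows that every function into an ordinal $< \kappa$ is $\equiv_U$ to one whose range has size $< \theta$. Hence it suffices to produce a $\le_U$‑finest element among functions of range $< \theta$: such a $\varphi \colon \lambda \to \mu$ (with $\mu < \theta$) will then factor every range‑$<\kappa$ function mod $U$, giving the second bullet with some $g \colon \mu \to \gamma$. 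Second, since $U$ is not $\theta$‑complete there is a family of fewer than $\theta$ members of $U$ whose intersection is not in $U$; taking complements and refining to a partition yields a witness $f_0 \colon \lambda \to \mu_0$ with $\mu_0 < \theta$ and all fibres $U$‑small, so $(f_0)_* U$ is nonprincipal. Because the finest $\varphi$ is $\ge_U$ everything, in particular $\ge_U f_0$, its pushforward $\varphi_* U$ is nonprincipal as well; reindexing $\varphi$ by the uniformity cardinal $\mu < \theta$ of $\varphi_* U$ (restricting to a $U$‑large member of minimal size and composing with a bijection onto $\mu$) makes $\varphi_* U$ uniform, i.e.\ $\varphi^{-1}[n] \notin U$ for every $n < \mu$, which is the first bullet. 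Thus everything reduces to the existence of a finest function among those of range $< \theta$.

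For the core existence statement I would work with the family $\mathcal F$ of all functions $\lambda \to \mathrm{Ord}$ of range $< \theta$, preordered by $\le_U$. Since $\theta$ is regular, the common refinement $\alpha \mapsto (f(\alpha),g(\alpha))$ of two members again has range $< \theta$, so $\mathcal F$ is upward directed. The hypothesis $\kappa > 2^{\theta}$ enters precisely when amalgamating up to $\theta$ members at once: for a $\le_U$‑chain $\langle f_\xi \mid \xi < \delta \rangle$ with $\delta \le \theta$, the join $F(\alpha) := \langle f_\xi(\alpha) \mid \xi < \delta \rangle$ has range of size at most $\theta^{\theta} = 2^{\theta} < \kappa$, so by the first reduction $F$ is $\equiv_U$ to a range‑$<\theta$ function and the chain has an upper bound in $\mathcal F$. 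One then builds $\varphi$ by transfinite recursion, at successor stages adjoining a range‑$<\theta$ witness to any function that fails to factor through the current approximation, and at limit stages taking such joins; this produces a $\le_U$‑increasing sequence which, once it stabilises, is finest.

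The step I expect to be the main obstacle is bounding the length of this recursion, equivalently ruling out strictly $\le_U$‑increasing chains in $\mathcal F$ that are too long. The join estimate controls chains of length $\le \theta$, so every limit stage below $\theta^{+}$ is harmless (its cofinality being $\le \theta$), but a chain that reached cofinality $> \theta$ would demand a join over a product of size $> 2^{\theta}$, to which indecomposability need not apply. Resolving this is exactly where the interaction between $\kappa > 2^{\theta}$ and the structure of the pushforwards $\{\varphi_* U \mid \varphi \in \mathcal F\}$ must be used: this is a family of ultrafilters living on cardinals $< \theta$, directed and bounded above by $U$ in the Rudin–Keisler order, and one wants to show it has a $\le_{RK}$‑cofinal sub‑chain of length $\le \theta$, whose controlled join is then $\le_U$‑above all of $\mathcal F$ and hence finest. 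I would attack this via the downward well‑foundedness of the Rudin–Keisler order together with the fact that $\kappa > 2^{\theta}$ caps the amount of genuinely new splitting available to range‑$<\theta$ functions; making this cofinality bound rigorous — i.e.\ proving that no strictly $\le_U$‑increasing chain in $\mathcal F$ has length $\theta^{+}$ — is the delicate heart of the argument. Once the finest $\varphi$ is in hand, the two displayed bullet points are immediate from the two reductions of the second paragraph.
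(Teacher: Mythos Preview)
The paper does not supply a proof of this statement; it is recorded as a Fact with a citation to Silver~\cite{silver}. Evaluating your proposal on its own merits: your two reductions are correct (indecomposability lets one replace any range-$<\kappa$ function by an $\equiv_U$-equivalent range-$<\theta$ one, and the failure of $\theta$-completeness yields an $f_0$ whose pushforward is nonprincipal, from which the first bullet follows once a finest $\varphi$ exists), as is the observation that $(\mathcal F,\le_U)$ is directed and that every chain of length $\le\theta$ has an upper bound, since the join of such a chain has range of size at most $\theta^\theta=2^\theta<\kappa$ and hence reduces back into $\mathcal F$ by indecomposability.

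The gap you yourself flag --- ruling out strictly $\le_U$-increasing chains in $\mathcal F$ of length $\theta^+$ --- is genuine, and your sketch for closing it does not work. First, the Rudin--Keisler order on nonprincipal ultrafilters is not well-founded in general: consistently there are no selective ultrafilters on $\omega$, and then every nonprincipal ultrafilter on $\omega$ has a strict RK-predecessor, so dependent choice produces an infinite RK-descending chain. Second, and more to the point, your chain is $\le_U$-\emph{increasing}, so the associated pushforwards form an RK-\emph{increasing} chain; well-foundedness, even if it held, would speak to minimal elements rather than to upper bounds. Third, the remark that ``$\kappa>2^\theta$ caps the amount of genuinely new splitting available to range-$<\theta$ functions'' is not yet an argument: a fixed $g\in\mathcal F$ with range of size $\mu<\theta$ admits at most $2^\mu$ coarsenings up to $\equiv_U$, but along a strictly refining chain the ranges may change at every step, and nothing you have written bounds the total length by anything below $\theta^+$. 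This termination step is exactly where the content of Silver's lemma lies; you will need either to consult the original argument in~\cite{silver} or to supply a different idea here.
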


With a $[\theta, \kappa)$-indecomposable ultrafilter $U$ over $\kappa$ and a finest partition $\varphi: \kappa\rightarrow\mu$ associated with it, we can let $D:=\varphi^*(U)$ be the Rudin-Keisler projection of $U$ via $\varphi$. Then $D$ is a non-principal uniform ultrafilter on $\mu$ defined by putting $X \subseteq \mu$
in $D$ if and only if $\varphi^{-1}[X] \in U$.
The following theorem is due to Silver, whose proof is implicit in \cite{silver}. A countably complete version appeared as \cite[Theorem~7.5.26]{goldbergthesis}.
We include a proof of the following for completeness.

\begin{thm}[Silver]\label{theorem: silver}
Suppose $U$ is an ultrafilter satisfying the hypothesis of Fact \ref{finest}. Let $\varphi$ and $D$ be given as in the preceding discussion. The ultrapower embedding $j_U: V\to M_U$ can be factored as $k\circ j_D$ where $j_D: V\to M_D $ and $k: M_D\to M_U$ such that $k$ is $j_D(\eta)$-$M_D$-complete for all $\eta<\kappa$, namely, for any $\sigma\in M_D$ such that $M_D\models |\sigma|< j_D(\eta)$, we have $k(\sigma)=k``\sigma$.
\end{thm}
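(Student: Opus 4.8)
The plan is to realize $k$ as the canonical Rudin--Keisler factor embedding and then extract its completeness from the finest partition property. Since $D = \varphi_*(U)$, I would define $k \colon M_D \to M_U$ by $k([f]_D) := [f \circ \varphi]_U$ for each $f \colon \mu \to V$. The first task is to check that $k$ is well defined and elementary, and this is routine: for $f,f' \colon \mu \to V$ and any formula, the relevant $D$-large subsets of $\mu$ pull back under $\varphi$ to $U$-large subsets of $\lambda$ precisely because $X \in D \iff \varphi^{-1}[X] \in U$, so both well-definedness and elementarity follow from {\L}o\'s's theorem applied on the $M_D$ side and the $M_U$ side. Taking $f$ constant gives $k([c_x]_D) = [c_x \circ \varphi]_U = [c_x]_U = j_U(x)$, so $k \circ j_D = j_U$ and the factorization holds.

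The real content is the completeness of $k$. Fix $\eta < \kappa$ and $\sigma \in M_D$ with $M_D \models |\sigma| < j_D(\eta)$. The inclusion $k``\sigma \subseteq k(\sigma)$ is immediate from elementarity, so the work is to prove $k(\sigma) \subseteq k``\sigma$. I would represent $\sigma = [a]_D$ with $a \colon \mu \to V$; by {\L}o\'s's theorem and the hypothesis on $\sigma$, we may assume $|a(n)| < \eta$ for every $n < \mu$. Then $k(\sigma) = [a \circ \varphi]_U$, and an arbitrary $x \in k(\sigma)$ may be written $x = [h]_U$ with $h(\alpha) \in a(\varphi(\alpha))$ for $U$-almost every $\alpha$. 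The goal thus reduces to finding $b \colon \mu \to V$ with $b(n) \in a(n)$ for $D$-almost every $n$ and $h = b \circ \varphi \pmod U$: for then $[b]_D \in \sigma$ and $k([b]_D) = [h]_U = x$, witnessing $x \in k``\sigma$.

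The crux---and the step where indecomposability is essential---is this factoring of $h$ through $\varphi$, and it is the main obstacle precisely because $h$ need not take ordinal values below $\kappa$, so Fact~\ref{finest} does not apply to $h$ directly. I would circumvent this by passing to indices: fix in $V$ an enumeration $a(n) = \{ a_n(\xi) \mid \xi < |a(n)| \}$ of each fiber, and define $f \colon \lambda \to \eta$ by letting $f(\alpha)$ be the least $\xi$ with $a_{\varphi(\alpha)}(\xi) = h(\alpha)$. Since $|a(n)| < \eta < \kappa$, the function $f$ does map into an ordinal below $\kappa$, so the finest partition property yields $g \colon \mu \to \eta$ with $f = g \circ \varphi \pmod U$. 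Setting $b(n) := a_n(g(n))$ when $g(n) < |a(n)|$ and an arbitrary element of $a(n)$ otherwise gives $b(n) \in a(n)$ for all $n$ and, for $U$-almost every $\alpha$, $b(\varphi(\alpha)) = a_{\varphi(\alpha)}(g(\varphi(\alpha))) = a_{\varphi(\alpha)}(f(\alpha)) = h(\alpha)$, as required. The only delicate bookkeeping is the coherent choice of fiber enumerations in $V$ and tracking which statements hold $D$-almost everywhere versus $U$-almost everywhere; the entire conceptual content lies in recognizing that the finest partition property is exactly the instrument that factors the index function $f$.
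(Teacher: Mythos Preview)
Your proposal is correct and follows essentially the same approach as the paper: define $k$ via precomposition with $\varphi$, verify elementarity and the factorization through {\L}o\'s, and for the completeness argument reduce membership in $k(\sigma)$ to an index function into $\eta$ which can then be factored through $\varphi$ by the finest-partition property. The only cosmetic differences are notational (your $a,h,f,g,b$ correspond to the paper's $f,g,h,r,g'$) and that the paper bounds the fibers by $\leq \eta$ rather than $<\eta$, which is immaterial.
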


\begin{proof}
Recall that elements of $M_D$ and $M_U$ are of the form $[f]_D$ and $[g]_U$, where $f$ and $g$ are
functions with domains $\mu$ and $\lambda$, respectively.
Let $k: M_D\to M_U$ be defined by setting $k([f]_D)=[\bar{f}]_U$, where $\bar{f}=f\circ \varphi$. In particular, we have that $k\circ j_D = j_U$. To see that $k$ is elementary, for a formula $\psi(x_0, \ldots, x_{n-1})$ and $[f_i]_D\in M_D$ such that $M_D\models \psi([f_0]_D, \ldots, [f_{n-1}]_D)$, we know that $\{n\in \mu\mid V\models \psi(f_0(n), \ldots, f_{n-1}(n))\}\in D$. Since $D=\varphi^*(U)$, we know that $\{\alpha\in \lambda\mid V\models \psi(f_0(\varphi(\alpha)), \ldots, f_{n-1}(\varphi(\alpha)))\}\in U$, hence $M_U\models \psi([\bar{f}_0]_{U}, \ldots, [\bar{f}_{n-1}]_U)$.

It remains to check that $k$ is $j_D(\eta)$-$M_D$-complete for all $\eta<\kappa$. Fix $\eta<\kappa$. Let $X\in M_D$ be such that $M_D \models |X|< j_D(\eta)$. Let $f: \mu\to [V]^{\leq \eta}$ represent $X$ in $M_D$. In particular, $k(X)=[\bar{f}]_U$.
On the other hand, $k``X = \{k([g]_D)\mid g: \mu\rightarrow\mathcal V, \{n\in \mu\mid g(n)\in f(n)\}\in D\}$. It is clear that $k`` X\subset k(X)$. Let us check the other direction.
Let $[g]_U\in k(X)$, so we have $\{\alpha<\lambda\mid g(\alpha)\in \bar{f}(\alpha)\}\in U$. Since each $\bar{f}(\alpha)$ has size at most $\eta$, we can let $h: \lambda \to \eta$ be such that $g(\alpha)$ is the $h(\alpha)$-th element of $\bar{f}(\alpha)$. Here, for each $\alpha$, we fix some well ordering of $\bar{f}(\alpha)$ of order type $\leq \eta$. By the indecomposability assumption on $U$, we know that $h(\alpha)=_U r\circ \varphi$ for some $r: \mu\to \eta$. Define $g': \mu\to V$ such that $g'(n)$ is the $r(n)$-th element of $f(n)$.

We claim that $k([g']_D)=[g]_U$, which is clearly sufficient. Let $\bar{g}=g'\circ\varphi$ and consider $[\bar{g}]_U=k([g']_D)$. In short, we need to show $g=_U \bar{g}$. This amounts to showing that on a measure one set in $U$, $\bar{g}(\alpha)$ is  the $h(\alpha)$-th element of $\bar{f}(\alpha)$. To see this, note  that the following two sets belong to $U$:
\begin{itemize}
\item $A_0:=\{\alpha<\lambda\mid \bar f(\alpha)=f(\varphi(\alpha)), \bar g(\alpha)=g'(\varphi(\alpha)), h(\alpha)=r(\varphi(\alpha))\}$, and
\item $A_1:=\{\alpha<\lambda\mid g'(\varphi(\alpha)) \text{ is the }r(\varphi(\alpha))\text{-th element of }f(\varphi(\alpha))\}$,
\end{itemize}
so $A_0\cap A_1\in U$ is as desired.
\end{proof}

Let $W$ be a possibly external $M_D$-ultrafilter on $j_D(\kappa)$ derived from $k$ using $[\id]_U$.
In other words, for all $A \in M_D$ such that $M_D \models A \subseteq j_D(\kappa)$, we put
$A \in W$ if and only if $[\id]_U \in k(A)$. Then Theorem~\ref{theorem: silver} implies that $W$ is $M_D$-$j_D(\eta)$-complete for all $\eta<\kappa$. To see this, given $\mathcal{A}\subset W$ such that $\mathcal{A}\in M_D$ and $M_D\models |\mathcal{A}|< j_D(\eta)$, by Theorem~\ref{theorem: silver} it follows that $k(\mathcal{A})=k``\mathcal{A}$. In particular, $k(\bigcap \mathcal{A})=\bigcap k``\mathcal{A}$. Since for each $X\in \mathcal{A}$, $[\id]_U\in k(X)$, we have that $[\id]_U\in k(\bigcap \mathcal{A})$, namely, $\bigcap \mathcal{A}\in W$.

The following is due to Kunen and Goldberg \cite{goldberg}.
\begin{lemma}\label{lemma: approximation}
Let $U$ be as in the hypothesis of Fact \ref{finest}, and let $D, j_D, k$ be as in the conclusion of Theorem \ref{theorem: silver}. Furthermore, assume that $2^\gamma<\kappa$. Then for any $x\in [V]^{\gamma}$, $W\cap j_D(x)\in M_D$.
\end{lemma}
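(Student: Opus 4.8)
The plan is to exploit the $j_D(\eta)$-$M_D$-completeness of $k$ furnished by Theorem~\ref{theorem: silver} in order to show that $k$ maps $\mathcal P(j_D(x))^{M_D}$ \emph{onto} $\mathcal P(j_U(x))^{M_U}$, and then to recognize the externally-defined trace $W\cap j_D(x)$ as the $k$-preimage of a set that $M_U$ can define internally from the seed $[\id]_U$. The completeness is what drives everything: it is exactly what makes the relevant pullback exist inside $M_D$ and what makes it commute with $k$.

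First I would run a cardinality computation so that completeness applies. Since $|x|=\gamma$, elementarity gives $M_D\models|j_D(x)|=j_D(\gamma)$ and $M_D\models|\mathcal P(j_D(x))|=j_D(2^\gamma)$. As $2^\gamma<\kappa$ (and $\kappa$ is a limit ordinal), we may fix $\eta$ with $2^\gamma<\eta<\kappa$, so that $M_D\models|\mathcal P(j_D(x))|<j_D(\eta)$. Completeness then yields $k(\mathcal P(j_D(x))^{M_D})=k``\mathcal P(j_D(x))^{M_D}$, while elementarity of $k$ together with $k(j_D(x))=j_U(x)$ gives $k(\mathcal P(j_D(x))^{M_D})=\mathcal P(j_U(x))^{M_U}$. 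Combining the two identities shows that every $M_U$-subset of $j_U(x)$ has the form $k(B)$ for some $B\in\mathcal P(j_D(x))^{M_D}$; this surjectivity is the crux.

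Next I would apply this to the specific $M_U$-set $B^*:=\{A\in j_U(x)\mid A\subseteq j_U(\kappa)\ \&\ [\id]_U\in A\}$, which lies in $M_U$ because it is definable there from $j_U(x)$, $j_U(\kappa)$ and the seed $[\id]_U$. Fixing $B\in\mathcal P(j_D(x))^{M_D}$ with $k(B)=B^*$, this $B$ is my proposed witness that $W\cap j_D(x)\in M_D$. Since $B\subseteq j_D(x)$ and $M_D\models|j_D(x)|=j_D(\gamma)<j_D(\gamma+1)$ with $\gamma+1<\kappa$, completeness applies a \emph{second} time to give $k(B)=k``B$. Unwinding the definition of $W$ and using that $k$ is elementary and injective, I would then verify $B=W\cap j_D(x)$: for $A\in B$ the relation $k(A)\in B^*$ forces, via $k(A)\subseteq j_U(\kappa)=k(j_D(\kappa))$ and elementarity, that $A\subseteq j_D(\kappa)$ and $[\id]_U\in k(A)$, i.e.\ $A\in W$; conversely, any $A\in W\cap j_D(x)$ has $k(A)\in B^*=k``B$, so injectivity of $k$ places $A$ in $B$.

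The main obstacle I anticipate is conceptual rather than computational: seeing that the external object $W\cap j_D(x)$ is precisely the pullback under $k$ of the \emph{internally} definable set $B^*$, and checking that completeness is strong enough to supply both halves of this—surjectivity onto $\mathcal P(j_U(x))^{M_U}$ and the elementwise identity $k``B=k(B)$ on the small set $B$. Once those are secured, matching the membership clauses is routine; the only point demanding care is transporting the condition ``$A\subseteq j_D(\kappa)$'', which is built into the very definition of $W$, back and forth across $k$ by elementarity.
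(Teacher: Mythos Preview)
Your argument is correct and follows the same overall strategy as the paper: identify the $M_U$-definable set $B^*=\{A\in j_U(x)\mid [\id]_U\in A\}$ and pull it back along $k$ to an element of $M_D$, then verify that this pullback equals $W\cap j_D(x)$. The difference lies in how the pullback is obtained. The paper codes $B^*$ as a subset of $j_U(\gamma)$ via a bijection, represents it by a function $f:\lambda\to\mathcal P(\gamma)$, and then invokes the finest-partition property directly (using $2^\gamma<\kappa$) to factor $f$ through $\varphi$ and thereby produce the $M_D$-preimage explicitly. You instead apply the $j_D(\eta)$-$M_D$-completeness of $k$ to the whole power set $\mathcal P(j_D(x))^{M_D}$---legitimate precisely because $2^\gamma<\kappa$---to get $k``\mathcal P(j_D(x))^{M_D}=\mathcal P(j_U(x))^{M_U}$ in one stroke, extracting the preimage abstractly. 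Your route is arguably cleaner, since it uses Theorem~\ref{theorem: silver} as a black box rather than reopening the indecomposability argument; the paper's route is more constructive, exhibiting the representing function concretely. Both are short and neither buys a genuinely stronger conclusion.
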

\begin{proof}
Let $\sigma=j_D(x)$. By Theorem \ref{theorem: silver}, $k(\sigma)=k``\sigma$. In $M_U$, let $B'=\{X\in k(\sigma)\mid [\id]_U\in X\}$. Since $M_D\models |\sigma|=j_D(\gamma)$, we can fix a bijection $\gamma: \sigma \leftrightarrow j_D(\gamma)$ in $M_D$ and let $B=k(\gamma)``B'$. Let $f: \lambda\to P(\gamma)$ be such that $j(f)([\id]_U)=B$. By the indecomposability assumption on $U$, there exists $g: \mu\to P(\gamma)$ such that $f=_U g\circ \varphi$. By the definition, $k([g]_D)=[g\circ \varphi]_U=[f]_U=B$. As a result, $(\gamma)^{-1}([g]_D)=W\cap j_D(x)$.
\end{proof}

\subsection{The $C$-sequence number}

In the remainder of this section, we investigate the effect of the existence of indecomposable ultrafilters
on other compactness phenomena, beginning with the $C$-sequence number.
The following result takes care of a case that is not covered by \cite[Lemma~4.12]{knaster_ii}.

\begin{thm} Suppose that $\theta,\kappa$ are infinite regular cardinals with $\kappa>2^\theta$.
If $\kappa$ carries a uniform $[\theta, \kappa)$-indecomposable ultrafilter,
then there exists a cardinal $\mu<\theta$ such that $\chi(\vec C)\le\mu$
for every transversal $\vec C$ for $\square(\kappa,{<}\kappa)$.
\end{thm}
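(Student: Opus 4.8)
The plan is to isolate the combinatorial heart of the matter (a ``finest partition'' computation that produces the covers of size $\le\mu$) and to separate it from the one genuinely delicate point, which is arranging that the index set $\Delta$ has size $\kappa$. First I would dispose of degenerate cases: if $\kappa$ is weakly compact then no $\square(\kappa,{<}\kappa)$-sequence exists, so there are no transversals and the statement holds vacuously; hence I may assume $\square(\kappa,{<}\kappa)$ holds and $\kappa$ is not measurable. Next I observe that $U$ cannot be $\theta$-complete: a $\theta$-complete $[\theta,\kappa)$-indecomposable uniform ultrafilter is $\kappa$-complete (given $f:\kappa\to\nu$ with $\nu<\kappa$, repeatedly apply indecomposability to descend through a strictly decreasing sequence of cardinals until the range has size ${<}\theta$, then apply $\theta$-completeness to make $f$ constant mod $U$), forcing $\kappa$ to be measurable. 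So Fact~\ref{finest} applies and yields a finest partition $\varphi:\kappa\to\mu$ with $\mu<\theta$; I will show $\chi(\vec C)\le\mu$ for every transversal. It will also be convenient to replace $U$ by the weakly normal ultrafilter $U^\ast$ derived from $\sup(j_U[\kappa])$: indecomposability (and hence uniformity and non-$\theta$-completeness) is preserved under Rudin--Keisler projections, since if $U^\ast$ is the projection of $U$ along $\pi$ and $f:\kappa\to\nu$, then applying indecomposability of $U$ to $f\circ\pi$ gives the required $H$. So I may assume $U$ is weakly normal, whence $\delta:=[\id]_U=\sup j_U[\kappa]$.

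Fix a coherent $\square(\kappa,{<}\kappa)$-sequence $\vec{\mathcal C}=\langle\mathcal C_\alpha\mid\alpha<\kappa\rangle$ and a transversal $\vec C=\langle C_\beta\mid\beta<\kappa\rangle$. Working in $M_U$, set $E:=j_U(\vec C)_\delta$, a club in $\delta$, and define
\[
\Delta:=\{\gamma<\kappa\mid j_U(\gamma)\in E\}=\{\gamma<\kappa\mid\{\beta<\kappa\mid\gamma\in C_\beta\}\in U\}.
\]
The covering data come from the finest partition. For each $\alpha<\kappa$ consider $f_\alpha(\beta):=C_\beta\cap\alpha$; by coherence and regularity of $\kappa$ its range is a set of size ${<}\kappa$ (each value lies in $\mathcal C_\alpha$ when $\alpha\in\acc(C_\beta)$, and otherwise is a finite modification of an element of some $\mathcal C_\tau$ with $\tau<\alpha$). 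Coding the range into $\kappa$ and invoking the second clause of Fact~\ref{finest}, I get $f_\alpha=_U g_\alpha\circ\varphi$, so $C_\beta\cap\alpha$ assumes at most $\mu$ values, say ranging over $S_\alpha$ on a set $X_\alpha\in U$. Choosing for each nonempty $D\in S_\alpha$ a representative $\beta_D$ with $C_{\beta_D}\cap\alpha=D$ and letting $b(\alpha):=\{\beta_D\mid D\in S_\alpha\}$, I have $|b(\alpha)|\le\mu$. For $\gamma\in\Delta\cap\alpha$ the set $\{\beta\mid\gamma\in C_\beta\}\cap X_\alpha$ is in $U$, hence nonempty, and any $\beta$ in it satisfies $\gamma\in C_\beta\cap\alpha\in S_\alpha$, so $\gamma\in\bigcup_{\beta\in b(\alpha)}C_\beta$. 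This establishes the covering for \emph{every} $\alpha<\kappa$.

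The hard part will be showing $|\Delta|=\kappa$, equivalently that $E$ meets $j_U[\kappa]$ cofinally in $\delta=\sup j_U[\kappa]$; this is exactly where weak normality and the indecomposability of $U$ are used. For each $\gamma<\kappa$ put $e_\gamma:=\min(E\setminus j_U(\gamma))<\delta$ and let $F(\gamma)$ be least with $e_\gamma<j_U(F(\gamma))$ (which exists since $\delta=\sup j_U[\kappa]$); this defines $F:\kappa\to\kappa$ in $V$. Pick $\gamma^\ast$ closed under $F$ with $\cf(\gamma^\ast)=\theta$ (the closure points of $F$ form a club, and $E^\kappa_\theta$ is stationary, so such $\gamma^\ast$ are cofinal). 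The point is that $\theta$-indecomposability makes $j_U$ continuous at such $\gamma^\ast$: given $g:\kappa\to\gamma^\ast$ and a cofinal sequence $\langle\zeta_i\mid i<\theta\rangle$ in $\gamma^\ast$, the map $\beta\mapsto\min\{i\mid g(\beta)\le\zeta_i\}$ is, by $\theta$-indecomposability, concentrated on a set $H\in[\theta]^{<\theta}$, so $g$ is bounded below $\zeta_{\sup H}<\gamma^\ast$ mod $U$; hence $\sup j_U[\gamma^\ast]=j_U(\gamma^\ast)$. Now $\sigma:=\sup_{\gamma<\gamma^\ast}e_\gamma$ satisfies $\sup j_U[\gamma^\ast]\le\sigma\le j_U(\gamma^\ast)$ (the upper bound because $F(\gamma)<\gamma^\ast$), so $\sigma=j_U(\gamma^\ast)<\delta$; since the $e_\gamma\in E$ converge up to $\sigma$ and $E$ is closed, $j_U(\gamma^\ast)=\sigma\in E$, i.e.\ $\gamma^\ast\in\Delta$. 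Thus $\Delta$ is cofinal, so $|\Delta|=\kappa$ and $\chi(\vec C)\le\mu$. I expect this cofinality argument to be the main obstacle; note that it is precisely the reason for passing to a weakly normal ultrafilter (for a general $U$ the club $E$ sits at level $[\id]_U$, which may exceed $\sup j_U[\kappa]$). It is also worth remarking that this proof needs only the finest partition, not the full factorization machinery of Theorem~\ref{theorem: silver} and Lemma~\ref{lemma: approximation}.
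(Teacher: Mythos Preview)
Your proof is correct and shares the same scaffolding as the paper's argument---reduce to a weakly normal $U$, extract the finest partition $\varphi:\kappa\to\mu$, factor $f_\alpha(\beta)=C_\beta\cap\alpha$ through $\varphi$, and produce a set $\Delta\in[\kappa]^\kappa$ together with covers $b(\alpha)$ of size $\le\mu$---but the two implementations diverge in interesting ways. The paper defines $\Delta$ combinatorially as $\{\delta\in E^\kappa_{>\mu}\mid\{i<\mu\mid\sup(g_\delta(i))=\delta\}\in D\}$ and shows it covers a club in $E^\kappa_{>\mu}$ by a pressing-down argument: if not, the regressive map $\delta\mapsto$ (a bound on most $g_\delta(i)$'s) stabilizes on a stationary set, and then weak normality produces a contradiction. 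You instead define $\Delta$ via the ultrapower club $E=j_U(\vec C)_{[\mathrm{id}]_U}$ and show it is cofinal by proving that $j_U$ is continuous at every $\gamma^\ast\in E^\kappa_\theta$ (using $\theta$-indecomposability) and that any such $\gamma^\ast$ closed under your auxiliary function $F$ lands in $\Delta$. The covering steps also differ slightly: the paper exploits the $\sqsubseteq_D$-coherence of the $g_\delta$'s to cover $\Delta\cap\alpha$ using data at a single $\delta\in\Delta$ just above $\alpha$, whereas you cover directly by intersecting the $U$-large sets $\{\beta\mid\gamma\in C_\beta\}$ and $X_\alpha$. Your ultrapower formulation is conceptually clean and makes the role of weak normality transparent (it pins $[\mathrm{id}]_U$ at $\sup j_U[\kappa]$, so that $E$ lives where $j_U[\kappa]$ can reach it); the paper's version is more elementary in that it never leaves $V$. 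One cosmetic point: you apply Fact~\ref{finest} to the original $U$ before passing to the weakly normal projection $U^\ast$, but the covering argument then uses the finest partition for $U^\ast$; simply swap the order (pass to $U^\ast$ first, then extract $\mu$) and nothing changes.
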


\begin{proof}
Suppose $U$ is a $[\theta,\kappa)$-indecomposable ultrafilter on $\kappa$.
We may assume $U$ is $\theta$-incomplete for non-triviality.
By \cite[Theorem~2]{prikry}, we may also assume $U$ is weakly normal.
Let $\varphi: \kappa\rightarrow\mu$ with $\mu<\theta$ be given by Fact~\ref{finest}.
We shall prove that $\mu$ is as sought.

To this end, let $\vec C=\langle C_\beta\mid \beta<\kappa\rangle$ be some transversal for $\square(\kappa,{<}\kappa)$.
For each $\delta<\kappa$, define a function $f_\delta: \kappa\setminus\delta\rightarrow \mathcal P(\delta)$ via $$f_\delta(\beta):=C_\beta\cap \delta.$$
By the choice of $\vec C$, $|\im(f_\delta)|<\kappa$, so we may pick a map $g_\delta: \mu\to \mathcal P(\delta)$ satisfying that $f_\delta = g_\delta\circ \varphi\pmod U$.
Clearly, we can choose $g_\delta$ in a way that, for every $i<\mu$, there is some $\eta_{\delta,i}\geq \delta$ such that $g_\delta(i)=C_{\eta_{\delta,i}}\cap \delta$.
Set $D:=\varphi^*(U)$.
\begin{claim} Let $\gamma<\delta<\kappa$. Then
$g_\gamma\sq_D g_\delta$, i.e., $\{i<\mu\mid g_\gamma(i)\sq g_\delta(i)\}\in D$.
\end{claim}
\begin{proof}
This is because $B_\gamma:=\{\beta\in\kappa\setminus\gamma\mid f_\gamma(\beta)=g_\gamma(\varphi(\beta))\}$ and
$B_{\delta}:=\{\beta\in\kappa\setminus\delta\mid f_{\delta}(\beta)=g_{\delta}(\varphi(\beta))\}$ are both in $U$,
and for every $\beta\in B_\gamma\cap B_\delta$, $f_\gamma(\beta)\sq f_{\delta}(\beta)$.
But $D=\varphi^*(U)$, and hence $g_\gamma\sq_D g_\delta$.
\end{proof}
Consider the set $\Delta:=\{\delta\in E^\kappa_{>\mu}\mid \{i<\mu\mid \sup(g_\delta(i))=\delta\}\in D\}$.
\begin{claim} $\Delta$ covers a club relative to $E^\kappa_{>\mu}$.
\end{claim}
\begin{proof} Suppose not, so that $S:=E^\kappa_{>\mu}\setminus \Delta$ is stationary.
Define a regressive function $h:S\rightarrow \kappa$ via
$$h(\delta):=\min\{\varepsilon<\delta\mid \{i<\mu\mid \sup(g_\delta(i))<\varepsilon\}\in D\}.$$
The fact that $h$ is well-defined follows from the fact that $\cf(\delta)>\mu$ and $D$ is an ultrafilter on $\mu$.
Let $S'\s S$ be stationary on which $h$ is constant with value, say, $\varepsilon$.
Since $\gamma<\delta$ implies $g_\gamma\sq_D g_\delta$, we actually have $\{i<\mu\mid g_\delta(i)\s \varepsilon\}\in D$
for every $\delta<\kappa$.
By the weak normality of $U$, we can find some $\delta<\kappa$ for which the set
$$\{ \beta\in\acc(\kappa\setminus\varepsilon)\mid  \min(C_\beta\setminus\varepsilon+1))<\delta\}$$
is in $U$.
As a result, $\{i<\mu\mid g_{\delta}(i)\nsubseteq \varepsilon\}\in D$, which is a contradiction.
\end{proof}

As $\Delta$ is in particular an element of $[\kappa]^\kappa$,
it suffices to check that for every $\alpha<\kappa$, there is a set $b(\alpha)\in[\kappa]^\mu$ such that $\Delta\cap \alpha \s \bigcup_{\beta\in b(\alpha)} C_\beta$.
Set $\delta:=\min(\Delta\setminus(\alpha+1))$ and  $b(\alpha):=\{\eta_{\delta,i}\mid i<\mu\}$.
For each $\gamma\in \Delta\cap \alpha$, we know that $\{i<\mu\mid g_{\gamma}(i)\sq g_\delta(i)\}\in D$.
Recalling the definition of $\Delta$, it follows that $\{i<\mu\mid \gamma\in g_\delta(i)\}\in D$.
Altogether, $$\Delta\cap \alpha\s \bigcup_{i<\mu} g_\delta(i)=\bigcup_{i<\mu} C_{\eta_{\delta,i}}\cap\delta\s \bigcup_{\beta\in b(\alpha)}C_\beta,$$
as sought.
\end{proof}

\begin{cor}\label{theorem: $C$-sequenceNumber}
If a strongly inaccessible cardinal $\kappa$ carries a uniform $[\theta, \kappa)$-indecomposable ultrafilter where $\theta$ is regular, then $\chi(\kappa)<\theta$.\qed
\end{cor}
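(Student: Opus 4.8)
The plan is to obtain the corollary directly from the preceding theorem, with strong inaccessibility supplying the two ingredients that the theorem does not provide on its own. First, since $\theta$ is a regular cardinal below the strongly inaccessible $\kappa$, we automatically have $2^\theta < \kappa$; hence the hypothesis $\kappa > 2^\theta$ of the theorem is met, and we may fix the cardinal $\mu < \theta$ it produces, for which $\chi(\vec C) \le \mu$ holds for every transversal $\vec C$ for $\square(\kappa, {<}\kappa)$. The only remaining work is to promote this uniform bound on the $C$-sequence numbers of $\square(\kappa, {<}\kappa)$-transversals to a bound on $\chi(\kappa)$ itself, which by definition is the least $\chi$ serving all $C$-sequences over $\kappa$, not merely the coherent ones.

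To bridge this gap I would invoke the computation of the $C$-sequence number at inaccessible cardinals carried out in \cite{knaster_ii}. If $\kappa$ is weakly compact, then $\chi(\kappa) = 0$ by definition and $0 < \theta$, so there is nothing to prove. If $\kappa$ is not weakly compact, then $\chi(\kappa)$ is there computed as the supremum of $\chi(\vec C)$ taken over all transversals $\vec C$ for $\square(\kappa, {<}\kappa)$-sequences; since the preceding theorem bounds each such $\chi(\vec C)$ by $\mu$, we conclude $\chi(\kappa) \le \mu < \theta$. Either way $\chi(\kappa) < \theta$, as required.

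I expect the single point demanding care to be the identification of $\chi(\kappa)$ with the supremum over $\square(\kappa, {<}\kappa)$-transversals, that is, the passage from arbitrary $C$-sequences to coherent ones; everything else is bookkeeping. This is precisely the input that \cite{knaster_ii} furnishes and that the theorem above was designed to complement: \cite[Lemma~4.12]{knaster_ii} disposes of the fully coherent ($\square(\kappa)$) transversals, for which $\chi(\vec C)$ attains its maximal value $\sup(\reg(\kappa))$, while the theorem treats the strictly broader pool of $\square(\kappa, {<}\kappa)$-transversals once an indecomposable ultrafilter is present. It is moreover reassuring that the dichotomy dovetails with the hypothesis of Fact~\ref{finest}: a $\theta$-complete witnessing ultrafilter would force $\kappa$ to be measurable, hence weakly compact, landing us in the first case with $\chi(\kappa) = 0$, whereas the $\theta$-incomplete case, in which the finest-partition machinery has genuine content, is exactly where the bound $\mu < \theta$ on coherent transversals does the work.
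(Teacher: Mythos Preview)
Your overall plan is correct and matches the paper's intended derivation (which is literally just ``\qed''): strong inaccessibility supplies $\kappa > 2^\theta$, the theorem then yields a single $\mu < \theta$ bounding $\chi(\vec C)$ for the relevant $C$-sequences, and this pushes down to $\chi(\kappa)$.

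The one step that deserves a second look is your bridging via \cite{knaster_ii}. You assert that, for non--weakly-compact inaccessible $\kappa$, that paper computes $\chi(\kappa)$ as the supremum of $\chi(\vec C)$ over transversals for $\square(\kappa,{<}\kappa)$-sequences. I do not believe this formula appears there in that form, and in any case invoking it is more than you need. The cleaner route is to notice what the theorem's proof actually uses about ``transversal for $\square(\kappa,{<}\kappa)$'': the \emph{only} property invoked is that, for each $\delta < \kappa$, the set $\{C_\beta \cap \delta \mid \delta \le \beta < \kappa\}$ has size ${<}\,\kappa$. At a strongly inaccessible $\kappa$ this holds for \emph{every} $C$-sequence, since that set sits inside $\mathcal{P}(\delta)$ and $2^{|\delta|} < \kappa$. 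Hence the theorem's proof applies to arbitrary $C$-sequences over $\kappa$, giving $\chi(\vec C) \le \mu$ for all of them, and $\chi(\kappa) \le \mu < \theta$ follows immediately (with the weakly compact case handled, as you note, by $\chi(\kappa)=0$). This sidesteps any worry about whether the coherent sequence you would build from an arbitrary $\vec C$ is nontrivial in the sense required by the definition of $\square(\kappa,{<}\kappa)$.
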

\begin{remark}
By Corollary~\ref{cor: indsquare} below, the preceding is optimal in the sense that we cannot strengthen the conclusion to $\chi(\kappa)\leq 1$.
\end{remark}

\begin{cor}[Prikry and Silver, \cite{prikry}]\label{cor: prikrysilver}
If a strongly inaccessible $\kappa$ carries a uniform indecomposable ultrafilter, then any finite collection of stationary subsets of $E^\kappa_{>\omega}$ reflects simultaneously.
\end{cor}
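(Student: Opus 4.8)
The plan is to reduce the entire statement to the single claim that for each stationary $S\s E^\kappa_{>\omega}$ the reflection set
$R_S:=\{\alpha\in E^\kappa_{>\omega}\mid S\cap\alpha\text{ is stationary in }\alpha\}$
belongs to $U$. Granting this, finite simultaneous reflection is immediate: for stationary $S_0,\dots,S_{n-1}\s E^\kappa_{>\omega}$ we have each $R_{S_m}\in U$, hence $\bigcap_{m<n}R_{S_m}\in U$ (finite intersections of members of an ultrafilter lie in it), and any $\alpha$ in this intersection is a point of uncountable cofinality at which $S_0,\dots,S_{n-1}$ all reflect. By \cite[Theorem~2]{prikry} I may assume $U$ is weakly normal, since the conclusion makes no reference to $U$. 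As verified below, $E^\kappa_{>\omega}\in U$, so that $R_S\in U$ is equivalent to $N_S\notin U$, where $N_S:=E^\kappa_{>\omega}\setminus R_S$.

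I would work inside the ultrapower $j:=j_U\colon V\to M:=\ult(V,U)$, and put $\delta:=[\id]_U$. Weak normality yields $\delta=\sup(j[\kappa])$. A first, easy observation is that $\cf^M(\delta)>\omega$: a cofinal $\omega$-sequence in $\delta$ would, since each element of $\delta$ lies below some $j(\gamma)$ with $\gamma<\kappa$, produce a cofinal $\omega$-sequence in $\kappa$, contradicting the regularity of $\kappa$. By \L o\'s's theorem this is exactly the statement $E^\kappa_{>\omega}\in U$ used above.

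The heart of the matter is to show $N_S\notin U$. Suppose toward a contradiction that $N_S\in U$. For $U$-almost every $\alpha$ fix a club $e_\alpha\s\alpha$ with $e_\alpha\cap S=\emptyset$, and set $C:=[\langle e_\alpha\rangle]_U$; by \L o\'s, $C$ is a club in $\delta$ (computed in $M$) with $C\cap j(S)=\emptyset$. Consider $E^*:=\{\xi<\kappa\mid j(\xi)\in C\}$; it suffices to show $E^*\cap S\neq\emptyset$, since any $\xi\in E^*\cap S$ gives $j(\xi)\in C\cap j(S)=\emptyset$. The analysis of $E^*$ rests on the key structural consequence of indecomposability: for every $\gamma<\kappa$ of uncountable cofinality, $j$ is \emph{continuous} at $\gamma$, i.e.\ $\sup(j[\gamma])=j(\gamma)$. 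Indeed, $U$ is $\cf(\gamma)$-indecomposable, and for a regular $\mu\in[\aleph_1,\kappa)$, $\mu$-indecomposability is equivalent to $\sup(j[\mu])=j(\mu)$ (Prikry), from which continuity at $\gamma$ follows routinely. Using that $C$ is cofinal in $\delta=\sup(j[\kappa])$ together with continuity at a point of cofinality $\omega_1$, a simple interleaving construction produces, above any prescribed ordinal, some $\gamma^*<\kappa$ with $\cf(\gamma^*)=\omega_1$ and $j(\gamma^*)=\sup_{\eta<\omega_1}c_\eta$ for suitable $c_\eta\in C$; closedness of $C$ then gives $j(\gamma^*)\in C$, i.e.\ $\gamma^*\in E^*$, so $E^*$ is unbounded. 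The same continuity shows $E^*$ is closed at points of uncountable cofinality: if $\cf(\gamma)>\omega$ and $\gamma=\sup(E^*\cap\gamma)$, then $j(\gamma)=\sup(j[\gamma])=\sup\{j(\xi)\mid\xi\in E^*\cap\gamma\}$ is a limit point of $C$ lying below $\delta$, hence $j(\gamma)\in C$ and $\gamma\in E^*$. Thus $\acc(E^*)$ is a club with $\acc(E^*)\cap E^\kappa_{>\omega}\s E^*$; as $S\s E^\kappa_{>\omega}$ is stationary, $\acc(E^*)\cap S$ is a nonempty subset of $E^*\cap S$, completing the contradiction.

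The main obstacle, and the sole place where indecomposability is indispensable, is precisely the verification that $E^*$ is closed at points of uncountable cofinality. Because $U$ need not be countably complete (let alone $\mu$-complete), one cannot close $E^*$ off in the naive way, by intersecting $\mu$-many members of $U$; the device that substitutes for completeness is the continuity of $j$ at uncountable cofinalities, which lets closedness of the \emph{single} $M$-club $C$ carry out the closure. The situation in which $U$ happens to be countably complete is only easier, since then $j$ is in addition continuous at every point of cofinality $\omega$ below $\kappa$, so $E^*$ becomes a genuine club and the argument goes through verbatim.
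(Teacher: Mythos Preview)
Your argument is correct and essentially reconstructs the classical Prikry--Silver ultrapower proof: pass to a weakly normal indecomposable $U$, use continuity of $j_U$ at ordinals of uncountable cofinality to pull back the $M$-club witnessing nonreflection to a set $E^*\subseteq\kappa$ that is unbounded and closed on $E^\kappa_{>\omega}$, and conclude that each reflection set $R_S$ lies in $U$. (One small point: with the paper's conventions, $\acc(E^*)\subseteq E^*$ need not be club; you want the set $\acc^+(E^*)$ of limit points of $E^*$ below $\kappa$, which is club and satisfies $\acc^+(E^*)\cap E^\kappa_{>\omega}\subseteq E^*$ by your closure argument.)

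The paper takes a genuinely different route. Rather than working directly with the ultrapower and reflection sets, it first invokes Corollary~\ref{theorem: $C$-sequenceNumber} to deduce that the $C$-sequence number satisfies $\chi(\kappa)<\aleph_1$, and then cites a general result from \cite{knaster_ii} asserting that any finite family of stationary subsets of $E^\kappa_{>\chi(\kappa)}$ reflects simultaneously. This situates the corollary as a special case of a broader framework linking indecomposability, the $C$-sequence number, and reflection, and makes the proof a two-line application of the machinery developed earlier in the section. Your direct approach, by contrast, is self-contained, avoids the $C$-sequence number entirely, and actually yields a bit more: you show each $R_S\in U$, so the set of simultaneous reflection points for any finite family is not merely nonempty but $U$-large.
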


\begin{proof}
By \cite[Theorem~A(4)]{knaster_ii}, any finite collection of stationary subsets of $E^\kappa_{>\chi(\kappa)}$ reflects simultaneously.
Now appeal to Corollary~\ref{theorem: $C$-sequenceNumber}.
\end{proof}

\subsection{Ascent paths and narrow systems}

Let us recall Laver's definition of a $\mu$-ascent path and a couple of its generalizations.
\begin{definition}\label{ascdef}
Let $(T,<_T)$ be a tree of height $\kappa$, and let $\mu$ be an infinite cardinal.
\begin{itemize}
\item A \emph{$\mu$-ascent path} through $(T,<_T)$
is a sequence $\vec f =\langle f_\alpha\mid \alpha<\kappa\rangle$
satisfying the following two conditions:
\begin{enumerate}
\item for every $\alpha<\kappa$, $f_\alpha:\mu\rightarrow T_\alpha$;
\item for all $\alpha<\beta<\kappa$, $\{i<\mu\mid f_\alpha(i)<_T f_\beta(i)\}$ contains a tail in $\mu$.
\end{enumerate}
\item A \emph{$D$-ascent path} through $(T,<_T)$, where $D$ is a filter over $\mu$,
is a sequence $\vec f =\langle f_\alpha\mid \alpha<\kappa\rangle$ satisfying Clause~(1) above together with the following:
\begin{enumerate}
\item[(2')] for all $\alpha<\beta<\kappa$, $\{i<\mu\mid f_\alpha(i)<_T f_\beta(i)\}$ is in $D$.
\end{enumerate}
\item A \emph{$\mu$-ascending path} through $(T,<_T)$
is a sequence $\vec f =\langle f_\alpha\mid \alpha<\kappa\rangle$
satisfying Clause~(1) above together with the following:
\begin{enumerate}
\item[(2'')] for all $\alpha<\beta<\kappa$, there are $i,j<\mu$ such that $f_\alpha(i)<_T f_\beta(j)$.
\end{enumerate}
\end{itemize}
\end{definition}

We apply ideas similar to those of the previous subsection to show the following.

\begin{thm}\label{theoerem: ascent}
If a regular cardinal $\kappa>2^{\aleph_1}$ carries a uniform indecomposable ultrafilter, then every $\kappa$-Aronszajn tree
admits an $\omega$-ascent path.
\end{thm}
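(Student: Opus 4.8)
The plan is to reduce to the case of a countably incomplete ultrafilter, extract from it a finest partition of width exactly $\omega$, use that partition to manufacture a width-$\omega$ family coherent modulo the projected ultrafilter, and finally upgrade this modulo-$D$ coherence to the cofinite coherence demanded by Definition~\ref{ascdef}. To begin, fix a uniform indecomposable ultrafilter $U$ over $\kappa$, so that $U$ is $[\aleph_1,\kappa)$-indecomposable and $\kappa > 2^{\aleph_1}$. If $U$ is $\aleph_1$-complete (equivalently $\aleph_0$-indecomposable), then $U$ is $\mu$-indecomposable for \emph{every} $\mu < \kappa$; a minimal-decomposition argument then forces $U$ to be $\kappa$-complete, so $\kappa$ is measurable, hence weakly compact, and there are no $\kappa$-Aronszajn trees at all, making the statement vacuous. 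So I may assume $U$ is $\aleph_1$-incomplete. Then Fact~\ref{finest} with $\theta = \aleph_1$ yields a finest partition $\varphi : \kappa \to \mu$ with $\mu < \aleph_1$; as $D := \varphi^*(U)$ is nonprincipal, $\mu$ cannot be finite, so $\mu = \omega$ and $D$ is a nonprincipal ultrafilter on $\omega$. This is precisely why the target width is $\omega$.

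Next I would build a $D$-ascent path, mirroring the $C$-sequence computation given just above. Let $T$ be a $\kappa$-Aronszajn tree, and for each $\beta < \kappa$ fix a node $x_\beta \in T_\beta$. For $\delta < \kappa$ define $f_\delta : \kappa \setminus \delta \to T_\delta$ by $f_\delta(\beta) := x_\beta \restriction \delta$ (the predecessor of $x_\beta$ on level $\delta$). Since $|T_\delta| < \kappa$, the finest-partition property of Fact~\ref{finest} supplies $g_\delta : \omega \to T_\delta$ with $f_\delta = g_\delta \circ \varphi \pmod U$. If $\gamma < \delta$, then the agreement sets $B_\gamma, B_\delta \in U$ satisfy, for every $\beta \in B_\gamma \cap B_\delta$, $g_\gamma(\varphi(\beta)) = f_\gamma(\beta) = (x_\beta \restriction \delta) \restriction \gamma = g_\delta(\varphi(\beta)) \restriction \gamma$; pushing forward through $\varphi$ and using $D = \varphi^*(U)$ gives $\{n < \omega \mid g_\gamma(n) <_T g_\delta(n)\} \in D$. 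Thus $\langle g_\delta \mid \delta < \kappa \rangle$ is a $D$-ascent path through $T$ in the sense of Clause~$(2')$ of Definition~\ref{ascdef}.

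The remaining, and I expect hardest, step is to upgrade this $D$-ascent path to an honest $\omega$-ascent path, i.e.\ to replace ``agreement on a set in $D$'' by ``agreement on a cofinite set''; equivalently, to prove the combinatorial lemma that any $\kappa$-tree carrying a $D$-ascent path for a nonprincipal (hence countably incomplete) ultrafilter $D$ on $\omega$ carries an $\omega$-ascent path. This cannot be done by a single reindexing of the columns, since insisting that the reindexed columns be pairwise eventually equal would force a common pseudo-intersection of all the sets $\{n \mid g_\gamma(n) <_T g_\delta(n)\}$, which is empty. My proposed route is to carry out the construction one level up: viewing $\langle [g_\delta]_D \mid \delta < \kappa \rangle$ as a cofinal branch through $j_D(T) \restriction \sup j_D[\kappa]$ in $M_D = \ult(V,D)$, and using the derived $M_D$-ultrafilter $W$ on $j_D(\kappa)$ --- which by Theorem~\ref{theorem: silver} is $M_D$-$j_D(\eta)$-complete for all $\eta < \kappa$ and by Lemma~\ref{lemma: approximation} is amenable to $M_D$ on small sets --- I would aim to show that $M_D \models$ ``$j_D(T)$ carries a $j_D(\omega)$-ascent path,'' the point being that the countable completeness of $W$ over $M_D$ should convert $W$-coherence into genuine tail-coherence, while amenability guarantees the path is an element of $M_D$. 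Elementarity of $j_D$ would then return an $\omega$-ascent path through $T$ in $V$ (and via the standard predecessor trick it suffices to produce one on a club of levels). Reconciling the per-pair $D$-large agreement into uniform cofinite agreement across \emph{all} pairs of levels simultaneously is the delicate point that the completeness and amenability of $W$ are designed to handle.
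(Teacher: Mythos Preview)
Your first two paragraphs are correct and coincide with the paper's argument: reduce to the countably incomplete case, extract a finest partition $\varphi:\kappa\to\omega$, push $U$ down to a nonprincipal $D$ on $\omega$, and produce the $D$-ascent path $\langle g_\delta\mid\delta<\kappa\rangle$ exactly as you describe.

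The gap is in your third paragraph. The ultrafilter $W$ you invoke lives on $j_D(\kappa)$, not on $j_D(\omega)$, so it is unclear what ``$W$-coherence'' of an ascent path even means, and the $M_D$-$j_D(\eta)$-completeness of $W$ concerns intersections of families of subsets of $j_D(\kappa)$, not tail behaviour on the index set $\omega$. Amenability (Lemma~\ref{lemma: approximation}) lets you pull back small pieces of $W$ into $M_D$, but you have not identified a single $M_D$-definable object that would constitute an internal $\omega$-ascent path through $j_D(T)$; the chain $\langle [g_\delta]_D\mid\delta<\kappa\rangle$ you mention is a $V$-indexed sequence, not an element of $M_D$, and there is no evident reason it should extend to all $M_D$-levels below $j_D(\kappa)$. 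As written, this step is a hope rather than an argument.

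The paper upgrades the $D$-ascent path to an $\omega$-ascent path by a second, entirely elementary use of indecomposability, with no ultrapowers at all. Writing $I_{\gamma,\delta}:=\{i<\omega\mid g_\gamma(i)<_T g_\delta(i)\}\in D$, one fixes $\gamma$ and applies indecomposability of $U$ to the map $\delta\mapsto I_{\gamma,\delta}\in\mathcal P(\omega)$ (range of size at most $2^{\aleph_0}<\kappa$) to obtain a \emph{countable} $\mathcal I_\gamma\subseteq D$ with $\{\delta\mid I_{\gamma,\delta}\in\mathcal I_\gamma\}\in U$. Take a pseudointersection $P_\gamma\in[\omega]^{\aleph_0}$ of $\mathcal I_\gamma$, then stabilize: since there are only $2^{\aleph_0}<\kappa$ possible values, some $P$ occurs on a cofinal $\Gamma\subseteq\kappa$. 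For $\gamma<\delta$ in $\Gamma$, choose $\eta$ in the $U$-large intersection of the two witness sets; then $P\subseteq^* I_{\gamma,\eta}$ and $P\subseteq^* I_{\delta,\eta}$, so for cofinitely many $i\in P$ both $g_\gamma(i)<_T g_\eta(i)$ and $g_\delta(i)<_T g_\eta(i)$, whence $g_\gamma(i)<_T g_\delta(i)$ since $T$ is a tree. Reindexing columns by $P$ and restricting to $\Gamma$ yields the $\omega$-ascent path. This is the missing idea: one more pass of indecomposability to shrink the family $\{I_{\gamma,\delta}\}_\delta$ to countable size, after which pseudointersection and pigeonhole finish the job.
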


\begin{proof}
Let $U$ be the indecomposable ultrafilter.
We may assume $U$ is countably incomplete.
Let $\varphi: \kappa\to\omega$ be the finest partition associated with $U$ as given by Fact~\ref{finest}.
Consider $D:=\varphi^*(U)$, which is a nonprincipal ultrafilter on $\omega$.

Let $(T,<_T)$ be a given $\kappa$-Aronszajn tree.
Choose a transversal $\langle t_\beta\mid \beta<\kappa\rangle\in\prod_{\beta<\kappa}T_\beta$.
For each $\delta<\kappa$, define a map $f_\delta: \kappa\setminus\delta\to T_\delta$ via $$f_\delta(\beta):=t_\beta\restriction \delta.$$

Since $|T_\delta|<\kappa$, there is $g_\delta: \omega\to T_\delta$ such that $f_\delta = g_\delta \circ \varphi \pmod U$.
As before, for all $\gamma<\delta<\kappa$, $I_{\gamma,\delta}:=\{i<\omega\mid g_\gamma(i)<_T g_\delta(i)\}$ is in $D$.

For each $\gamma<\kappa$, define a map $h_\gamma: \kappa\setminus\gamma\rightarrow D$ via $h_\gamma(\delta):=I_{\gamma,\delta}$.
Since $U$ is indecomposable, we can find $\mathcal{I}_\gamma\in [D]^{\aleph_0}$ such that $\{\delta\in\kappa\setminus(\gamma+1)\mid I_{\gamma,\delta}\in \mathcal{I}_\gamma\}$ is in $U$.
Then, we find a pseudointersection $P_\gamma\in [\omega]^{\aleph_0}$ of the sets in $\mathcal{I}_\gamma$.
Finally, pick $P\in [\omega]^{\aleph_0}$ for which $\Gamma:=\{\gamma<\kappa\mid P_\gamma=P\}$ is cofinal in $\kappa$.

We check that for any pair $\gamma<\delta$ of ordinals from $\Gamma$, on a tail of $i\in P$, it is the case that $g_\gamma(i)<_T g_\delta(i)$.
Recalling that the following set is in $U$:
$$\{\eta\in\kappa\setminus(\gamma+1)\mid I_{\gamma,\eta}\in \mathcal{I}_\eta\}\cap \{\eta\in\kappa\setminus(\delta+1)\mid I_{\delta,\eta}\in \mathcal{I}_\eta\},$$
we may fix an $\eta<\kappa$ such that $I_{\gamma,\eta}\in \mathcal{I}_\gamma$ and $I_{\delta,\eta}\in \mathcal{I}_\delta$.
Consequently, $P=P_\gamma\s ^* A_{\gamma,\eta}$ and $P=P_\delta\s^* A_{\delta,\eta}$. Therefore, for co-finitely many $i\in P$, $g_\gamma(i)<_T g_\eta(i)$ and $g_\delta(i)<_T g_\eta(i)$, which implies $g_\gamma(i)<_T g_\delta(i)$.
It now easily follows that $(T,<_T)$ admits an $\omega$-ascent path.
\end{proof}

\begin{remark}\label{remark: D-ascent}
The above proof makes it clear that if $U$ is a uniform $[\theta, \kappa)$-indecomposable ultrafilter on $\kappa>2^\theta$ where $\theta$ is regular, then every $\kappa$-Aronszajn tree admits a $D$-ascent path,
where $D$ is a filter on some $\mu<\theta$.
To see this, if $U$ is $\theta$-incomplete, then we can apply Fact~\ref{finest} to get the finest partition $\varphi$ and let $D=\varphi^*(U)$. If $U$ is $\theta$-complete, then in fact $U$ is $\kappa$-complete. In this case, since there is a cofinal branch of the tree, $D$ can be taken to be a trivial filter on a singleton.
Note that by \cite[Lemmas 3.7 and 3.38(3)]{paper36},
if $\theta < \kappa$ are infinite regular cardinals and there exists a $\kappa$-Aronszajn tree with a $\theta$-ascent path,
then every uniform ultrafilter over $\kappa$ is $\theta$-decomposable.
\end{remark}

Given a binary relation $R$ on a set $X$,
for $a,b \in X$, we say that $a$ and $b$ are \emph{$R$-comparable} iff $a = b$, $a \mathrel{R} b$, or $b \mathrel{R} a$.
$R$ is \emph{tree-like} iff, for all $a,b,c \in X$, if $a \mathrel{R} c$ and $b \mathrel{R} c$, then $a$ and $b$ are $R$-comparable.

\begin{definition}[Magidor-Shelah, \cite{MgSh:324}]\label{system_def}
$\mathcal S = \langle \bigcup_{\alpha \in I}\{\alpha\} \times \theta_\alpha, \mathcal{R} \rangle$ is a \emph{$\kappa$-system} if all of the following hold:
\begin{enumerate}
\item $I \subseteq \kappa$ is unbounded and, for all $\alpha \in I$, $\theta_\alpha$ is a cardinal such that $0 < \theta_\alpha < \kappa$;
\item $\mathcal{R}$ is a set of binary, transitive, tree-like relations on $\bigcup_{\alpha \in I}\{\alpha\} \times \theta_\alpha$ and $0 < |\mathcal{R}| < \kappa$;
\item for all $R \in \mathcal{R}$, $\alpha_0, \alpha_1 \in I$, $\beta_0 < \theta_{\alpha_0}$, and $\beta_1 < \theta_{\alpha_1}$, if $(\alpha_0, \beta_0) \mathrel{R} (\alpha_1,\beta_1)$, then $\alpha_0 < \alpha_1$;
\item for every $(\alpha_0,\alpha_1)\in[I]^2$. there are $(\beta_0,\beta_1)\in \theta_{\alpha_0}\times \theta_{\alpha_1}$ and $R \in \mathcal{R}$ such that $(\alpha_0, \beta_0) \mathrel{R} (\alpha_1, \beta_1)$.
\end{enumerate}

Define $\mathrm{width}(\mathcal S) := \sup\{|\mathcal{R}|,\theta_\alpha \mid \alpha \in I\}$.
A $\kappa$-system $\mathcal S$ is \emph{narrow}  if $\mathrm{width}(\mathcal S)^+ < \kappa$.
For $R \in \mathcal{R}$, a \emph{branch of $\mathcal S$ through $R$} is a set $B\s \bigcup_{\alpha \in I}\{\alpha\} \times \theta_\alpha$ such that for all $a,b \in B$, $a$ and $b$ are $R$-comparable.
A branch $B$ is \emph{cofinal} iff $\sup\{ \alpha\in I\mid \exists \tau<\theta_\alpha~(\alpha,\tau)\in B\}=\kappa$.
\end{definition}

\begin{definition}[\cite{narrow_systems}]
The \emph{$(\theta, \kappa)$-narrow system property}, which is abbreviated $\nsp(\theta, \kappa)$, asserts that every narrow $\kappa$-system of width $<\theta$ has a cofinal branch.
\end{definition}

By \cite[Theorem~10.3]{narrow_systems},
$\mathsf{PFA}$ implies that $\nsp(\omega_1, \kappa)$ holds for all regular
$\kappa \geq \aleph_2$. (In fact, as the proof in \cite{narrow_systems} shows, $\mathsf{ISP}(\omega_2)$,
or, equivalently, $\mathsf{GMP}$, is enough to derive the desired conclusion.) Recall that, for a
regular cardinal $\kappa$, the \emph{tree property at $\kappa$}, denoted $\TP(\kappa)$, is the
assertion that there are no $\kappa$-Aronszajn trees.

\begin{thm}\label{theorem: NSP+TP}
Suppose that $\theta < \kappa$ are uncountable cardinals with $\kappa$ regular, $\nsp(\theta,   \kappa)$ holds,
and $\kappa$ carries a $[\theta, \kappa)$-indecomposable ultrafilter. Then
$\TP(\kappa)$ holds.
\end{thm}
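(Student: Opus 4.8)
The plan is to argue by contradiction: assuming a $\kappa$-Aronszajn tree $T$ exists, I will manufacture from it, with the help of the ultrafilter, a narrow $\kappa$-system of width $<\theta$ possessing no cofinal branch, contradicting $\nsp(\theta,\kappa)$. Fix a uniform $[\theta,\kappa)$-indecomposable ultrafilter $U$ on $\kappa$ (so that every member of $U$ is cofinal in $\kappa$) and a transversal $\langle t_\beta\mid\beta<\kappa\rangle$ with $t_\beta\in T_\beta$. As in the proof of Theorem~\ref{theoerem: ascent}, for each $\alpha<\kappa$ define $f_\alpha\colon\kappa\setminus\alpha\to T_\alpha$ by $f_\alpha(\beta):=t_\beta\restriction\alpha$. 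Since $T$ is a $\kappa$-tree, $|T_\alpha|<\kappa$, so the equivalent formulation of $[\theta,\kappa)$-indecomposability recorded after Definition~\ref{interval-indecomposable} yields a set $W_\alpha\s T_\alpha$ with $|W_\alpha|<\theta$ and $f_\alpha^{-1}[W_\alpha]\in U$. Intuitively, $W_\alpha$ is a ``thin'' set of nodes at level $\alpha$ through which $U$-almost-every branch $t_\beta$ passes.

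First I would arrange a uniform bound on the widths. The values $|W_\alpha|$ are cardinals below $\theta$, of which there are fewer than $\kappa$; since $\kappa$ is regular, there is a single cardinal $\rho<\theta$ for which $I:=\{\alpha<\kappa\mid |W_\alpha|=\rho\}$ is unbounded in $\kappa$ (note $\rho\ge 1$, as $f_\alpha^{-1}[W_\alpha]\in U$ forces $W_\alpha\neq\emptyset$). Fixing bijections $e_\alpha\colon\rho\to W_\alpha$ for $\alpha\in I$, I define a system $\mathcal S$ on $\bigcup_{\alpha\in I}\{\alpha\}\times\rho$ with the single relation $R$ given by $(\alpha,i)\mathrel{R}(\beta,j)$ iff $\alpha<\beta$ and $e_\alpha(i)<_T e_\beta(j)$. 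This $R$ is transitive and tree-like: if $e_\alpha(i)$ and $e_\beta(j)$ both lie $<_T$-below a common node they are $<_T$-comparable; if they lie at distinct levels this makes the lower one $R$-below the higher, while if they lie at the same level they must be equal and hence carry the same index. For Clause~(4) of Definition~\ref{system_def}, given $\alpha_0<\alpha_1$ in $I$ the set $f_{\alpha_0}^{-1}[W_{\alpha_0}]\cap f_{\alpha_1}^{-1}[W_{\alpha_1}]$ lies in $U$, hence is cofinal, so I may pick $\beta>\alpha_1$ in it; then $t_\beta\restriction\alpha_0\in W_{\alpha_0}$ and $t_\beta\restriction\alpha_1\in W_{\alpha_1}$ are $<_T$-comparable ancestors of $t_\beta$, witnessing the required $R$-connection. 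Thus $\mathcal S$ is a $\kappa$-system of width $\rho<\theta$, and since $\rho^+\le\theta<\kappa$ it is narrow.

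Finally I would invoke $\nsp(\theta,\kappa)$ to obtain a cofinal branch $B$ through $R$ and read off a contradiction. Because $R$ strictly increases the first coordinate, $B$ contains at most one node per level, and any two of its elements are $<_T$-comparable; hence $\{e_\alpha(i)\mid(\alpha,i)\in B\}$ is a $<_T$-chain meeting cofinally many levels of $T$. Its downward closure is then a chain meeting every level below $\kappa$, i.e., a cofinal branch of $T$, contradicting the assumption that $T$ is $\kappa$-Aronszajn. Therefore no $\kappa$-Aronszajn tree exists, that is, $\TP(\kappa)$ holds.

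I expect the step requiring the most care to be the reduction to a uniform width $\rho$ together with the verification that a single tree-order relation already yields a genuine (tree-like) narrow system and satisfies the connectivity Clause~(4): indecomposability is exactly what shrinks each level to width $<\theta$, and the regularity of $\kappa$ is what lets one fixed width be extracted along an unbounded set of levels. It is worth emphasizing that, in contrast to Fact~\ref{finest}, this argument requires neither $\kappa>2^\theta$ nor the regularity of $\theta$, so it applies under exactly the stated hypotheses.
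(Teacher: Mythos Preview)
Your proof is correct and follows essentially the same route as the paper: both use indecomposability to thin each level $T_\alpha$ to a set of size $<\theta$ that captures $U$-almost-every $t_\beta\restriction\alpha$, pigeonhole via the regularity of $\kappa$ to stabilize the width on an unbounded $I$, and then observe that the tree order itself furnishes a single-relation narrow $\kappa$-system whose cofinal branch (given by $\nsp(\theta,\kappa)$) yields a cofinal branch of $T$. The only cosmetic difference is that you phrase it as a contradiction with $T$ Aronszajn, whereas the paper directly shows every $\kappa$-tree has a branch.
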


Before giving the proof we note that if $2^\theta < \kappa$ and $\theta$ is regular, then we can just apply Remark~\ref{remark: D-ascent} to get the desired conclusion, since a $D$-ascent path through $T$, where $D$ is a uniform ultrafilter on $\mu<\theta$ and $T$ is a $\kappa$-tree, is clearly a $\kappa$-narrow system of width $<\theta$. However, as we demonstrate below, we do not need these extra assumptions.

\begin{proof} Let  $U$ be $[\theta, \kappa)$-indecomposable ultrafilter on $\kappa$.
Fix a $\kappa$-tree $(T,<_T)$ and we shall find a cofinal branch through it.
Choose a transversal $\langle t_\alpha\mid \alpha<\kappa\rangle\in\prod_{\alpha<\kappa}T_\alpha$.
For each $\alpha < \kappa$,   using the $[\theta, \kappa)$-indecomposability of $U$, fix
a set $S_\alpha \in [T_\alpha]^{<\theta}$ such that the following set is in $U$:
\[
X_\alpha := \{\beta \in[\alpha, \kappa )\mid t_\beta \restriction \alpha \in S_\alpha\}.
\]
We can then fix an unbounded set $I \subseteq \kappa$ and a cardinal
$\nu < \theta$ such that $|S_\alpha| = \nu$ for all $\alpha \in I$.

We claim that $\mathcal{S} = \langle \langle S_\alpha \mid \alpha \in I \rangle, \{<_T\} \rangle$
is a system of height $\kappa$ and width $\nu$. The only nontrivial thing to verify is the
requirement that, for every pair $(\alpha,\beta)\in[I]^2$, there are $s \in S_\alpha$ and
$t \in S_\beta$ such that $s <_T t$. To this end, fix such a pair $(\alpha,\beta)$ and then
fix $\gamma \in X_\alpha \cap X_\beta$. Then $t_\gamma \restriction \beta \in S_\beta$ and
$t_\gamma \restriction \alpha \in S_\alpha$, and clearly $t_\gamma \restriction \alpha
<_T t_\gamma \restriction \beta$, so we have found $s$ and $t$ as desired.

Now apply $\nsp(\theta, \kappa)$ to find a cofinal branch $b$ through $\mathcal{S}$.
Then $b \in \prod_{\alpha \in I'} S_\alpha$ for some cofinal $I' \subseteq I$ and, for every
$(\alpha,\beta)\in[I']^2$, we have $b(\alpha) <_T b(\beta)$. It follows that the $<_T$-downward closure
of $\{b(\alpha) \mid \alpha \in I'\}$ is a cofinal branch through $T$.
\end{proof}

\begin{cor} \label{pfa_indec_cor}
Suppose that $\mathsf{PFA}$ holds and $\kappa$ is a regular cardinal carrying a uniform
indecomposable ultrafilter. Then $\TP(\kappa)$ holds. In particular, if, in addition,
$\kappa$ is inaccessible, then it is in fact Ramsey.
\end{cor}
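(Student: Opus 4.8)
\emph{First assertion.} The plan is to apply Theorem~\ref{theorem: NSP+TP} with $\theta=\omega_1$. By definition a uniform indecomposable ultrafilter lives on a cardinal $>\aleph_1$ and is $[\aleph_1,\kappa)$-indecomposable, so since $\kappa$ is moreover regular we have $\kappa\geq\aleph_2>\omega_1$, and the ultrafilter hypothesis of Theorem~\ref{theorem: NSP+TP} is met. As recalled above, $\pfa$ (indeed $\GMP$, equivalently $\isp(\omega_2)$) implies $\nsp(\omega_1,\kappa)$ for every regular $\kappa\geq\aleph_2$. Hence Theorem~\ref{theorem: NSP+TP} applies and gives $\TP(\kappa)$.

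\emph{Second assertion.} Now assume in addition that $\kappa$ is inaccessible. Since a cardinal is weakly compact precisely when it is inaccessible and satisfies the tree property, the first assertion gives that $\kappa$ is weakly compact; in particular $2^{\aleph_1}<\kappa$. If the uniform indecomposable ultrafilter $U$ is countably complete then, being indecomposable, it is $\kappa$-complete (Remark~\ref{remark: D-ascent}), so $\kappa$ is measurable and hence Ramsey. So I may assume $U$ is countably incomplete, which places it under the hypotheses of Fact~\ref{finest}, Theorem~\ref{theorem: silver}, and Lemma~\ref{lemma: approximation} with $\theta=\aleph_1$. To conclude that $\kappa$ is Ramsey I would use the iterable-ultrafilter characterization: $\kappa$ is Ramsey iff every $A\subseteq\kappa$ belongs to a weak $\kappa$-model $M$ (a transitive model of $\mathsf{ZFC}^-$ of size $\kappa$ with $\kappa\in M$) carrying a weakly amenable, countably complete $M$-ultrafilter on $\kappa$.

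Fix $A\subseteq\kappa$. Weak compactness furnishes such an $M\ni A$ together with an elementary $j\colon M\to N$ with $\crit(j)=\kappa$, and hence a $\kappa$-complete $M$-ultrafilter $W_M:=\{X\in\mathcal P(\kappa)^M\mid\kappa\in j(X)\}$, which can be arranged to be weakly amenable; the only clause not guaranteed by weak compactness is countable completeness. This is exactly the feature that $U$ is meant to supply: the finest partition $\varphi\colon\kappa\to\omega$ and the factorization $j_U=k\circ j_D$ of Theorem~\ref{theorem: silver}, in which $k$ is $j_D(\eta)$-$M_D$-complete for all $\eta<\kappa$, show that passing to the ultrapower by $U$ does not disturb objects of size $<\kappa$, and the derived $M_D$-ultrafilter $W$ on $j_D(\kappa)$ is $M_D$-$j_D(\eta)$-complete for all $\eta<\kappa$ and enjoys the approximation property of Lemma~\ref{lemma: approximation}. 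The plan is to transfer this high degree of completeness to certify that the finite and $\omega$-length iterated ultrapowers of $M$ by $W_M$ (after a harmless thinning aligning $W_M$ with $U$) stay well-founded, i.e.\ that $W_M$ is countably complete; for weakly amenable $M$-ultrafilters $\omega$-iterability is equivalent to countable completeness, so this completes the argument. I expect the main obstacle to be precisely this last step: fusing weak amenability (from the weakly compact embedding) with countable completeness (from the fine factor map $k$ and the approximation property) into a \emph{single} $M$-ultrafilter while coping with the ill-foundedness of $M_D$ --- which also explains why only $\omega$-iterability, rather than a genuine $\kappa$-complete $V$-measure (full measurability), is extracted at this stage.
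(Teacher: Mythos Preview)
Your proof of the first assertion is correct and identical to the paper's: $\pfa$ gives $\nsp(\omega_1,\kappa)$ for every regular $\kappa\ge\aleph_2$, and Theorem~\ref{theorem: NSP+TP} then yields $\TP(\kappa)$.

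For the second assertion, your argument is incomplete (as you yourself acknowledge), and it also contains a misconception. You write that the $M$-ultrafilter $W_M$ derived from a weakly compact embedding ``can be arranged to be weakly amenable; the only clause not guaranteed by weak compactness is countable completeness.'' This is not right: weak compactness does \emph{not} in general provide weakly amenable $M$-ultrafilters. The existence, for every $A\subseteq\kappa$, of a weak $\kappa$-model $M\ni A$ carrying a weakly amenable $M$-ultrafilter with well-founded ultrapower is precisely Gitman's characterization of a \emph{weakly Ramsey} (equivalently, $1$-iterable) cardinal, which is strictly stronger than weak compactness. So weak amenability is already missing, and your plan to graft countable completeness from the factor map $k$ onto an object you do not yet have cannot succeed as stated. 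The vague ``fusing'' step you flag as the main obstacle is indeed a genuine gap, and there is no evident way to close it along these lines.

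The paper does not attempt any such construction. It simply observes that inaccessibility together with $\TP(\kappa)$ gives weak compactness, and then invokes Ketonen's theorem \cite[Theorem~3.1]{Ketonen}: a weakly compact cardinal carrying a uniform indecomposable ultrafilter is Ramsey. That black-box citation is the entire content of the ``in particular'' clause.
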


\begin{proof}
The ``in particular" part follows from Theorem \ref{theorem: NSP+TP} and a theorem of Ketonen \cite[Theorem 3.1]{Ketonen} stating that if a weakly compact cardinal carries a uniform indecomposable ultrafilter, then it is in fact Ramsey.
\end{proof}

We will improve this theorem in Section~\ref{section: FAandUltra}, showing that in fact, in such a situation, $\kappa$ must be measurable.

\subsection{The $\pr_1$ principle}
As explained in the introduction to \cite{paper18}, the following principle of Shelah is intimately connected with non-productivity of chain conditions.
Note that it becomes stronger as we increase the third and fourth parameters.

\begin{definition}[Shelah, \cite{shelah_productivity}]\label{def_pr1} Suppose $\theta,\chi\le\kappa$ are cardinals.
\begin{itemize}
\item
$\pr_1(\kappa, \kappa, \theta, \chi)$ asserts the existence of a coloring $c:[\kappa]^2 \rightarrow \theta$ such that,
for every  $\sigma<\chi$, for every  pairwise disjoint subfamily $\mathcal{B}  \subseteq [\kappa]^{\sigma}$ of size $\kappa$,
for every $\tau< \theta$, there are $a,b\in\mathcal B$ with $a<b$ such that $c[a\times b] = \{\tau\}$;
\item
$\pr_1(\kappa, \kappa, \theta, (2,\chi))$ asserts the existence of a coloring $c:[\kappa]^2 \rightarrow \theta$ such that,
for every $A\in[\kappa]^\kappa$,
for every  $\sigma<\chi$, for every  pairwise disjoint subfamily $\mathcal{B}  \subseteq [\kappa]^{\sigma}$ of size $\kappa$,
for every $\tau< \theta$, there are $\alpha\in A$ and $b\in\mathcal B$ with $\alpha<b$ such that $c[\{\alpha\}\times b] = \{\tau\}$.
\end{itemize}
\end{definition}

Clearly, $\pr_1(\kappa, \kappa, \theta, 1+\chi)$ implies $\pr_1(\kappa, \kappa, \theta, (2,\chi))$.
We now demonstrate a constraint on the fourth parameter when the source cardinal carries a uniform indecomposable ultrafilter.
The following generalizes a remark made at the end of Section~2 of \cite{paper18}.

\begin{thm} \label{theorem: pr}
Let $F$ be a uniform filter on $\mu$.
If $\kappa$ is a strongly inaccessible cardinal such that every $\kappa$-Aronszajn tree admits an $F$-ascent path, then $\pr_1(\kappa,\kappa, 2,\allowbreak(2,\mu^+))$ fails.
\end{thm}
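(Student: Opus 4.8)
The plan is to argue by contraposition: assuming a coloring $c:[\kappa]^2\to 2$ witnesses $\pr_1(\kappa,\kappa,2,(2,\mu^+))$, I would derive a contradiction. The first step is to read $c$ as a coloring tree. For $\beta<\kappa$ define $g_\beta:\beta\to 2$ by $g_\beta(\alpha):=c(\alpha,\beta)$, and let $T:=\{g_\beta\restriction\gamma\mid \gamma\le\beta<\kappa\}$, ordered so that $s<_T t$ iff $t$ end-extends $s$. Since $\kappa$ is strongly inaccessible, the $\gamma$-th level satisfies $T_\gamma\s{}^{\gamma}2$ and hence $|T_\gamma|\le 2^{|\gamma|}<\kappa$, so $T$ is a $\kappa$-tree. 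Now either $T$ is $\kappa$-Aronszajn, in which case the hypothesis supplies an $F$-ascent path, or $T$ has a cofinal branch, which, read as a sequence that is constant in the $\mu$-coordinate, is itself an $F$-ascent path. Either way I fix an $F$-ascent path $\langle f_\delta\mid\delta<\kappa\rangle$ with $f_\delta:\mu\to T_\delta$.

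Next I would manufacture a family suitable for $\pr_1$ out of the ascent path. For each $\delta$ and $i<\mu$, the node $f_\delta(i)\in T_\delta$ has the form $g_{\beta(\delta,i)}\restriction\delta$ for some $\beta(\delta,i)\ge\delta$, so that $f_\delta(i)(\alpha)=c(\alpha,\beta(\delta,i))$ whenever $\alpha<\delta$. I would then set $b_\delta:=\{\delta\}\cup\{\beta(\delta,i)\mid i<\mu\}$, the point of inserting $\delta$ being that $\min(b_\delta)=\delta$. Thinning out and using the regularity of $\kappa$ together with $\mu^+<\kappa$, fix a cofinal set $\{\delta_\xi\mid\xi<\kappa\}$ for which the $b_{\delta_\xi}$ are pairwise disjoint of a common size $\sigma\le\mu$, and put $\mathcal B:=\{b_{\delta_\xi}\mid\xi<\kappa\}$. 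Let $A_1$ (resp.\ $A_0$) be the set of $\alpha<\kappa$ for which there is $\xi$ with $\alpha<\delta_\xi$ and $f_{\delta_\xi}(i)(\alpha)=1$ (resp.\ $=0$) for all $i<\mu$. I claim each of $\kappa\setminus A_1$ and $\kappa\setminus A_0$ has size $<\kappa$. Otherwise, applying $\pr_1$ to $A:=\kappa\setminus A_1$, the family $\mathcal B$, and the color $\tau=1$ (and symmetrically for $A_0$) produces $\alpha\in A$ and $\xi$ with $\alpha<\min(b_{\delta_\xi})=\delta_\xi$ and $c(\alpha,\eta)=1$ for all $\eta\in b_{\delta_\xi}$; since $\alpha<\delta_\xi$ this forces $f_{\delta_\xi}(i)(\alpha)=1$ for every $i<\mu$, i.e.\ $\alpha\in A_1$, a contradiction.

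Consequently $A_0\cap A_1$ is co-$({<}\kappa)$ and in particular nonempty; fix $\alpha$ in it and choose witnesses $\xi,\xi'$ for $\alpha\in A_1$ and $\alpha\in A_0$, so that $\alpha<\delta_\xi,\delta_{\xi'}$ while $f_{\delta_\xi}(i)(\alpha)=1$ and $f_{\delta_{\xi'}}(i)(\alpha)=0$ for all $i<\mu$. As $\mu$ is infinite these levels are distinct; say $\delta_\xi<\delta_{\xi'}$. By the defining property of the $F$-ascent path the set $S:=\{i<\mu\mid f_{\delta_\xi}(i)<_T f_{\delta_{\xi'}}(i)\}$ belongs to $F$, yet for every $i\in S$ the node $f_{\delta_{\xi'}}(i)$ end-extends $f_{\delta_\xi}(i)$ and hence $f_{\delta_\xi}(i)(\alpha)=f_{\delta_{\xi'}}(i)(\alpha)$, i.e.\ $1=0$. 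Thus $S=\emptyset\in F$, contradicting the properness of the uniform filter $F$. This refutes $\pr_1(\kappa,\kappa,2,(2,\mu^+))$.

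The step I expect to be most delicate is the alignment between the ordinal $\alpha$ returned by $\pr_1$ and the level $\delta_\xi$: a priori $\pr_1$ guarantees only $\alpha<\min(b_{\delta_\xi})$, which need not place $\alpha$ below $\delta_\xi$, and then the values $f_{\delta_\xi}(i)(\alpha)$ would be undefined and the argument would break. Throwing $\delta$ into each block $b_\delta$ is precisely the device that forces $\min(b_{\delta_\xi})=\delta_\xi$ and thereby repairs this gap; it is also what allows the Aronszajn and cofinal-branch cases to be handled by one and the same computation, so that I never need to verify that $T$ is Aronszajn.
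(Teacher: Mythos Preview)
Your proof is correct and takes a genuinely different route from the paper's. Both arguments begin by forming the coloring tree $T$ and extracting an $F$-ascent path, but the endgames diverge. The paper argues (via $\kappa\nrightarrow[\kappa]^2_2$) that $T$ is actually Aronszajn, then thins to a set $D$ on which the blocks are spread out, introduces a notion of one ordinal being ``good'' for another (meaning the ascent path forces a particular color on $F$-many coordinates), stabilizes on a single color $i$ along a further thinning $E$, and finally applies $\pr_1$ once with the opposite color $1-i$ to reach a contradiction. You instead sidestep the Aronszajn verification entirely by noting a branch already yields a constant ascent path, and then run a cleaner two-sided argument: apply $\pr_1$ twice (once per color) to show that the sets $A_0,A_1$ of ordinals admitting a monochromatic block above them are each co-small, and then use the ascent path directly to contradict membership in $A_0\cap A_1$. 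Your device of inserting $\delta$ into $b_\delta$ to force $\min(b_\delta)=\delta$ is the analogue of the paper's spreading-out step, but packaged more transparently. The paper's stabilization-then-one-shot approach and your two-applications-then-intersect approach are equally valid; yours is somewhat more direct, while the paper's makes the role of the ascent path as a ``color-propagating'' mechanism more explicit. One cosmetic point: the reason $\delta_\xi\ne\delta_{\xi'}$ is simply that $f_{\delta_\xi}$ and $f_{\delta_{\xi'}}$ disagree at $\alpha$, not anything about $\mu$ being infinite.
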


\begin{proof}
Suppose for the sake of contradiction that $c: [\kappa]^2\rightarrow 2$ is a counterexample.
Since $\kappa$ is a strongly inaccessible, and $c$ in particular witnesses $\kappa\nrightarrow[\kappa]^2_2$,
the set $T:=\{c(\cdot, \beta)\restriction\alpha\mid\alpha\le\beta<\kappa\}$ forms a $\kappa$-Aronszajn tree,
so it must admit an $F$-ascent path.
This means that we can find $\langle \langle \beta_{\alpha,j}\mid  j< \mu\rangle\mid \alpha<\kappa\rangle$ such that:
\begin{itemize}
\item For all $\alpha<\kappa$, the set $b_\alpha:=\{\beta_{\alpha,j}\mid j< \mu\}$ is disjoint from $\alpha$;
\item for all $\alpha_0<\alpha_1<\kappa$,
for $F$-many $j<\mu$, $c(\cdot, \beta_{\alpha_0,j})\restriction \alpha_0 = c(\cdot, \beta_{\alpha_1, j})\restriction \alpha_0$.
\end{itemize}

Choose $D\in [\kappa]^\kappa$ such that for every $(\alpha,\beta)\in D$, $\sup(b_{\alpha})<\beta$.
For each $\alpha\in D$, if there are $\beta\in D\setminus(\alpha+1)$ and $i<2$ such that $c[\{\alpha\}\times b_\beta]=\{i\}$,
then in particular for all $\gamma\in D\setminus(\beta+1)$, for $F$-many $j<\mu$, $c(\alpha, \beta_{\gamma,j})=i$. We call such $\gamma$ \emph{good for $\alpha$}.
Next we find $E\in [D]^\kappa$ and $i<2$ such that for every $\alpha\in E$, either no $\beta\in E\setminus(\alpha+1)$ is good for $\alpha$ or every $\beta\in E\setminus(\alpha+1)$ is good for $\alpha$ as witnessed by $i$.
Finally, as $c$ is a witnesses $\pr_1(\kappa,\kappa, 2, (2,\mu^+))$, we can find $(\alpha,\beta)\in[E]^2$ such that $c[\{\alpha\}\times b_\beta]=\{1-i\}$. But then this contradicts the fact that if $\beta$ is good for $\alpha$ then $i$ must be the witnessing color.
\end{proof}

\begin{cor}
Suppose that $\kappa$ is a strongly inaccessible cardinal, $\theta \in\reg(\kappa)$, and $\kappa$ carries a
uniform $[\theta, \kappa)$-indecomposable ultrafilter. Then $\pr_1(\kappa, \kappa, 2, \theta)$
fails.
\end{cor}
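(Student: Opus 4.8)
The plan is to treat this corollary as essentially a bookkeeping consequence of Theorem~\ref{theorem: pr} together with Remark~\ref{remark: D-ascent}: the indecomposable ultrafilter is fed into the remark to manufacture the ascent path that the theorem requires, and then the failure of the pair-version of $\pr_1$ is transferred to the ordinary version by monotonicity in the fourth parameter.

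First I would record the two facts that strong inaccessibility hands us for free. Since $\theta < \kappa$ and $\kappa$ is a strong limit, $2^\theta < \kappa$, so the cardinal-arithmetic hypothesis of Remark~\ref{remark: D-ascent} is automatic. Applying that remark to the given uniform $[\theta,\kappa)$-indecomposable ultrafilter $U$ on $\kappa$ (using that $\theta \in \reg(\kappa)$ is regular), I obtain a cardinal $\mu < \theta$ and a \emph{uniform} filter $D$ on $\mu$ such that every $\kappa$-Aronszajn tree admits a $D$-ascent path. Here I would be slightly careful to invoke both cases of the remark: if $U$ is $\theta$-incomplete we take $D = \varphi^*(U)$ for the finest partition $\varphi:\kappa\to\mu$ (a nonprincipal uniform ultrafilter on $\mu$), and if $U$ is $\theta$-complete we take $D$ to be the trivial filter on a singleton, which is vacuously uniform; in either case $D$ is a uniform filter on some $\mu < \theta$, as Theorem~\ref{theorem: pr} demands.

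Next I would apply Theorem~\ref{theorem: pr} with $F := D$. Since $\kappa$ is strongly inaccessible and every $\kappa$-Aronszajn tree admits a $D$-ascent path, the theorem yields directly that $\pr_1(\kappa,\kappa,2,(2,\mu^+))$ fails.

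Finally, I would descend from this to the failure of $\pr_1(\kappa,\kappa,2,\theta)$. Because $\mu < \theta$ and $\theta$ is an infinite regular cardinal, $1+\mu^+ \le \theta$, so a single coloring witnessing $\pr_1(\kappa,\kappa,2,\theta)$ would, by monotonicity in the fourth parameter, witness $\pr_1(\kappa,\kappa,2,1+\mu^+)$, which in turn witnesses $\pr_1(\kappa,\kappa,2,(2,\mu^+))$ by the implication recorded just after Definition~\ref{def_pr1}. As the latter principle has no witness, neither does $\pr_1(\kappa,\kappa,2,\theta)$. I expect the only genuinely delicate point to be the parameter bookkeeping in this last step: getting the direction of monotonicity right (the larger fourth parameter is the \emph{stronger} principle, so it is its failure that must be deduced) and confirming $1+\mu^+ \le \theta$ in the degenerate case where $\mu$ is finite. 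Everything else reduces to direct citation of the two preceding results.
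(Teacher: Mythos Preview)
Your proof is correct and follows exactly the approach the paper intends: the paper's own proof is the single sentence ``This follows from Theorem~\ref{theorem: pr} and Remark~\ref{remark: D-ascent},'' and your write-up simply unpacks this citation, supplying the parameter bookkeeping (that $2^\theta<\kappa$ by strong inaccessibility, that the resulting $D$ is uniform on some $\mu<\theta$, and that $1+\mu^+\le\theta$ yields the monotonicity step from $\pr_1(\kappa,\kappa,2,\theta)$ down to $\pr_1(\kappa,\kappa,2,(2,\mu^+))$). There is nothing to add.
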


\begin{proof}
This follows from Theorem~\ref{theorem: pr} and Remark~\ref{remark: D-ascent}.
\end{proof}

\subsection{Indexed square} \label{subsection: ind_square_ind_uf}

In this section, we demonstrate that $\square^{\mathrm{ind}}(\kappa, \theta)$ is compatible
with the existence of a uniform $[\theta^+, \kappa)$-ultrafilter on $\kappa$. By the following
fact, this is sharp.

\begin{fact}[{\cite[Theorem~4.4 and Lemma~3.38(3)]{paper36}}]\label{fact: indsVSindecom}
Suppose that $\theta < \kappa$ is a pair of infinite regular cardinals such that $\inds(\kappa, \theta)$
holds. Then every uniform ultrafilter over $\kappa$ is $\theta$-decomposable.
\end{fact}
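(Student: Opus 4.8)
The plan is to prove the contrapositive in its sharpest form: assuming $\inds(\kappa,\theta)$ holds, I would show there is no uniform $\theta$-indecomposable ultrafilter on $\kappa$. Fix an $\inds(\kappa,\theta)$-sequence $\vec C = \langle C_{\alpha,i}\mid \alpha\in\Gamma,\ i(\alpha)\le i<\theta\rangle$ and a uniform ultrafilter $U$ on $\kappa$, and suppose toward a contradiction that $U$ is $\theta$-indecomposable. A preliminary reduction lets me assume $\Gamma\in U$: since $\acc(\kappa)\setminus\Gamma\s E^\kappa_\theta$, the set $\Gamma$ contains a cofinal copy of $\kappa$ of some fixed cofinality $\neq\theta$ (in the main case $\theta>\omega$ one may use $\alpha\mapsto\omega\cdot\alpha+\omega$, landing in $E^\kappa_\omega\cap\acc(\kappa)\s\Gamma$; the case $\theta=\omega$ is either degenerate or handled by the analogous device). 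Composing with an increasing injection $e\colon\kappa\to\Gamma$ and passing to the pushforward $e_*(U)$ preserves both uniformity and $\theta$-indecomposability, so I may assume $\Gamma\in U$ outright.

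Next I would package the coherence of $\vec C$ into a threshold coloring. For $\alpha<\beta$ in $\Gamma$, Clause~(2) gives some $i$ with $\alpha\in\acc(C_{\beta,i})$, and since the clubs are $\s$-increasing the set of such $i$ is a tail of $[i(\beta),\theta)$; let $c(\alpha,\beta)$ be its minimum. By Clause~(3), for every $i\ge c(\alpha,\beta)$ we have $i(\alpha)\le i$ and $C_{\alpha,i}=C_{\beta,i}\cap\alpha$. The crux is then a double application of $\theta$-indecomposability, using that $\theta$ is regular so that $[\theta]^{<\theta}$ consists of bounded sets. First, for each fixed $\alpha\in\Gamma$ the map $\beta\mapsto c(\alpha,\beta)$ sends $\Gamma\setminus(\alpha+1)$ into $\theta$, so there is $\xi_\alpha<\theta$ with $B_\alpha:=\{\beta\in\Gamma\mid \beta>\alpha,\ c(\alpha,\beta)\le\xi_\alpha\}\in U$. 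Second, applying indecomposability to $\alpha\mapsto\xi_\alpha$ yields a single $\xi^*<\theta$ with $W:=\{\alpha\in\Gamma\mid\xi_\alpha\le\xi^*\}\in U$. The effect is to collapse the a priori $\theta$-many coherence thresholds down to one level $\xi^*$ that works for $U$-many pairs simultaneously.

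With $\xi^*$ in hand I would thread the $\xi^*$-th slice. For $\alpha<\alpha'$ in $W$, choosing $\beta\in B_\alpha\cap B_{\alpha'}\cap W$ above $\alpha'$ gives $C_{\alpha,\xi^*}=C_{\beta,\xi^*}\cap\alpha$ and $C_{\alpha',\xi^*}=C_{\beta,\xi^*}\cap\alpha'$, whence $C_{\alpha,\xi^*}=C_{\alpha',\xi^*}\cap\alpha$; thus $\langle C_{\alpha,\xi^*}\mid\alpha\in W\rangle$ is $\sqsubseteq$-coherent, and $D:=\bigcup_{\alpha\in W}C_{\alpha,\xi^*}$ is readily checked to be a club in $\kappa$ with $D\cap\alpha=C_{\alpha,\xi^*}$ for every $\alpha\in W$. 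Finally, I would verify that $D$ is a genuine thread of $\vec C$: for any $\alpha\in\acc(D)$, picking $\alpha'\in W$ above $\alpha$ yields $D\cap\alpha=C_{\alpha',\xi^*}\cap\alpha$, which is cofinal in $\alpha$, so $\alpha\in\acc(C_{\alpha',\xi^*})$, and then Clause~(3) forces $\alpha\in\Gamma$, $i(\alpha)\le\xi^*$, and $C_{\alpha,\xi^*}=D\cap\alpha$. Hence $D\cap\alpha=C_{\alpha,\xi^*}$ for every $\alpha\in\acc(D)\cap\Gamma$, which flatly contradicts the nontriviality Clause~(4).

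The step I expect to be the main obstacle is this last one, and it is exactly where the full strength of $\inds$ (as opposed to $\inde$) is spent: Clause~(4) only guarantees a single bad $\alpha$, so to reach a contradiction I need $D$ to cohere with $\vec C$ at \emph{every} accumulation point, which requires Clause~(3) to apply to all $\bar\alpha\in\acc(C_{\alpha',\xi^*})$, including those of cofinality below $\theta$. This is precisely the coherence that $\inde$ relaxes to hold only eventually (and only at $E^\kappa_{\ge\theta}$-points), consistent with the remark following Corollary~\ref{mmiscompatible} that the subadditive colorings arising from $\inde$ do not force decomposability. A secondary point demanding care is the regularity of $\theta$, which is what converts $\theta$-indecomposability into the usable assertion that each fiber $\{\beta\mid c(\alpha,\beta)<\xi\}$ is in $U$ for some $\xi<\theta$.
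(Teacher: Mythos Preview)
The paper does not prove this statement; it is recorded as a Fact with a citation to \cite[Theorem~4.4 and Lemma~3.38(3)]{paper36}, so there is no in-paper argument to compare against. Your direct proof is correct: the threshold function $c$, the two applications of $\theta$-indecomposability (using regularity of $\theta$ to bound the fibers), and the threading of the $\xi^*$-th column all go through, and your final paragraph correctly identifies why Clause~(3) of $\inds$ (full coherence at \emph{all} accumulation points, not just those in $E^\kappa_{\ge\theta}$) is exactly what is needed to contradict Clause~(4).

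By way of comparison, the cited route in \cite{paper36} factors through an intermediate principle: Theorem~4.4 there produces from $\inds(\kappa,\theta)$ a closed subadditive witness to $\U(\kappa,2,\theta,2)$ (essentially your $c$, packaged abstractly), and Lemma~3.38(3) then shows that any such coloring forces every uniform ultrafilter on $\kappa$ to be $\theta$-decomposable. Your argument collapses these two steps and works directly with the matrix, which is cleaner for this one implication; the modular approach in \cite{paper36} has the advantage that the coloring principle is reusable (cf.\ Proposition~\ref{prop: inde_wc} here). One minor simplification: your case split on $\theta=\omega$ in the reduction to $\Gamma\in U$ is unnecessary, since $\Gamma$ always contains a stationary (hence size-$\kappa$) set, so an increasing cofinal injection $e:\kappa\to\Gamma$ exists regardless of $\theta$.
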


Fix for now a pair of infinite regular cardinals $\theta < \kappa$, and let $\mathbb{P} = \mathbb{P}(\kappa, \theta)$
be the forcing to add a $\square^{\ind}(\kappa, \theta)$-sequence introduced in \cite[\S 7]{narrow_systems}.
Conditions in $\mathbb{P}$ are thus all sequences $p = \langle C^p_{\alpha,i} \mid \alpha
\in \acc(\gamma^p + 1), ~ i(\alpha)^p \leq i < \theta \rangle$ such that
\begin{itemize}
\item $\gamma^p \in \acc(\kappa)$;
\item for all $\alpha \in \acc(\gamma^p + 1)$, we have $i(\alpha)^p < \theta$ and
$\langle C^p_{\alpha,i} \mid i(\alpha)^p \leq i < \theta \rangle$ is a $\subseteq$-increasing
sequence of clubs in $\alpha$, with $\acc(\alpha) = \bigcup_{i(\alpha) \leq i < \theta}
\acc(C^p_{\alpha,i})$;
\item for all $\alpha \in \acc(\gamma^p + 1)$, $i(\alpha)^p \leq i < \theta$, and
$\bar{\alpha} \in \acc(C^p_{\alpha,i})$, we have $i(\bar{\alpha})^p \leq i$ and
$C^p_{\bar{\alpha},i} = C^p_{\alpha,i} \cap \bar{\alpha}$.
\end{itemize}
$\mathbb{P}$ is ordered by end-extension.

Let $\dot{\vec{C}} = \langle \dot{C}_{\alpha, i} \mid \alpha < \kappa, ~ \dot{i(\alpha)} \leq i < \theta
\rangle$ be the canonical $\mathbb{P}$-name for the generically-added $\square^{\ind}(\kappa,
\theta)$-sequence. For each $i < \theta$, let $\dot{\mathbb{T}}_i$ be a $\mathbb{P}$-name for the
poset to thread the $i^{\mathrm{th}}$ column of $\dot{\vec{C}}$. More precisely,
the conditions of $\dot{\mathbb{T}}_i$ are forced to be the elements of $\{\dot{C}_{\alpha, i} \mid
\alpha < \lambda \wedge \dot{i(\alpha)} \leq i\}$, and the ordering is end-extension.

\begin{fact}[{\cite[Lemma 3.18]{hlh}}] \label{dense_closed_lemma}
\begin{enumerate}
\item For all $i < \theta$, the two-step iteration $\mathbb{P} \ast \dot{\mathbb{T}}_i$ has a
dense $\kappa$-directed closed subset.
\item In $V^{\mathbb{P}}$, there is a system of commuting projections
$\langle \pi_{ij} : \mathbb{T}_i \rightarrow \mathbb{T}_j \mid i \leq j < \theta \rangle$ defined
by letting $\pi_{ij}(C_{\alpha,i}) = C_{\alpha, j}$ for all $i \leq j < \theta$ and
$C_{\alpha,i} \in \mathbb{T}_i$.
\end{enumerate}
\end{fact}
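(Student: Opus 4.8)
For part~(1), the plan is to exhibit a dense $\kappa$-directed closed subset $D_i\s\mathbb{P}\ast\dot{\mathbb{T}}_i$. I let $D_i$ consist of all pairs $(p,\check C^p_{\gamma^p,i})$ with $p\in\mathbb{P}$ and $i(\gamma^p)^p\le i$; here $C^p_{\gamma^p,i}$ is a club in $\gamma^p$, and $p$ forces it to be a condition of $\dot{\mathbb{T}}_i$. To see that $D_i$ is dense, I start with an arbitrary $(p,\dot C)$, extend $p$ to $p'$ deciding $\dot C=\check C_{\beta,i}$ for some $\beta\le\gamma^{p'}$, and then, using an extension move as in the proof of Lemma~\ref{game}, extend $p'$ to $q$ with a fresh top level $\gamma^q$ for which $i(\gamma^q)^q=0$ and $\beta\in\acc(C^q_{\gamma^q,i})$. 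Coherence gives $C^q_{\gamma^q,i}\cap\beta=C^q_{\beta,i}=C_{\beta,i}$, so $(q,\check C^q_{\gamma^q,i})\in D_i$ sits below $(p,\dot C)$.

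The crux is that $D_i$, with the inherited order, is $\kappa$-directed closed. Since $\mathbb{P}$ (and hence $D_i$) is tree-like, any directed subset of $D_i$ of size $<\kappa$ is a chain $\langle(p_s,\check C^{p_s}_{\gamma^{p_s},i})\mid s\in S\rangle$; I may assume it has no maximum and set $\gamma:=\sup_s\gamma^{p_s}\in\acc(\kappa)$. The key observation is that membership in $\dot{\mathbb{T}}_i$ forces the top $i$-clubs of the chain to end-extend one another; in particular, if $(p_s,\cdot)$ extends $(p_{s'},\cdot)$ then $\gamma^{p_{s'}}\in\acc(C^{p_s}_{\gamma^{p_s},i})$. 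Hence $\{C^{p_s}_{\gamma^{p_s},i}\mid s\in S\}$ is a $\sq$-chain and $C^q_{\gamma,i}:=\bigcup_s C^{p_s}_{\gamma^{p_s},i}$ is a club in $\gamma$ with cofinally many accumulation points. I then put $i(\gamma)^q:=i$ and define the remaining top columns by unioning along these anchors:
\[
C^q_{\gamma,i'}:=\bigcup\{C^q_{\bar\alpha,i'}\mid\bar\alpha\in\acc(C^q_{\gamma,i})\}\qquad(i\le i'<\theta).
\]
For $\bar\alpha<\bar\beta$ in $\acc(C^q_{\gamma,i})$ one has $i(\bar\alpha)\le i\le i'$ and, by coherence below $\gamma$, $C^q_{\bar\alpha,i'}=C^q_{\bar\beta,i'}\cap\bar\alpha$, so each $C^q_{\gamma,i'}$ is again a coherent club; a short computation then yields $\acc(\gamma)=\bigcup_{i\le i'<\theta}\acc(C^q_{\gamma,i'})$ and preserves full coherence, so $q$ is a legitimate condition and $(q,\check C^q_{\gamma,i})\in D_i$ is a lower bound for the chain.

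This limit step is where I expect the real work to lie, and it is precisely where the thread earns its keep: for $\mathbb{P}$ alone the top clubs of a chain need not cohere across levels, so one obtains only $\kappa$-strategic closure (as in Lemma~\ref{game}), whereas fixing a thread through the $i$-th column supplies the cofinal family $\acc(C^q_{\gamma,i})$ of coherence anchors along which all higher columns are forced to agree, removing every choice at limits and upgrading strategic closure to genuine directed closure. Having $D_i$ dense and $\kappa$-directed closed establishes part~(1).

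For part~(2), I would work in $V^{\mathbb{P}}$ and verify directly that $\pi_{ij}\colon\mathbb{T}_i\to\mathbb{T}_j$, $C_{\alpha,i}\mapsto C_{\alpha,j}$, is a projection for $i\le j$. It is well defined because $\alpha=\sup(C_{\alpha,i})$ and $i(\alpha)\le i\le j$, and order preserving because $C_{\alpha,i}\sq C_{\beta,i}$ forces $\alpha\in\acc(C_{\beta,i})\s\acc(C_{\beta,j})$, whence $C_{\alpha,j}=C_{\beta,j}\cap\alpha$ by coherence. For the projection property, given $C_{\beta,j}\le_{\mathbb{T}_j}\pi_{ij}(C_{\alpha,i})=C_{\alpha,j}$, a density argument back in $\mathbb{P}$---adding a top level $\delta\ge\beta$ with $\alpha\in\acc(C_{\delta,i})$ and $\beta\in\acc(C_{\delta,j})$ end-extending $C_{\beta,j}$---yields $C_{\delta,i}\le_{\mathbb{T}_i}C_{\alpha,i}$ with $\pi_{ij}(C_{\delta,i})=C_{\delta,j}\le_{\mathbb{T}_j}C_{\beta,j}$. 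Finally $\pi_{jk}\circ\pi_{ij}=\pi_{ik}$ is immediate from the definition, so the $\pi_{ij}$ form a commuting system of projections, as required.
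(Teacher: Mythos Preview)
Your proof is correct and follows the standard argument. Note that the paper does not actually prove this statement---it is cited as a fact from \cite{hlh}---but immediately after the statement the paper identifies the dense set as $\mathbb{U}_i = \{(p,\dot t) \mid p \Vdash_{\mathbb{P}} \dot t = C^p_{\gamma^p,i}\}$, which is exactly your $D_i$. Your argument for density and for $\kappa$-directed closure (reducing to chains via tree-likeness, then using the thread through the $i$-th column to anchor the coherent union at the limit) is the intended one from \cite{hlh}. One small remark on part~(2): your density argument for the projection property (``adding a top level $\delta\ge\beta$ with $\alpha\in\acc(C_{\delta,i})$ and $\beta\in\acc(C_{\delta,j})$'') is correct in spirit but requires a bit more care in the construction of the top clubs $C^q_{\delta,i'}$ to simultaneously ensure $\subseteq$-monotonicity in $i'$, full coherence, and that $\acc(\delta)$ is covered---particularly when $\gamma^p>\beta$; this is routine but not entirely a one-liner.
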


The dense subset of $\mathbb{P} \ast \dot{\mathbb{T}}_i$ referenced in Clause (1) of the above fact can
be taken to be the collection of all $(p,\dot{t})$ such that $p \Vdash_{\mathbb{P}} \dot{t} =
C^p_{\gamma^p,i}$. We will refer to the set of such pairs as $\mathbb{U}_i$. It follows, that, if
$\kappa^{<\kappa} = \kappa$, then $\mathbb{P} \ast \dot{\mathbb{T}}_i$ is forcing equivalent to
$\mathrm{Add}(\kappa, 1)$, the forcing to add a single Cohen subset to $\kappa$.

\begin{lemma} \label{generic_thread}
Suppose that $i < j < \theta$. Then
\[
\Vdash_{\mathbb{P} \ast \dot{\mathbb{T}}_j} ``\text{for every club } D \subseteq \kappa, \text{ there
is } \alpha \in \acc(D) \text{ such that } \dot{i(\alpha)} > i".
\]
In particular, forcing with $\mathbb{T}_j$ over $V^{\mathbb{P}}$ does not add a thread through the
$i^{\mathrm{th}}$ column of the generic $\square^{\ind}(\kappa, \theta)$-sequence.
\end{lemma}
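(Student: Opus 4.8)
The plan is to run a density argument, set up so that the coherence of the relevant columns comes for free. The key preliminary observation is that it suffices to treat the case $j=i+1$. Indeed, by the system of commuting projections of Fact~\ref{dense_closed_lemma}(2), the map $\pi_{i+1,j}$ exhibits $\mathbb{T}_j$ as a projection of $\mathbb{T}_{i+1}$ over $V^{\mathbb{P}}$, so $V^{\mathbb{P}\ast\dot{\mathbb{T}}_j}\subseteq V^{\mathbb{P}\ast\dot{\mathbb{T}}_{i+1}}$ and every club $D\subseteq\kappa$ lying in $V^{\mathbb{P}\ast\dot{\mathbb{T}}_j}$ also lies in $V^{\mathbb{P}\ast\dot{\mathbb{T}}_{i+1}}$. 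Since the generic $\square^{\ind}(\kappa,\theta)$-sequence and its index function $i(\cdot)$ already live in $V^{\mathbb{P}}$, for a fixed such $D$ the assertion ``there is $\alpha\in\acc(D)$ with $\dot{i(\alpha)}>i$'' is computed identically in the two models (the witness $\alpha<\kappa$ and the predicates $\acc(D)$ and $\dot{i(\alpha)}>i$ are absolute). Hence it is enough to prove
\[
\Vdash_{\mathbb{P}\ast\dot{\mathbb{T}}_{i+1}} ``\text{for every club } D\subseteq\kappa \text{ there is } \alpha\in\acc(D) \text{ with } \dot{i(\alpha)} = i+1".
\]

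To prove this I would work inside the dense $\kappa$-directed closed subset $\mathbb{U}_{i+1}\subseteq\mathbb{P}\ast\dot{\mathbb{T}}_{i+1}$ described after Fact~\ref{dense_closed_lemma}, whose conditions are the pairs that thread the $(i+1)^{\mathrm{st}}$ column up to their top. The point of passing to $\mathbb{U}_{i+1}$ is that its ordering \emph{enforces} coherence at index $i+1$: if $(q,\dot s)\le(p,\dot t)$ in $\mathbb{U}_{i+1}$ then $C^q_{\gamma^q,i+1}$ end-extends $C^p_{\gamma^p,i+1}$, so $\gamma^p\in\acc(C^q_{\gamma^q,i+1})$ with $C^q_{\gamma^q,i+1}\cap\gamma^p=C^p_{\gamma^p,i+1}$; as the columns are $\subseteq$-increasing and $q$ is a legitimate $\mathbb{P}$-condition, this propagates to $\gamma^p\in\acc(C^q_{\gamma^q,i'})$ with $C^q_{\gamma^q,i'}\cap\gamma^p=C^p_{\gamma^p,i'}$ for \emph{all} $i'\ge i+1$. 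Thus along any descending sequence in $\mathbb{U}_{i+1}$ the tops automatically cohere at every column of index $\ge i+1$, even though the intermediate conditions used to decide $\dot D$ may carry smaller base indices.

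Given a condition and a name $\dot D$ for a club, I would then build a descending $\omega$-sequence $\langle q_n\mid n<\omega\rangle$ in $\mathbb{U}_{i+1}$ with $i(\gamma^{q_n})=i+1$ for all $n$, interleaving two moves at each step: first extend freely in $\mathbb{U}_{i+1}$ to decide some $\delta_n\in\dot D$ with $\delta_n>\gamma^{q_n}$ (possible as $\dot D$ is forced unbounded), and then apply a continuation move — exactly as in the limit and successor steps in the proofs of Lemmas~\ref{lemma: directedclosed} and \ref{game} — adding a fresh top of index $i+1$ above $\delta_n$ while continuing all columns of index $\ge i+1$. Setting $\alpha:=\sup_n\gamma^{q_n}<\kappa$ and $C_{\alpha,i'}:=\bigcup_n C^{q_n}_{\gamma^{q_n},i'}$ for $i'\ge i+1$, the coherence noted above makes each $C_{\alpha,i'}$ a club cohering with the matrix below $\alpha$; moreover every limit ordinal $\beta<\alpha$ lies below some $\gamma^{q_n}$ and is therefore, by the defining requirement of $\mathbb{P}$ that $\acc(\gamma^{q_n})=\bigcup_{i'\ge i+1}\acc(C^{q_n}_{\gamma^{q_n},i'})$, an accumulation point of some $C^{q_n}_{\gamma^{q_n},i'}$ with $i'\ge i+1$, whence $\beta\in\acc(C_{\alpha,i'})$. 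This gives $\acc(\alpha)=\bigcup_{i'\ge i+1}\acc(C_{\alpha,i'})$, so the limit condition $q_\omega\in\mathbb{U}_{i+1}$ has $\gamma^{q_\omega}=\alpha$ and $i(\alpha)=i+1$. Since $q_\omega\Vdash\check\delta_n\in\dot D$ for all $n$ and $\alpha=\sup_n\delta_n$, we obtain $q_\omega\Vdash``\alpha\in\acc(\dot D)\text{ and }\dot{i(\alpha)}=i+1"$, completing the density argument.

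For the ``in particular'' clause, a thread through the $i^{\mathrm{th}}$ column would be a club $E\subseteq\kappa$ with $E\cap\alpha=C_{\alpha,i}$ for all $\alpha\in\acc(E)$, which in particular forces $\dot{i(\alpha)}\le i$ for every $\alpha\in\acc(E)$; applying the displayed statement to $D=E$ produces $\alpha\in\acc(E)$ with $\dot{i(\alpha)}>i$, a contradiction. The step I expect to be the main obstacle is precisely the limit construction, namely arranging that the new top $\alpha$ can be assigned index $i+1>i$ rather than some smaller index forced by the accumulation points below it. This is exactly what the passage to $\mathbb{U}_{i+1}$ secures, since there the columns of index $\ge i+1$ thread coherently all the way to the top while the columns of index $<i+1$ are simply dropped; and it is here that the hypothesis $i<j$, equivalently $i+1\le j$, is used, both to keep the relevant column threadable and to make $\mathbb{T}_j$ a projection of $\mathbb{T}_{i+1}$.
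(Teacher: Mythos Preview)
Your argument is correct and follows essentially the same route as the paper: work in the dense $\kappa$-directed closed subset $\mathbb{U}_\ell$ of $\mathbb{P}\ast\dot{\mathbb{T}}_\ell$, build a descending $\omega$-sequence that decides unboundedly many points of $\dot D$, and at the limit ordinal $\gamma$ place a new top with index $\ell$ and columns $C_{\gamma,k}=\bigcup_n C^{p_n}_{\gamma^{p_n},k}$ for $k\ge\ell$. The paper does this directly for $\ell=j$, obtaining $i(\gamma)=j>i$; your preliminary reduction to $\ell=i+1$ via the projection $\pi_{i+1,j}$ is valid but unnecessary, and your interleaved ``continuation moves'' to maintain $i(\gamma^{q_n})=i+1$ are likewise harmless but not needed, since the $\subseteq$-monotonicity of the columns already guarantees $\acc(\gamma^{q_n})=\bigcup_{k\ge i+1}\acc(C^{q_n}_{\gamma^{q_n},k})$ regardless of the value of $i(\gamma^{q_n})$.
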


\begin{proof}
Suppose that $(p_0,\dot{t}_0) \in \mathbb{P} \ast \dot{\mathbb{T}_j}$ and
$\dot{D}$ is a name for a club in $\kappa$. We can assume that $(p_0, \dot{t}_0) \in
\mathbb{U}_j$. Recursively define a decreasing sequence $\langle (p_n, \dot{t}_n) \mid n < \omega
\rangle$ from $\mathbb{U}_j$ together with an increasing sequence of ordinals
$\langle \alpha_n \mid n < \omega \rangle$ such that, for all $n < \omega$, we have
\begin{itemize}
\item $\gamma^{p_n} < \alpha_n < \gamma^{p_{n+1}}$; and
\item $(p_{n+1}, \dot{t}_{n+1}) \Vdash \alpha_n \in \dot{D}$.
\end{itemize}
The construction is straightforward. At the end, let $\gamma := \sup\{\gamma^{p_n} \mid n < \omega\}
= \sup\{\alpha_n \mid n < \omega\}$. For $j \leq k < \theta$ and $m < n < \omega$, note
that $C^{p_m}_{\gamma^{p_m}, k} = C^{p_n}_{\gamma^{p_n}, k} \cap \gamma^{p_m}$; for such $k$, let
$E_k := \bigcup \{C^{p_n}_{\gamma^{p_n},k} \mid n < \omega\}$.
Define a condition $p$ extending each $p_n$ by setting
$\gamma^p := \gamma$, $i(\gamma)^p := j$, and, for all $k \in [j, \theta)$,
$C^p_{\gamma,k} := E_k$. Let $\dot{t}$ be a $\mathbb{P}$-name for $E_j$. Then
\begin{itemize}
\item $(p,\dot{t}) \in \mathbb{U}_j$ is a lower bound for $\langle (p_n, \dot{t}_n) \mid
n < \omega \rangle$;
\item $(p,\dot{t}) \Vdash \gamma \in \acc{\dot{D}}$;
\item $(p,\dot{t}) \Vdash \dot{i(\gamma)} = j > i$.
\end{itemize}
Since $(p_0, \dot{t}_0)$ and $\dot{D}$ were chosen arbitrarily, this completes the proof.
\end{proof}

Let $G$ be $\mathbb{P}$-generic over $V$, and let $\vec{C} = \langle C_{\alpha,i} \mid
\alpha < \kappa, ~ i(\alpha) \leq i < \theta \rangle$ be $\bigcup G$. Temporarily move to $V[G]$.
For each $i < \theta$, let $\bb{T}_i$ be the interpretation of $\dot{\bb{T}}_i$. Note that
forcing with $\bb{T}_i$ over $V[G]$ adds a thread through the $i^{\mathrm{th}}$ column of
$\vec{C}$, i.e., a club $D \subseteq \kappa$ such that, for all
$\alpha \in \acc(\kappa)$, we have $D \cap \alpha = C_{\alpha,i}$.

\begin{prop} \label{compatible_prop}
Suppose that $i_0 \leq i_1 < \theta$, $t_0 \in \mathbb{T}_{i_0}$, and $t_1 \in \mathbb{T}_{i_1}$.
Then, for all sufficiently large $j < \theta$, the conditions $\pi_{i_0j}(t_0)$ and
$\pi_{i_1j}(t_1)$ are compatible in $\mathbb{T}_j$.
\end{prop}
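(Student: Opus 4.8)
The plan is to unwind the definitions so that compatibility in $\mathbb{T}_j$ becomes a direct instance of the coherence of the generically-added sequence $\vec{C}$. Since we are already working in $V[G]$ with the actual threading posets $\mathbb{T}_i$ and the actual sequence $\vec C = \bigcup G$, no density or forcing argument is needed: this is a purely combinatorial statement about $\vec C$.

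First I would put the two conditions in canonical form. Because $t_0 \in \mathbb{T}_{i_0}$ and $t_1 \in \mathbb{T}_{i_1}$, there are $\alpha_0, \alpha_1 \in \acc(\kappa)$ with $i(\alpha_0) \le i_0$ and $i(\alpha_1) \le i_1$ such that $t_0 = C_{\alpha_0, i_0}$ and $t_1 = C_{\alpha_1, i_1}$ (indeed $\alpha_\ell = \sup(t_\ell)$). Then for every $j$ with $i_1 \le j < \theta$ (so in particular $j \ge i_0$) both projections are defined, with $\pi_{i_0 j}(t_0) = C_{\alpha_0, j}$ and $\pi_{i_1 j}(t_1) = C_{\alpha_1, j}$. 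Since $\mathbb{T}_j$ is ordered by end-extension and is tree-like, two of its conditions are compatible precisely when they are $\sq$-comparable; hence it suffices to produce a single $j^* < \theta$ such that, for all $j \in [j^*, \theta)$, one of $C_{\alpha_0, j}$, $C_{\alpha_1, j}$ end-extends the other.

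Next I would reduce this to coherence. Assume without loss of generality $\alpha_0 \le \alpha_1$; the case $\alpha_0 = \alpha_1$ is trivial, so suppose $\alpha_0 < \alpha_1$. As both are limit ordinals, $\alpha_0 \in \acc(\alpha_1)$. The generic sequence satisfies $\acc(\alpha_1) = \bigcup_{i(\alpha_1) \le i < \theta} \acc(C_{\alpha_1, i})$ with the columns $\s$-increasing, so I can fix the least $i^* \in [i(\alpha_1), \theta)$ with $\alpha_0 \in \acc(C_{\alpha_1, i^*})$, and by monotonicity $\alpha_0 \in \acc(C_{\alpha_1, j})$ for every $j \ge i^*$. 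For each such $j$, coherence of $\vec C$ yields $i(\alpha_0) \le j$ and $C_{\alpha_0, j} = C_{\alpha_1, j} \cap \alpha_0$, i.e., $C_{\alpha_1, j}$ end-extends $C_{\alpha_0, j}$. Setting $j^* := \max(i_1, i^*)$ then guarantees simultaneously that both projections are defined and that $C_{\alpha_1, j} \le_{\mathbb{T}_j} C_{\alpha_0, j}$, witnessing the desired compatibility.

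I do not anticipate a genuine obstacle: the statement amounts to the observation that coherence at the single pair $\alpha_0 < \alpha_1$ kicks in from the first column $i^*$ in which $\alpha_0$ enters $\acc(C_{\alpha_1, \cdot})$, and such an $i^* < \theta$ exists exactly because the sets $\acc(C_{\alpha_1, i})$ exhaust $\acc(\alpha_1)$. The only points that merit care are the index bookkeeping — ensuring a single $j^*$ makes everything defined and coherent at once — and the claim that compatibility in the tree-like threading poset coincides with $\sq$-comparability, which holds because any common end-extension of $C_{\alpha_0, j}$ and $C_{\alpha_1, j}$ forces each to be an initial segment of it, and hence forces the two to be comparable.
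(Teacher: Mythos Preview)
Your proposal is correct and follows essentially the same approach as the paper: write $t_\ell = C_{\alpha_\ell,i_\ell}$, handle the trivial case $\alpha_0 = \alpha_1$, and otherwise use the exhaustion property $\acc(\alpha_1) = \bigcup_i \acc(C_{\alpha_1,i})$ to find a column from which coherence makes $C_{\alpha_1,j}$ an end-extension of $C_{\alpha_0,j}$. Your version is slightly more detailed in its index bookkeeping and in spelling out why compatibility in $\mathbb{T}_j$ reduces to $\sqsubseteq$-comparability, but the argument is the same.
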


\begin{proof}
Let $\alpha_0, \alpha_1 \in \acc(\kappa)$ be such that $t_0 = C_{\alpha_0, i_0}$ and
$t_1 = C_{\alpha_1, i_1}$. If $\alpha_0 = \alpha_1$, then $\pi_{i_0j}(t_0) = \pi_{i_1j}(t_1)$
for all $i_1 \leq j < \theta$. If $\alpha_0 < \alpha_1$, then, since $\vec{C}$ is a
$\square^{\ind}(\kappa, \theta)$-sequence, there is $j_0 \geq i_1$ such that
$\alpha_0 \in \acc(C_{\alpha_1,j_0})$. Then, for all $j \geq j_0$, we have
$\pi_{i_0j}(t_0) \leq_{\mathbb{T}_j} \pi_{i_1j}(t_1)$. The case in which
$\alpha_1 < \alpha_0$ is symmetric.
\end{proof}

\begin{thm} \label{ind_sq_thm}
Suppose that $\theta < \kappa$ are regular, $\kappa$ is measurable,
$\mathbb{P} = \mathbb{P}(\kappa, \theta)$, and the
measurability of $\kappa$ is indestructible under forcing with
$\mathrm{Add}(\kappa, 1)$. Suppose also that $W$ is a uniform ultrafilter
over $\theta$. Then, in $V^{\mathbb{P}}$, there is a uniform ultrafilter
$U$ over $\kappa$ such that, for all $\mu < \kappa$,
\[
(U \text{ is }\mu\text{-decomposable}) \iff (W \text{ is }\mu\text{-decomposable}).
\]
In particular, $U$ is $[\theta^+,\kappa)$-indecomposable.
\end{thm}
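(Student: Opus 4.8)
The plan is to realize $U$ through a factored ultrapower in the spirit of Theorem~\ref{theorem: silver}, arranging that $j_U = k \circ j_W$, where $j_W \colon V[G] \to N := \ult(V[G], W)$ is the ordinary ultrapower by $W$ (which, since $W \in V \s V[G]$, is computed inside $V[G]$) and $k \colon N \to M_U$ is a highly complete factor embedding manufactured from the measurability that Kunen's method resurrects. The engine of the construction is the observation, already implicit in the preceding lemmas, that threading any single column resurrects the measurability of $\kappa$: since $\kappa^{<\kappa} = \kappa$, each $\mathbb{P} \ast \dot{\mathbb{T}}_i$ is forcing equivalent to $\Add(\kappa,1)$ (Fact~\ref{dense_closed_lemma}), under which measurability is indestructible, so the ground-model embedding $j \colon V \to M$ with $\crit(j) = \kappa$ can be lifted once a thread is present. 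Because $j_W$ already lives in $V[G]$, the entire thrust of the argument will be to build $k$ \emph{inside} $V[G]$; this is what ultimately places $U$ in $V[G]$, and morally it encodes the idea of ``$W$-averaging'' the $\kappa$-complete measures one would derive by resurrecting along the individual columns.

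The crucial step is to locate, inside $N$, a thread that $V[G]$ itself lacks. Although the generic sequence $\vec C$ is nontrivial (cf.\ clause~(\ref{clause4}) of Definition~\ref{inds}), so that no club of $V[G]$ threads any single column, the $W$-average of the columns does produce a thread in $N$. Writing $\delta := [\id]_W$, uniformity of $W$ gives $\sup(j_W''\theta) \le \delta < j_W(\theta)$, so $\delta$ is a legitimate column index above all $j_W(i)$; reading off the $\delta^{\mathrm{th}}$ column of $j_W(\vec C)$ along $j_W''\kappa$ yields, by {\L}o\'{s}'s theorem together with the coherence of $\vec C$, a $\s$-coherent club threading that column up to $\lambda^* := \sup(j_W''\kappa) < j_W(\kappa)$. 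This thread is exactly a condition threading column $\delta$ in $N$'s copy of the forcing, so by Fact~\ref{dense_closed_lemma}(1) applied in $N$ it serves as the master condition needed to resurrect a measure relative to $N$. I would then let $E$ be the $N$-ultrafilter on $j_W(\kappa)$ derived from the resulting lift of $j$ using $\kappa$ as a seed, set $k := \ult(N, E)$, and define
\[
U := \{ X \s \kappa \mid \kappa \in k(j_W(X)) \}.
\]

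The main obstacle is precisely the construction of $k$ within $V[G]$, and it splits into two parts. First, one must verify that the $W$-averaged object of the previous paragraph really is $\s$-coherent and cofinal in $\lambda^*$; this reduces via {\L}o\'{s} to the coherence clauses of $\vec C$, with the action of $\crit(j_W) \le \theta$ on the indices $i(\alpha)$ requiring care (it is here that $\delta \ge \sup(j_W''\theta)$ is used to guarantee $\delta$ dominates all the relevant $j_W(i(\alpha))$). Second, and more delicate, one must check that $E$ lies in $N$, equivalently that $k$ and hence $U$ are definable in $V[G]$; for this I would run an approximation argument of exactly the type proved in Lemma~\ref{lemma: approximation}, exploiting $2^{<\kappa} = \kappa$ and the completeness of $k$ to capture each small piece of $E$ inside $N$. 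The completeness of $k$ itself --- that $k$ is $j_W(\eta)$-$N$-complete for every $\eta < \kappa$, just as in Theorem~\ref{theorem: silver} --- is inherited from the $\kappa$-completeness over $N$ of the resurrected measure $E$.

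It then remains to read off the properties of $U$. Uniformity follows because $\kappa \notin j_U(A)$ for every $A \in [\kappa]^{<\kappa}$: writing $|A| = \eta < \kappa$ and using $j_W(\eta{+}1)$-completeness of $k$ to compute $k(j_W(A)) = k''j_W(A)$, one checks that $\kappa$, being a generator of the measure component, is omitted. For the decomposability equivalence, fix $\mu < \kappa$ and $f \colon \kappa \to \mu$. If $\mu \in [\theta^+, \kappa)$, then the $j_W(\eta)$-completeness of $k$ confines $\mathrm{ran}(j_U \restriction \mu)$ below a single generator, yielding $\mu$-indecomposability of $U$ --- matching the fact that $W$, concentrated on $\theta < \mu$, is automatically $\mu$-indecomposable. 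If instead $\mu \le \theta$, then $k$ acts continuously on the relevant ordinals, so $j_U \restriction \mu$ carries the same decomposition data as $j_W \restriction \mu$, and $U$ is $\mu$-decomposable precisely when $W$ is. Combining the two ranges yields the stated equivalence for all $\mu < \kappa$; in particular, since $W$ is $\mu$-indecomposable for every $\mu \in [\theta^+, \kappa)$, the ultrafilter $U$ is $[\theta^+, \kappa)$-indecomposable, as required.
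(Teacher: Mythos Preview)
Your high-level intuition---that $U$ should arise as a ``$W$-average'' of the column-by-column measures, realized via a factorization $j_U = k \circ j_W$---is exactly right, and indeed morally equivalent to what the paper does. But the execution has real gaps.

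First, a concrete error: since $\kappa > \theta$ is regular, every function $\theta \to \kappa$ is bounded, so every $N$-ordinal below $j_W(\kappa)$ lies below some $j_W(\alpha)$ with $\alpha < \kappa$. Thus $j_W{}''\kappa$ is \emph{cofinal} in $j_W(\kappa)$, and your $\lambda^* = \sup(j_W{}''\kappa)$ is not strictly below $j_W(\kappa)$; in particular it is not the top of a proper-initial-segment ``condition'' in $N$'s threading forcing. More seriously, even granting that the coherence of $\vec C$ yields an external thread through column $\delta$ of $j_W(\vec C)$, this is \emph{not} a master condition; it is a cofinal filter on $\bb{T}^N_\delta$. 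To evaluate $N$'s name for the resurrected measure you need this filter to be $N$-generic, and you give no argument for that (and it is not automatic). Relatedly, the assertion that ``$E$ lies in $N$'' via a Lemma~\ref{lemma: approximation}-style approximation is unsupported: that lemma concerns a different configuration, and here the whole point is that the measure only appears \emph{after} a further forcing, so $E \notin N$.

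The paper sidesteps all of this by working concretely rather than through $j_W$. It passes to $V[G \ast H_0]$ where $H_0$ is $\bb{T}_0$-generic, projects to obtain $\bb{T}_i$-generics $H_i$ and normal measures $U_i$ for each $i < \theta$, and then defines $U$ by $W$-voting: $X \in U$ iff $\{i < \theta \mid X \in U_i\} \in W$. The crux---that $U$ lies in $V[G]$---is handled not by any approximation argument but by the compatibility lemma (Proposition~\ref{compatible_prop}): any two $\bb{T}_0$-conditions project to compatible $\bb{T}_j$-conditions for all large $j$, so they cannot decide ``$X \in \dot U_j$'' oppositely on a $W$-large set, hence $U$ is independent of $H_0$. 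The decomposability equivalence is then read off directly, using Claim~\ref{concentration_claim} (each $U_i$ concentrates on $\{\alpha \mid i(\alpha) = i\}$) for the forward direction and the $\kappa$-completeness of each $U_i$ for the reverse. If you want to salvage the ultrapower picture, the right move is to take $H_0$ externally, observe that $[i \mapsto H_i]_W$ \emph{is} $N$-generic for $\bb{T}^N_\delta$ (since each $D_i$ in a represented dense set lies in $V[G]$ and is met by $H_i$), and then run the paper's compatibility argument to descend to $V[G]$---but at that point you have simply reproduced the paper's proof in different notation.
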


\begin{proof}
Let $\dot{\vec{C}} = \langle \dot{C}_{\alpha,i} \mid \alpha < \kappa,
\dot{i(\alpha)} \leq i < \theta \rangle$ be the canonical $\mathbb{P}$-name for the
generic $\square^{\ind}(\kappa, \theta)$-sequence. For each $i < \theta$,
let $\dot{\mathbb{T}}_i$ be a $\mathbb{P}$-name for the forcing to add a thread
through the $i^{\mathrm{th}}$ column of $\dot{\vec{C}}$. By
Fact~\ref{dense_closed_lemma}, for each $i < \theta$, $\mathbb{P} \ast
\dot{\mathbb{T}}_i$ is forcing equivalent to $\mathrm{Add}(\kappa, 1)$.
Therefore, by our assumption about the indestructibility of the measurability
of $\kappa$, we can fix a $\mathbb{P} \ast \dot{\mathbb{T}}_i$-name
$\dot{U}_i$ for a normal $\kappa$-complete ultrafilter over $\kappa$.

\begin{claim} \label{concentration_claim}
Let $i < \theta$. Then
\[
\Vdash_{\mathbb{P} \ast \dot{\mathbb{T}}_i} \{\alpha < \kappa \mid
\dot{i(\alpha)} = i\} \in \dot{U}_i.
\]
\end{claim}

\begin{proof}
By the fact that $\dot{U}_i$ is forced to be $\kappa$-complete, there is
a name $\dot{j}$ for an ordinal below $\theta$ such that $\Vdash_{\mathbb{P} \ast \dot{\mathbb{T}}_i}\{\alpha < \kappa \mid
\dot{i(\alpha)} = \dot{j}\} \in \dot{U}_i$. Note first that, in $V^{\mathbb{P} \ast
\dot{\mathbb{T}}_i}$, there is a club $D \subseteq \kappa$ through the
set $\{\alpha < \kappa \mid i(\alpha) \leq i\}$. As a result, by the normality of
$U_i$, $\dot{j}$ is forced to be at most $i$. On the other hand, if
$j < i$ in $V^{\mathbb{P} \ast \dot{\mathbb{T}}_i}$, then, letting
$k : V^{\mathbb{P} \ast \dot{\mathbb{T}}_i} \rightarrow M$ be the ultrapower
map with respect to $U_i$, we can conclude that $k(\vec{C})_{\kappa,j}$ is
defined and is a thread through the $j^{\mathrm{th}}$ column of
$\vec{C}$, contradicting Lemma~\ref{generic_thread}.
\end{proof}

Let $G$ be $\mathbb{P}$-generic over $V$ and let $H_0$ be $\mathbb{T}_0$-generic
over $V[G]$. For each $i < \theta$, the projection $\pi_{0,i}$ induces a filter
$H_i$ that is $\mathbb{T}_i$-generic over $V[G]$. Let $U_i$ denote the
realization of $\dot{U}_i$ in $V[G * H_i]$. Note that $U_i \in V[G * H_0]$ for
all $i < \theta$. Only $U_0$ is an ultrafilter in $V[G*H_0]$, but each
$U_i$ is a normal ultrafilter with respect to sets (and sequences of sets)
in $V[G]$.

In $V[G*H_0]$, define an ultrafilter $U$ on $\mathcal{P}(\kappa)^{V[G]}$ as follows.
For all $X \in \mathcal{P}(\kappa)^{V[G]}$, put $X \in U$ if and only if
$\{i < \theta \mid X \in U_i\} \in W$. Note that $\mathcal{P}(\theta)^{V[G *
H_0]} = \mathcal{P}(\theta)^V$, so $W$ remains an ultrafilter in
$V[G * H_0]$. It follows that $U$ is in fact an ultrafilter on
$\mathcal{P}(\kappa)^{V[G]}$. We defined $U$ in $V[G * H_0]$, but we now show that
it is in fact in $V[G]$. Work for now in $V[G]$, and let $\dot{U}$ be a
$\mathbb{T}_0$-name for $U$.

\begin{claim}
For every $X \in \mathcal{P}(\kappa)$, either $\Vdash_{\mathbb{T}_0} X
\in \dot{U}$ or $\Vdash_{\mathbb{T}_0} X \notin \dot{U}$.
\end{claim}

\begin{proof}
Suppose for sake of contradiction that $X \subseteq \kappa$ and there
are $t, t' \in \mathbb{T}_0$ such that $t \Vdash X \in \dot{U}$ and
$t' \Vdash X \notin \dot{U}$. By extending $t$ and $t'$ if necessary,
we can fix sets $Y, Y' \in W$ such that
\begin{itemize}
\item for all $i \in Y$, $t \Vdash_{\mathbb{T}_0} X \in \dot{U}_i$; and
\item for all $i \in Y'$, $t' \Vdash_{\mathbb{T}_0} X \notin \dot{U}_i$.
\end{itemize}
Since, for each $i < \theta$, $\pi_{0i}:\mathbb{T}_0 \rightarrow
\mathbb{T}_i$ is a projection, and since $\dot{U}_i$ is a $\mathbb{T}_i$-name,
this implies that
\begin{itemize}
\item for all $i \in Y$, $\pi_{0i}(t) \Vdash_{\mathbb{T}_i} X \in \dot{U}_i$; and
\item for all $i \in Y'$, $\pi_{0i}(t') \Vdash_{\mathbb{T}_i} X \notin \dot{U}_i$.
\end{itemize}
By Proposition~\ref{compatible_prop}, we can find $j \in Y \cap Y'$ such that
$\pi_{0j}(t)$ and $\pi_{0j}(t')$ are compatible in $\mathbb{T}_j$. But this
leads to a contradiction, since the two conditions decide the statement
``$X \in \dot{U}_j$" in opposite ways.
\end{proof}

It follows that $U$ is in fact definable in $V[G]$. Since each $\dot{U}_i$ is
forced to be a uniform ultrafilter, it follows that $U$ is uniform.
Also, since each $\dot{U}_i$ is forced to be normal, and in particular to
concentrate on the set of limit ordinals below $\kappa$, $U$ also concentrates
on the set of limit ordinals below $\kappa$.
It remains to check that $U$ has the desired spectrum of decomposability.
To this end, fix an infinite cardinal $\mu < \kappa$.

Suppose first that $W$ is $\mu$-decomposable, as witnessed by a function
$g : \theta \rightarrow \mu$. Define a function $f : \kappa \rightarrow \mu$
by setting $f(\alpha) = g(i(\alpha))$ for all $\alpha \in \acc(\kappa)$
(recall that $i(\alpha)$ is the least ordinal $i$ such that $C_{\alpha,i}$
is defined). We claim that $f$ witnesses that $U$ is $\mu$-decomposable.
Suppose for sake of contradiction that $H \in [\mu]^{<\mu}$ is such that
$f^{-1}[H] \in U$. Move to $V[G * H_0]$. By definition of $U$ and $f$, we have
\[
\{i < \theta \mid \{\alpha \in \acc(\kappa) \mid g(i(\alpha)) \in H\} \in U_i\}
\in W.
\]
By Claim~\ref{concentration_claim}, each $U_i$ concentrates on the set
$\{\alpha \in \acc(\kappa) \mid i(\alpha) = i\}$, so the above expression
simplifies to
\[
\{i < \theta \mid g(i) \in H\} \in W,
\]
i.e., $g^{-1}[H] \in W$, contradicting the fact that $g$ witnesses the
$\mu$-decomposability of $W$.

Suppose next that $W$ is $\mu$-indecomposable; we must show that $U$ is also
$\mu$-indecomposable. To this end, fix a function $f:\kappa \rightarrow \mu$.
Move to $V[G \ast H_0]$. Using the $\kappa$-completeness of each $U_i$,
define a function $g:\theta \rightarrow \mu$
by letting $g(i)$ be the unique $\eta < \mu$ such that $f^{-1}\{\eta\} \in
U_i$ for all $i < \theta$. Since $W$ is $\mu$-indecomposable, we can find
$H \in [\mu]^{<\mu}$ such that $Y := g^{-1}[H] \in W$. Note that $g$ and $H$ are in
$V$, since $\mathbb{P} \ast \dot{\mathbb{T}}_0$ is $\kappa$-distributive.
Now, for all $i \in Y$, we have $f^{-1}[H] \supseteq f^{-1}\{g(i)\} \in U_i$.
Since $Y \in W$, it follows that $f^{-1}[H] \in U$. Since $f$ was arbitrary,
it follows that $U$ is $\mu$-indecomposable.
\end{proof}

Theorem~\ref{theorem: main3} follows from Theorem~\ref{ind_sq_thm}.
One application of this result is the construction of a model in which an
inaccessible cardinal $\kappa$ carries a uniform indecomposable ultrafilter and only
satisfies the minimal amount of stationary reflection implied by the existence of
such an ultrafilter. The following corollary shows
that Corollary~\ref{cor: prikrysilver} is consistently sharp in two ways.

\begin{cor}\label{cor: indsquare}
Suppose that $\kappa$ is a measurable cardinal. Then there is a forcing
extension in which the following all hold:
\begin{enumerate}
\item $\kappa$ is strongly inaccessible;
\item $\kappa$ carries a uniform indecomposable ultrafilter;
\item there is a non-reflecting stationary subset of $E^\kappa_\omega$;
\item for every stationary subset $S \subseteq \kappa$, there is a family of
countably many stationary subsets of $S$ that does not reflect simultaneously.
\end{enumerate}
\end{cor}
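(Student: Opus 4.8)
The plan is to instantiate Theorem~\ref{ind_sq_thm} at $\theta=\omega$. Beginning with a measurable $\kappa$, I would first run the standard preparatory forcing making the measurability of $\kappa$ indestructible under $\Add(\kappa,1)$, so that the hypotheses of Theorem~\ref{ind_sq_thm} are in place. Fix any nonprincipal (equivalently, uniform) ultrafilter $W$ over $\omega$ and force with $\mathbb{P}=\mathbb{P}(\kappa,\omega)$. Theorem~\ref{ind_sq_thm} then yields in $V^{\mathbb{P}}$ a uniform ultrafilter $U$ over $\kappa$ whose decomposability spectrum coincides with that of $W$; since any nonprincipal ultrafilter over $\omega$ is $\mu$-indecomposable for every $\mu\ge\omega_1$ (the range of any $f:\omega\to\mu$ lies in some $H\in[\mu]^{<\mu}$, whence $f^{-1}[H]=\omega\in W$), the ultrafilter $U$ is $[\omega_1,\kappa)$-indecomposable, i.e.\ a uniform indecomposable ultrafilter. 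This is clause~(2). For clause~(1), note that $\mathbb{P}$ is $\kappa$-strategically closed (as in Lemma~\ref{game}), hence $({<}\kappa)$-distributive, and since $\kappa^{<\kappa}=\kappa$ it has size $\kappa$; thus it adds no bounded subsets of $\kappa$ and preserves all cardinals and cofinalities, so $\kappa$ stays strongly inaccessible. The generic object witnesses $\square^{\ind}(\kappa,\omega)$, so by Proposition~\ref{prop: inde_wc} (via $\square^{\ind}(\kappa,\omega)\Rightarrow\inde(\kappa,\omega)$) the cardinal $\kappa$ is not weakly compact in $V^{\mathbb{P}}$, consistent with $U$ being non-normal.

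Clauses~(3) and~(4) are then extracted from $\square^{\ind}(\kappa,\omega)$, the main tool being the $E^\kappa_{\ge\omega}$-closed subadditive coloring $c:[\kappa]^2\to\omega$ witnessing $\U(\kappa,2,\omega,2)$ supplied by Proposition~\ref{prop: inde_wc}. Write $D_n(\beta):=\{\varepsilon<\beta\mid c(\varepsilon,\beta)\le n\}$, a $\s$-increasing family of closed subsets of $\beta$ with $\bigcup_{n<\omega}D_n(\beta)=\beta$. For clause~(3), set $S:=\{\alpha\in E^\kappa_\omega\mid \text{$D_n(\alpha)$ is bounded in $\alpha$ for all }n<\omega\}$. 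Non-reflection is formal: if $\beta$ has uncountable cofinality and $S\cap\beta$ were stationary, then since $\cf(\beta)>\omega$ some $D_{n_0}(\beta)$ is club in $\beta$; picking $\alpha\in S\cap\acc(D_{n_0}(\beta))$, closedness gives $c(\alpha,\beta)\le n_0$, and subadditivity gives $c(\varepsilon,\alpha)\le\max(c(\varepsilon,\beta),c(\alpha,\beta))\le n_0$ for cofinally many $\varepsilon<\alpha$, so $D_{n_0}(\alpha)$ is unbounded in $\alpha$, contradicting $\alpha\in S$. The stationarity of $S$ is where the real content lies, and I would establish it by a density argument in $\mathbb{P}(\kappa,\omega)$ in the spirit of the Remark following Definition~\ref{definition: ForcingIndexedMinus}: at a limit stage a new top $\gamma\in E^\kappa_\omega$ arises as the supremum of an $\omega$-sequence of earlier tops, and by postponing coherence to ever-higher columns—exactly as in the case $\nu=\theta$ of Lemma~\ref{lemma: directedclosed}—one can force $c(\cdot,\gamma)\to\infty$ along that sequence, placing $\gamma\in S$; genericity over clubs then makes $S$ stationary.

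For clause~(4), fix a stationary $S$, put $\alpha_0:=\min(S)$, and consider the fibers $S_m:=\{\alpha\in S\mid c(\alpha_0,\alpha)=m\}$. At any $\beta$ of uncountable cofinality, choosing $n_0$ with $D_{n_0}(\beta)$ club in $\beta$, subadditivity bounds $c(\alpha_0,\alpha)\le\max(c(\alpha_0,\beta),n_0)=:B(\beta)$ for every $\alpha\in D_{n_0}(\beta)$; as any $S_m$ reflecting at $\beta$ meets the club $D_{n_0}(\beta)$, only the finitely many $S_m$ with $m\le B(\beta)$ can reflect at $\beta$. Hence the family of stationary fibers admits no common reflection point. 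The one genuinely missing ingredient is that infinitely many $S_m$ are stationary, so that we truly obtain \emph{countably} many subsets; I would secure this from the unboundedness of $c$ on $\kappa$-sized sets together with the same density analysis of $\mathbb{P}(\kappa,\omega)$ (relocating the reference point into a dominant fiber and iterating if only finitely many fibers from $\alpha_0$ are stationary). Alternatively, clauses~(3) and~(4) may be quoted as the known incompactness consequences of $\square^{\ind}(\kappa,\omega)$ from \cite{hlh} and \cite{paper36} and then localized to $S$.

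The hard part throughout is not the non-reflection—which is a soft consequence of the subadditivity and closedness of $c$—but the \emph{stationarity} of the extremal objects: that the ``colors tend to infinity'' set $S$ of clause~(3) and the requisite countably many fibers of clause~(4) are stationary. I expect to discharge both via density arguments inside $\mathbb{P}(\kappa,\omega)$ mirroring the Remark after Definition~\ref{definition: ForcingIndexedMinus}, the crucial flexibility being the forcing's ability (already visible in Lemma~\ref{lemma: directedclosed}) to add new $\omega$-cofinal points whose coherence with the earlier levels is deferred to arbitrarily high columns. Finally, this model refines Corollary~\ref{cor: prikrysilver} in the two advertised ways: clause~(3) shows reflection can fail outright for a subset of $E^\kappa_\omega$, and clause~(4) shows that the finite simultaneous reflection of Corollary~\ref{cor: prikrysilver} cannot be improved to countable simultaneous reflection, even locally.
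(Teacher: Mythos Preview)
Your setup for clauses~(1) and~(2) matches the paper's proof exactly: prepare $\kappa$ for indestructibility, force with $\mathbb{P}(\kappa,\omega)$, and apply Theorem~\ref{ind_sq_thm} with any nonprincipal $W$ on $\omega$.

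For clauses~(3) and~(4), you diverge from the paper. The paper does not argue via the subadditive coloring of Proposition~\ref{prop: inde_wc}; it simply cites external results. Clause~(3) is quoted from \cite[Theorem~3.4(5)]{knaster_ii}, and clause~(4) is quoted from \cite[Theorem~2.18]{hlh}, after observing that a $\square^{\ind}(\kappa,\omega)$-sequence is automatically a \emph{full} $\square(\kappa,{<}\omega_1)$-sequence in the sense of Definition~\ref{fulls}. You mention this citation route as an alternative at the end; in the paper it is the entire proof. Your hands-on derivation via $c$ is a legitimate alternative and the non-reflection halves are correct as written, but the stationarity halves you flag as ``the hard part'' are genuinely nontrivial: the density arguments you gesture at (referencing the Remark after Definition~\ref{definition: ForcingIndexedMinus} and Lemma~\ref{lemma: directedclosed}) are stated for $\mathbb{P}^-$ rather than $\mathbb{P}$, and the iteration you propose for clause~(4) (relocating the base point when only finitely many fibers are stationary) is not obviously terminating. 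The paper's route via fullness sidesteps both issues, since \cite[Theorem~2.18]{hlh} delivers the $\omega$-many nonsimultaneously reflecting stationary subsets of an arbitrary stationary $S$ directly from the full square sequence, without any case analysis on how many fibers of a fixed coloring are stationary.
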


\begin{proof}
We can assume that the measurability of $\kappa$ is indestructible under
$\mathrm{Add}(\kappa, 1)$. Let $\mathbb{P} = \mathbb{P}(\kappa, \omega)$.
$V^{\mathbb{P}}$ is the desired model. Since $\mathbb{P}$ is $\kappa$-distributive,
$\kappa$ remains strongly inaccessible there, and, by
Theorem \ref{ind_sq_thm}, $\kappa$ carries a uniform indecomposable ultrafilter. The existence
of a non-reflecting stationary subset of $E^\kappa_\omega$ follows from
\cite[Theorem 3.4(5)]{knaster_ii}, and Clause (4) in the statement of the
theorem follows from \cite[Theorem 2.18]{hlh} and the observation that a
$\square^{\ind}(\kappa, \omega)$-sequence is a full
$\square(\kappa, {<} \omega_1)$-sequence in the sense of
Definition~\ref{fulls}.
\end{proof}

Note that, in the setup for Theorem~\ref{ind_sq_thm}, if $\theta$ is measurable,
then by letting $W$ be a $\theta$-complete ultrafilter over $\theta$, we can
require that the uniform ultrafilter $U$ we obtain over $\kappa$ in the forcing
extension is $\theta$-complete. With a bit more care, we can produce some
variations on results of Gitik from \cite{gitik}.
Recall that a cardinal
$\kappa$ is \emph{$\theta$-strongly compact} if every $\kappa$-complete filter
over a set $A$ can be extended to a $\theta$-complete ultrafilter over $A$.

\begin{thm}
Suppose that $\theta < \kappa$ are cardinals such that $\theta$ is measurable,
$\kappa$ is $\theta$-strongly compact, and the $\theta$-strong compactness of
$\kappa$ is indestructible under forcing with $\mathrm{Add}(\kappa,1)$. Then there is a
cofinality-preserving forcing extension in which
$\square^{\ind}(\kappa, \theta)$ holds and $\kappa$ is
$\theta$-strongly compact.
\end{thm}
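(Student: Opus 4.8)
The plan is to rerun the proof of Theorem~\ref{ind_sq_thm} almost verbatim, replacing the normal measures on $\kappa$ by $\theta$-complete fine ultrafilters on the sets $[\lambda]^{<\kappa}$ and discarding the analysis of the decomposability spectrum, which is no longer needed. Throughout I use the standard reformulation of $\theta$-strong compactness: for regular $\kappa$, the filter-extension property is equivalent to the assertion that for every $\lambda \ge \kappa$ there is a $\theta$-complete fine ultrafilter on $[\lambda]^{<\kappa}$ (i.e.\ a $\theta$-complete ultrafilter $U$ with $\{x \mid \alpha \in x\} \in U$ for every $\alpha < \lambda$). The nontrivial direction takes a $\kappa$-complete filter $F$ on a set $A$, fixes a fine $\theta$-complete ultrafilter $\mathcal W$ on $[F]^{<\kappa}$, uses $\kappa$-completeness of $F$ to choose $a_s \in \bigcap s$ for each $s \in [F]^{<\kappa}$, and checks that $\{X \subseteq A \mid \{s \mid a_s \in X\} \in \mathcal W\}$ is a $\theta$-complete ultrafilter extending $F$. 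I also fix a normal (hence $\theta$-complete and uniform) measure $W$ on $\theta$, available as $\theta$ is measurable, and I assume $\kappa^{<\kappa} = \kappa$ (which holds whenever $\kappa$ is inaccessible, as in the intended applications), so that Fact~\ref{dense_closed_lemma} yields $\mathbb P \ast \dot{\mathbb T}_i \cong \mathrm{Add}(\kappa,1)$ for each $i < \theta$.

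With $\mathbb P = \mathbb P(\kappa,\theta)$ as in Subsection~\ref{subsection: ind_square_ind_uf}: by the assumed indestructibility, $\kappa$ is $\theta$-strongly compact in $V^{\mathbb P \ast \dot{\mathbb T}_i}$, so for each $\lambda \ge \kappa$ and each $i < \theta$ I can fix, in $V$, a $\mathbb P \ast \dot{\mathbb T}_i$-name $\dot U^\lambda_i$ for a $\theta$-complete fine ultrafilter on $[\lambda]^{<\kappa}$. Now force with $\mathbb P$ to obtain $G$ and $\vec C = \bigcup G$, take $H_0$ to be $\mathbb T_0$-generic over $V[G]$, and put $H_i := \pi_{0i}(H_0)$, a $\mathbb T_i$-generic filter, with $U^\lambda_i$ the realization of $\dot U^\lambda_i$. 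Since $\mathbb P \ast \dot{\mathbb T}_0$ is $({<}\kappa)$-distributive, we have $([\lambda]^{<\kappa})^{V[G \ast H_0]} = ([\lambda]^{<\kappa})^{V[G]}$ and $\mathcal P(\theta)^{V[G \ast H_0]} = \mathcal P(\theta)^V$, so $W$ remains an ultrafilter; working in $V[G \ast H_0]$, I define for each $\lambda \ge \kappa$ an ultrafilter $U^\lambda$ on $([\lambda]^{<\kappa})^{V[G]}$ by declaring $X \in U^\lambda$ iff $\{i < \theta \mid X \in U^\lambda_i\} \in W$.

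Two things remain to be verified for each $\lambda$. First, $U^\lambda$ is a $\theta$-complete fine ultrafilter: fineness holds because $\{i < \theta \mid \{x \mid \alpha \in x\} \in U^\lambda_i\} = \theta \in W$, and $\theta$-completeness follows by intersecting the fewer-than-$\theta$ many witnessing $W$-large sets (using $\theta$-completeness of $W$) and then invoking $\theta$-completeness of each $U^\lambda_i$ on that intersection. Second, $U^\lambda \in V[G]$: here I reproduce the well-definedness claim of Theorem~\ref{ind_sq_thm}. If conditions $t, t' \in \mathbb T_0$ forced $X \in \dot U^\lambda$ and $X \notin \dot U^\lambda$ respectively, I extend them to pin down $Y, Y' \in W$ such that $t \Vdash X \in \dot U^\lambda_i$ for $i \in Y$ and $t' \Vdash X \notin \dot U^\lambda_i$ for $i \in Y'$; pushing these through the projections $\pi_{0i}$ and applying Proposition~\ref{compatible_prop}, I find a sufficiently large $j \in Y \cap Y'$ (nonempty and cofinal in $\theta$ since $W$ is uniform) at which $\pi_{0j}(t)$ and $\pi_{0j}(t')$ are compatible in $\mathbb T_j$, yet decide ``$X \in \dot U^\lambda_j$'' oppositely, a contradiction.

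Carrying this out for every $\lambda \ge \kappa$ produces in $V[G] = V^{\mathbb P}$ a family $\langle U^\lambda \mid \lambda \ge \kappa \rangle$ of $\theta$-complete fine ultrafilters, so by the reformulation above $\kappa$ is $\theta$-strongly compact there, while $\vec C$ witnesses $\square^{\ind}(\kappa,\theta)$; cofinalities are preserved because $\mathbb P$ is $({<}\kappa)$-distributive and, as $\kappa^{<\kappa} = \kappa$, has size $\kappa$ and hence is $\kappa^+$-cc. The only genuinely new point beyond transcribing Theorem~\ref{ind_sq_thm} is that the completeness obtained is $\theta$-completeness rather than $\kappa$-completeness; this suffices precisely because $\kappa$-completeness of the measures $U_i$ was used in Theorem~\ref{ind_sq_thm} only for the concentration claim (Claim~\ref{concentration_claim}) and the decomposability computation, neither of which is needed here. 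The main thing to be careful about is that a single generic $H_0$ serves all $\lambda$ simultaneously, which is unproblematic since each $U^\lambda$ is ultimately shown to lie in $V[G]$.
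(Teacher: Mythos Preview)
Your proposal is correct and follows essentially the same approach as the paper: force with $\mathbb{P}(\kappa,\theta)$, use the indestructibility hypothesis to obtain $\theta$-complete ultrafilters in each threading extension $V[G]^{\mathbb{T}_i}$, integrate these against a normal measure $W$ on $\theta$, and use the projection structure together with Proposition~\ref{compatible_prop} to show the resulting ultrafilter already lies in $V[G]$. The one cosmetic difference is that the paper works directly with the filter-extension definition of $\theta$-strong compactness---given a $\kappa$-complete filter $F$ on a set $A$ in $V[G]$, it takes $\dot U_i$ to name a $\theta$-complete ultrafilter extending $F$ and then integrates---whereas you pass through the equivalent fine-ultrafilter formulation on $[\lambda]^{<\kappa}$; the paper's route is marginally more direct since it avoids invoking the equivalence, but the underlying mechanism is identical.
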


\begin{proof}
Let $\mathbb{P} = \mathbb{P}(\kappa, \theta)$ and, for $i < \theta$, let
$\dot{\mathbb{T}}_i$ be a $\mathbb{P}$-name for the forcing to add a
thread through the $i^{\mathrm{th}}$ column of the generically added
$\square^{\ind}(\kappa, \theta)$-sequence. Let $W$ be a normal measure
over $\theta$. Let $G$ be $\mathbb{P}$-generic over $V$, and move to
$V[G]$, which is our desired model. Let $A$ be a set, and let
$F$ be a $\kappa$-complete filter over $A$. For each $i < \theta$, let
$\dot{F}_i$ be a $\mathbb{T}_i$-name for the filter over $A$ generated by
$F$ in $V[G]^{\mathbb{T}_i}$. Because $\mathbb{T}_i$ is $\kappa$-distributive,
$\dot{F}_i$ is forced to be a $\kappa$-complete filter. Since
$\kappa$ is forced to be $\theta$-strongly compact in $V[G]^{\mathbb{T}_i}$,
we can fix a $\mathbb{T}_i$-name $\dot{U}_i$ for a $\theta$-complete ultrafilter
over $A$ extending $\dot{F}_i$.

Let $H_0$ be $\mathbb{T}_0$-generic over $V[G]$. As in the proof of
Theorem~\ref{ind_sq_thm}, $H_0$ induces a $\mathbb{T}_i$-generic filter
$H_i$ for each $i < \theta$; let $U_i$ be the realization of
$\dot{U}_i$ in $V[G \ast H_i]$. Note that $W$ remains a normal measure over
$\theta$ in $V[G \ast H_0]$. Define an ultrafilter $U$ on
$(\mathcal{P}(A))^{V[G]}$ in $V[G \ast H_0]$ by setting
\[
X \in U \iff \{i < \theta \mid X \in U_i\} \in W
\]
for all $X \in (\mathcal{P}(A))^{V[G]}$. Since each $U_i$ extends $F$,
$U$ extends $F$ as well. Moreover, exactly as in the argument for the analogous
fact in the proof of Theorem~\ref{ind_sq_thm}, one can show that we in fact
have $U \in V[G]$. It thus remains to show that $U$ is $\theta$-complete.
To this end, fix $\eta < \theta$ and a sequence $\langle X_\xi \mid
\xi < \eta \rangle$ of sets in $U$. Let $X := \bigcap_{\xi < \eta} X_{\xi}$.
Move to $V[G \ast H_0]$. By the definition of $U$ and the
$\theta$-completeness of $W$, there is a set $Y \in W$ such that,
for all $i \in Y$ and all $\xi < \eta$, we have $X_\xi \in U_i$. Then, by
the $\theta$-completeness of $U_i$ for each $i < \theta$, we have $X \in U_i$
for all $i \in Y$. But this implies that $X \in U$, as desired.
\end{proof}

\section{Forcing axioms and indecomposable ultrafilters}\label{section: FAandUltra}

We start by recalling some definitions.

\begin{definition}
Let $M\prec H(\theta)$ and $\delta$ be a cardinal. We say $M$ is \emph{$\delta$-guessing (or is a $\delta$-guessing model)} whenever for any $z\in M$ and any $b\subset z$, if it is the case that for every $a\in M\cap [M]^{<\delta}$ we have $b\cap a\in M$, then there is some $b'\in M$ such that $b'\cap M = b \cap M$. In such a case, we say that $b$ is \emph{$M$-guessed}, and that $b'$ \emph{guesses} $b$.
\end{definition}

\begin{definition}\label{definition: IU}
We say $M\prec H(\theta)$ is $\delta$-internally unbounded if for any $z\in M$ and any $x\in [z\cap M]^{<\delta}$, there is some $y\in [z]^{<\delta} \cap M$ such that $x\subset y$.
\end{definition}

Krueger \cite{krueger_sch} showed that if $M\prec H(\theta)$ is an $\aleph_1$-guessing model, then $M$ is $\aleph_1$-internally unbounded.
\begin{definition}
$\isp(\omega_2)$ asserts that for all large enough $\theta$, the collection $\{M\in \mathcal P_{\aleph_2}(H(\theta))\mid M\prec H(\theta), \ M \text{ is }\aleph_1\text{-guessing}\}$ is stationary in $\mathcal P_{\aleph_2}(H(\theta))$.
\end{definition}

Viale and Weiss \cite{viale_weiss} showed that $\pfa$ implies $\isp(\omega_2)$. Krueger \cite{krueger_sch} showed that $\isp(\omega_2)$ implies $\sch$.

\begin{thm}\label{theorem: ISPMeasurable}
$\isp(\omega_2)$ implies that if $\kappa>2^\omega$ is a cardinal carrying a uniform indecomposable ultrafilter,
then either $\kappa$ is a measurable cardinal or $\kappa$ is the supremum of countably many measurable cardinals.
\end{thm}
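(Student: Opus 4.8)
The plan is to adapt Goldberg's argument, using the $\aleph_1$-guessing models furnished by $\isp(\omega_2)$ in place of a strongly compact cardinal. Let $U$ be a uniform indecomposable ultrafilter on $\kappa$, where $\kappa > 2^\omega$. First dispose of the easy case. If $U$ is countably complete, then since $U$ is $\nu$-indecomposable for every $\nu \in [\aleph_1, \kappa)$, the completeness of $U$ avoids $[\aleph_1,\kappa)$; being at least $\aleph_1$, it must therefore equal $\kappa$, so $U$ is $\kappa$-complete and $\kappa$ is measurable. Hence I assume for the remainder that $U$ is countably incomplete.

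Next I set up the Silver factorization. Applying Fact~\ref{finest} with $\theta = \aleph_1$---legitimate since $\kappa > 2^\omega = 2^{\aleph_0}$ and $U$ is uniform, $[\aleph_1,\kappa)$-indecomposable, and not $\aleph_1$-complete---I obtain a finest partition $\varphi : \kappa \to \omega$ and the nonprincipal ultrafilter $D = \varphi^*(U)$ on $\omega$. Theorem~\ref{theorem: silver} then factors $j_U = k \circ j_D$, and the associated external $M_D$-ultrafilter $W$ on $j_D(\kappa)$ is $M_D$-$j_D(\eta)$-complete for every $\eta < \kappa$. Two features of $W$ drive the argument: (i) $W$ is $M_D$-complete for intersections of $M_D$-size below $j_D(\eta)$ for all $\eta < \kappa$, which is the full strength of the $j_D(\eta)$-completeness of $k$; and (ii) by Lemma~\ref{lemma: approximation} applied with $\gamma = \aleph_0$ (again using $2^{\aleph_0} < \kappa$), we have $W \cap j_D(x) \in M_D$ for every countable $x \in V$, so that $W$ is countably amenable to $M_D$.

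Now I bring in $\isp(\omega_2)$: fix a sufficiently large regular $\Theta$ and an $\aleph_1$-guessing model $M \prec H(\Theta)$ of size $\aleph_1$ with $U, \varphi, D, \kappa \in M$; by Krueger's theorem $M$ is then also $\aleph_1$-internally unbounded. The heart of the proof is to internalize $W$ through $M$. The countable amenability (ii) is exactly what verifies the guessing hypothesis for the trace $W \cap z$ of $W$ on a suitable $z \in M$ coding the candidate $W$-sets, while $\aleph_1$-internal unboundedness guarantees that the countable approximations visible to $M$ exhaust the countable subsets of $M$; the $\aleph_1$-guessing property then yields some $b' \in M$ with $b' \cap M = (W\cap z)\cap M$. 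Transferring $b'$ through the transitive collapse of $M$ and exploiting the $M_D$-completeness (i) at all levels below $\kappa$, one extracts genuine, $V$-internal, countably complete uniform ultrafilters, and the $\omega$-indexing provided by $\varphi$ organizes the corresponding measurable cardinals into a countable family $\langle \kappa_n \mid n < \omega \rangle$ with each $\kappa_n \le \kappa$. Throughout, $\sch$---which holds by $\isp(\omega_2)$---is used to keep the cardinal arithmetic under control so that Lemma~\ref{lemma: approximation} and the collapse computations apply.

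Finally I read off the dichotomy. If the $\kappa_n$ are cofinal in $\kappa$, then $\kappa$ is a supremum of countably many measurable cardinals. Otherwise they are bounded, in which case the whole of $W$ internalizes to a $\kappa$-complete uniform ultrafilter on $\kappa$, and $\kappa$ is measurable. I expect the main obstacle to be precisely this internalization step: passing from the external, ill-founded ultrapower $M_D$ together with its external ultrafilter $W$ to an honest $V$-internal complete measure, and verifying that the $\aleph_1$-guessing property---when combined with the amenability (ii) and the completeness (i) of $W$---is exactly strong enough to license the transfer. A secondary point requiring care is matching the approximation level $\gamma = \aleph_0$ of Lemma~\ref{lemma: approximation} with the $\aleph_1$-approximation built into the guessing model, which is where the hypothesis $\kappa > 2^\omega$ does its work.
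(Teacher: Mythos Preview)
Your overall strategy matches the paper's, but there is a concrete gap at the very first step. Fact~\ref{finest} with $\theta=\aleph_1$ requires $\kappa>2^{\aleph_1}$, not merely $\kappa>2^{\aleph_0}$; your justification ``legitimate since $\kappa>2^\omega$'' is insufficient. The paper closes this gap with the nontrivial Lemma~\ref{lemma: nonstronglimit}, which uses $\isp(\omega_2)$ (via guessing models and the nonexistence of weak Kurepa trees) to rule out $2^{\omega_1}\ge\kappa$ given $\kappa>2^\omega$. The same issue recurs later: you invoke Lemma~\ref{lemma: approximation} only at $\gamma=\aleph_0$, but the argument needs it at $\gamma=\aleph_1$ (so that $W\cap j_D(x)\in M_D$ for $x$ of size $\aleph_1$, in particular for $x=M\cap H(\theta)$), and again at a cardinal $\gamma^*$ that may be a countable limit of measurables---the paper handles that case using $\sch$ together with Fact~\ref{theorem: prikry} and a second appeal to Lemma~\ref{lemma: nonstronglimit}.

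Your endgame is also not how the argument actually runs. The paper does not show directly that ``if the $\kappa_n$ are bounded then $W$ internalizes to a $\kappa$-complete measure.'' Instead it argues by contradiction: assuming neither alternative of the dichotomy holds, one presses down over a stationary set of $\aleph_1$-guessing models to pin a single countable family $z$ of $\sigma$-complete ultrafilters, builds an explicit set $E$ of size ${<}\kappa$ witnessing their incompleteness, shows $W\cap j_D(E)\in M_D$ (this is where the extra cardinal arithmetic enters), and then produces a single countable set $L$ that simultaneously evades $W$ yet must meet it---a contradiction. Your sketch of ``internalizing $W$ through $M$'' captures the spirit of Claims~\ref{claim: guessed} and the $D$-large guessing step, but the genuine work lies in this pressing-down/contradiction phase, which your proposal does not anticipate.
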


First, we record some known constraints regarding $\kappa$ being a successor cardinal.

\begin{fact}[Kunen-Prikry, \cite{kunen_prikry}]
For regular $\lambda$, if an ultrafilter $U$ is $\lambda^+$-decomposable, then it is $\lambda$-decomposable.
\end{fact}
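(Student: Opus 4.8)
The plan is to reduce to a Rudin–Keisler projection and then exploit an Ulam matrix on $\lambda^+$ together with the regularity of $\lambda$. Suppose $U$ (over an index set $I$) is $\lambda^+$-decomposable, witnessed by $f\colon I\to\lambda^+$; that is, $f^{-1}[H]\notin U$ for every $H\in[\lambda^+]^{<\lambda^+}=[\lambda^+]^{\le\lambda}$. Equivalently, the projected ultrafilter $D:=f^*(U)$ on $\lambda^+$ is uniform. Since a witness $h\colon\lambda^+\to\lambda$ to the $\lambda$-decomposability of $D$ yields the witness $h\circ f$ to the $\lambda$-decomposability of $U$, it suffices to prove that every uniform ultrafilter $D$ on $\lambda^+$ is $\lambda$-decomposable whenever $\lambda$ is regular.

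First I would fix, for each $\alpha\in[\lambda,\lambda^+)$, an injection $g_\alpha\colon\alpha\to\lambda$ (available since $|\alpha|\le\lambda$), and use it to set up the Ulam matrix $\langle A^\xi_\eta\mid \xi<\lambda^+,\ \eta<\lambda\rangle$ with $A^\xi_\eta:=\{\alpha>\xi\mid g_\alpha(\xi)=\eta\}$. For fixed $\xi$ these sets partition the tail $(\xi,\lambda^+)$, which lies in $D$ by uniformity (as $|[0,\xi]|\le\lambda$). Thus for each $\xi$ the map $h_\xi(\alpha):=g_\alpha(\xi)$ sends a $D$-set into $\lambda$. The argument then splits into two cases. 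If for some $\xi$ the map $h_\xi$ is unbounded mod $D$, i.e.\ $\{\alpha\mid g_\alpha(\xi)<j\}\notin D$ for all $j<\lambda$, then $h_\xi$ itself witnesses $\lambda$-decomposability and we are done.

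Otherwise, for every $\xi<\lambda^+$ there is $\eta_\xi<\lambda$ with $B_\xi:=\{\alpha>\xi\mid g_\alpha(\xi)<\eta_\xi\}\in D$. Since there are $\lambda^+$ many $\xi$ but only $\lambda$ possible values $\eta_\xi$, I would fix $\eta^*<\lambda$ and a set $Z\in[\lambda^+]^{\lambda^+}$ with $\eta_\xi=\eta^*$ for all $\xi\in Z$, choose a strictly increasing $\langle\xi_i\mid i<\lambda\rangle$ in $Z$ with $\lambda\le\xi_*:=\sup_i\xi_i<\lambda^+$ (possible since $\cf(\lambda^+)=\lambda^+>\lambda$), and define $\rho\colon\lambda^+\to\lambda$ by $\rho(\alpha):=\sup\{i<\lambda\mid\alpha\in B_{\xi_i}\}+1$ for $\alpha>\xi_*$ (and $\rho(\alpha):=0$ otherwise). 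The crucial point, where regularity of $\lambda$ enters, is that for each $\alpha>\xi_*$ the index set $\{i<\lambda\mid\alpha\in B_{\xi_i}\}=\{i<\lambda\mid g_\alpha(\xi_i)<\eta^*\}$ has size at most $|\eta^*|<\lambda$ and is therefore bounded below $\lambda$, so that $\rho(\alpha)<\lambda$ is well-defined. Since $B_{\xi_i}\cap(\xi_*,\lambda^+)\subseteq\{\alpha\mid\rho(\alpha)\ge i+1\}$ and the left-hand set is in $D$, we conclude $\{\alpha\mid\rho(\alpha)\ge j\}\in D$ for all $j<\lambda$; hence $\{\alpha\mid\rho(\alpha)<j\}\notin D$ for all $j<\lambda$, i.e.\ $\rho$ is unbounded mod $D$ and witnesses $\lambda$-decomposability.

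The main obstacle to anticipate is that a uniform $D$ on $\lambda^+$ is in general far from $\lambda^+$-complete; indeed, since indecomposable ultrafilters need not even be countably complete, the ultrapower $\ult(V,D)$ may be ill-founded, so tempting cofinality/ultrapower arguments (showing continuity of $j_D$ at $\lambda$ forces continuity at $\lambda^+$) must be avoided. The combinatorial route above sidesteps this entirely: the only completeness-type input it uses is that $D$ contains every tail of $\lambda^+$ (a consequence of uniformity together with $\cf(\lambda^+)>\lambda$), and it is the regularity of $\lambda$ that does the essential work of keeping $\rho$ valued in $\lambda$. One should also record that the auxiliary objects (the injections $g_\alpha$, the homogeneous set $Z$, and the sequence $\langle\xi_i\mid i<\lambda\rangle$) are chosen outright rather than generically, so no further model-theoretic bookkeeping is needed.
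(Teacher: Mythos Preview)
The paper does not supply its own proof of this fact; it is simply quoted from Kunen--Prikry \cite{kunen_prikry} and used as a black box. So there is no ``paper's proof'' to compare against, and your task reduces to whether the argument you wrote is correct.

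It is. The reduction to a uniform ultrafilter $D$ on $\lambda^+$ via Rudin--Keisler projection is immediate, and your Ulam-matrix dichotomy is exactly the classical Kunen--Prikry line of reasoning. The one point worth stating a bit more explicitly is the passage from ``$\{\alpha\mid\rho(\alpha)\ge i+1\}\in D$ for every $i<\lambda$'' to ``$\rho^{-1}[H]\notin D$ for every $H\in[\lambda]^{<\lambda}$'': since $\lambda$ is regular, each such $H$ is bounded by some $j<\lambda$, and $\{\alpha\mid\rho(\alpha)\ge j\}\supseteq\{\alpha\mid\rho(\alpha)\ge j+1\}\in D$, so its complement $\rho^{-1}[j]\supseteq\rho^{-1}[H]$ is not in $D$. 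Similarly, in Case~1 you should mention that $h_\xi$ is extended by $0$ on $[0,\xi]$ and that, $D$ being an ultrafilter with $[0,\xi]\notin D$, the extended preimages $\tilde h_\xi^{-1}[j]=[0,\xi]\cup h_\xi^{-1}[j]$ still fail to lie in $D$. With those two cosmetic additions the argument is complete and matches the original source.
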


\begin{fact}[Prikry, \cite{prikry}]\label{theorem: prikry}
Suppose that $\lambda$ is a singular strong limit cardinal such that $\lambda^{<\lambda}<2^{\lambda^+}$. Then every uniform ultrafilter $U$ on $\lambda^+$ that is $\beta$-indecomposable for a tail of $\beta<\lambda$ is $\lambda$-decomposable.
\end{fact}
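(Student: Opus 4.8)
The plan is to prove the contrapositive in the guise of a propagation result, extending the Kunen--Prikry phenomenon (that for regular $\mu$, $\mu$-indecomposability propagates to $\mu^+$, \cite{kunen_prikry}) from regular to singular cardinals. Concretely, assuming $U$ is uniform on $\lambda^+$ and $\beta$-indecomposable for a tail of $\beta<\lambda$, I would suppose toward a contradiction that $U$ is $\lambda$-indecomposable and aim to deduce that $U$ is then $\lambda^+$-indecomposable; since $U$ is uniform, the identity map already witnesses $\lambda^+$-decomposability, so this is the desired contradiction. Before starting I would invoke the reductions available here: passing to a Rudin--Keisler equivalent ultrafilter I may assume $U$ is $\theta$-incomplete and, by \cite[Theorem~2]{prikry}, weakly normal, so that in the ultrapower $j\colon V\to M=\ult(V,U)$ the point $\delta:=[\id]_U$ is exactly $\sup j[\lambda^+]$.

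Next I would set up the cofinality dictionary in $M$. For a regular $\nu\le\lambda^+$, $U$ is $\nu$-indecomposable iff $j[\nu]$ is cofinal in $j(\nu)$, i.e.\ $\sup j[\nu]=j(\nu)$; hence establishing $\lambda^+$-indecomposability amounts to showing $\delta=\sup j[\lambda^+]=j(\lambda^+)$. The engine of the argument is the computation $\cf^M(\delta)=[\eta\mapsto\cf(\eta)]_U$. As $\lambda$ is singular, every ordinal below $\lambda^+$ has cofinality a regular cardinal \emph{strictly} below $\lambda$, so this value is represented by a function into $\lambda$ and thus $\cf^M(\delta)<j(\lambda)$. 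The associated projection $D:=(\cf)_*U$ is an ultrafilter on $\lambda$ concentrating on $\reg\cap\lambda$, and everything turns on whether $D$ is uniform: if it is, then $\eta\mapsto\cf(\eta)$ is a map $\lambda^+\to\lambda$ all of whose fibers and small unions are $U$-null, i.e.\ a witness to $\lambda$-decomposability, and we are done outright. So I may assume $D$ is non-uniform, which is exactly what the standing assumption that $U$ is $\lambda$-indecomposable delivers.

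From non-uniformity of $D$ I would extract the least size $\mu^\ast$ of a set in $D$; this is a regular cardinal and yields a uniform projection of $U$ onto $\mu^\ast$, so $U$ is $\mu^\ast$-decomposable with $\mu^\ast<\lambda$. Tail-indecomposability forces $\mu^\ast$ below the tail threshold, and in the main case where $\cf(\lambda)$ itself lies in the tail the concentration set has size $<\cf(\lambda)$ and is therefore bounded, so $U$ concentrates on $\{\eta<\lambda^+\mid \cf(\eta)\le\gamma_0\}$ for some $\gamma_0<\lambda$; in particular $\cf^M(\delta)<j(\gamma_0)$. (The complementary case, $\cf(\lambda)$ below the tail, is handled by the same arithmetic input described below.) Thus $\delta$ is an ordinal below $j(\lambda^+)$ whose $M$-cofinality is below $j(\gamma_0)$ for some $\gamma_0<\lambda$, whereas its \emph{external} cofinality is $\lambda^+$, since $j[\lambda^+]$ is an external cofinal subset of order type $\lambda^+$. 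Reflecting an $M$-cofinal sequence back through $U$ produces a system $\langle s_\eta\mid \eta<\lambda^+\rangle$ with each $s_\eta$ cofinal in $\eta$ of length $<\gamma_0$, and the task becomes to show that no such system can have true cofinality $\lambda^+$.

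The main obstacle, and the sole place the cardinal-arithmetic hypotheses are spent, is precisely this last reconciliation. Because $U$ need not be complete at small cardinals, the external cofinalities of ordinals below $j(\gamma_0)$ cannot be bounded crudely---they may be as large as $2^{\lambda^+}$---so a naive cofinality count fails. Here I would use that $\lambda$ is a strong limit, which gives $\lambda^{<\lambda}=2^\lambda$ and caps at $2^\lambda$ the number of sets $H\in[\lambda]^{<\lambda}$ that can serve as obstructions, together with the gap $2^\lambda<2^{\lambda^+}$, which makes the interval $[\sup j[\lambda],j(\lambda^+))$ externally of size $2^{\lambda^+}$ and hence impossible to cover by $2^\lambda$-many images $j(H)$ of small sets. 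A diagonalization inside this interval---carried out with care, since $M$ is not closed under $\lambda^+$-sequences---should yield either an ordinal escaping every $j(H)$ (a genuine seed for a $\lambda$-decomposition) or the contradiction $\cf^{\mathrm{ext}}(\delta)<\lambda^+$. I expect that cleanly separating the controlled, tail cofinalities from the uncontrolled small ones, and making this counting precise in the non-closed ultrapower, will be the technical heart of the argument, this being the singular analogue of the cofinality-reflection step in the Kunen--Prikry proof.
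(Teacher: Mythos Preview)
The paper does not actually prove this statement: it is recorded as a \emph{Fact} attributed to Prikry and cited to \cite{prikry}, with no proof supplied. So there is no in-paper argument to compare your proposal against.

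That said, your proposal is not yet a proof but an outline, and you are candid about this: the entire weight of the argument rests on the final paragraph, where you say that a ``diagonalization inside this interval---carried out with care'' \emph{should} yield a contradiction, and that making the counting precise ``will be the technical heart of the argument.'' Everything before that point is a correct and natural setup (passing to a weakly normal $U$, identifying $\delta=[\id]_U=\sup j[\lambda^+]$, bounding $\cf^M(\delta)$ below $j(\gamma_0)$ using the tail-indecomposability and the singularity of $\lambda$), but none of it yet uses the hypothesis $\lambda^{<\lambda}<2^{\lambda^+}$, and the contradiction you are aiming for---that the external cofinality of $\delta$ drops below $\lambda^+$---does not follow from $\cf^M(\delta)<j(\gamma_0)$ alone. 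Your proposed mechanism for closing the gap is a size comparison between $\lambda^{<\lambda}$-many ``small obstruction sets'' $j(H)$ and an interval you assert has external size $2^{\lambda^+}$; but you have not established that lower bound on the interval (it does not follow just from uniformity of $U$), nor explained why covering by sets of the form $j(H)$ is the right reduction. As written, this step is a promissory note rather than an argument.

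If you want to pursue this route, one concrete way to bring the arithmetic hypothesis to bear is to work not with abstract ordinals but with a family of $\lambda^+$ pairwise eventually-different functions $f_\alpha:\lambda^+\to\lambda$ (e.g.\ from a system of injections $e_\beta:\beta\hookrightarrow\lambda$), observe that the assumed $[\beta_0,\lambda]$-indecomposability forces each $f_\alpha$ to concentrate on some $H_\alpha\in[\lambda]^{<\lambda}$, and then use $\lambda^{<\lambda}<2^{\lambda^+}$ (together with the strong-limit hypothesis) to control the resulting pigeonhole. That is closer in spirit to Prikry's original combinatorics and makes the role of $\lambda^{<\lambda}<2^{\lambda^+}$ transparent, whereas in your current sketch it is not clear exactly which counting inequality is doing the work.
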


Next, we prove some constraints regarding $\kappa$ not being a strong limit cardinal.

\begin{lemma}\label{lemma: nonstronglimit}
Suppose that $\isp(\omega_2)$ holds, $\kappa$ is a cardinal that is not a strong
limit cardinal, $\lambda < \kappa$ is least such that $2^\lambda \geq \kappa$, and
$\cf(\lambda) > \aleph_0$. Then $\kappa$ does not carry a uniform indecomposable ultrafilter.
\end{lemma}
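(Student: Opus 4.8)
The plan is to assume toward a contradiction that $\kappa$ carries a uniform indecomposable ultrafilter $U$, and to argue by cases according to whether $U$ is countably complete. Throughout, minimality of $\lambda$ gives $2^\mu<\kappa$ for all $\mu<\lambda$, and since $\cf(\lambda)>\aleph_0$ we have $\lambda\geq\aleph_1$, hence $\kappa>2^{\aleph_0}$. Fix an injection $e\colon\kappa\to{}^\lambda 2$, and for $\xi<\lambda$ let $\pi_\xi\colon\kappa\to{}^\xi 2$ be $\alpha\mapsto e(\alpha)\restriction\xi$; note $|{}^\xi 2|=2^{|\xi|}<\kappa$.

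Suppose first that $U$ is countably complete. I claim its completeness degree (additivity) is $\kappa$, so that $\kappa$ is measurable and hence a strong limit, contradicting the hypothesis. Indeed, if the additivity were some $\nu$ with $\aleph_1\le\nu<\kappa$, fix a partition $\langle P_i\mid i<\nu\rangle$ of $\kappa$ into $U$-null pieces, coded by $f\colon\kappa\to\nu$. Applying $\nu$-indecomposability to $f$ yields $H\in[\nu]^{<\nu}$ with $f^{-1}[H]\in U$; then $\langle P_i\mid i\in H\rangle$ together with the null set $\kappa\setminus f^{-1}[H]$ is a partition of $\kappa$ into fewer than $\nu$ many $U$-null pieces, contradicting the minimality of $\nu$. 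This case uses neither $\isp(\omega_2)$ nor $\cf(\lambda)>\aleph_0$.

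The substantive case is when $U$ is countably incomplete. Here, for each $\xi<\lambda$, indecomposability applied to $\pi_\xi$ (which has range of size $<\kappa$) produces a countable set $H_\xi\subseteq{}^\xi 2$ with $\pi_\xi^{-1}[H_\xi]\in U$; equivalently, the Rudin--Keisler image $D_\xi:=(\pi_\xi)_\ast U$ is a (non-principal, by countable incompleteness) ultrafilter on ${}^\xi 2$ concentrating on a countable set. As the restriction maps ${}^\xi 2\to{}^\eta 2$ for $\eta<\xi$ send $D_\xi$ to $D_\eta$, the sequence $\langle D_\xi,H_\xi\mid\xi<\lambda\rangle$ is a coherent system of countably based ultrafilters living on the tree $T:=\bigcup_{\xi<\lambda}H_\xi\subseteq{}^{<\lambda}2$ of height $\lambda$ with countable levels. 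I would then invoke $\isp(\omega_2)$ to fix an $\aleph_1$-guessing model $M\prec H(\theta)$ of size $\aleph_1$ with $\omega_1\subseteq M$ and $U,e,\lambda,\kappa,T,\langle H_\xi\rangle\in M$, arranged so that $\delta:=\sup(M\cap\lambda)$ has cofinality $\omega_1$. The system reads off, along $M$, an external thread $c$ through $T$; the point at which $\cf(\lambda)>\aleph_0$ is essential is that every countable $a\in M$ meets only a bounded set of levels below $\delta$, so $c\cap a$ is already determined by a single node $c\restriction\eta$ with $\eta\in M\cap\lambda$, and since the countable level $T_\eta$ lies inside $M$ we get $c\cap a\in M$. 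Thus $c$ is $M$-guessed, and by the guessing property together with elementarity $T$ acquires a cofinal branch lying in $M\subseteq V$.

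The final, and hardest, step is to contradict this. The thread produced by the coherent non-principal system $\langle D_\xi\rangle$ is tied to the countable incompleteness of $U$, and I expect that a genuine cofinal branch of $T$ in $V$ is impossible: reading the splitting ordinals $\rho(\alpha):=\sup\{\xi<\lambda\mid e(\alpha)\restriction\xi=c\restriction\xi\}$ as a map $\kappa\to\lambda$ and applying indecomposability---again using $\cf(\lambda)>\aleph_0$ to bound a countable preimage below some $\eta^\ast<\lambda$---one pins the behaviour of $U$ to a thin, countably based part of the tree in a manner incompatible with $U$ being a uniform, countably incomplete, indecomposable ultrafilter. Making this incompatibility precise, and verifying that the guessed branch really is the same object that the ultrafilter forbids, is the main obstacle; one must also handle the bookkeeping for $\delta$ in the subcase $\cf(\lambda)=\aleph_1$ (ensuring $M$ can be taken with $M\cap\lambda$ bounded, or arguing directly at $\delta=\lambda$), and confirm that the argument is insensitive to whether $\kappa$ is regular or of cofinality $\omega$.
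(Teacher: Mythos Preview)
Your treatment of the countably complete case is fine and correctly disposes of it. The substantive case, however, has a genuine gap at the point where you claim ``the system reads off, along $M$, an external thread $c$ through $T$.'' The projected ultrafilters $D_\xi$ are non-principal ultrafilters on the countable sets $H_\xi$; a coherent family of non-principal ultrafilters on countable sets does not single out any branch of $T$, so there is no well-defined object $c$ to guess. Even granting some $c$, your proposed endgame via $\rho(\alpha)=\sup\{\xi\mid e(\alpha)\restriction\xi=c\restriction\xi\}$ does not reach a contradiction: indecomposability only gives $\rho(\alpha)<\eta^\ast$ for $U$-many $\alpha$, i.e., $e(\alpha)\restriction\eta^\ast\neq c\restriction\eta^\ast$, which is perfectly compatible with $e(\alpha)\restriction\eta^\ast\in H_{\eta^\ast}$ and with $U$ being uniform.

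The paper's argument repairs exactly this. Instead of a single thread, one looks at the entire countable level $H_\delta$ at $\delta=\sup(N\cap\lambda)$ and shows that the set $\mathcal{G}$ of $N$-guessed elements of $H_\delta$ still captures a $U$-large set of $\alpha$'s (if some $g\in H_\delta$ is not $N$-guessed, then some initial segment $g\restriction\xi$ with $\xi\in N$ is not in $N$; a uniform bound $\xi$ over the countably many such $g$ contradicts $H_\xi\subseteq N$). Each $g\in\mathcal{G}$ is guessed by some $g^\ast\in N\cap{}^\lambda 2$; by $\aleph_1$-internal unboundedness, the countably many $g^\ast$ lie in a single countable $z\in N$. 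Now the tree of restrictions of $z$ has countable levels, and the sets $X^\ast_\xi=\{\alpha\mid e(\alpha)\restriction\xi\in z\restriction\xi\}$ form a decreasing chain in $U$ along $\lambda$; indecomposability plus $\cf(\lambda)>\aleph_0$ puts $\bigcap_\xi X^\ast_\xi\in U$, so $\kappa$-many $e(\alpha)$ are cofinal branches through a tree with countable levels. When $\cf(\lambda)>\aleph_1$, such a tree has at most countably many cofinal branches, contradiction. When $\cf(\lambda)=\aleph_1$, the paper does \emph{not} use a guessing model at all: one directly builds from the $H_\xi$'s a tree of height $\omega_1$ with countable levels and $\kappa$-many branches, i.e., a weak Kurepa tree, which $\isp(\omega_2)$ rules out. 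Your proposal does not account for this case split, and your suggested ``bookkeeping for $\delta$'' would not produce the needed contradiction there.
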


\begin{proof}
Suppose for the sake of contradiction that $U$ is a uniform indecomposable ultrafilter on
$\kappa$. Let $\vec{f} = \langle f_\alpha \mid \alpha < \kappa \rangle$ be an injective
sequence of elements of ${^{\lambda}}2$. Using the minimality of $\lambda$ and the fact that
$U$ is indecomposable, choose for each $\eta < \lambda$ a countable set $\mathcal{F}_\eta
\subseteq {^{\eta}}2$ such that $\{\alpha < \kappa \mid f_\alpha \restriction \eta
\in \mathcal{F}_\eta\} \in U$. Let $\vec{\mathcal{F}} = \langle \mathcal{F}_\eta \mid
\eta < \lambda \rangle$.

We first handle the case in which $\cf(\lambda)>\aleph_1$. Let $\theta$ be a sufficiently large regular cardinal, and let $N \in\mathcal P_{\aleph_2}(H(\theta))$
be such that $N \prec H(\theta)$, $\{U, \vec{\mathcal{F}}, \vec{f}\} \subseteq N$, and
$N$ is an $\aleph_1$-guessing model. Let $\eta := \sup(N \cap \lambda)$, and note that
$\cf(\eta) = \aleph_1$ (cf.\ \cite[Proposition 2.1(6)]{viale_guessing_models}).
Let $\mathcal{G} := \{g \in \mathcal{F}_\eta \mid g \text{ is $N$-guessed}\}$.

\begin{claim}
$X := \{\alpha < \kappa \mid f_\alpha \restriction \eta \in \mathcal{G}\}$ is in $U$.
\end{claim}

\begin{proof}
Suppose otherwise, and let $\mathcal{H} = \mathcal{F}_\eta \setminus \mathcal{G}$.
Then $Y = \{\alpha < \kappa \mid f_\alpha \restriction \eta \in \mathcal{H}\} \in U$.
For each $h \in \mathcal{H}$, there is $\xi_h \in N \cap \eta$ such that
$h \restriction \xi_h \notin N$; otherwise, $h$ would be $N$-guessed.
Find $\xi \in N \cap \eta$ such that $\xi \geq \xi_h$ for all $h \in \mathcal{H}$.
Then, for all $\alpha \in Y$, we have $f_\alpha \restriction \xi \notin N$.
This contradicts the fact that $\mathcal{F}_\xi \subseteq N$.
\end{proof}

For each $g \in \mathcal{G}$, let $g^* \in N$ be such that $g^* \cap N = g \cap N$.
By elementarity, we have $g^* \in {^\lambda}2$ for all $g \in \mathcal{G}$. By the
$\aleph_1$-internal unboundedness of $N$, we can find a countable set $z \in N$ such that
$g^* \in z$ for all $g \in \mathcal{G}$. We may assume that $z \subseteq {^\lambda}2$.
For each $\xi < \lambda$, let $\mathcal{F}^*_\xi = \{h \restriction \xi \mid
h \in z\}$, and let $X^*_\xi = \{\alpha < \kappa \mid f_\alpha \restriction \xi \in
\mathcal{F}^*_\xi\}$. Then $\langle \mathcal{F}^*_\xi \mid \xi < \lambda \rangle$
and $\langle X^*_\xi \mid \xi < \lambda \rangle$ are in $N$. By elementarity,
$X^*_\xi \in U$ for every $\xi < \lambda$. Moreover, if $\xi < \xi' < \lambda$,
then $X^*_\xi \supseteq X^*_{\xi'}$. Therefore, since $U$ is indecomposable and
$\cf(\lambda) > \aleph_0$,
we have $X^* := \bigcap_{\xi < \lambda} X^*_\xi \in U$. Let
$T = \{h \restriction \xi \mid h \in z, \ \xi < \lambda\}$, so $T \subseteq {^{<\lambda}}2$
is a tree. Then, for every $\alpha \in X^*$, $f_\alpha$ is a cofinal branch
through $T$. However, $T$ is a tree of height $\lambda$ with countable levels,
so, since $\cf(\lambda) > \aleph_1$, $T$ has at most countably many cofinal branches. This is a contradiction.

Assume now that $\cf(\lambda)=\aleph_1$.  Let $\langle \lambda_i\mid i<\omega_1\rangle$ be an increasing sequence of cardinals cofinal in $\lambda$. Consider the tree $T$ of height $\omega_1$ whose $i^{\mathrm{th}}$ level $T_i$ is defined to be $\{l\in {^{\lambda_i}}2 \mid \exists \beta\geq i\ \exists g\in \mathcal{F}_{\lambda_\beta} \text{ such that }l=g\restriction \lambda_i\}$. For each $\xi<\omega_1$, let $X_\xi=\{\alpha<\kappa\mid f_\alpha\restriction \lambda_\xi\in T_\xi\}$. Then we have that for every $\xi<\xi'\in\omega_1$, $X_{\xi}\in U$ and $X_{\xi}\supseteq X_{\xi'}$. Since $U$ is $\omega_1$-indecomposable, $X=\bigcap_{\xi<\omega_1} X_\xi\in U$. In particular, $T$ has $\kappa$ many branches. On the other hand, $T$ has size and height $\omega_1$, so such a tree is a weak Kurepa tree, whose existence contradicts $\isp(\omega_2)$ (see \cite[Theorem 2.8]{CoxKrueger}).
\end{proof}

\begin{remark}
Lemma \ref{lemma: nonstronglimit} is optimal in the following sense: it is consistent that $\isp(\omega_2)$ holds and $2^\omega$ carries a uniform indecomposable ultrafilter. To see this, Cox and Krueger \cite{CoxKrueger} showed that $\isp(\omega_2)$ can be made indestructible under adding any number of Cohen reals.\footnote{It was later shown in \cite{hlhs} that $\isp(\omega_2)$ is in fact \emph{always} indestructible under adding any number of Cohen reals.} Starting with their model and adding measurably many Cohen reals will result in the model as desired.
\end{remark}

\begin{proof}[Proof of Theorem \ref{theorem: ISPMeasurable}]
Suppose that $\isp(\omega_2)$ holds, and suppose that $\kappa > 2^\omega$ carries
a uniform indecomposable ultrafilter $U$. To avoid triviality, assume that $U$ is countably incomplete.
By Lemma \ref{lemma: nonstronglimit}, $\kappa>2^{\omega_1}$ necessarily. So, by Fact~\ref{finest}, we may fix a finest partition $\varphi: \kappa\rightarrow\omega$. Let $D:=\varphi^*(U)$ be the Rudin-Keisler projection. Let $j_U: V\rightarrow M_U$, $j_D: V\rightarrow M_D$ and $k: M_D\rightarrow M_U$ be the corresponding elementary embeddings. Let $W$ be the $M_D$-ultrafilter derived from $k$ and $[\id]_U$. By Theorem~\ref{theorem: silver}, $W$ is $M_D$-$j_D(\mu)$-complete for all $\mu<\kappa$. By Lemma~\ref{lemma: approximation}, for each $z\in\mathcal P_{\omega_2}(\mathcal P(\kappa))$, we have $W\cap j_D(z)\in M_D$.

Let $\theta>2^\kappa$ be a large enough regular cardinal. For each $x\in\mathcal P_{\omega_2}(H(\theta))$ with $x\prec H(\theta)$, let $f_x: \omega\rightarrow \mathcal P(x\cap \mathcal P(\kappa))$ represent $W\cap j_D(x)$ in $M_D$. By the elementarity of $j_D$, we can insist on the following for each $n<\omega$:
\begin{enumerate}
\item $f_x(n)$ is an ultrafilter on $x\cap\mathcal P(\kappa)$;
\item $f_x(n)$ is $x$-$\omega_1$-complete, namely, if $\langle A_i\in f_x(n)\mid i<\omega\rangle \in x$, then $\bigcap_{i<\omega} A_i \in f_x(n)$.
\end{enumerate}
The reason why we can insist on the preceding is that for any $x\in\mathcal P_{\omega_2}(H(\theta))$ with $x\prec H(\theta)$, $M_D\models  W\cap j_D(x)$ is an ultrafilter on $j_D(x\cap\mathcal P(\kappa))$ and for any $\langle A^*_i\in W\cap j_D(x)\mid i<j_D(\omega)\rangle\in j_D(x)$, we have $\bigcap_{i<j_D(\omega)} A^*_i \in W\cap j_D(x)$, since $W$ is $M_D$-$j_D(\omega_1)$-complete.

Let $\theta^*>>\theta$ be a sufficiently large regular cardinal and $M\prec H(\theta^*)$ be an $\aleph_1$-guessing model of size $\aleph_1$ containing all relevant objects. Note that $M$ is internally unbounded by \cite{krueger_sch}. Let $y=M\cap H(\theta)$. For each $x\in\mathcal P_{\omega_2}(H(\theta))\cap M$, we know that the following set is in $D$: $B_x=\{n<\omega\mid f_x(n)=f_y(n)\cap x\}$.

\begin{claim}\label{claim: guessed}
If $m\in \omega$ is such that the set $\{x\in M\cap\mathcal P_{\omega_2}(H(\theta)) \mid m \in B_x\}$
is cofinal in $M\cap\mathcal P_{\omega_2}(H(\theta))$, then $f_y(m)$ is $M$-guessed.
\end{claim}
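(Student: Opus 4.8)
The plan is to verify directly that $f_y(m)$ satisfies the $\aleph_1$-approximation property with respect to $M$ and then to read off that $f_y(m)$ is $M$-guessed from the assumption that $M$ is an $\aleph_1$-guessing model. I would first observe that, since $\theta>2^\kappa$, we have $\mathcal P(\kappa)\in M$, and that $f_y(m)$, being an ultrafilter on $y\cap\mathcal P(\kappa)$, is a subset of $\mathcal P(\kappa)$. Taking $z:=\mathcal P(\kappa)\in M$ as the ambient set, the task reduces to showing that $f_y(m)\cap a\in M$ for every $a\in M\cap[M]^{<\aleph_1}$; the guessing property of $M$ will then immediately produce a $b'\in M$ with $b'\cap M=f_y(m)\cap M$, which is exactly the assertion that $f_y(m)$ is $M$-guessed.

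To establish the approximation property, I would fix a countable $a\in M$ and pass to $a':=a\cap\mathcal P(\kappa)$, which by elementarity lies in $M\cap\mathcal P_{\omega_2}(H(\theta))$ and satisfies $f_y(m)\cap a=f_y(m)\cap a'$ (as $f_y(m)\subseteq\mathcal P(\kappa)$). Here I would use the cofinality hypothesis: since $\{x\in M\cap\mathcal P_{\omega_2}(H(\theta))\mid m\in B_x\}$ is cofinal in $M\cap\mathcal P_{\omega_2}(H(\theta))$, I can choose $x$ in this set with $a'\subseteq x$ (so in particular $x\prec H(\theta)$ and $f_x$ is defined). Unpacking $m\in B_x$ gives $f_x(m)=f_y(m)\cap(x\cap\mathcal P(\kappa))$, and since $a'\subseteq x\cap\mathcal P(\kappa)$ this yields $f_y(m)\cap a'=f_x(m)\cap a'$. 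Provided $f_x(m)\in M$ and $a'\in M$, the right-hand side is an element of $M$, which completes the verification.

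The crux — and the step I expect to be the main obstacle — is arranging that $f_x(m)\in M$ whenever $x\in M$. Since $W$ is external to $M_D$, there is no reason a priori for the representing functions $f_x$ to be accessible inside $M$. I would resolve this by fixing, once and for all in $V$, a single choice function $x\mapsto f_x$ (defined on the elementary submodels $x\in\mathcal P_{\omega_2}(H(\theta))$ via Lemma~\ref{lemma: approximation}, which places each $W\cap j_D(x\cap\mathcal P(\kappa))$ in $M_D$), and by requiring this function to be among the relevant objects that $M$ contains. It is an element of $H(\theta^*)$, so this costs nothing, and it guarantees $f_x\in M$, hence $f_x(m)\in M$, for every $x\in M$ in its domain. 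Note that there is no circularity: we never need $f_y\in M$ (indeed $y=M\cap H(\theta)\notin M$, which is precisely why $f_y(m)$ must be \emph{guessed} rather than simply found), only the local equation $f_x(m)=f_y(m)\cap x$ supplied by $m\in B_x$. With this in hand the remaining manipulations are routine bookkeeping about traces of $f_y(m)$ on countable subalgebras of $\mathcal P(\kappa)$, and the conclusion that $f_y(m)$ is $M$-guessed follows from the $\aleph_1$-guessing property of $M$.
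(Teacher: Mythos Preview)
Your proposal is correct and follows essentially the same route as the paper: verify the $\aleph_1$-approximation property for $f_y(m)$ by covering any countable $a\in M$ with some $x\in M$ satisfying $m\in B_x$, then use $f_y(m)\cap a=f_x(m)\cap a\in M$. You are in fact more careful than the paper in explicitly arranging that the choice function $x\mapsto f_x$ lies in $M$ (the paper leaves this implicit under ``containing all relevant objects''), and your parenthetical that the cofinal $x$ is automatically an elementary submodel (so $f_x$ is defined) is a useful clarification.
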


\begin{proof}
Let $m$ be as in the claim.
For any $x\in M\cap [M]^\omega$, we need to show that $f_y(m)\cap x\in M$. We may assume $x\subset P(\kappa)$. By the hypothesis, there is some $x'\in M\cap\mathcal P_{\omega_2}(H(\theta))$ containing $x$ such that $m \in B_{x'}$. As a result, $f_{x'}(m)=f_y(m)\cap x'$. But then $f_y(m)\cap x = f_{x'}(m)\cap x \in M$.
\end{proof}

Note that if $m$ is as in the claim and $a \in M$ guesses $f_y(m)$, then the elementarity of $M$ and the fact that $f_y(m)$ is $y$-$\omega_1$-complete imply that $a$ is a $\sigma$-complete ultrafilter on $\kappa$.

\begin{claim}
$X:=\{i\in \omega\mid f_y(i) \text{ is $M$-guessed }\}$ is in $D$.
\end{claim}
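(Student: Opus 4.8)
The plan is to reduce the desired membership to a directedness property of the models drawn from $M$ and then obtain a contradiction from $\sigma$-directedness. Write $\mathcal Q := M \cap \mathcal P_{\omega_2}(H(\theta))$, regarded as a poset under $\subseteq$ (recall that $B_x$ is defined precisely when $x \prec H(\theta)$, so ``$m \in B_x$'' tacitly restricts attention to such $x$), and set
$$Y := \{m < \omega \mid \{x \in \mathcal Q \mid m \in B_x\} \text{ is cofinal in } (\mathcal Q, \subseteq)\}.$$
By Claim~\ref{claim: guessed}, for every $m \in Y$ the function $f_y(m)$ is $M$-guessed, i.e. $m \in X$; hence $Y \subseteq X$, and it suffices to prove $Y \in D$.

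First I would verify that $(\mathcal Q, \subseteq)$ is $\sigma$-directed, meaning that every countable subfamily of $\mathcal Q$ has an upper bound in $\mathcal Q$ that is moreover an elementary submodel of $H(\theta)$. Given a countable $\{x_n \mid n < \omega\} \subseteq \mathcal Q \subseteq M$, I would apply the $\aleph_1$-internal unboundedness of $M$ (which holds by \cite{krueger_sch}, since $M$ is $\aleph_1$-guessing) to the set $z := \mathcal P_{\omega_2}(H(\theta)) \in M$ and the countable set $\{x_n \mid n < \omega\} \in [z \cap M]^{\leq \aleph_0}$: this yields $w \in [z]^{\leq \aleph_0} \cap M$ with $\{x_n \mid n < \omega\} \subseteq w$. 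Then $\bigcup w \in M$ has cardinality $\leq \aleph_1$, so by the elementarity of $M \prec H(\theta^*)$ there is $x^* \in M$ with $x^* \prec H(\theta)$, $|x^*| \leq \aleph_1$, and $\bigcup w \subseteq x^*$. Such $x^*$ belongs to $\mathcal Q$ and bounds every $x_n$.

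Then I would argue by contradiction. Suppose $Y \notin D$, so that $Y^c := \omega \setminus Y \in D$. For each $m \in Y^c$, the failure of cofinality furnishes a witness $x_m \in \mathcal Q$ such that $m \notin B_x$ for every $x \in \mathcal Q$ with $x \supseteq x_m$. Since $Y^c$ is a countable subset of $\omega$, the family $\{x_m \mid m \in Y^c\}$ is a countable subfamily of $\mathcal Q$, so by $\sigma$-directedness I may fix $x^* \in \mathcal Q$ with $x^* \prec H(\theta)$ and $x^* \supseteq x_m$ for all $m \in Y^c$. By the choice of the $x_m$, this gives $m \notin B_{x^*}$ for every $m \in Y^c$, i.e. $B_{x^*} \cap Y^c = \emptyset$. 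But $B_{x^*} \in D$ (as $x^* \in \mathcal Q$ and $x^* \prec H(\theta)$) while $Y^c \in D$, contradicting that $D$ is a proper filter. Hence $Y \in D$, and since $Y \subseteq X$ we conclude $X \in D$.

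The hard part will be the $\sigma$-directedness of $(\mathcal Q, \subseteq)$; this is the only step requiring genuine argument, the rest being bookkeeping with the ultrafilter $D$. The delicate point is ensuring that the upper bound $x^*$ can be taken to be an elementary submodel of $H(\theta)$ (so that $B_{x^*}$ is defined and lies in $D$), which is why one first covers the countable family by an internal set $w \in M$ and then reflects into $M$ the existence of an $\aleph_1$-sized elementary submodel containing $\bigcup w$.
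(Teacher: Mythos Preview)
Your proof is correct and follows essentially the same route as the paper: assume the complement is in $D$, pick for each bad index a witness $x_m \in M \cap \mathcal P_{\omega_2}(H(\theta))$ above which $m$ never enters $B_x$, use $\aleph_1$-internal unboundedness of $M$ to bound the countably many witnesses by a single $x^*$, and contradict $B_{x^*} \in D$. Your introduction of the intermediate set $Y$ is just the contrapositive of how the paper phrases the same reduction via Claim~\ref{claim: guessed}.

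The one point where you do more work than the paper is in insisting that the upper bound $x^*$ satisfy $x^* \prec H(\theta)$, and in carrying out the two-step argument (first cover $\{x_m\}$ by an internal countable $w \in M$, then reflect the existence of an $\aleph_1$-sized elementary submodel containing $\bigcup w$) to secure this. The paper simply invokes internal unboundedness to obtain $z^* \in M \cap \mathcal P_{\aleph_2}(H(\theta))$ and asserts $B_{z^*} \in D$ without explicitly addressing elementarity; since the paper only defined $f_x$ (and hence $B_x$) for $x \prec H(\theta)$, your extra care is well placed and fills a small gap in the exposition. The argument you give for this step is correct: $H(\theta) \in M$ and $\bigcup w \in M$, so elementarity of $M \prec H(\theta^*)$ yields the desired $x^* \in M$.
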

\begin{proof}
Suppose not for the sake of contradiction. For each $i\in \omega \setminus X$,
Claim \ref{claim: guessed} implies that we can fix $z_i \in M\cap\mathcal P_{\aleph_2}(H(\theta))$
such that, for all $z \in  M\cap\mathcal P_{\aleph_2}(H(\theta))$ containing $z_i$, we have
$i \notin B_{z}$. By the $\aleph_1$-internal unboundedness of $N$, there is some $z^*\in  M\cap\mathcal P_{\aleph_2}(H(\theta))$ such that $z_i\subset z^*$ for all $i\in \omega \setminus X$. But then
$i \notin B_{z^*}$ for all $i \in \omega \setminus X$, contradicting the fact that $B_{z^*} \in D$.
\end{proof}

As a result, for each $\aleph_1$-guessing model $N \prec H(\theta^*)$, there is a set $X_N \in D$ such that for every $i\in X_N$, $f_{N\cap H(\theta)}(i)$ is $\aleph_1$-guessed, hence there is some $\sigma$-complete ultrafilter $V_i=V^N_i\in N$ on $\kappa$ such that $V_i\cap N = f_{N\cap H(\theta)}(i) \cap N$.
If $\cf(\kappa)>\omega$ and there is some $\aleph_1$-guessing model $N$ with one of $V_i^N$ being $\kappa$-complete, then $\kappa$ is measurable. If $\cf(\kappa)=\omega$ and for any $\mu<\kappa$ there is an $\aleph_1$-guessing model $N$ with $V_i^N$ being $\mu$-complete, then $\kappa$ is a supremum of countably many measurable cardinals. Suppose the situation above does not occur for the sake of contradiction.

Define $$\delta:=\begin{cases}
\kappa,&\text{if }\cf(\kappa)>\omega;\\
\mu,&\text{if }\cf(\kappa)=\omega,
\end{cases}$$
where $\mu<\kappa$ is some cardinal such that for any $\aleph_1$-guessing model $N$ and $i\in X_N$, $V_i^N$ is not $\mu$-complete.

By the $\aleph_1$-internal unboundedness of $N$, there exists some $z_N\in N\cap\mathcal P_{\aleph_1}(N)$ such that $V^N_i\in z_N$ for all $i\in X_N$. Apply the pressing down lemma to find  stationary $S\s\mathcal P_{\aleph_2} (H(\theta^*))$ consisting of $\aleph_1$-guessing models and a $z$ such that, for all $N\in S$, $z_N=z$. We may also assume that every $a\in z$ is a $\sigma$-complete ultrafilter on $\kappa$. Fix $a\in z$. Let its completeness be $\gamma_a$. Then we can find a $\supseteq$-decreasing sequence $\vec{A}^a=\langle A^a_i\in a\mid i<\gamma_a\rangle$ such that $\bigcap_{i<\gamma_a} A^a_i=\emptyset$. Note that, necessarily, $\gamma_a$ is a measurable cardinal below $\delta$.

Let $E=\{A^a_{i}\mid a\in z, \ i<\gamma_a\}$ and $\gamma^*=\sup_{a\in z} \gamma_a$. Note that by our assumption $\gamma^*<\kappa$: this is clear when $\cf(\kappa)>\omega$ and when $\cf(\kappa)=\omega$, our assumption implies that each $\gamma_a<\delta$ for all $a\in z$.
Since $|E|\leq \gamma^*$, we know that $W\cap j_D(E)\in M_D$. To see this, note that $\gamma^*$ is
either a singular cardinal of countable cofinality or a measurable cardinal. If $\gamma^*$ is a singular cardinal of countable cofinality, then $\gamma^*$ is a strong limit cardinal. By \cite{krueger_sch}, $\isp(\omega_2)$ implies $\sch$. Hence $2^{\gamma^*}=(\gamma^*)^+$. Theorem \ref{theorem: prikry} implies that $\kappa>(\gamma^*)^+$. As a result, Lemma~\ref{lemma: approximation} implies that $j_D(E)\cap W\in M_D$. If $\gamma^*$ is a measurable cardinal $<\kappa$, then Lemma~\ref{lemma: nonstronglimit} implies that $2^{\gamma^*}<\kappa$. We can then apply Lemma~\ref{lemma: approximation} to get the same conclusion as desired.

Let $l: \omega\rightarrow V$ represent $W\cap j_D(E)$ in $M_D$. Let $\mathcal{F}=\langle \vec{A}^a\mid a\in z\rangle$. Then in $M_D$, it is true that for each $\vec{B}\in j(\mathcal{F})$, there exists some $i<lh(\vec{B})$ such that $B_j\not\in W\cap j_D(E)$ for all $j>i$. Here we are using the fact that $\vec{B}\subset j_D(E)$, $lh(\vec{B})\leq j_D(\gamma^*)$ and $W$ is $M_D$-$j_D((\gamma^*)^+)$-complete. By \L o\'s' theorem, there is a set $A\in D$ such that for each $i\in A$ and $a\in z$, there is some $j_{a,i}<\gamma_a$ such that for every $k>j_{a,i}$, it is the case that $A^a_k\not\in l(i)$. By adjusting $l$ if necessary,
we may assume that $A=\omega$ for simplicity.
Therefore, for each $a\in z$, we can find $j_a=\sup_{i\in \omega}j_{a,i}+1<\gamma_a$ (recall that $\gamma_a$ is measurable) such that $A^a_{j_a}\not\in l(i)$ for all $i\in \omega$.

Finally, consider $L=\{A^a_{j_a}\mid a\in z\}$. Let $N\in S$ be such that $L\in N$. In $M_D$, there is some $a^*\in j_D(z)$ such that $a^*\cap j_D(N) = W\cap j_D(N)$. Let $p: \omega\rightarrow z$ represent $a^*$. Consider $q: \omega\rightarrow L$ such that $q(i)=A^{p(i)}_{j_{p(i)}}$ for every $i\in \omega$. Then $[q]_D\in j_D(L)\cap a^* \cap j_D(N)\s j_D(L)\cap W$. However, by our choice of $q$, we have $[q]_D\not\in [l]_D=W\cap j_D(E)\supseteq W\cap j_D(L)$. This is a contradiction.
\end{proof}

\begin{proof}[Proof of Theorem \ref{theorem: main1}]
$\pfa$ implies $2^\omega=\omega_2$ and $\isp(\omega_2)$. Apply Theorem \ref{theorem: ISPMeasurable}.
\end{proof}

Finally, as promised in the introduction, we make use of all the combinatorial analyses in the proceeding sections to show that Theorem \ref{theorem: main1} is optimal.

\begin{thm}\label{theorem: optimal}
$\mm$ is consistent with the existence of a non weakly compact strongly inaccessible cardinal $\kappa$ carrying a uniform $[\aleph_2, \kappa)$-indecomposable ultrafilter.
\end{thm}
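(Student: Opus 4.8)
The plan is to combine the forcing $\mathbb{P}^-(\kappa,\omega_1)$ of Theorem~\ref{theorem: main2}(2), which is compatible with $\mm$, with the ultrafilter-mixing construction from the proof of Theorem~\ref{ind_sq_thm}. First I would arrange a ground model $V_0 \models \mm$ containing a cardinal $\kappa > \omega_2$ that is measurable and whose measurability is indestructible under $\Add(\kappa,1)$. This is standard: starting from a supercompact $\lambda$ with a measurable $\kappa > \lambda$, one Laver-prepares $\lambda$, runs the Foreman--Magidor--Shelah iteration of size $\lambda$ to force $\mm$ and collapse $\lambda$ to $\omega_2$ (this step is small relative to $\kappa$ and so preserves its measurability), and finally performs a $\kappa$-directed closed preparation making the measurability of $\kappa$ indestructible under $\Add(\kappa,1)$; being $\kappa$-directed closed, this last step is $\omega_2$-directed closed and hence preserves $\mm$ by the theorem of Larson cited in Corollary~\ref{mmiscompatible}.

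Next I would force with $\mathbb{P} := \mathbb{P}^-(\kappa,\omega_1)$ over $V_0$. By Lemma~\ref{lemma: directedclosed} this poset is $\omega_2$-directed closed, so it again preserves $\mm$; by Lemma~\ref{game} it is $\kappa$-strategically closed, hence $({<}\kappa)$-distributive, and of size $\kappa$, so $\kappa$ remains strongly inaccessible in $V_1 := V_0^{\mathbb{P}}$; and by Proposition~\ref{prop: inde_wc} the generic $\inde(\kappa,\omega_1)$-sequence witnesses that $\kappa$ is not weakly compact in $V_1$. It therefore remains only to produce, inside $V_1$, a uniform $[\aleph_2,\kappa)$-indecomposable ultrafilter on $\kappa$, as this yields exactly the desired configuration and shows that the conclusion of Theorem~\ref{theorem: main1} cannot be improved.

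For this I would replay the proof of Theorem~\ref{ind_sq_thm} with $\mathbb{P}^-(\kappa,\omega_1)$ in place of $\mathbb{P}(\kappa,\omega_1)$ and with $W$ taken to be \emph{any} nonprincipal uniform ultrafilter over $\omega_1$. The key observation enabling the transfer is that a normal measure on $\kappa$ concentrates on regular cardinals, hence on $E^\kappa_{\geq\omega_1}$, and that an $\inde$-sequence is fully coherent at the accumulation points lying in $E^\kappa_{\geq\omega_1}$ by Clause~(2) of Definition~\ref{def: weakening}, while its columns are pairwise eventually coherent everywhere by Clause~(3). Accordingly I would define, for each $i<\omega_1$, a threading poset $\mathbb{T}_i$ that threads the $i^{\mathrm{th}}$ column only through its accumulation points of cofinality $\geq\omega_1$. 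The analogues of Fact~\ref{dense_closed_lemma}(2) and Proposition~\ref{compatible_prop} then follow from the eventual coherence of Clause~(3); the analogue of the generic-thread Lemma~\ref{generic_thread} is obtained by driving a descending chain of length $\omega_1$ to a limit $\gamma$ of cofinality $\omega_1$ with $i(\gamma)=j>i$, so that column~$i$ is left undefined at $\gamma\in\acc(\dot D)$, contradicting coherence there; and the analogue of the concentration claim holds because $\acc(D)\cap E^\kappa_{\geq\omega_1}\subseteq\{\alpha\mid i(\alpha)\leq i\}$ lies in each normal $U_i$. One then defines $U$ by $X\in U$ iff $\{i<\omega_1\mid X\in U_i\}\in W$ exactly as before; since $W$ is concentrated on $\omega_1$ it is $\mu$-indecomposable for every $\mu\geq\omega_2$, so $U$ is $[\aleph_2,\kappa)$-indecomposable, while the uniformity of $U$ and the fact that $U\in V_1$ (via the compatibility of projections) are verified verbatim.

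The hard part will be establishing the analogue of Fact~\ref{dense_closed_lemma}(1): that $\mathbb{P}^-(\kappa,\omega_1)\ast\dot{\mathbb{T}}_i$ has a dense $\kappa$-directed closed subset of size $\kappa$, so that it is forcing equivalent to $\Add(\kappa,1)$ and the indestructibility hypothesis delivers the measures $U_i$. The obstacle is precisely the weak low-cofinality coherence of $\inde$: at a limit $\gamma$ of cofinality $<\omega_1$ along a threading chain, the relevant top clubs need not cohere in column~$i$ to the full extent demanded by $\square^{\ind}$. The point is that, since we only require the thread to cohere at points of $E^\kappa_{\geq\omega_1}$, we are free at such a low-cofinality $\gamma$ to set the column-$i$ club to be the union of the threads already chosen, imposing no coherence at $\gamma$ itself---this is permitted exactly because Clause~(2) constrains only points of $E^\kappa_{\geq\omega_1}$ and Clause~(3) demands only eventual coherence---while defining the remaining columns at $\gamma$ as in Lemma~\ref{lemma: directedclosed}. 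Verifying that the resulting condition lies below every member of the chain and genuinely extends the thread, uniformly for chains of every length below $\kappa$, is the technical heart of the argument.
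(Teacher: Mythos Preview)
Your proposal is correct and follows essentially the same plan as the paper: start from a model of $\mm$ with an indestructibly measurable $\kappa$, force with $\mathbb{P}^-(\kappa,\omega_1)$, and then mix the measures $U_i$ obtained from the threading posets along a uniform ultrafilter $W$ on $\omega_1$, exactly as in Theorem~\ref{ind_sq_thm}. Two points of comparison are worth noting.

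First, the paper handles the weak low-cofinality coherence of $\inde$ not by restricting the threading to $E^\kappa_{\geq\omega_1}$ as you suggest, but by strengthening the \emph{order} on $\mathbb{T}_i$: it declares $C_{\beta,i}\leq_{\mathbb{T}_i} C_{\alpha,i}$ iff $\alpha\leq\beta$ and $C_{\alpha,j}=C_{\beta,j}\cap\alpha$ for \emph{all} $j\in[i,\omega_1)$. This packages the ``eventual coherence'' of Clause~(3) directly into the poset and makes the projections $\pi_{ij}$ and the dense $\kappa$-directed closed subset go through without any cofinality bookkeeping; your approach of tracking high-cofinality accumulation points can also be made to work but is less streamlined.

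Second, you do more than is needed. For the conclusion here we only require that $U$ be $[\aleph_2,\kappa)$-indecomposable, and for this only the implication ``$W$ $\mu$-indecomposable $\Rightarrow$ $U$ $\mu$-indecomposable'' from Theorem~\ref{ind_sq_thm} is relevant (any uniform $W$ on $\omega_1$ is trivially $\mu$-indecomposable for $\mu\geq\aleph_2$). That direction uses only the $\kappa$-completeness of each $U_i$; the analogues of Lemma~\ref{generic_thread} and Claim~\ref{concentration_claim} that you sketch are needed only for the converse direction and can be omitted entirely.
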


\begin{proof}
Start with a model of $\mm$ that contains a measurable cardinal $\kappa$ whose
measurability is indestructible under $\mathrm{Add}(\kappa,1)$.
Let $\bb{P} := \bb{P}^-(\kappa, \omega_1)$ be the forcing from
Definition~\ref{definition: ForcingIndexedMinus} for adding a witness
to $\inde(\kappa, \omega_1)$, and let $G$ be $\bb{P}$-generic over
$V$. By Lemma~\ref{lemma: directedclosed},
$\bb{P}$ is $\omega_2$-directed closed, so $\mm$ holds in $V[G]$.
Moreover, $\inde(\kappa, \omega_1)$ holds in $V[G]$ so, by Proposition
\ref{prop: inde_wc}, $\kappa$ is not weakly compact in $V[G]$.

It remains to show that $\kappa$ carries a uniform $[\aleph_2, \kappa)$-indecomposable
ultrafilter in $V[G]$. The proof of this fact is almost identical to that of
Theorem \ref{ind_sq_thm}, so we only provide a few details, leaving the rest to
the reader.

In $V[G]$, let $\vec{{C}} = \bigcup G = \langle C_{\alpha,i} \mid \alpha \in
\acc(\kappa), ~ i(\alpha) \leq i < \omega_1 \rangle$ be the generically added
witness to $\inde(\kappa, \omega_1)$. For each $i < \omega_1$, define a
poset $\bb{T}_i$ as follows. The underlying set of $\bb{T}_i$ is
$\{C_{\alpha,i} \mid \alpha \in \acc(\kappa) \text{ and } i(\alpha) \leq i\}$.
Given $C_{\alpha,i}, C_{\beta,i} \in \bb{T}_i$, we set
$C_{\beta,i} \leq_{\bb{T}_i} C_{\alpha,i}$ if and only if $\alpha \leq \beta$
and, for all $i \leq j < \omega_1$, we have $C_{\alpha,j} = C_{\beta,j} \cap
\alpha$. The following facts are proven exactly as in
\cite[Lemma 3.18]{hlh} and Proposition \ref{compatible_prop} above.
\begin{itemize}
\item In $V$, for all $i < \omega_1$, the two-step iteration $\bb{P} \ast
\dot{\bb{T}}_i$ has a dense $\kappa$-directed closed subset of cardinality
$\kappa$.
\item In $V[G]$, there is a system of commuting projections
$\langle \pi_{ij} : \bb{T}_i \rightarrow \bb{T}_j \mid i \leq j <
\omega_1 \rangle$ defined by letting $\pi_{ij}(C_{\alpha,i}) = C_{\alpha,j}$
for all $i \leq j < \omega_1$ and $C_{\alpha,i} \in \bb{T}_i$.
\item Suppose that $i_0 \leq i_1 < \omega_1$, $t_0 \in \bb{T}_{i_0}$, and
$t_1 \in \bb{T}_{i_1}$. Then, for all sufficiently large $j < \theta$,
the conditions $\pi_{i_0j}(t_0)$ and $\pi_{i_1j}(t_1)$ are compatible
in $\bb{T}_j$.
\end{itemize}

Let $W$ be a uniform ultrafilter on $\omega_1$, and use it, together with the
fact that, for each $i < \omega_1$, $\kappa$ is measurable in the extension of
$V[G]$ by $\bb{T}_i$, to define an ultrafilter $U$ on $\kappa$ as in the proof
of Theorem \ref{ind_sq_thm}. The verification that $U$ is a uniform,
$[\aleph_2, \kappa)$-indecomposable ultrafilter is as in the proof of
Theorem \ref{ind_sq_thm}, so we leave it to the reader.
\end{proof}

\section{Open questions}\label{Section: questions}

\begin{q}
Does $\pid$ or $\mrp$ imply that any strong limit cardinal carrying a uniform indecomposable ultrafilter is either measurable or a supremum of countably many measurable cardinals?
\end{q}

\begin{q} Does $\ssr$ refute $\square(\kappa,\omega_1)$ for regular $\kappa > \omega_2$?
\end{q}

\section*{Acknowledgments}

The first author was was supported by GA\v{C}R project 23-04683S and the Czech Academy of Sciences (RVO 67985840).
The second author was partially supported by the European Research Council (grant agreement ERC-2018-StG 802756)
and by the Israel Science Foundation (grant agreement 203/22). A portion of this work was carried out while
the authors were participating in the Thematic Program on Set Theoretic Methods in
Algebra, Dynamics and Geometry at the Fields Institute in the spring of 2023.
We thank the Fields Institute for their support and hospitality.


\begin{thebibliography}{HLHS24}

\bibitem[Bau84]{baumgartnerpfa}
James~E. Baumgartner.
\newblock Applications of the proper forcing axiom.
\newblock In {\em Handbook of set-theoretic topology}, pages 913--959.
  North-Holland, Amsterdam, 1984.

\bibitem[BR17]{paper20}
Ari~Meir Brodsky and Assaf Rinot.
\newblock Reduced powers of {S}ouslin trees.
\newblock {\em Forum Math. Sigma}, 5(e2):1--82, 2017.

\bibitem[CFM01]{cfm}
James Cummings, Matthew Foreman, and Menachem Magidor.
\newblock Squares, scales and stationary reflection.
\newblock {\em J. Math. Log.}, 1(1):35--98, 2001.

\bibitem[CK17]{CoxKrueger}
Sean Cox and John Krueger.
\newblock Indestructible guessing models and the continuum.
\newblock {\em Fund. Math.}, 239(3):221--258, 2017.

\bibitem[DJK81]{DonderJensenKoppelberg}
D.~Donder, R.~B. Jensen, and B.~J. Koppelberg.
\newblock Some applications of the core model.
\newblock In {\em Set theory and model theory ({B}onn, 1979)}, volume 872 of
  {\em Lecture Notes in Math.}, pages 55--97. Springer, Berlin-New York, 1981.

\bibitem[Doe13]{Doebler}
Philipp Doebler.
\newblock Rado's conjecture implies that all stationary set preserving forcings
  are semiproper.
\newblock {\em J. Math. Log.}, 13(1):1350001, 8, 2013.

\bibitem[FMS88]{MM}
M.~Foreman, M.~Magidor, and S.~Shelah.
\newblock Martin's maximum, saturated ideals, and nonregular ultrafilters. {I}.
\newblock {\em Ann. of Math. (2)}, 127(1):1--47, 1988.

\bibitem[Git20]{gitik}
Moti Gitik.
\newblock On $\sigma$-complete uniform ultrafilters.
\newblock 2020.
\newblock preprint.

\bibitem[Gol20]{goldberg}
Gabriel Goldberg.
\newblock Some combinatorial properties of {U}ltimate ${L}$ and ${V}$.
\newblock 2020.
\newblock preprint.

\bibitem[Gol22]{goldbergthesis}
Gabriel Goldberg.
\newblock {\em The ultrapower axiom}, volume~10 of {\em De Gruyter Series in
  Logic and its Applications}.
\newblock De Gruyter, Berlin, [2022] \copyright 2022.

\bibitem[HLH17]{hlh}
Yair Hayut and Chris Lambie-Hanson.
\newblock Simultaneous stationary reflection and square sequences.
\newblock {\em J. Math. Log.}, 17(2):1750010, 27, 2017.

\bibitem[HLHS24]{hlhs}
Radek Honzik, Chris Lambie-Hanson, and \v{S}\'{a}rka Stejskalov\'{a}.
\newblock Indestructibility of some compactness principles over models of
  {$\mathsf{PFA}$}.
\newblock {\em Ann. Pure Appl. Logic}, 175(1):Paper No. 103359, 17, 2024.

\bibitem[Jen72]{jensen}
R.~Bj\"{o}rn Jensen.
\newblock The fine structure of the constructible hierarchy.
\newblock {\em Ann. Math. Logic}, 4:229--308; erratum, ibid. 4 (1972), 443,
  1972.
\newblock With a section by Jack Silver.

\bibitem[Ket80]{Ketonen}
Jussi Ketonen.
\newblock Some combinatorial properties of ultrafilters.
\newblock {\em Fund. Math.}, 107(3):225--235, 1980.

\bibitem[K{\"o}n03]{MR2013395}
Bernhard K{\"o}nig.
\newblock Local coherence.
\newblock {\em Ann. Pure Appl. Logic}, 124(1-3):107--139, 2003.

\bibitem[KP71]{kunen_prikry}
Kenneth Kunen and Karel Prikry.
\newblock On descendingly incomplete ultrafilters.
\newblock {\em J. Symbolic Logic}, 36:650--652, 1971.

\bibitem[Kru19]{krueger_sch}
John Krueger.
\newblock Guessing models imply the singular cardinal hypothesis.
\newblock {\em Proc. Amer. Math. Soc.}, 147(12):5427--5434, 2019.

\bibitem[Kun70]{iteratedUltra}
Kenneth Kunen.
\newblock Some applications of iterated ultrapowers in set theory.
\newblock {\em Ann. Math. Logic}, 1:179--227, 1970.

\bibitem[Kun78]{Kunen}
Kenneth Kunen.
\newblock Saturated ideals.
\newblock {\em J. Symbolic Logic}, 43(1):65--76, 1978.

\bibitem[Lar00]{MR1782117}
Paul Larson.
\newblock Separating stationary reflection principles.
\newblock {\em J. Symbolic Logic}, 65(1):247--258, 2000.

\bibitem[LH17]{narrow_systems}
Chris Lambie-Hanson.
\newblock Squares and narrow systems.
\newblock {\em J. Symb. Log.}, 82(3):834--859, 2017.

\bibitem[LHR18]{paper34}
Chris Lambie-Hanson and Assaf Rinot.
\newblock Knaster and friends {I}: {C}losed colorings and precalibers.
\newblock {\em Algebra Universalis}, 79(4):Art. 90, 39, 2018.

\bibitem[LHR19]{paper31}
Chris Lambie-Hanson and Assaf Rinot.
\newblock A forcing axiom deciding the generalized {S}ouslin {H}ypothesis.
\newblock {\em Canad. J. Math.}, 71(2):437--470, 2019.

\bibitem[LHR21]{knaster_ii}
Chris Lambie-Hanson and Assaf Rinot.
\newblock Knaster and friends {II}: {T}he {$C$}-sequence number.
\newblock {\em J. Math. Log.}, 21(1):Paper No. 2150002, 54, 2021.

\bibitem[LHR23]{paper36}
Chris Lambie-Hanson and Assaf Rinot.
\newblock Knaster and friends {III}: {S}ubadditive colorings.
\newblock {\em J. Symbolic Logic}, 88(3):1230--1280, 2023.

\bibitem[L{\"{u}}c17]{lucke}
Philipp L{\"{u}}cke.
\newblock Ascending paths and forcings that specialize higher {A}ronszajn
  trees.
\newblock {\em Fund. Math.}, 239(1):51--84, 2017.

\bibitem[MS89]{PD}
Donald~A. Martin and John~R. Steel.
\newblock A proof of projective determinacy.
\newblock {\em J. Amer. Math. Soc.}, 2(1):71--125, 1989.

\bibitem[MS96]{MgSh:324}
Menachem Magidor and Saharon Shelah.
\newblock {The tree property at successors of singular cardinals}.
\newblock {\em Archive for Mathematical Logic}, 35:385--404, 1996.
\newblock A special volume dedicated to Prof. Azriel Levy.

\bibitem[Pri68]{prikrythesis}
Karel~Libor Prikry.
\newblock {\em {C}hanging measurables into accessible cardinals}.
\newblock ProQuest LLC, Ann Arbor, MI, 1968.
\newblock Thesis (Ph.D.)--University of California, Berkeley.

\bibitem[Pri73]{prikry}
Karel Prikry.
\newblock On descendingly complete ultrafilters.
\newblock In {\em Cambridge {S}ummer {S}chool in {M}athematical {L}ogic
  ({C}ambridge, 1971)}, Lecture Notes in Math., Vol. 337, pages 459--488.
  Springer, Berlin-New York, 1973.

\bibitem[Rin14]{paper18}
Assaf Rinot.
\newblock Chain conditions of products, and weakly compact cardinals.
\newblock {\em Bull. Symb. Log.}, 20(3):293--314, 2014.

\bibitem[She83]{sheard}
Michael Sheard.
\newblock Indecomposable ultrafilters over small large cardinals.
\newblock {\em J. Symbolic Logic}, 48(4):1000--1007 (1984), 1983.

\bibitem[She88]{shelah_productivity}
Saharon Shelah.
\newblock Successors of singulars, cofinalities of reduced products of
  cardinals and productivity of chain conditions.
\newblock {\em Israel J. Math.}, 62(2):213--256, 1988.

\bibitem[She98]{properShelah}
Saharon Shelah.
\newblock {\em Proper and improper forcing}.
\newblock Perspectives in Mathematical Logic. Springer-Verlag, Berlin, second
  edition, 1998.

\bibitem[She03]{ShelahNotCollapsing}
Saharon Shelah.
\newblock Not collapsing cardinals {$\leq\kappa$} in {$(<\kappa)$}-support
  iterations.
\newblock {\em Israel J. Math.}, 136:29--115, 2003.

\bibitem[Sil74]{silver}
Jack~H. Silver.
\newblock Indecomposable ultrafilters and {$0^{\#}$}.
\newblock In {\em Proceedings of the {T}arski {S}ymposium ({P}roc. {S}ympos.
  {P}ure {M}ath., {V}ol. {XXV}, {U}niv. {C}alifornia, {B}erkeley, {C}alif.,
  1971)}, volume Vol. XXV of {\em Proc. Sympos. Pure Math.}, pages 357--363.
  Published for the Association for Symbolic Logic by the American Mathematical
  Society, Providence, RI, 1974.

\bibitem[Sol74]{solovay}
Robert~M. Solovay.
\newblock Strongly compact cardinals and the {GCH}.
\newblock In {\em Proceedings of the {T}arski {S}ymposium ({P}roc. {S}ympos.
  {P}ure {M}ath., {V}ol. {XXV}, {U}niv. {C}alifornia, {B}erkeley, {C}alif.,
  1971)}, Proc. Sympos. Pure Math., Vol. XXV, pages 365--372. Published for the
  Association for Symbolic Logic by the American Mathematical Society,
  Providence, RI, 1974.

\bibitem[Str11]{Strullu}
Remi Strullu.
\newblock $\mrp$, tree properties and square principles.
\newblock {\em J. Symbolic Logic}, 76(4):1441--1452, 2011.

\bibitem[Sus19]{susice2019special}
John Susice.
\newblock The special {A}ronszajn tree property at $\kappa^+$ and
  $\square_{\kappa, 2}$, 2019.
\newblock \verb"https://arxiv.org/abs/1902.00108".

\bibitem[SV15]{SakaiVelickovic}
Hiroshi Sakai and Boban Veli\u{c}kovi\'{c}.
\newblock Stationary reflection principles and two cardinal tree properties.
\newblock {\em J. Inst. Math. Jussieu}, 14(1):69--85, 2015.

\bibitem[Tod84]{todorcevicPFA}
Stevo Todor\v{c}evi\'{c}.
\newblock A note on the proper forcing axiom.
\newblock In {\em Axiomatic set theory ({B}oulder, {C}olo., 1983)}, volume~31
  of {\em Contemp. Math.}, pages 209--218. Amer. Math. Soc., Providence, RI,
  1984.

\bibitem[Tod87]{todorcevicwalks}
Stevo Todor\v{c}evi\'{c}.
\newblock Partitioning pairs of countable ordinals.
\newblock {\em Acta Math.}, 159(3-4):261--294, 1987.

\bibitem[Tod07]{todorcevicwalksbook}
Stevo Todorcevic.
\newblock {\em Walks on ordinals and their characteristics}, volume 263 of {\em
  Progress in Mathematics}.
\newblock Birkh\"{a}user Verlag, Basel, 2007.

\bibitem[TPW17]{TPWu}
V\'{\i}ctor Torres-P\'{e}rez and Liuzhen Wu.
\newblock Strong {C}hang's conjecture, semi-stationary reflection, the strong
  tree property and two-cardinal square principles.
\newblock {\em Fund. Math.}, 236(3):247--262, 2017.

\bibitem[Via06]{viale}
Matteo Viale.
\newblock The proper forcing axiom and the singular cardinal hypothesis.
\newblock {\em J. Symbolic Logic}, 71(2):473--479, 2006.

\bibitem[Via12]{viale_guessing_models}
Matteo Viale.
\newblock Guessing models and generalized {L}aver diamond.
\newblock {\em Ann. Pure Appl. Logic}, 163(11):1660--1678, 2012.

\bibitem[VW11]{viale_weiss}
Matteo Viale and Christoph Wei\ss.
\newblock On the consistency strength of the proper forcing axiom.
\newblock {\em Adv. Math.}, 228(5):2672--2687, 2011.

\bibitem[Zha20]{RCzhang}
Jing Zhang.
\newblock Rado's conjecture and its {B}aire version.
\newblock {\em J. Math. Log.}, 20(1):1950015, 35, 2020.

\end{thebibliography}
\end{document}